\newtheorem{theorem}{Theorem}[section]
\newtheorem{lemma}[theorem]{Lemma}
\newtheorem{proposition}[theorem]{Proposition}
\newtheorem{cor}[theorem]{Corollary}
\newtheorem{fact}[theorem]{Fact}
\newtheorem{theoremalpha}{Theorem}
\numberwithin{equation}{section}
\theoremstyle{definition}
\newtheorem{definition}[theorem]{Definition}
\theoremstyle{remark}
\newtheorem{remark}[theorem]{Remark}
\newtheorem{example}[theorem]{Example}
\newenvironment{sis}{\left\{\begin{aligned}}{\end{aligned}\right.}
\newcommand{\bbR}{{\mathbb R}}
\newcommand{\bbC}{{\mathbb C}}
\newcommand{\bbQ}{{\mathbb Q}}
\newcommand{\bbZ}{{\mathbb Z}}
\newcommand{\bbG}{{\mathbb G}}
\newcommand{\bbA}{{\mathbb A}}
\newcommand{\bbE}{{\mathbb E}}
\newcommand{\cC}{{\mathcal C}}
\newcommand{\cF}{{\mathcal F}}
\newcommand{\cG}{{\mathcal G}}
\newcommand{\calL}{{\mathcal L}}
\newcommand{\cN}{{\mathcal N}}
\newcommand{\cO}{{\mathcal O}}
\newcommand{\cX}{{\mathcal X}}
\newcommand{\cY}{{\mathcal Y}}
\newcommand{\un}{\underline}
\newcommand{\ov}{\overline}
\newcommand{\wt}{\widetilde}
\newcommand{\wh}{\widehat}
\newcommand{\m}{\mathcal}
\newcommand{\U}{\mathcal{U}}
\renewcommand{\P}{\mathbb{P}}
\newcommand{\irr}{\operatorname{irr}}
\newcommand{\type}{\operatorname{type}}
\newcommand{\Spec}{\operatorname{Spec}}
\newcommand{\Spf}{\operatorname{Spf}}
\newcommand{\Proj}{\operatorname{Proj}}
\newcommand{\Aut}{\operatorname{Aut}}
\newcommand{\Pic}{\operatorname{Pic}}
\newcommand{\PGL}{\operatorname{PGL}}
\newcommand{\Cl}{\operatorname{Cl}}
\newcommand{\D}{\operatorname{\Delta}}
\newcommand{\Def}{\operatorname{Def}}
\newcommand{\ps}{\operatorname{ps}}
\newcommand{\adm}{\operatorname{adm}}
\newcommand{\codim}{\operatorname{codim}}
\renewcommand{\div}{\operatorname{div}}
\newcommand{\wei}{\operatorname{wt}}
\renewcommand{\Im}{\operatorname{Im}}
\newcommand{\car}{\operatorname{char}}
\newcommand{\M}{\operatorname{\ov{\m M}}}
\newcommand{\MM}{\operatorname{\ov{M}}}
\newcommand{\MMM}{\operatorname{\ov{M}}}
\newcommand{\Spr}{\operatorname{Sprout}_{g,n}(A_3)}
\newcommand{\EC}{\mathcal{EC}}
\newcommand{\NE}{\operatorname{NE}}
\newcommand{\NEb}{\operatorname{\ov{NE}}}
\newcommand{\Nef}{\operatorname{Nef}}
\newcommand{\Gm}{\operatorname{\mathbb{G}_m}}
\newcommand{\Ell}{\operatorname{Ell}}
\newcommand{\Tac}{\operatorname{Tac}}
\newcommand{\Exc}{\operatorname{Exc}}
\tikzset{dot/.style={
		circle,
		fill=black,
		inner sep=1.5pt,
}}
\title{On the first steps of the minimal model program for the moduli space of stable pointed curves}
\author{Giulio Codogni}
\address{Universit\`{a} Roma Tre, Dipartimento di Matematica e Fisica, Largo San Leonardo Murialdo, Rome, Italy} 
\email{codogni@mat.uniroma3.it}
\author{Luca Tasin}
\address{Dipartimento di Matematica F.\ Enriques, Universit\`a degli Studi di Milano, Via Cesare Saldini 50, 20133 Milano, Italy} 
\email{luca.tasin@unimi.it}
\author{Filippo Viviani}
\address{Universit\`{a} Roma Tre, Dipartimento di Matematica e Fisica, Largo San Leonardo Murialdo, Rome, Italy} 
\email{viviani@mat.uniroma3.it}
\begin{document}
\maketitle

\begin{abstract}
	The aim of this paper is to study all  the natural first steps of the minimal model program for the moduli space of stable pointed curves.  We prove that they admit a modular interpretation and we study their geometric properties. As a particular case, we recover the first few Hassett-Keel log canonical models.
	As a by-product, we produce many birational morphisms from the moduli space of stable pointed curves to alternative modular projective compactifications of the moduli space of pointed curves.  
\end{abstract}

\tableofcontents

\section*{Introduction}

The motivation of this work comes from the following vague but inspiring

\vspace{0.1cm}

\textbf{Question:} 
If we run a minimal model program of a moduli space, do all the steps admit a modular interpretation?

\vspace{0.1cm}

For example, this is  true for the moduli spaces of vector bundles over many classes of surfaces, see \cite{BM, Yos, Nue, LZ, BC, Giulio, CH2} or the surveys \cite{CH, Hui, Mac}.

\hspace{0.2cm}

In the present paper, we look at the above question for the coarse moduli space $\MM_{g,n}$ of Deligne-Mumford stable $n$-pointed curves of genus $g$. The main result of the paper is that \emph{all} the first natural 
steps of the MMP 	(=minimal model program) for $\MM_{g,n}$ admit a modular interpretation; more precisely,  they are moduli spaces of suitable singular curves.

\hspace{0.2cm}

The MMP for $\MM_{g,n}$ is closely related to the  Hassett-Keel program (see \cite{HH1,HH2,AFSV1,AFS2,AFS3}), which  is interested in studying the modular interpretation of the following  log canonical models 
\begin{equation}\label{E:logcan}
\MM_{g,n}(\alpha):=\Proj \bigoplus_{m\geq 0} H^0(\M_{g,n}, \lfloor m(K_{\M_{g,n}}+\psi+\alpha(\delta-\psi))\rfloor)
\end{equation}
of $\MM_{g,n}$ with respect to $K_{\M_{g,n}}+\psi+\alpha(\delta-\psi)$ as $\alpha$ decreases from $1$ to $0$. However, the point of view of the MMP is slightly different, since one is interested in contracting $K$-negative rays, or more generally faces, of the Mori cone  $\MM_{g,n}$ and then flipping them if the resulting contraction is small. It turns out that the first three steps of the Hassett-Keel program coincide with some of the steps of the MMP described in this paper, as we explain in detail towards the end of the introduction.

As a by-product of our investigation, we produce many morphisms (with connected fibres) from $\MM_{g,n}$ to other normal projective varieties. The number of these morphisms grows exponentially in  $(g,n)$. This gives a partial answer to \cite[Question, page 275]{GKM}), which asks for a classification of all such morphisms.
To the best of our knowledge, the only previously known birational morphisms  from $\MM_{g, n}$ (with $g >5$) were the first two steps of the above mentioned Hassett-Keel program, and, for $n=0$, the Torelli morphism from $\MM_g$ to the Satake compactification of the moduli space of principally polarized abelian varieties (note that it is unknown whether the Satake compactification admits a modular interpretation as moduli space of curves). 

The geometry of the morphisms that we construct in this paper will be further studied in our work \cite{CTV}. This paper is independent from its sequel \cite{CTV}, even though, for the sake of completeness, we have included here  some results from \cite{CTV}.

As a further by-product, we produce many new weakly modular (and sometimes also modular) compactification (in the sense of \cite[Sec. 2.1]{FS}) of the moduli space $M_{g,n}$ of $n$-pointed smooth curves of genus $g$,  see Remark \ref{R:modcom}. Moreover, our weakly modular compactifications involve curves  whose singularities are of the simplest kind, namely nodes, cusps and tacnodes, a problem that was explicitly discussed in \cite[p. 21--22]{FS}.

\subsection*{The first step}

As a warm-up, let us describe what are the possible first steps of the MMP for $\MM_{g,n}$, assuming for the moment that the characteristic of the base field $k$ is $0$. 

A first natural $K$-negative\footnote{In this introduction, we will be deliberately  vague on the canonical class $K$, what we are going to say works both for  the canonical class of the stack and of its coarse moduli space.} extremal ray of $\NEb(\MM_{g,n})$ is generated by the \emph{elliptic tail curve} $C_{\rm ell}$,  i.e. the curve $C_{\rm ell}$ (well-defined up to numerical equivalence) 
of $\MM_{g,n}$ parametrising a moving $1$-pointed elliptic curve $(E,p)$ attached in $p$ to a fixed $n+1$-pointed smooth irreducible curve of genus $g-1$. The contraction associated to the extremal ray $\bbR_{\geq 0}\cdot C_{\rm ell}$ has a modular meaning and it can be identified with the modular contraction
\begin{equation}\label{E:Up-ps}
\Upsilon:\MM_{g,n}\to \MM_{g,n}^{\ps},
\end{equation}
where $\MM_{g,n}^{\ps}$ is a projective normal $\bbQ$-factorial irreducible variety which is the coarse moduli space of the proper smooth Deligne-Mumford stack of \emph{$n$-pointed pseudostable}  curves of genus $g$ \footnote{We assume from now on that $(g,n)\neq (1,1),(2,0)$, because $\M_{1,1}^{\ps}$ is empty, while  $\M_{2,0}^{\ps}$ is neither separated nor with  finite inertia and $\MM_{2,0}^{\ps}$ is only an adequate  moduli space.}, i.e. $n$-pointed projective connected (reduced) curves of genus $g$ with nodes and cusps as singularities, not having elliptic tails and  with ample log canonical line bundle, and $\Upsilon$ sends an $n$-pointed stable curve  $C\in \MM_{g,n}(k)$ into the $n$-pointed pseudostable curve $\Upsilon(C)$ of $\MM_{g,n}^{\ps}(k)$ which is obtained by contracting  the elliptic tails of $C$ into cusps (see Propositions \ref{P:Mgps-DM}, \ref{P:Pic-Mgps}, \ref{P:div-contr}).

The morphism $\Upsilon$ is a birational divisorial contraction of relative Picard number one, and it is the unique such morphism at least if $g\geq 5$ by \cite[Prop. 6.4]{GKM}. Moreover, if the F-conjecture is true and $n\leq 2$, then a close inspection of formulae \cite[Thm. 2.1]{GKM} reveals that $\bbR_{\geq 0}\cdot C_{\rm ell}$  is the unique $K$-negative extremal ray of $\NEb(\MM_{g,n})$. On the other hand, if the F-conjecture is true and $n\geq 3$, then there are other extremal rays of  $\NEb(\MM_{g,n})$ that are $K$-negative, but $\bbR_{\geq 0}\cdot C_{\rm ell}$ is the unique one which is also $K+\psi$-negative. In both the  MMP and the Hassett-Keel program of $\MM_{g,n}$, it seems that the divisor class $K+\psi$ is more natural than the divisor $K$; one reason is that, on the stack, it is stable under the clutching morphisms (see e.g. \cite[Chap. XVII, Sec. 4]{GAC2}). The upshot of the above discussion is that the morphism \eqref{E:Up-ps} is the ``natural" (and conjecturally unique for $n\leq 2$)  first step of the MMP for $\MM_{g,n}$.

 \subsection*{The next steps}
 
 Let us now analyse what are the natural possible ways of continuing the MMP of $\MM_{g,n}$ by looking for $K$-negative extremal rays of $\MM_{g,n}^{\ps}$.

Given an hyperbolic pair $(g,n)$ (i.e. such that  $2g-2+n>0$),  consider the set
\begin{equation}\label{E:Tset}
T_{g,n}:=\left(\{\irr \} \cup \{ (\tau,I) : 0\leq \tau\leq g, I \subseteq [n]:=\{1,\ldots, n\}\} \setminus \{(0,\emptyset), (g,[n]) \}\right)/\sim,
\end{equation}
where $\sim$ is the equivalence relation such that $\irr$ is equivalent only to itself and $(\tau,I) \sim (\tau',I')$ if and only if $(\tau,I)= (\tau',I')$ or $(\tau',I')=(g-\tau,I^c )$, where $I^c=[n] \setminus I$. 
We will denote the class of $(\tau,I)$ in $T_{g,n}$ by $[\tau, I]$ and the class of $\irr$ in $T_{g,n}$ again by $\irr$. Set $T_{g,n}^*=T_{g,n}\setminus \{\irr\}$.

\begin{definition}\label{D:1strata}[Elliptic bridge curves]
%Let $(g,n)$ be an hyperbolic pair.   
Consider the following irreducible curves (well-defined up to numerical equivalence) in $\M_{g,n}^{\ps}$ (or in $\MM_{g,n}^{\ps}$), which we call \emph{elliptic bridge curves}:
\begin{enumerate}
\item  If $g\geq 2$ and $(g,n)\neq(2,0)$, we denote by $C(\irr)$  the closure of the curve formed by a varying $2$-pointed rational nodal elliptic curve  $(R,p,q)$ attached to a fixed $n$-pointed smooth irreducible curve $D$ of genus $g-2$ in the two points $p$ and $q$. If $(g,n)=(2,0)$, $C(\irr)$ is the closure of the curve formed by a varying rational curve with two nodes.

\item For every $\{[\tau, I], [\tau+1, I]\}=\{[\tau, I], [g-1-\tau, I^c]\}\subset T_{g,n}-\{(1,\emptyset), \irr\}$, we denote by $C([\tau, I], [\tau+1, I])$ the curve formed by a varying  $2$-pointed rational nodal elliptic curve  $(R,p,q)$ attached in $p$ to a fixed smooth  irreducible curve $D_1$ of genus $\tau$ and with marked points $\{p_i\}_{i\in I}$ and attached in $q$ to a fixed smooth  irreducible curve $D_2$ of genus $g-1-\tau$ and with marked points $\{p_i\}_{i\in I^c}$, with the convention that if $\tau=0$ and $I=\{k\}$ for some $k\in [n]$ then, instead of attaching the fixed curve $D_1$, we consider $p$ as the $k$-th marked points, and similarly for the case $(g-1-\tau, I^c)=(0,\{k\})$.
\end{enumerate}
The \emph{type}  of an elliptic bridge curve is defined as follows: $C(\irr)$ has type $\{\irr\}\subset T_{g,n}$ while $C([\tau, I], [\tau+1, I])$ has type equal to $\{[\tau, I], [\tau+1, I]\}\subset T_{g,n}$.

\end{definition}

\begin{figure}[!h]
	\begin{center}
		\begin{tikzpicture}[scale=0.6]

		\draw[very thick] (-2-14,2.5) to (7.5-14,2.5);
		
		\draw[very thick, decoration={brace, amplitude=10pt}, decorate] (-1.5-14,-3) -- (-1.5-14, 2);
		
		\coordinate (x) at (0-14,1.5);
		\coordinate (y) at (2.5-14, -2.5);
		\coordinate[label=right:$g-2$] (w) at (5-14,1.5);
		\draw [very thick] (x) to[in=180, out=-90] 
		node[dot, pos=0.8, label=left:$p_1$]{} 
		node[dot, pos=1, label=below:$\ldots$]{}
		(y) to[in=-90, out=0]  
		%node[dot, pos=0.75]{}  
		node[dot, pos=0.2, label=right:$p_n$]{}
		(w);

		\coordinate (a) at (-0.5-14,0);
		\coordinate (b) at (2.5-14,0.5);
		\coordinate (c) at (2.5-14, 2);
		\coordinate[label=right: $1$] (d) at (5.5-14, 0);
		\draw [very thick] (a) to[in=225,out=-15] (b) to[in=0, out=45] (c) to[out=180, in=135] (b) to[in=-165, out=-45] (d);

		\draw[very thick, decoration={brace, mirror, amplitude=10pt}, decorate] (7-14,-3) -- (7-14, 2);

		\draw[very thick] (-2,2.5) to (7.5,2.5);
		
		\draw[very thick, decoration={brace, amplitude=10pt}, decorate] (-1.5,-3) -- (-1.5, 2);
		
		\coordinate (x) at (0,1.5);
		\coordinate[label=below:{$\tau$}] (y) at (0, -2.5);
		\draw [very thick, in=105, out=-45] (x) to 
		node[dot, pos=0.6, label=left:$p_1$]{}
		node[dot, pos=0.75, label=left:$\vdots$]{}
		%	node[dot, pos=0.75]{}  
		node[dot, pos=0.95, label=left:$p_k$]{}
		(y);

		\coordinate (a) at (-0.5,0);
		\coordinate (b) at (2.5,0.5);
		\coordinate (c) at (2.5, 2);
		\coordinate[label=above: $1$] (d) at (5.5, 0);
		\draw [very thick] (a) to[in=225,out=-15] (b) to[in=0, out=45] (c) to[out=180, in=135] (b) to[in=-165, out=-45] (d);
		
		\coordinate (w) at (5,1.5);
		\coordinate[label=below:{$g-\tau-1$}] (z) at (5, -2.5);
		\draw [very thick, in=75, out=-125] (w) to 
		node[dot, pos=0.6, label=right:$p_{k+1}$]{}
		node[dot, pos=0.75, label=right:$\vdots$]{}
		%	node[dot, pos=0.75]{}  
		node[dot, pos=0.95, label=right:$p_n$]{}
		(z);
		
		\draw[very thick, decoration={brace, mirror, amplitude=10pt}, decorate] (7,-3) -- (7, 2);
		
		%\node at (9,-0.5) {$\cong \mathbb P^1$};		
		
		\end{tikzpicture}		
		
	\end{center}
	\caption{The elliptic bridge curves $C(\irr)$ and  $C([\tau, I], [\tau+1, I])$, where $I=\{1,\ldots, k\}$. The varying component is a 2-pointed rational nodal curve.}
	\label{F:ellipticbridgecurve}
\end{figure}
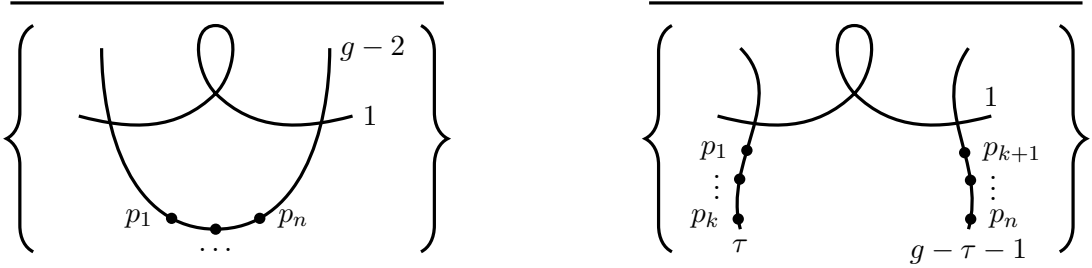

The elliptic bridge curves generate linearly independent extremal rays of $\NEb(\MM_{g,n}^{\ps})$ that are both $K$ and $K+\psi$-negative (see Proposition \ref{P:face}). For an arbitrary subset $T\subseteq T_{g,n}$, we denote by $F_T$ the $K$-negative face of $\NEb(\MM_{g,n}^{\ps})$ spanned by the classes of the elliptic bridge curves whose type is contained in $T$ (see Lemma \ref{L:poset-T} for the properties of $F_T$).

If the F-conjecture (see \cite[Conj. (0.2)]{GKM}) holds  true, then:
\begin{itemize}
\item The elliptic bridge curves are the unique $1$-strata of $\M_{g,n}^{\ps}$ which are $K_{\M_{g,n}^{\ps}}+\psi$-negative. In particular,  they are the unique $1$-strata of $\M_{g}^{\ps}$ which are $K_{\M_{g}^{\ps}}$-negative.
\item The elliptic bridge curves are the unique $K_{\M_{g,n}^{\ps}}$-negative curves of $\M_{g,n}^{\ps}$ which are the image of $K_{\M_{g,n}}$-positive $1$-strata of $\M_{g,n}$.
\end{itemize}
Hence the natural prosecution of the MMP for $\MM_{g,n}$ is the contraction of one of these extremal rays, or, more generally, of a face $F_T$, and its flip. The goal of our paper is to show that both the contractions of these $K$-negative faces and their flips have a modular description, and describe explicitly their geometrical properties.

\subsection*{$T$-semistable and $T^+$-semistable curves }

To give  these modular descriptions, we need new stability notions. Given a tacnode $p$ of an $n$-pointed projective curve of genus $g$ with ample log canonical line bundle, we define the type of $p$ as 
\begin{itemize}
\item $\type(p):=\{\irr\}\subseteq T_{g,n} $ if the normalisation of $C$ at  $p$ is connected;
\item  $\type(p):=\{[\tau,I],[\tau+1,I]\} \subseteq T_{g,n}$ if the normalisation of $C$ at $p$ consists of two connected components, one of which has arithmetic genus  $\tau$ and marked points $\{p_i\}_{i \in I}$ and the other has arithmetic genus $g-1-\tau$ and marked points $\{p_i\}_{i\in I^c}$.
\end{itemize}
In a similar fashion, we define the type of an $A_1$/$A_1$-attached elliptic chain (see Definition \ref{D:tailchain}).

\begin{definition}\label{D:Tdefi}[see Definition \ref{D:ourstacks}]
Let $T\subseteq T_{g,n}$. 
\begin{enumerate}[(i)]
\item We denote by $\M_{g,n}^T$ the stack of \emph{$T$-semistable curves}, i.e. $n$-pointed projective connected curves of genus $g$, having singularities that are nodes, cusps or tacnodes of type contained in $T$, not having neither $A_1$-attached elliptic tails nor $A_3$-attached elliptic tails and  with ample log canonical line bundle.
\item We denote by $\M_{g,n}^{T+}$ the stack of \emph{$T^+$-semistable curves}, i.e. $T$-semistable curves without any $A_1$/$A_1$-attached elliptic chain of type contained in $T$.
\end{enumerate}
\end{definition}

\subsection*{Main Results}

We can now state the three main results of this paper. We work over an algebraically closed field $k$. For some of our results, we will need to assume that the characteristic of $k$ is big enough with respect to the pair $(g,n)$, which we write as $\car(k)\gg (g,n)$  (see Definition \ref{A:char}),  and for some others that the characteristic of $k$ is zero.

The first main result describes the relation between the stacks of pseudostable curves, $T$-semistable curves and $T^+$-semistable curves and their good moduli spaces.

\begin{theoremalpha}[=Theorems \ref{T:algstack} and \ref{T:goodspaces}]
Assume that $(g,n)\neq (2,0)$ and let $T\subset T_{g,n}$. 
\begin{enumerate}
\item \label{T:A1} The stack $\M_{g,n}^T$ is algebraic, smooth, irreducible  and of finite type over $k$ and we have open embeddings 
$$
\xymatrix{
\M_{g,n}^{\ps} \ar@{^{(}->}[r]^{\iota_T} & \M_{g,n}^T & \ar@{_{(}->}[l] _{\iota_T^+}  \M_{g,n}^{T,+}.
}
$$
\item \label{T:A2} Assume that ${\rm char}(k)\gg (g,n)$. Then the algebraic stacks $\M_{g,n}^T$ and $\M_{g,n}^{T+}$ admit good moduli spaces  $\MMM_{g,n}^T$ and $\MMM_{g, n}^{T+}$ respectively, which are  proper normal irreducible algebraic spaces over $k$. Moreover, there exists a commutative diagram 
\begin{equation*}
\xymatrix{
\M_{g,n}^{\ps} \ar@{^{(}->}[r]^{\iota_T} \ar[d]^{\phi^{\ps}} & \M_{g,n}^T \ar[d]^{\phi^T} & \M_{g,n}^{T+} \ar@{_{(}->}[l]_{\iota_T^+} \ar[d]^{\phi^{T+}}\\
\MM_{g,n}^{\ps} \ar[r]^{f_T}&   \MMM_{g,n}^T  & \MMM_{g,n}^{T+} \ar[l]_{f_T^+} \\
}
\end{equation*}
where the vertical maps are the natural morphisms to the  good moduli spaces (indeed also $\phi^{\ps}$ is a good moduli space if ${\rm char}(k)\gg (g,n)$) and the bottom horizontal morphisms $f_T$ and $f_T^+$   are proper (and birational if $(g,n)\neq (1,2)$)   morphisms. 

\end{enumerate} 
\end{theoremalpha}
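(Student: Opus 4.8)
I would first realise $\M_{g,n}^T$ and $\M_{g,n}^{T+}$ as quotient stacks. A $T$-semistable curve $(C,p_\bullet)$ carries by definition the ample line bundle $\omega_C(p_1+\dots+p_n)$, and the family of such curves is bounded (the fixed genus forces at most $2g-2+n$ irreducible components and finitely many singular points, of finitely many analytic types); hence for a uniform $m\gg 0$ the sheaf $\omega_{\mathcal C/S}(\sum\sigma_i)^{\otimes m}$ is relatively very ample with vanishing higher cohomology and fixed Hilbert polynomial on every family, which presents $\M_{g,n}^T$ as $[H/\PGL_N]$ for a suitable locally closed $H$ in a Hilbert scheme, so it is algebraic and of finite type over $k$. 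Smoothness is pure deformation theory: nodes, cusps and tacnodes are the plane‑curve singularities $A_1,A_2,A_3$, hence l.c.i., so the local $\mathcal{E}xt^{\geq 2}(L_C,\mathcal O_C)$ vanish, and together with $H^{\geq 2}(C,-)=0$ (a curve) and the $p_i$ lying in the smooth locus this makes the deformation functor of $(C,p_\bullet)$ unobstructed. Since a smooth stack is the disjoint union of its irreducible components, for irreducibility it suffices to connect, inside $\M_{g,n}^T$, an arbitrary $T$-semistable curve to the irreducible locus $M_{g,n}$ of smooth curves; I would do this by degenerating each tacnode to a pair of nodes via the flat family with general fibre $\{y^2=(x^2-t)^2\}$ and special fibre the tacnode $\{y^2=x^4\}$: this changes neither the arithmetic genus nor the degree of $\omega_C(\sum p_i)$ on any component, and creates no new component, so it stays in $\M_{g,n}^T$ and, once all tacnodes are removed, lands in $\M_{g,n}^{\ps}$; the local deformation globalizes by unobstructedness to a morphism $\bbA^1\to\M_{g,n}^T$, and $M_{g,n}$ is dense in $\M_{g,n}^{\ps}$ because $\MM_{g,n}^{\ps}$ is irreducible. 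Finally, one identifies $\M_{g,n}^{\ps}$ with the tacnode‑free locus of $\M_{g,n}^T$ (with no tacnodes, an elliptic tail can only be $A_1$‑attached, hence is forbidden) and $\M_{g,n}^{T+}$ with the locus avoiding the relevant $A_1/A_1$‑attached elliptic chains; both are open, since worse singularities and more degenerate dual graphs cut out closed loci along the proper universal curve.

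\textbf{Part (2): good moduli spaces and properness.} For the existence of $\MMM_{g,n}^T$ and $\MMM_{g,n}^{T+}$ I would apply the existence criterion of Alper, Halpern-Leistner and Heinloth: an algebraic stack of finite type with affine diagonal over $k$ admits a separated good moduli space as soon as it is $\Theta$‑reductive and S‑complete, and that space is proper over $k$ provided the stack also satisfies the existence part of the valuative criterion for DVRs. Finite type and affine diagonal come from Part (1). The $\Theta$‑reductivity and S‑completeness reduce, through the local structure theory, to extending $\Gm$‑equivariant families of $T$‑semistable curves over $\bbA^1$ and over $[\bbA^2/\Gm]$; these I would check directly, the only nontrivial local model being the tacnode $\{y^2=x^4\}$ with its $\Gm$‑action $(x,y)\mapsto(\lambda x,\lambda^2y)$ and the isotrivial degenerations relating the tacnodal curve to the $2$‑nodal ones — and it is exactly here that $\car(k)\gg(g,n)$ is used, to keep the stabilizers linearly reductive and to avoid the bad behaviour of cusps and tacnodes in characteristics $2$ and $3$. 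Normality and irreducibility of $\MMM_{g,n}^T,\MMM_{g,n}^{T+}$ are then inherited from the stacks, a good moduli space of a normal (resp. irreducible) stack being normal (resp. irreducible). Properness, i.e. the existence part of the valuative criterion, is the expected semistable‑reduction statement: every $T$‑ (resp. $T^+$‑) semistable curve over the fraction field of a DVR acquires, after a finite base change, a model of the same type over the DVR, obtained from the Deligne--Mumford stable model by the Schubert‑type contraction of elliptic tails to cusps together with the contraction of the elliptic bridges of type in $T$ to tacnodes; the case of $\M_{g,n}^{T+}$ is parallel.

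\textbf{The diagram.} The morphism $\phi^{\ps}\colon\M_{g,n}^{\ps}\to\MM_{g,n}^{\ps}$ is a good moduli space (a coarse moduli space in characteristic $0$, a good moduli space when $\car(k)\gg(g,n)$), so the universal property of good moduli spaces applied to $\phi^T\circ\iota_T$ produces a unique $f_T\colon\MM_{g,n}^{\ps}\to\MMM_{g,n}^T$ with $f_T\circ\phi^{\ps}=\phi^T\circ\iota_T$, and similarly $f_T^+$. Since $\MM_{g,n}^{\ps}$ and $\MMM_{g,n}^T$ are both proper over $k$, the morphism $f_T$ is proper; and $\iota_T$ restricts to the identity over $M_{g,n}$ (a smooth pointed curve has neither tacnodes nor elliptic chains), so — as long as $(g,n)\neq(1,2)$, which guarantees that $M_{g,n}$ is dense on both sides — $f_T$ is an isomorphism over the dense open $M_{g,n}$ and hence birational.

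\textbf{Main obstacle.} The genuinely hard step is the verification of $\Theta$‑reductivity and S‑completeness in Part (2), i.e. the actual existence of the good moduli spaces: one must pin down which tacnodal and elliptic‑chain curves are polystable and understand precisely how a $\Gm$‑filtration of such a curve degenerates, and this is the source of the characteristic hypothesis. By comparison, checking in Part (1) that the degeneration of a tacnode to a pair of nodes stays inside the stack (so that no spurious component of $\M_{g,n}^T$ appears), the openness of the two embeddings, and the formal properties of the diagram are comparatively routine.
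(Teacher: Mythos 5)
Your approach is essentially the paper's: smoothness from unobstructed deformations of lci singularities, algebraicity and finite type from a quotient presentation. Two small differences. First, the paper does not prove irreducibility of $\M_{g,n}^T$ by a degeneration argument; it simply exhibits $\M_{g,n}^T$ as an open substack of $\U_{g,n}^{lci}$, which is irreducible because lci curves are smoothable, so the argument is a one-liner. Your degeneration of a tacnode to two nodes via $\{y^2 = (x^2-t)^2\}$ works, but you then need density of $M_{g,n}$ in $\M_{g,n}^{\ps}$, which the paper already has by construction. Second, and more importantly, you elide the one nontrivial openness statement: that the locus $\U_{g,n}(A_3(T))$ of curves whose tacnodes all have type in $T$ is open in $\U_{g,n}(A_3)$. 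This is not just ``worse singularities are closed''; one must check that the \emph{type} of a tacnode is preserved under specialization, and that requires an equisingularity/equigenericity argument (normalising the family along the tacnodal section and tracking connected components and marked points of the partial normalisation). This is Lemma \ref{lem:A_3(T)} in the paper and is not cosmetic.

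\textbf{Part (2).} Here you propose a genuinely different route. You would apply the Alper--Halpern-Leistner--Heinloth existence criterion ($\Theta$-reductivity, S-completeness, valuative existence). The paper instead uses the Alper--Fedorchuk--Smyth--van der Wyck machinery: it shows that the two open embeddings $\M_{g,n}^{\ps}\hookrightarrow \M_{g,n}^T \hookleftarrow \M_{g,n}^{T+}$ arise from local VGIT with respect to $\delta-\psi$ (Proposition \ref{P:VGIT}), proves by a genus induction that the complementary closed substacks $\m Z_T^-$ and $\m Z_T^+$ admit proper good moduli spaces via finite covers by sprouting stacks and products with $\EC_r$ (Propositions \ref{prop:-}, \ref{prop:+}), and then invokes \cite[Theorem 1.3]{AFS2}, which produces the good moduli spaces, the properness, and the diagram all at once. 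The paper also has to re-prove a Chevalley-type lemma for stacks (Proposition \ref{P:fincov} and Lemma \ref{L:Cheva}) to make the $\car(k)>0$ case go through, which is where $\car(k)\gg(g,n)$ is really used. Both routes are legitimate. What the AHLH approach buys is conceptual uniformity and avoiding the explicit inductive construction; what it costs, and what your sketch does not deliver, is the actual verification of $\Theta$-reductivity and S-completeness for $\M_{g,n}^T$ and $\M_{g,n}^{T+}$. You acknowledge this as the ``main obstacle,'' which is fair, but as written this is where your proposal has a genuine gap. In particular, your description of the valuative existence step (``Deligne--Mumford stable model, contract elliptic tails to cusps and elliptic bridges of type in $T$ to tacnodes'') glosses over the fact that the resulting limit need not be $T$-semistable without further care about $A_3$-attached tails and chains, and it is precisely the VGIT bookkeeping that handles this.

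\textbf{Diagram.} Your argument via the universal property of good moduli spaces for $f_T$, $f_T^+$, properness from two-out-of-three, and birationality over the dense open $M_{g,n}$ (for $(g,n)\neq(1,2)$) is correct and matches the paper.
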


Part \eqref{T:A1} of the above Theorem (which coincides with Theorem \ref{T:algstack}) is proved in Section \ref{S:stacks}. In this section, we also investigate the properties of 
  the stacks $\M_{g,n}^T$ and $\M_{g,n}^{T+}$: we describe the containment relation among all these different stacks in Proposition \ref{P:equaT}; we describe the closed points and the isotrivial specialisations of $\M_{g,n}^T$ and $\M_{g,n}^{T+}$ in Propositions \ref{prop:Tclosed} and \ref{P:T+closed}; we describe the Picard groups of 
  $\M_{g,n}^T$ and $\M_{g,n}^{T+}$ in Corollary \ref{C:Pic-ps}.

Part \eqref{T:A2} of the above Theorem is proved in Section \ref{S:modspace} (see Theorem \ref{T:goodspaces}). The strategy is the same as the one pioneered by Alper-Fedorchuk-Smyth-van der Wyck in \cite{AFSV1} and \cite{AFS2} to perform the first steps of the Hassett-Keel program. The key property is the fact that the open embeddings of stacks in part \eqref{T:A1} arise from  local VGIT (=variation of 
geometric invariant theory) with respect to $\delta-\psi$ (in the sense of \cite[Def. 3.14]{AFSV1}). One little improvement of the methods of loc. cit. is provided in Proposition  \ref{P:fincov} which generalises \cite[Prop. 1.4]{AFS2} from characteristic zero to arbitrary characteristic and it allows us to construct the good moduli spaces  provided that the  automorphism group schemes of the algebraic stacks  are linearly reductive, which is true if the characteristic is big enough (see Lemma \ref{L:linred} and Remark \ref{L:linred}).

After the completion of this work, Alper-Halpern-Leistner-Heinloth posted on arXiv the preprint \cite{AHLH}, where they provide a necessary and sufficient criterion  for a stack to admit a good moduli space. 
Hence it should be possible to prove the existence of the good moduli spaces $\MMM_{g,n}^T$ and $\MMM_{g, n}^{T+}$ (and also Proposition  \ref{P:fincov}) using their criterion; however, we have not checked the details.   

Our second main result identifies, in characteristic zero,  the morphism $f_T$ with the contraction of the $K$-negative face $F_T$ of the Mori cone of $\MM_{g,n}^{\ps}$. 

\begin{theoremalpha}[=Theorem \ref{thm:contr}] \label{T:B}
Assume that ${\rm char}(k)=0$ and that $(g,n)\neq (2,0)$, and  let $T\subseteq T_{g,n}$. 
The good moduli space $\MMM_{g,n}^T$ is  projective and the morphism $f_T: \MM_{g,n}^{\ps}\to \MMM_{g,n}^T$ coincides with the contraction of the face $F_T$.
\end{theoremalpha}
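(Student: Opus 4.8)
\textbf{Proof proposal for Theorem \ref{T:B}.}

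The plan is to combine three ingredients: (a) the projectivity of $\MMM_{g,n}^T$, (b) the identification of the $f_T$-contracted curves with exactly the elliptic bridge curves of type in $T$, and (c) the Nakai--Moishezon / base-point-free type argument to upgrade set-theoretic contraction to the statement that $f_T$ \emph{is} the contraction of the face $F_T$.

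First I would establish projectivity. By Theorem A the algebraic space $\MMM_{g,n}^T$ is proper and normal, and it receives the proper birational morphism $f_T$ from the projective variety $\MM_{g,n}^{\ps}$. To get projectivity one produces an $f_T$-ample line bundle and twists it by the pullback of an ample class from $\MM_{g,n}^{\ps}$: concretely, the local VGIT picture behind Theorem A (the inclusions $\iota_T$ arise from VGIT with respect to $\delta-\psi$) means that a suitable small positive combination of the VGIT-polarisation descends to an $f_T$-ample $\mathbb{Q}$-line bundle $L$ on $\MMM_{g,n}^T$; then $f_T^*(K_{\MM_{g,n}^{\ps}}+\psi) + \varepsilon L$ — or rather its descent — will be ample on $\MMM_{g,n}^T$ for small $\varepsilon$, using that $K+\psi$ is $f_T$-trivial (see below) and ample on the source up to the contracted curves. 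This is the standard Alper--Fedorchuk--Smyth--van der Wyck mechanism and I expect it to go through verbatim once the $f_T$-triviality of $K+\psi$ is in hand.

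The heart of the argument is to show that a curve $C' \subseteq \MM_{g,n}^{\ps}$ is contracted by $f_T$ if and only if its class lies in the face $F_T$, equivalently (since $F_T$ is spanned by the elliptic bridge curve classes of type in $T$ and these are extremal and linearly independent by Proposition \ref{P:face}) if and only if $C'$ is numerically proportional to a non-negative combination of those elliptic bridge classes. The ``only if'' direction: the exceptional locus of $f_T$ is read off from the modular description — $f_T$ replaces an $A_1$/$A_1$ elliptic bridge of type in $T$ (for $[\tau,I]$-type) or an elliptic bridge attached at a self-node (for $\irr$) by a tacnode of the corresponding type, so the fibres of $f_T$ over such tacnodal curves are exactly the $\overline{M}_{1,2}$'s (moduli of the $2$-pointed genus-$1$ piece) that get crushed, and these trace out precisely the elliptic bridge curves; hence the contracted curves span a subcone of $F_T$. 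For the reverse inclusion one checks directly that each elliptic bridge curve of type in $T$ is contracted, which is immediate from the modular description of $f_T$ since varying the $j$-invariant of the elliptic bridge does not change the stabilisation to the tacnodal curve. Then one invokes: (i) $f_T$ has relative Picard number equal to the dimension of $F_T$ — this follows from the Picard group computations (Corollary \ref{C:Pic-ps}, Corollary \ref{C:Pic-ps} again for $\MMM_{g,n}^T$) together with the fact that $\MM_{g,n}^{\ps}$ is $\mathbb{Q}$-factorial; and (ii) $K_{\MM_{g,n}^{\ps}}+\psi$ (in char $0$, also $K_{\MM_{g,n}^{\ps}}$) is negative on $F_T$ by Proposition \ref{P:face}. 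Since a proper birational morphism from a $\mathbb{Q}$-factorial projective variety to a normal projective variety is the contraction of the extremal face spanned by its contracted curves, and that face is $F_T$, we are done — provided we know $\MMM_{g,n}^T$ is projective, which is step (a), and that $f_T$ is $K+\psi$-trivial, i.e. $K_{\MM_{g,n}^{\ps}}+\psi = f_T^*(D)$ for some $\mathbb{Q}$-Cartier $D$ on the target.

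The main obstacle, I expect, is precisely this last point: verifying that $f_T$ is a $(K+\psi)$-trivial contraction, i.e. that $K_{\MM_{g,n}^{\ps}}+\psi$ is numerically trivial on all of $F_T$ \emph{and} that it descends to a Cartier (or $\mathbb{Q}$-Cartier) divisor on $\MMM_{g,n}^T$. Numerical triviality on the contracted curves should again come from the explicit intersection computation of $(K_{\M_{g,n}^{\ps}}+\psi)$ against each elliptic bridge curve — one expects this intersection number to vanish, which is the reflection on the pseudostable side of the fact that the Hassett--Keel wall being crossed is the wall at which the relevant boundary divisor class becomes trivial. Descent then follows from the general fact (Kollár's lemma / the rigidity lemma for good moduli spaces) that a line bundle numerically trivial on the fibres of a proper morphism with connected fibres between normal varieties descends after passing to a multiple, once we know $\MMM_{g,n}^T$ is projective and $f_T$ has connected fibres (which is part of Theorem A). One should also double-check the degenerate small cases — $(g,n)=(1,2)$ is already excluded from birationality in Theorem A, and $(2,0)$ is excluded by hypothesis — so no separate analysis is needed there. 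Finally, the characteristic-zero hypothesis enters only through the use of the full MMP-theoretic statement ``proper birational $=$ contraction of a face'' and the $K$-negativity (as opposed to just $(K+\psi)$-negativity) of $F_T$; over $k=\bar k$ of characteristic zero these are available, and that is why the theorem is stated there.
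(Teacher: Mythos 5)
Your overall plan --- identify the $f_T$-contracted curves with the elliptic bridge curves of type in $T$, then invoke the uniqueness of contractions --- is the right skeleton, and your modular description of the contracted curves matches Proposition \ref{P:fibers1}\eqref{P:fibers1bis}. But there are two concrete problems.

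First, your projectivity step does not go through as sketched, and it is also structured differently from the paper's proof. You propose to produce an ample $\mathbb{Q}$-line bundle on $\MMM_{g,n}^T$ directly, via some descent of the VGIT polarisation plus a twist by $f_T^*$ of an ample class --- but $f_T$ maps \emph{from} $\MM_{g,n}^{\ps}$ \emph{to} $\MMM_{g,n}^T$, so ``$f_T^*(K_{\MM_{g,n}^{\ps}}+\psi)$'' lives on the wrong side and the phrase ``$f_T$-ample $\mathbb{Q}$-line bundle on $\MMM_{g,n}^T$'' does not parse. The VGIT polarisation $\delta-\psi$ gives an $f_T^+$-ample bundle on the \emph{other} moduli space $\MMM_{g,n}^{T+}$ (Proposition \ref{P:relample+}); it does not directly produce an ample class on the wall space $\MMM_{g,n}^T$. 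To get projectivity of $\MMM_{g,n}^T$ you would need a semiample $\mathbb{Q}$-line bundle on $\MM_{g,n}^{\ps}$ whose dual face is exactly $F_T$, and for arbitrary $T$ this is nontrivial. The paper sidesteps this entirely: since $F_T$ is a $K_{\MM_{g,n}^{\ps}}$-negative face (Proposition \ref{P:face}) and $\MM_{g,n}^{\ps}$ is klt and $\mathbb{Q}$-factorial (Proposition \ref{P:Pic-Mgps}), the cone theorem already produces a projective contraction $\gamma_T:\MM_{g,n}^{\ps}\to (\MM_{g,n}^{\ps})_{F_T}$; the rigidity Lemma \ref{L:rigidity} then identifies $\gamma_T$ with $f_T$, and projectivity of $\MMM_{g,n}^T$ falls out as a consequence rather than being a separate step.

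Second, and more concretely, your last paragraph is built on a false premise: $K_{\MM_{g,n}^{\ps}}+\psi$ is \emph{not} $f_T$-trivial, it is $f_T$-antiample --- every elliptic bridge curve intersects $K+\psi$ \emph{negatively} (Proposition \ref{P:face}\eqref{P:face1}, via Lemma \ref{L:int-1str} and Mumford's formula). Indeed $f_T$ is a $K$-negative contraction, which is exactly what makes the cone theorem applicable, and what later makes $f_T^+$ a genuine flip (Corollary \ref{C:K-flip}). The class that vanishes on $F_T$ is not $K+\psi = K+\psi+1\cdot(\delta-\psi)$ (the $\alpha=1$ endpoint) but the Hassett--Keel critical class at $\alpha=7/10$, namely $K_{\M_{g,n}^{\ps}}+\tfrac{7}{10}\delta+\tfrac{3}{10}\psi = \tfrac{13}{10}(10\lambda-\delta+\psi)$, and more generally any class in the relative interior of $F_T^{\vee}$. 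So the ``descent of a numerically-trivial-on-fibres bundle'' mechanism you invoke at the end would have to be run with the correct class, and even then establishing semiampleness (rather than assuming it via the cone theorem) is the real content.
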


The proof of the above Theorem follows, using the rigidity Lemma \ref{L:rigidity}, from the fact that $f_T$ is a contraction with the property that a curve $C\subset \M_{g,n}^{\ps}$ is contracted by $f_T$ if and only if  its class $[C]$ lies in $F_T$ (see Lemma \ref{L:poset-T} and Proposition \ref{P:fibers1}). From the above Theorem and standard corollaries of the cone theorem, we derive a description of the rational Picard group of $\MMM_{g,n}^T$ and of its nef/ample cone (see Corollary \ref{C:contr}).

In our sequel paper \cite{CTV}, we will investigate the geometric properties of the moduli space $\MMM_{g,n}^T$ and of the morphism $f_T$ (see Proposition \ref{P:geomT} for a recap of some of the results of loc. cit.).

Our last main result is a description of the morphism $f_T^+:\MMM_{g,n}^{T+}\to \MM_{g,n}^{\ps}$ (which turns out to be a projective contraction, see Propositions \ref{P:relample+} and \ref{P:fibers+}) as the flip (in the sense of Definition \ref{D:Dflip}) of $f_T$  with respect to suitable $\bbQ$-line bundles.

\begin{theoremalpha}[=Theorem \ref{T:flip+}, Corollary \ref{C:projT+}, Corollary \ref{C:K-flip}] \label{T:C}
Assume that  $\car(k)\gg (g,n)$ and $(g,n)\neq (2,0), (1,2)$, and let $T\subseteq T_{g,n}$. 
Let  $L\in \Pic(\MM_{g,n}^{\ps})_{\bbQ}=\Pic(\M_{g,n}^{\ps})_{\bbQ}=\Pic(\M_{g,n}^{T})_{\bbQ}$. The morphism $f_T^+$ is the $L$-flip of $f_T$ if and only if  $L$ is $f_T$-antiample and the restriction of $L$ to $\M_{g,n}^{T+}$ descends to a $\bbQ$-line bundle on $\MMM_{g,n}^{T+}$.
In particular:
\begin{enumerate}[(i)]
\item  The  morphism $f_T^+: \MMM_{g,n}^{T,+}  \to \MMM_{g,n}^T$ is the $(K_{\M_{g,n}^{\ps}}+\psi)$-flip of $f_T$. 
\item  The  morphism $f_T^+: \MMM_{g,n}^{T,+}  \to \MMM_{g,n}^T$ is the $K_{\MM_{g,n}^{\ps}}$-flip of $f_T$ if and only if $\MMM_{g,n}^{T,+}$ is $\bbQ$-Gorenstein, i.e. if and only if $T$ does not contain subsets of the form $\{[0,\{j\}],[1,\{j\}],[2,\{j\}]\}$ for some $j\in [n]$ or  $(g,n)=(3,1), (3,2), (2,2)$
\end{enumerate}
Therefore, $\MMM_{g,n}^{T+}$ is projective if $\car(k)=0$.
\end{theoremalpha}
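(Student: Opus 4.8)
The plan is to reduce Theorem \ref{T:C} to three more focused assertions: first the "if and only if" characterization of which $L$-flips are realized by $f_T^+$, then the two special cases (i) and (ii), and finally the projectivity statement as a corollary of (i). The backbone throughout is the general theory of flips with respect to a $\bbQ$-line bundle (Definition \ref{D:Dflip}), combined with the already-established facts that $f_T$ and $f_T^+$ are projective birational contractions (Theorem \ref{T:B} for $f_T$; Propositions \ref{P:relample+} and \ref{P:fibers+} for $f_T^+$) fitting into the commutative diagram of Theorem \ref{T:A2}, and that the relevant Picard groups all agree with $\Pic(\M_{g,n}^{\ps})_{\bbQ}$ (via the Picard group computations in Corollary \ref{C:Pic-ps} / Corollary \ref{C:contr}).

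For the main equivalence, I would argue as follows. By definition of the $L$-flip, $f_T^+$ is the $L$-flip of $f_T$ precisely when: (a) $f_T$ and $f_T^+$ have the same source-target behaviour over $\MMM_{g,n}^T$ (same exceptional locus, isomorphism in codimension one — this is automatic since both contract exactly the elliptic-bridge/elliptic-chain loci over $\MMM_{g,n}^T$ by Lemma \ref{L:poset-T}, Proposition \ref{P:fibers1} and Proposition \ref{P:fibers+}, so the composite $(f_T^+)^{-1}\circ f_T$ is a small birational map); (b) $-L$ is $f_T$-ample, i.e. $L$ is $f_T$-antiample; and (c) the proper transform of $L$ on $\MMM_{g,n}^{T+}$ is $f_T^+$-ample. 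Condition (b) is one half of the claimed criterion verbatim. For (c), the point is that the proper transform of $L$ is the descent (when it exists) of $L|_{\M_{g,n}^{T+}}$ to $\MMM_{g,n}^{T+}$; so I must show that whenever this descent exists it is automatically $f_T^+$-ample. This is where Proposition \ref{P:relample+} enters: it should identify an explicit $f_T^+$-ample class, and any descended class differing from the reference $f_T$-antiample generator by a pullback from $\MMM_{g,n}^T$ is forced to be $f_T^+$-ample because $f_T^+$ has relative Picard rank governed by the face $F_T$ — the relative Néron–Severi space is spanned by the classes dual to the contracted curves, and $L$ being $f_T$-antiample pins down its sign on each of those. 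Conversely if the descent does not exist, then $L|_{\M_{g,n}^{T+}}$ cannot be pulled back from $\MMM_{g,n}^{T+}$ at all, so $f_T^+$ manifestly fails to realize the $L$-flip. I expect this descent-implies-ampleness step to be the main obstacle: it requires a careful bookkeeping of the relative cone of curves of $f_T^+$ and matching it against the face $F_T$ contracted by $f_T$, essentially a linear-algebra computation on $N^1$ that has to be done uniformly over all $T\subseteq T_{g,n}$ and, to handle $\car(k)\gg(g,n)$, without the cone theorem — presumably one instead invokes the explicit VGIT/Kleiman-criterion description coming from Theorem \ref{T:A2} and the local VGIT structure.

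For part (i), I would take $L = K_{\M_{g,n}^{\ps}}+\psi$ and verify the two conditions of the equivalence. That $K_{\M_{g,n}^{\ps}}+\psi$ is $f_T$-antiample is exactly the content of Theorem \ref{T:B} (the face $F_T$ is $(K+\psi)$-negative and $f_T$ is its contraction, so $-(K+\psi)$ is $f_T$-ample). That $(K_{\M_{g,n}^{\ps}}+\psi)|_{\M_{g,n}^{T+}}$ descends to $\MMM_{g,n}^{T+}$ is the $T^+$-analogue of the descent statements used to build $\MMM_{g,n}^{T}$: on the stack $\M_{g,n}^{T+}$ the class $K+\psi$ is the (log) canonical-type class that is constant along the isotrivial specialisations classified in Proposition \ref{P:T+closed}, hence descends by the descent criterion (Alper's lemma / Kempf's lemma — presumably recorded earlier) to the good moduli space. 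So (i) follows immediately. Part (ii) is the same argument with $L = K_{\MM_{g,n}^{\ps}}$ (the coarse-space canonical class), and the extra hypothesis appears because the descent of $K_{\M_{g,n}^{\ps}}$ to $\MMM_{g,n}^{T+}$ is as a genuine line bundle only when $\MMM_{g,n}^{T+}$ is $\bbQ$-Gorenstein, which is governed by the singularity types of the good moduli space; the explicit list of bad $T$'s and sporadic $(g,n)$'s is the $\bbQ$-Gorenstein analysis carried out (I would expect) in the discussion surrounding Proposition \ref{P:geomT} or in \cite{CTV}, and I would simply cite it. Finally, the concluding sentence "$\MMM_{g,n}^{T+}$ is projective if $\car(k)=0$" follows from (i): over a field of characteristic zero, $\MMM_{g,n}^{T}$ is projective by Theorem \ref{T:B}, a $\bbQ$-line-bundle flip of a projective variety is again projective (the flipped space carries the relatively ample proper transform of an ample class plus a large pullback of an ample class from the base, which is ample by the standard projectivity-of-flips argument), so $\MMM_{g,n}^{T+}$ is projective; this also re-proves projectivity of $\MMM_{g,n}^{T+}$ directly, matching Corollary \ref{C:projT+}.
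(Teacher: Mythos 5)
Your overall skeleton is right: reduce to checking the three clauses of Definition \ref{D:Dflip}, note that normality of $\MMM_{g,n}^{T+}$ and smallness of $f_T^+$ are automatic (Theorem \ref{T:goodspaces}, Proposition \ref{P:fibers+}), identify $\bbQ$-Cartierness of the transform with descent of $L|_{\M_{g,n}^{T+}}$ to $\MMM_{g,n}^{T+}$ via the explicit descent criterion (Proposition \ref{P:lbdesc}), and then deduce (i) by taking $L=K+\psi=13\lambda-2\widehat{\delta}$, (ii) by comparing with $\bbQ$-Gorensteinness, and projectivity from projectivity of $\MMM_{g,n}^T$ and of $f_T^+$. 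All of that matches the paper.

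The gap is in the step you yourself flag as the main obstacle: showing that when $L$ is $f_T$-antiample and its restriction descends, the descended class $L^{T+}$ is $f_T^+$-ample. Your sketch --- compare a given descended $L$ with the reference $f_T^+$-ample class $(-\widehat{\delta})^{T+}$ of Proposition \ref{P:relample+}, differing ``by a pullback from $\MMM_{g,n}^T$'' --- does not work once the face $F_T$ has dimension bigger than one. The $f_T$-antiample classes form an open cone of dimension $\dim F_T$ in the relative N\'eron--Severi space, and only a ray inside it has the special form $c\cdot(-\widehat{\delta})+f_T^*M$. There is also no a priori reason that the relative ample cone of $f_T^+$ is carried onto the $f_T$-antiample cone under the descent identification: that is exactly the statement one is trying to prove, and invoking ``the standard behaviour of flips'' here would be circular. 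What the paper does instead, and what your sketch omits, is to compute the relative Mori cone of $f_T^+$ explicitly: it is spanned by the tacnodal curves $D(\irr)$, $D([\tau,I],[\tau+1,I])$ of Definition \ref{D:1strata+} (Proposition \ref{P:tacinter}\eqref{P:tacinter1}), and their intersection numbers with any $L$ are, by the explicit degree computation in Proposition \ref{P:tacinter}\eqref{P:tacinter2} together with Lemma \ref{L:int-1str}, exactly the negatives of the intersection numbers of the corresponding elliptic bridge curves $C(\cdot)$ with $L$. It is this numerical anti-duality $D\cdot L=-\,C\cdot L$, proved by normalising along the tacnodal section and applying the intersection formulas of \cite[Prop.~6.1]{AFS3}, that lets one pass from ``$L$ negative on $F_T$'' to ``$L^{T+}$ positive on the relative Mori cone of $f_T^+$'' and hence conclude by relative Kleiman. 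Without something like Proposition \ref{P:tacinter} the implication is an assertion, not an argument.

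Two smaller remarks. For (i), the verification that $K+\psi$ descends is not a statement about constancy along isotrivial specialisations; the paper uses Mumford's formula $K_{\M_{g,n}^{\ps}}+\psi=13\lambda-2\widehat{\delta}$ and reads off $T^+$-compatibility directly from Definition \ref{D:T+comp}, which in turn rests on the weight computation of Lemma \ref{L:weight+}. For (ii), your direction of implication from $\bbQ$-Gorensteinness to being the $K$-flip is correct and matches the paper; note though that the ``only if'' direction is immediate from clause (iii) of Definition \ref{D:Dflip}, since $(f_T^+)^{-1}\circ f_T$ pushes $K_{\MM_{g,n}^{\ps}}$ forward to $K_{\MMM_{g,n}^{T+}}$, so $\bbQ$-Cartierness is forced; the explicit list of exceptional $T$ and $(g,n)$ is Corollary \ref{C:Gor-fact}\eqref{C:Gor-fact2}.
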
 

In proving  the above result, we investigate the properties of the space $\MMM_{g,n}^{T+}$ and of the  morphism $f_T^+:\MMM_{g,n}^{T+}\to \MM_{g,n}^{\ps}$ in Section \ref{S:T+}.
We compute the rational Picard group of $\MMM_{g,n}^{T+}$ in Proposition \ref{P:lbdesc} (and in particular, we describe explicitly when a $\bbQ$-line bundle on $\M_{g,n }^{T+}$ descends to a $\bbQ$-line bundle on $\MMM_{g,n}^{T+}$) and we describe when $\MMM_{g,n}^T$ is $\bbQ$-factorial or $\bbQ$-Gorenstein in Corollary \ref{C:Gor-fact}.
Moreover, we describe the exceptional locus of $f_T^+$ in Proposition \ref{P:fibers+} and its relative Mori cone in Proposition \ref{P:tacinter}. 

Finally, we prove in Corollary \ref{C:compflips} that, whenever $f_T: \MM_{g,n}^{\ps} \to \MMM_{g,n}^T$ is small and $\MMM_{g,n}^{T,+}$ is $\bbQ$-factorial, for any $\bbQ$-line bundle $L$ on $\MM_{g,n}^{\ps}$ which is $f_T$-antiample, the rational map $(f_T^+)^{-1}\circ f_T:\MM_{g,n}^{\ps} \dashrightarrow \MMM_{g,n}^{T,+}$ can be decomposed  as a sequence of elementary $L$-flips.

A posteriori, we can recover our stacks of $T$-semistable and $T^+$-semistable curves as semistable locus for convenient line bundles, as explained in the following remark.
\begin{remark}\label{Alper_stability}
	Let $\U^{lci}_{g,n}$ be the  stack of $n$-pointed  curves of arithmetic genus $g$ with locally complete intersection singularities and with ample log canonical line bundle, as in Section   \ref{S:Tstacks}.
		Recall that $\U_{g,n}^{lci}$ is a smooth and irreducible algebraic stack of finite type over $k$. The stack $\M_{g,n}^T$ of $T$-semistable curves is an open substack of $\U_{g,n}^{lci}$, and its complement  contains a unique divisor, namely the divisor $\Delta_{1,\emptyset}$ parametrising curves with an elliptic tail.
	
	Assume that $\car(k)=0$ and consider the projective good moduli space $\phi^T:\M_{g,n}^T\to \MMM_{g,n}^T$ (see Theorem \ref{T:B}). Let $M$ be an ample line bundle on  $\MMM_{g,n}^T$ and let $\calL$ be a line bundle on $\U_{g,n}^{lci}$ whose restriction to $\M_{g,n}^T$ coincides with $(\phi^T)^*(M)$ (note that such a line bundle $\calL$ exists since $\U_{g,n}^{lci}$ is regular). 
	By combining \cite[Thm. 11.5]{Alper} and the proof of \cite[Thm. 11.14(ii)]{Alper}, it follows that the stack $\mathcal{M}_{g,n}^T$ is exactly the semistable locus of $\U_{g,n}^{lci}$ with respect to $\calL_N:=\calL\otimes \cO_{\U_{g,n}^{lci}}(N\Delta_{1,\emptyset})$ for $N\gg 0$ (in the sense of \cite[Def. 11.1]{Alper}) and $\MMM_{g,n}^T$ is the good moduli space provided by \cite[Thm. 11.5]{Alper}. A similar statement holds true for $\phi^{T+}:\M_{g,n}^{T+}\to \MMM_{g,n}^{T+}$.
\end{remark}
\subsection*{Relation with the Hassett-Keel program}
We can now describe in detail the connection between our work and the first steps of the Hassett-Keel program, as established in \cite{HH1,HH2,AFSV1,AFS2,AFS3}.
From \cite[Thm. 1.1]{AFS3} and Proposition \ref{P:div-contr}\eqref{P:div-contr3}, it follows that   (assuming $\car(k)=0$):
\begin{equation}\label{E:lowMalpha}
\MM_{g,n}(\alpha)=
\begin{cases}
\MM_{g,n} & \text{ if } \frac{9}{11}<\alpha\leq 1, \\
\MM_{g,n}^{\ps} & \text{ if } \frac{7}{10}<\alpha\leq \frac{9}{11},\\
 \MMM_{g,n}^{T_{g,n}} & \text{ if } \alpha=\frac{7}{10},\\
 \MMM_{g,n}^{T_{g,n}+} & \text{ if } \frac{2}{3}<\alpha< \frac{7}{10}.\\
\end{cases}
\end{equation}
Therefore, Theorems \ref{T:B} and \ref{T:C} implies that at the second critical value $7/10$ of  the Hassett-Keel program, the variety  $\MM_{g,n}(7/10)$ is obtained from $\MM_{g,n}(7/10+\epsilon)\cong \MM_{g,n}^{\ps}$ by contracting the entire elliptic bridge face of the Mori cone of $\MM_{g,n}^{\ps}$ (whose dimension is computed in Remark \ref{R:bridgecurves}), while the variety $\MM_{g,n}(7/10-\epsilon)$ is obtained by flipping the above contraction with respect to $K+\psi$.
As a by-product of our analysis we obtain some results on the geometry of $\MM_{g,n}(7/10)$ and of $\MM_{g,n}(7/10-\epsilon)$: we compute their rational Picard groups (see Example~\ref{R:LMMP} and Corollary \ref{C:PicT+}) and we determine when they are $\bbQ$-factorial or $\bbQ$-Gorenstein (see Proposition~\ref{P:geomT} and Remark \ref{R:QfactHK}).
\subsection*{Open questions}
This work leaves out some  interesting questions, which we hope to be able to address in the future:
\begin{enumerate}
\item For any $\bbQ$-line bundle $L$  on $\MM_{g,n}^{\ps}$ which is $f_T$-antiample, we can construct the $L$-flip of $f_T$ at least if $\car(k)=0$ (see Lemma \ref{L:Dflip}\eqref{L:Dflip2}). Theorem \ref{T:C} implies that the $L$-flip of $f_T$ coincides with $f_T^+$, provided that the restriction of $L$ to $\M_{g,n}^{T+}$ is $T^+$-compatible. If this condition fails (which can only happen if $\MMM_{g,n}^{T+}$ is not $\bbQ$-factorial), is there a modular description of the $L$-flip of $f_T$? 
 
\item Can we describe modularly all the small $\bbQ$-factorialisations of $\MMM_{g,n}^T$, i.e.\ all the $\bbQ$-factorial normal proper algebraic spaces  endowed with a small contraction $X\to \MMM_{g,n}^T$?
Even more, it would be interesting to determine  the  chamber decomposition 
         $$
	\Cl(\MMM_{g,n}^T)_{\bbR}/\Pic(\MMM_{g,n}^T)_{\bbR}= \coprod \Nef(X_i/\MM_{g}^{T}),
	$$
where $X_i\to \MMM_{g,n}^T$ vary among all the small $\bbQ$-factorialisations  of $\MM_{g}^{T}$ (see \cite[Exercise 116]{Exercises} and \cite[Thm. 12.2.7]{Matsukibook}).

In this paper, we have described modularly some of the $\bbQ$-factorialisations of $\MMM_{g,n}^T$, namely: $\MMM_{g,n}^{T^{\div}}$ (which coincides with $\MM_{g,n}^{\ps}$ whenever $f_T$ is small, see Proposition 
\ref{P:geomT}) and $\MMM_{g,n}^{S+}$ for all subsets $S\subseteq T$ that satisfy the conditions of Corollary \ref{C:Gor-fact}\eqref{C:Gor-fact1}. However, when $\MMM_{g,n}^{T+}$ is not $\bbQ$-factorial, we know for sure there are other $\bbQ$-factorialisations, namely the $\bbQ$-factorial flips of the morphisms $f_S:\MM_{g,n}^{\ps}\to \MMM_{g,n}^S$ where $S\subseteq T$ and $\MMM_{g,n}^{S+}$ is not $\bbQ$-factorial (see the previous question).

 \item  Theorem \ref{T:B} implies that the moduli space $\MMM_{g,n}^T$  (and hence also $\MMM_{g,n}^{T+}$) is projective if $\car(k)=0$. Is this true in positive characteristics (big enough so that $\MMM_{g,n}^T$ exists)?  For the special case $T=T_{g,n}$, this is achieved in Example \ref{R:LMMP} building upon the GIT(=geometric invariant theory) analysis of \cite{HH2} for $n=0$. In the general case, when no GIT construction seems plausible,  one could try to use Koll\'ar's  approach \cite{KolProj}, but the main difficulties  are that the stack $\M_{g,n}^{T}$ does not have finite stabilisers and it parametrises non nodal curves. 
 
 \item Can we find some (or all) $\bbQ$-line bundles $L$ (perhaps of adjoint type) on $\MM_{g,n}$ for which $\Proj \bigoplus_{m\geq 0} H^0(\MM_{g,n}, \lfloor mL\rfloor)$ is isomorphic to $\MMM_{g,n}^T$ or $\MMM_{g,n}^{T+}$? A quite complete answer for $\MM_{g,n}^T$ is contained in \cite[Sec. 4]{CTV}. 
% The case $T=T_{g,n}$ follows from \eqref{E:lowMalpha} and some proposals for $n=0$ are contained in \cite[Sec. 6.2]{AH}.

\end{enumerate}

\subsection*{Acknowledgment}
We had the pleasure and the benefit of conversations with  J.\ Alper, E.\ Arbarello, G.\ Farkas, M.\ Fedorchuk, R.\ Fringuelli,  A.\ Lopez,  Zs.\ Patakfalvi and R.\ Svaldi about the topics of this paper.
We  thank the referee for her/his careful reading of the paper and for the suggestions and questions that helped in improving the paper.

The first author 
%supported by prize funds related to the PRIN-project 2015 EYPTSB ``Geometry of Algebraic Varieties" and by University Roma Tre. 
is funded by the MIUR {\it ``Excellence Department 
Project"} MATH@TOV, awarded to the Department of Mathematics, 
University of Rome, Tor Vergata, CUP E83C18000100006, and the PRIN 
2017 {\it ``Advances in Moduli Theory and Birational Classifictation"}.
The second author was supported during part of this project by the DFG grant ``Birational Methods in Topology and Hyperk\"ahler Geometry".  The third author is a member of the CMUC (Centro de Matem\'atica da Universidade de Coimbra), where part of this work was carried over.  The three authors are members of the GNSAGA group of INdAM.

\section*{Notation and background}\label{S:nota}

We work over a fixed algebraically closed field $k$ of arbitrary characteristic. Further restrictions on the characteristic of $k$ will be specified when needed.

\subsection*{Notations for curves}
An \textbf{$n$-pointed curve} $(C,\{p_i\}_{i=1}^n)$ is a connected, reduced, projective $1$-dimensional scheme $C$ over $k$ with $n$ distinct smooth  points $p_i\in C$ (called \emph{marked points}). If the number of marked points is clear from the context, we will denote an $n$-pointed curve simply by $C$. The (arithmetic) genus of a curve $C$ will be denoted by $g(C)$. The \emph{log canonical line bundle}
 of a $n$-pointed curve $(C,\{p_i\}_{i=1}^n)$ is $\omega_C^{\log}:=\omega_C(\sum_{i=1}^n p_i)$. 
 
A singular point $p\in C$ is called:
 \begin{itemize}
  \item \emph{node} (or singularity of type $A_1$) if the complete local ring $\wh{\cO_{C,p}}$ of $C$ at $p$ is isomorphic to   $k[[x,y]]/(xy)$ (or to $k[[x,y]]/(y^2-x^{2}))$ if $\car(k)\neq 2$);
 \item \emph{cusp} (or singularity of type $A_2$) if $\wh{\cO_{C,p}}\cong k[[x,y]]/(y^2-x^{3}))$;
 \item \emph{tacnode} (or singularity of type $A_3$) if  $\wh{\cO_{C,p}}$ is isomorphic to $ k[[x,y]]/(y(y-x^{2}))$ (or to $k[[x,y]]/(y^2-x^{4}))$ if $\car(k)\neq 2$).
 \end{itemize}

When dealing with the deformation theory of a tacnode, we will often assume that $\car(k)\neq 2$ for simplicity (note that the semiuniversal deformation space of a tacnode has dimension $3$ if $\car(k)\neq 2$ and $4$ if $\car(k)=2$).
%Da aggiungere?
%When dealing with the deformation theory of a cusp, we will often assume that $\car(k)\neq 2, 3$ for simplicity (note that the semiuniversal deformation space of a cusp has dimension $2$ if $\car(k)\neq 2, 3$, $3$ if $\char(k)=3$ and $4$ if $\car(k)=2$).

We use the notation $\Delta=\Spec R$ and $\Delta^*=\Spec K$, where $R$ is a $k$-discrete valuation ring with residue field $k$ and fraction field $K$; we set $0$, $\eta$ and $\ov \eta$ to be, respectively,  the closed point, the generic point and a geometric generic point of $\Delta$. 
Given a flat and proper family $\pi:\cC\to \Delta$, we denote by $\cC_0$ the special fibre, by $\cC_{\eta}$ the generic fibre and by $\cC_{\ov \eta}$ a geometric generic fibre. 

An \textbf{isotrivial specialisation} is a flat and proper  family $\pi:\cC\to \Delta$ of curves such that the restriction $\cC\times_{\Delta} \Delta^*\to \Delta^*$ is trivial, i.e. $\cC\times_{\Delta} \Delta^*\cong C\times_k \Spec K$ for some curve $C$ defined over $k$. 
In this case, we say that $C$ \emph{isotrivially specialises} to $\cC_0$, and we write $C\leadsto \cC_0$. The above isotrivial specialisation is called non-trivial if $\cC_0\not\cong C$, or, equivalently (cf. \cite[Prop. 2.6.10]{sernesi}), if $\cC\not\cong C\times_k \Delta$. 
Similar definitions can be given for pointed curves, by requiring that the family $\pi:\cC\to \Delta$ admits sections. 

\subsection*{Notations for Mori theory}

A proper morphism $f:X\to Y$ between two reduced algebraic spaces of finite type over $k$ is called a \textbf{contraction}  if $f_*\cO_X=\cO_Y$.

Given a reduced proper $k$-algebraic space $X$, we denote by $N^1(X)\cong \bbZ^{\rho_X}$ the (numerical) \emph{Neron-Severi} group, and we set 
%i.e. the quotient of the Picard group $\Pic(X)$ by numerical equivalence. We set  $N^1(X)_{\bbQ}=N^1(X)\otimes_{\bbZ} \bbQ$ (the rational Neron-Severi  vector space) and 
$N^1(X)_{\bbR}=N^1(X)\otimes_{\bbZ} \bbR$ (the real Neron-Severi vector space).
Via the intersection product, the dual of $N^1(X)$ is naturally identified with the group $N_1(X)$ of $1$-cycles up to numerical equivalence and we set  
 $N_1(X)_{\bbR}=N_1(X)\otimes_{\bbZ} \bbR$.
 Inside $N_1(X)_{\bbR}$, there is the \emph{effective cone of curves} $\NE(X)$, which is the convex cone consisting of all effective $1$-cycle on $X$, and its closure $\NEb(X)$, the \emph{Mori cone}.
  Given a contraction $\pi:X\to Y$ between reduced proper $k$-algebraic spaces,  the  \emph{$\pi$-relative effective cone of curves} is the convex subcone $\NE(\pi)$ of 
  $\NE(X)$ spanned by the integral curves that are contracted by $\pi$ (i.e. the integral curves $C$ of $X$ such that $\pi(C)$ is a closed point of $Y$), and its closure  $\NEb(\pi):=\ov{\NE(\pi)}\subseteq \NEb(X)$ is called 
   the \emph{$\pi$-relative Mori cone}.
   We will use the following facts:
 \begin{itemize}
\item  If $Y$ is  projective, then $\NEb(\pi)$ is a face of $\NEb(X)$ and, hence, $\NE(\pi)$ is a face of $\NE(X)$ (the proof of \cite[Prop. 1.14(a)]{Deb} for $\NE(\pi)$ works also for $\NEb(\pi)$).  Moreover, the class of an integral curve $[C]$ belongs to $\NE(\pi)$ if and only if $\pi_*([C])=0$. 
\item If $X$ and $Y$ are projective (which implies that also $\pi$ is projective), then $\pi$ is uniquely determined by $\NE(\pi)$ up to isomorphism (see \cite[Prop. 1.14(b)]{Deb}).
\item If $\pi$ is projective, then the \emph{relative Kleiman's ampleness criterion}  holds: a  Cartier divisor $D$ on $X$ is $\pi$-ample if and only if $D$ is positive on $\NEb(\pi)\setminus \{0\}$ (see \cite[Thm. 1.44]{KM}).
\end{itemize}

 Given a projective $k$-variety $X$ and a face $F$ of $\NE(X)$, if there exists a (projective) contraction  $\pi:X\to Y$ into a  projective $k$-variety $Y$  such that $\NE(\pi)=F$ then $\pi:X\to Y$ (which is unique by what said above) is called the \textbf{contraction} of the face $F$ and it will be denoted by $\pi_F: X\to X_F$.
Note that not all the faces $F$ of $\NE(X)$ can have an associated contraction; a necessary condition for that to happen is that the closure of $F$ must be equal to a face of $\NEb(X)$. Contraction of faces of the effective cone of curves can also be characterised as follow.

\begin{lemma}\label{L:rigidity}
 Let $X$ be a projective $k$-variety and let $F$ be face of $\NE(X)$ for which there exists a contraction $\pi_F:X\to X_F$. If $f:X\to Y$ is a contraction  onto a reduced proper (not necessarily projective!) $k$-algebraic space $Y$ such that an integral curve $C\subset X$ is contracted by $f$  if and only if $[C]\in F$, 
 then there exists an isomorphism $X_F\cong Y$ under which $f=\pi_F$.
\end{lemma}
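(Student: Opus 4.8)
The plan is to deduce the statement from the universal property that already characterizes the contraction $\pi_F\colon X\to X_F$, namely that it is initial among morphisms from $X$ that contract exactly the curves whose class lies in $F$. Concretely, I would first show that $f$ factors through $\pi_F$, then that $\pi_F$ factors through $f$, and finally that the two factorizations are mutually inverse.

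First I would produce a morphism $h\colon X_F\to Y$ with $h\circ\pi_F=f$. Since $\pi_F$ is a projective contraction (so $(\pi_F)_*\cO_X=\cO_{X_F}$ and it has connected fibres), and since every fibre of $\pi_F$ is a connected projective scheme all of whose irreducible curves have class in $F$ and are therefore contracted by $f$, each fibre of $\pi_F$ is mapped by $f$ to a single point of $Y$; I would invoke the rigidity lemma in the form valid for a proper surjective morphism with connected fibres onto a (here projective) base, together with properness of $Y$, to get the factorization $h$ through the underlying topological quotient, and then promote it to a morphism of algebraic spaces using $(\pi_F)_*\cO_X=\cO_{X_F}$ (i.e.\ $h^\sharp$ is obtained from $f^\sharp\colon\cO_Y\to f_*\cO_X=(h\circ\pi_F)_*\cO_X=h_*\cO_{X_F}$). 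Note $Y$ need not be projective, but this direction only uses that $X$ and $X_F$ are, and that $Y$ is a proper algebraic space, so the argument goes through.

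For the reverse factorization I would argue that $f\colon X\to Y$ itself enjoys a universal property. Indeed $f$ is a contraction, so $f_*\cO_X=\cO_Y$; and an integral curve $C\subset X$ is contracted by $f$ iff $[C]\in F$ iff $C$ is contracted by $\pi_F$. Hence $f$ contracts all fibres of $\pi_F$... wait, more to the point: $\pi_F$ contracts every curve that $f$ contracts, so by the same rigidity argument (now with the roles of $f$ and $\pi_F$ reversed, using this time that $Y$ is proper and $X_F$ is projective — the rigidity lemma needs the \emph{target} of the map being constructed, here $X_F$, to be nice, and the base $Y$ to be such that $f$ has connected fibres; since $f_*\cO_X=\cO_Y$ and $Y$ is a proper algebraic space this holds) we obtain $h'\colon Y\to X_F$ with $h'\circ f=\pi_F$. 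Then $h\circ h'\circ f=h\circ\pi_F=f$ and $h'\circ h\circ\pi_F=h'\circ f=\pi_F$; since $f$ and $\pi_F$ are epimorphisms of algebraic spaces (being contractions, in particular surjective with $\cO$ pushing forward to $\cO$), we conclude $h\circ h'=\id_Y$ and $h'\circ h=\id_{X_F}$, so $h$ is the desired isomorphism and $f=h\circ\pi_F$ identifies $f$ with $\pi_F$.

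The main obstacle I anticipate is the bookkeeping around the rigidity lemma in the \emph{non-projective, algebraic-space} setting: the classical statement (as in Debarre or Koll\'ar–Mori) is usually phrased for schemes/varieties, and here $Y$ is only a proper algebraic space, while the maps $f$ and $\pi_F$ are contractions rather than arbitrary proper morphisms. The key technical points to nail down are (i) that a contraction is a universal $\cO$-epimorphism so that $h,h'$ are unique and the cancellation $h\circ h'=\id$, $h'\circ h=\id$ is legitimate; and (ii) that connectedness of fibres (forced by $f_*\cO_X=\cO_Y$, resp.\ $(\pi_F)_*\cO_X=\cO_{X_F}$) plus properness of the target of the map being constructed is exactly what rigidity needs, with projectivity of $X$ playing no essential role beyond guaranteeing $\pi_F$ exists and is itself projective. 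Everything else — descent of the morphism from topological spaces to algebraic spaces via structure sheaves, and the two-sided inverse computation — is formal.
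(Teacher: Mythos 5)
Your argument is essentially the paper's own: use the rigidity lemma to get factorizations $h\circ\pi_F=f$ and $h'\circ f=\pi_F$ and then cancel, exploiting that contractions are epimorphisms. The paper packages the two directions symmetrically by first showing that the closed fibres of $f$ and of $\pi_F$ coincide as subsets of $X$ and then invoking rigidity once, but that is a cosmetic difference.

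There is one genuine step you elide. You pass directly from ``every integral curve in a closed fibre of $\pi_F$ has class in $F$, hence is contracted by $f$'' to ``the fibre is sent by $f$ to a single point of $Y$.'' Connectedness of the fibre plus contraction of each individual curve does not immediately give this; one needs to know that any two closed points of the (connected, projective) fibre can be joined by a \emph{chain of integral curves lying in the fibre}, and this is exactly where projectivity of $X$ is used in the paper --- via hyperplane sections cut out inside the fibre. The same point recurs for the fibres of $f$ in the reverse direction. Without this chain-connectedness step, rigidity cannot be applied, so it deserves to be spelled out rather than absorbed into ``therefore.'' Everything else in your proposal --- including the observation that $Y$ need only be a reduced proper algebraic space, and that the cancellation $h\circ h'=\id$, $h'\circ h=\id$ follows from $f$ and $\pi_F$ being epimorphisms because they are contractions --- is correct and matches the paper's reasoning.
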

\begin{proof}
By the assumption on $f$ and the definition of the contraction $\pi_F$ of $F$, it follows that an integral curve $C\subset X$ is contracted by  $f$ if and only if it is contracted by $\pi_F$. Since $X$ is assumed to be projective, the morphisms $f$ and $\pi_F$ are projective contractions, which implies that their closed fibres are connected projective $k$-varieties.  Using suitable hyperplane sections, we can connect any two closed points  of a closed fibre of $f$ (resp. $\pi_F$)  by a chain of integral curves contained in the given fibre of $f$ (resp. $\pi_F$). Hence, from what said above for curves, we conclude that a closed subscheme of $X$ is a fibre of $f$ if and only if it is a fibre of $\pi_F$. 
 
We can now apply the rigidity Lemma of \cite[Lemma 1.15]{Deb} in order to conclude that $f$ factors through $\pi_F$ and $\pi_F$ factors through $f$. This implies that there exists an isomorphism $Y\cong X_F$ under which $f=\pi_F$. 
\end{proof}

In Lemma \ref{L:rigidity}, the assumption that a curve $C\subset X$ is contracted by $f$ if and only if $[C]\in F$ can not be replaced by the weaker condition that $\NE(f)=F$, as the following example shows. 

\begin{example}
Consider a projective smooth complex threefold $X$ with a $K_X$-negative extremal ray $R$ such that the contraction of $R$, $\pi_R: X \to Y$, contracts a divisor $E \cong \P^1 \times \P^1$ to a singular point in $Y$. In this case, by \cite[Thm. 3.3]{Mori82}, the normal bundle of $E$ is $\mathcal{O}(-1,-1)$, and the two rulings of $E$ are numerically equivalent on $X$.  Such a threefold does exist by \cite[Section 10, Example 3.44.2]{Mori82}.

By Nakano's theorem, $E$ can also be contracted analytically along one of its ruling by a holomorphic map  $f: X \to Z$. The end result $Z$ is a proper complex smooth algebraic space (or equivalently a proper  Moishezon manifold) and $\NE(f)=R$. The complex manifold $Z$ is therefore non projective and it can be seen as a small resolution of $Y$.
\end{example}

\begin{section}{The moduli stacks of $T$-semistable and $T^+$-semistable curves} \label{S:stacks}

The aim of this section is to define the relevant moduli stacks of $n$-pointed curves, with which we will work throughout the paper.

\subsection{Special subcurves}

In this subsection, we will introduce some special subcurves that will be  used in the definition of our moduli stacks. The reader can safely skip this section at a first reading and come back to the relevant definitions, when they will be needed.

\begin{definition}[Tails, bridges and chains, see {\cite[Def. 2.1 and 2.3, Lemma 2.13]{AFSV1}}]\label{D:ell}
\noindent 
\begin{enumerate}
\item An \emph{elliptic tail} is a $1$-pointed irreducible curve $(E,q)$ of arithmetic genus $1$ (i.e. $E$ is either a smooth elliptic curve or a rational curve with one node or one cusp). 

\item An \emph{elliptic bridge} is a $2$-pointed curve $(E,q_1,q_2)$ of arithmetic genus $1$ which is either irreducible  or it has two  rational smooth components $R_1$ and $R_2$ that meet in either two nodes or one tacnode   and such that $q_i\in R_i$ for $i=1,2$.
%admits a finite, surjective, degree $2$ map $\phi: E \to\mathbb P^1$ such that $\phi^{-1}(\{\infty \})=\{q_1+q_2\}$.
\item An \emph{elliptic chain of length $r$} is a $2$-pointed curve $(E,q_1,q_2)$ which admits a finite, surjective morphism
$
\gamma: \bigcup_{i=1}^r (E_i,p_{2i-1},p_{2i}) \to (E,q_1,q_2)
$
such that:
\begin{enumerate}
\item $(E_i,p_{2i-1},p_{2i})$ is an elliptic bridge for $i=1,\ldots,r$;
\item $\gamma$ induces an open embedding of $E_i \setminus \{p_{2i-1}, p_{2i}\}$ into $E \setminus \{q_1,q_2\}$ for $i=1,\ldots,r$; \item $\gamma(p_{2i})=\gamma(p_{2i+1})$ is a tacnode for $i=1,\ldots,r-1$;
\item $\gamma(p_1)=q_1$ and $\gamma(p_{2r})=q_2$.
\end{enumerate}
\end{enumerate}
\end{definition}
Note that an elliptic chain of length $r$ has arithmetic genus $2r-1$. An elliptic chain of length $1$ is just an elliptic bridge.

\begin{figure}[!h]
	\begin{center}
		\begin{tikzpicture}[scale=0.7]
		
		\coordinate (a) at (0, 0);
		\coordinate[label=right:{$g$=1}] (b) at (4, 0);
		\draw [very thick, in=150, out=-45] (a) to node[dot, pos=0.2, label=below:$q$]{} (b);

		\coordinate (c) at (0+10, 0);
		\coordinate[label=right:{$g$=1}] (d) at (4+10, 0);
		\draw [very thick, in=150, out=-45] (c) to 
		node[dot, pos=0.2, label=below:$q_1$]{}  
		node[dot, pos=0.7, label=below:$q_2$]{} 
		(d);

		\end{tikzpicture}
	\end{center}
	\vspace{-0.5cm}
	\caption{An elliptic tail and an elliptic bridge.}\label{F:tail}
\end{figure}
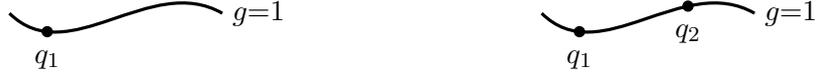

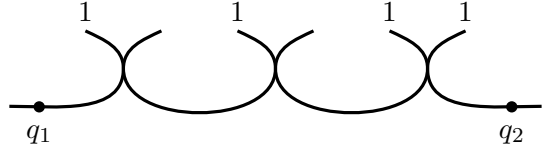
\begin{figure}[!h]
	\begin{center}
		\begin{tikzpicture}[scale=0.5]
		
		\coordinate (a) at (-1, 0);
		\coordinate (b) at (2, 1);
		\coordinate[label=above:1] (c) at (1, 2);
		\draw [very thick] (a)   to[out=0, in=-90]  
		node[dot, pos=0.2, label=below:$q_1$]{} 
		(b)  to[out=90, in=-25] (c);

		\coordinate (d) at (0+3, 2);
		\coordinate (e) at (2+4, 1);
		\coordinate[label=above:1] (f) at (1+4, 2); 
		\draw [very thick] (d)  to[out=205, in=90]   (b)  to[out=-90, in=-90] (e) to[out=90, in=-25] (f);

		\coordinate (g) at (0+7, 2);
		\coordinate (h) at (2+8, 1);
		\coordinate[label=above:1] (i) at (1+8, 2); 
		\draw [very thick] (g)  to[out=205, in=90]   (e)  to[out=-90, in=-90] (h) to[out=90, in=-25] (i);
		
		\coordinate[label=above:1] (l) at (0+11, 2);
		\coordinate (m) at (1+12, 0);
		\draw [very thick] (l)  to[out=205, in=90]   (h)  to[out=-90, in=180] 
		node[dot, pos=0.8, label=below:$q_2$]{} 
		(m);

		\end{tikzpicture}
	\end{center}
     \vspace{-0.5cm}
	\caption{An elliptic chain of length 4. The numbers 1 indicate the genus of the irreducible components.}\label{F:chain}
\end{figure}

\begin{definition}[Attached elliptic tails and chains, see {\cite[Def. 2.4]{AFSV1}}]\label{D:tailchain}
Let $(C, \{p_i \}_{i=1}^n)$ be an $n$-pointed curve of genus $g$. Let $k, k_1, k_2$ be equal to $1$ or $3$. 
\begin{enumerate}
\item $(C, \{p_i \}_{i=1}^n)$ has a \emph{$A_k$-attached elliptic tail} if there exists a finite morphism $\gamma: (E,q) \to (C, \{p_i \}_{i=1}^n)$ (called gluing morphism) such that:
\begin{enumerate}[(a)]
\item  $(E,q)$ is an elliptic tail;
\item $\gamma$ induces an open embedding of $E-\{q\}$ into $C- \cup_{i=1}^n \{p_i \}$;
\item  $\gamma(q)$ is an $A_k$-singularity.
% or, $k=1$, and $\gamma(q)$ is a marked point.
\end{enumerate}
\item $(C, \{p_i \}_{i=1}^n)$ has an \emph{$A_{k_1}/A_{k_2}$-attached elliptic chain}  (of length $r$) if there exists a finite morphism $\gamma: (E,q_1,q_2) \to (C, \{p_i \}_{i=1}^n)$ (called gluing morphism) such that:
\begin{enumerate}[(a)]
\item $(E,q_1,q_2)$ is an elliptic chain (of length $r$);
\item  $\gamma$ induces an open embedding of $E-\{q_1,q_2\}$ into $C- \cup_{i=1}^n \{p_i \}$; 
\item $\gamma(q_i)$ is an $A_{k_i}$-singularity or $k_i=1$ and  $\gamma(q_i)$ is a marked point (for $i=1,2$).
\end{enumerate}
An $A_{k_1}/A_{k_2}$-attached elliptic chain of length $1$ is also called an \emph{$A_{k_1}/A_{k_2}$-attached elliptic bridge}. 
An $A_k/A_k$-attached elliptic chain $\gamma: (E,q_1,q_2) \to (C, \{p_i \}_{i=1}^n)$ of length $r$ such that $\gamma(q_1)=\gamma(q_2)$ is called \emph{closed}. In this case $\gamma$ is surjective and $(g,n)=(2r-1+\frac{k+1}{2},0)$.

\end{enumerate}
\end{definition}

\begin{figure}[!h]
	\begin{center}
		\begin{tikzpicture}[scale=0.7]
		
		\coordinate (a) at (0, 0);
		\coordinate[label=right:{1}] (b) at (2, 0);
		\draw [very thick, in=205, out=25] (a) to (b);

		\coordinate (x) at (0.3,1);
		\coordinate[label=right:{$g-1$}] (y) at (0.3, -2.5);
		\draw [very thick, in=105, out=-45] (x) to (y);

		\coordinate (x) at (0.3+7,1);
		\coordinate (y) at (0.7+7,0);
		\coordinate[label=right:{$g-2$}] (z) at (0.3+7, -2.5);
		\draw [very thick] (x) to[in=90, out=-45] (y) to[in=105, out=-90] (z);

		\coordinate (a) at (1.5+7, 0.7);
		\coordinate (b) at (0.7+7, 0);
		\coordinate[label=above:1] (c) at (2.3+7, -0.5);
		\draw [very thick] (a)  to[out=205, in=90]   (b)  to[out=-90, in=180] (c);

		\coordinate (x) at (-1.5+15.5,1);
		\coordinate[label=right:$\tau$] (y) at (-1.5+15.5, -2.5);
		\draw [very thick, in=105, out=-45] (x) to (y);
		
		\coordinate (a) at (-2+15.5, 0);
		\coordinate[label=above:1] (b) at (2+15.5, 0);
		\draw [very thick, in=205, out=25] (a) to (b);
		
		\coordinate (w) at (1.5+15.5,1);
		\coordinate[label=right:$g-\tau-1$] (z) at (1.5+15.5, -2.5);
		\draw [very thick, in=75, out=-125] (w) to (z);

		\end{tikzpicture}
	\end{center}
	\caption{Three curves with respectively an $A_1$-attached elliptic tail, an $A_3$-attached elliptic tail and an $A_1/A_1$-attached elliptic bridge.}\label{F:attachedtails}
\end{figure}
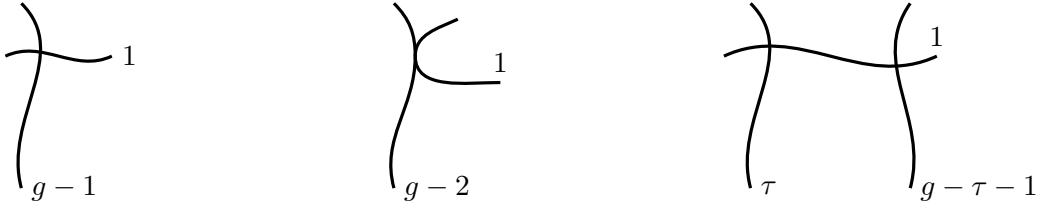

In analysing the automorphism group of the curves we will be dealing with, a central role is played by rosaries as introduced in \cite{HH2} (see also \cite[Sec. 2.5]{AFSV1}). 
%Abstract rosaries are defined as it follows. 
  
\begin{definition}[Open and closed rosaries, see {\cite[Def. 6.1, 6.3]{HH2}, \cite[Def. 2.26]{AFSV1}}]\label{D:ros}
\noindent 
\begin{enumerate}
\item \label{D:ros1} An \emph{open rosary} of length $r$, or simply a \emph{rosary} of length $r$,   is a $2$-pointed curve $(R,q_1,q_2)$ which admits a finite, surjective morphism
$
\gamma: \bigcup_{i=1}^r (L_i,p_{2i-1},p_{2i}) \to (R,q_1,q_2)
$
with:
\begin{enumerate}
\item $(L_i,p_{2i-1},p_{2i})$ is $2$-pointed smooth rational curve  for $i=1,\ldots,r$;
\item $\gamma$ induces an open embedding of $L_i \setminus \{p_{2i-1}, p_{2i}\}$ into $R \setminus \{q_1,q_2\}$ for $i=1,\ldots,r$; 
\item $a_i:=\gamma(p_{2i})=\gamma(p_{2i+1})$ is a tacnode for $i=1,\ldots, r-1$;
\item $\gamma(p_1)=q_1$ and $\gamma(p_{2r})=q_2$.
\end{enumerate}

\item \label{D:ros2} A \emph{closed rosary} of length $r$ is a ($0$-pointed) curve $R$ which admits a finite, surjective morphism
$
\gamma: \bigcup_{i=1}^r (L_i,p_{2i-1},p_{2i}) \to R
$
such that:
\begin{enumerate}
\item $(L_i,p_{2i-1},p_{2i})$ is $2$-pointed smooth rational curve  for $i=1,\ldots,r$;
\item $\gamma$ induces an open embedding of $L_i \setminus \{p_{2i-1}, p_{2i}\}$ into $R$ for $i=1,\ldots,r$; 
\item $a_i:=\gamma(p_{2i})=\gamma(p_{2i+1})$  for $i=1,\ldots,r-1$ and $a_r:=\gamma(p_1)=\gamma(p_{2r})$ are tacnodes.
\end{enumerate}
\end{enumerate}

\end{definition}
Note that an open  rosary $(R,q_1,q_2)$ of length $r$ has arithmetic genus $g(R)=r-1$ while a closed rosary $R$ of length $r$ has arithmetic genus $g(R)=r+1$. 

An open rosary $(R,q_1,q_2)$ of length $r$ is such that  $\omega_R(q_1+q_2)$ is ample if (and only if) $r\geq 2$ (this is the reason why open rosaries of length $1$ will not play any role in the sequel). An open  rosary of length $2$ is an elliptic bridge and it is the unique elliptic bridge 
containing a tacnode; for this reason, we will also call it the \emph{tacnodal elliptic bridge.} More generally, any open rosary of even length $r$ can be regarded as an elliptic chain of length $r/2$ in which all the elliptic bridges are tacnodal.

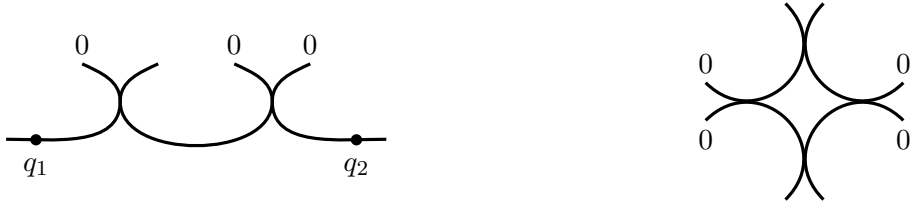
\begin{figure}[!h]
	\begin{center}
		\begin{tikzpicture}[scale=1]
		
		\coordinate (a) at (-0.5, -0.5);
		\coordinate (b) at (1, 0);
		\coordinate[label=above:0] (c) at (0.5, 0.5);
		\draw [very thick] (a)   to[out=0, in=-90]  
		node[dot, pos=0.2, label=below:$q_1$]{} 
		(b)  to[out=90, in=-25] (c);

		\coordinate (d) at (0+1.5, 0.5);
		\coordinate (e) at (1+2, 0);
		\coordinate[label=above:0] (f) at (0.5+2, 0.5); 
		\draw [very thick] (d)  to[out=205, in=90]   (b)  to[out=-90, in=-90] (e) to[out=90, in=-25] (f);

		\coordinate[label=above:0] (l) at (0+3.5, 0.5);
		\coordinate (m) at (0.5+4, -0.5);
		\draw [very thick] (l)  to[out=205, in=90]   (e)  to[out=-90, in=180] 
		node[dot, pos=0.8, label=below:$q_2$]{} 
		(m);

		\coordinate[label=above:0] (a) at (-1.3+10, 0.25);
		\coordinate (b) at (-0.25+10, 1.3);
		\draw [very thick, bend right=90, looseness=1.75] (a) to (b);
		
		\coordinate[label=above:0] (a) at (1.3+10, 0.25);
		\coordinate (b) at (0.25+10, 1.3);
		\draw [very thick, bend left=90, looseness=1.75] (a) to (b);
		
		\coordinate[label=below:0] (a) at (-1.3+10, -0.25);
		\coordinate (b) at (-0.25+10, -1.3);
		\draw [very thick, bend left=90, looseness=1.75] (a) to (b);
		
		\coordinate[label=below:0] (a) at (1.3+10, -0.25);
		\coordinate (b) at (0.25+10, -1.3);
		\draw [very thick, bend right=90, looseness=1.75] (a) to (b);

		\end{tikzpicture}	
	\end{center}
	\caption{A rosary of length 3 and a closed rosary of length 4.}\label{F:rosaries}
\end{figure}

\begin{remark}\label{R:ros-eq}
Assume $\car(k)\neq 2$.
Open rosaries and closed rosaries of even length share similar properties and they can be described as follows, following \cite[Prop. 6.5]{HH2} \footnote{Closed rosaries of odd length have different properties: they depend on one modulus and they do not admit an infinite  group of automorphism. Since we will not need them, we will refrain from giving an explicit description and direct the interested reader to  \cite[Prop. 6.5]{HH2}.} (see also \cite[Def. 2.20(2)]{AFSV1} for open rosaries of length $2$ that coincide with $7/10$-atoms). 
\noindent 
\begin{enumerate}[(i)]
\item \label{R:ros-eq1} An open rosary $(R,q_1,q_2)$ of length $r\geq 1$  can be obtained by gluing the disjoint union of $r$  projective lines $\{L_i\}_{i=1}^r$ with homogeneous coordinate $[s_i,t_i]$ and the $r-1$ affine tacnodal curves  $\Spec k[x_i,y_i]/(y_i^2-x_i^4)$ via the gluing relations
$$ x_i=\left(\frac{t_i}{s_i}, \frac{s_{i+1}}{t_{i+1}} \right)\in k\left[\frac{t_i}{s_i}\right]\times k\left[\frac{s_{i+1}}{t_{i+1}}\right], \quad y_i=\left(\left(\frac{t_i}{s_i}\right)^2, -\left(\frac{s_{i+1}}{t_{i+1}}\right)^2 \right)\in k\left[\frac{t_i}{s_i}\right]\times k\left[\frac{s_{i+1}}{t_{i+1}}\right].$$
%$$ x_i=\left(\frac{t_i}{s_i}, \frac{s_{i+1}}{t_{i+1}} \right)\in k\left[\frac{t_i}{s_i}\right]\times k\left[\frac{s_{i+1}}{t_{i+1}}\right],$$
%$$y_i=\left(\left(\frac{t_i}{s_i}\right)^2, -\left(\frac{s_{i+1}}{t_{i+1}}\right)^2 \right)\in k\left[\frac{t_i}{s_i}\right]\times k\left[\frac{s_{i+1}}{t_{i+1}}\right].$$
Note that the marked points are equal to $q_1=[0,1]\in L_1$ and $q_2=[1,0]\in L_r$, while the tacnodes have coordinates (for every $1\leq i \leq r-1$)
$$a_i= [1,0] \text{ on } L_i \quad \text{ and } \quad  a_i=[0,1] \text{ on } L_{i+1}.
$$
%$$a_i=
%\begin{sis}
%& [1,0] \text{ on } L_i, \\
%& [0,1] \text{ on } L_{i+1}.
%\end{sis}
%$$
The connected component of the automorphism group of  $(R,q_1,q_2)$ is equal to the multiplicative group $\Gm$ which acts, in the above coordinates, by 
$$
  \lambda\cdot [s_i,t_i]=[\lambda^{(-1)^{i+1}}s_i,t_i], \quad
 \lambda\cdot x_i=\lambda^{(-1)^{i}} x_i, \quad
\lambda \cdot y_i=\lambda^{2(-1)^{i}} y_i. 
 $$
% $$
% \begin{sis}
%&  \lambda\cdot [s_i,t_i]=[\lambda^{(-1)^{i+1}}s_i,t_i], \\
%& \lambda\cdot x_i=\lambda^{(-1)^{i}} x_i, \\
%& \lambda \cdot y_i=\lambda^{2(-1)^{i}} y_i. 
 %\end{sis}
 %$$
 Note that the weight of the $\Gm$-action on the tangent spaces at the marked points are 
 $$\wei_{\Gm}(T_{q_1}(R))=1 \text{ and } \wei_{\Gm}(T_{q_2}(R))=(-1)^r. $$

 \item \label{R:ros-eq2} A closed rosary $R$ of even length $r\geq 1$  can be obtained by gluing the disjoint union of $r$  projective lines $\{L_i\}_{i=1}^r$ with homogeneous coordinate $[s_i,t_i]$ and the $r$ affine tacnodal curves  $\Spec k[x_i,y_i]/(y_i^2-x_i^4)$ via the gluing relations
$$ x_i=\left(\frac{t_i}{s_i}, \frac{s_{i+1}}{t_{i+1}} \right)\in k\left[\frac{t_i}{s_i}\right]\times k\left[\frac{s_{i+1}}{t_{i+1}}\right], \quad y_i=\left(\left(\frac{t_i}{s_i}\right)^2, -\left(\frac{s_{i+1}}{t_{i+1}}\right)^2 \right)\in k\left[\frac{t_i}{s_i}\right]\times k\left[\frac{s_{i+1}}{t_{i+1}}\right],$$
%$$ x_i=\left(\frac{t_i}{s_i}, \frac{s_{i+1}}{t_{i+1}} \right)\in k\left[\frac{t_i}{s_i}\right]\times k\left[\frac{s_{i+1}}{t_{i+1}}\right],$$
%$$y_i=\left(\left(\frac{t_i}{s_i}\right)^2, -\left(\frac{s_{i+1}}{t_{i+1}}\right)^2 \right)\in k\left[\frac{t_i}{s_i}\right]\times k\left[\frac{s_{i+1}}{t_{i+1}}\right],$$
where we adopt the cyclic convention $L_{r+1}:=L_1$, $x_{r+1}:=x_1$ and $y_{r+1}:=y_1$. 
Note that the  tacnodes have coordinates (for every $1\leq i \leq r$)
$$a_i= [1,0] \text{ on } L_i \quad \text{�and }�\quad a_i=[0,1] \text{ on } L_{i+1}.$$
 The connected component of the automorphism group of  $R$ is equal to the multiplicative group $\Gm$ which acts, in the above coordinates, by 
 $$
 \lambda\cdot [s_i,t_i]=[\lambda^{(-1)^{i+1}}s_i,t_i], \quad \lambda\cdot x_i=\lambda^{(-1)^{i}} x_i,  \quad  \lambda \cdot y_i=\lambda^{2(-1)^{i}} y_i. 
 $$
 % $$
% \begin{sis}
%&  \lambda\cdot [s_i,t_i]=[\lambda^{(-1)^{i+1}}s_i,t_i], \\
%& \lambda\cdot x_i=\lambda^{(-1)^{i}} x_i, \\
%& \lambda \cdot y_i=\lambda^{2(-1)^{i}} y_i. 
 %\end{sis}
 %$$
Note that this is well-defined since $(-1)^{r+1}=(-1)^1$ because $r$ is even.
 \end{enumerate}
\end{remark}

Similarly to elliptic chains, also open rosaries can be attached in different way inside a pointed curve. However, we will need to  consider only nodal attachments, as we now define.

\begin{definition}[Attached rosaries, see {\cite[Def. 6.3]{HH2}} and {\cite[Def. 2.26]{AFSV1}}]
Let $(C, \{p_i \}_{i=1}^n)$ be an $n$-pointed curve. 
We say that $(C, \{p_i \}_{i=1}^n)$ has an \emph{$A_1/A_1$-attached rosary}  (of length $r$), or simply an \emph{attached rosary}, if there exists a finite morphism $\gamma: (R,q_1,q_2) \to (C, \{p_i \}_{i=1}^n)$ (called gluing morphism) such that:
\begin{enumerate}[(a)]
\item $(R,q_1,q_2)$ is a rosary (of length $r$);
\item  $\gamma$ induces an open embedding of $R-\{q_1,q_2\}$ into $C- \cup_{i=1}^n \{p_i \}$; 
\item $\gamma(r_i)$ is a node or a marked point (for any $i=1,2$).
\end{enumerate}

Note that we could have an $A_1/A_1$-attached rosary $\gamma: (R,q_1,q_2) \to (C, \{p_i \}_{i=1}^n)$ of length $r$ such that $\gamma(q_1)=\gamma(q_2)$: in this case we have that $C=R$ and $(g,n)=(r,0)$. 

\end{definition}

Next, we want  to define the type of a tacnode, of an $A_{k_1}/A_{k_2}$-attached elliptic chain, of an attached rosary and of a closed rosary, which will be a subset of  the set $T_{g,n}$ (see \eqref{E:Tset}).

\begin{definition}\label{D:type}[Types of tacnodes, attached elliptic chains, attached and closed rosaries]
Let $(C, \{p_i \}_{i=1}^n)$ be a $n$-pointed curve such that $C$ is Gorenstein and $\omega_C(\sum_{i=1}^n p_i)$ is ample.
\begin{enumerate}
\item \label{D:type1} Let $p \in C$ be a tacnode. We say that $p$ is of type:
\begin{itemize}
\item $\type(p):=\{\irr\}\subseteq T_{g,n} $ if the normalisation of $C$ at  $p$ is connected;
\item  $\type(p):=\{[\tau,I],[\tau+1,I]\} \subseteq T_{g,n}$ if the normalisation of $C$ at $p$ has two connected components, one of which has arithmetic genus  $\tau$ and marked points $\{p_i\}_{i \in I}$.
\end{itemize}

\item \label{D:type2}
Let $\gamma:(E,q_1,q_2)\to (C, \{p_i \}_{i=1}^n)$ be an $A_{k_1}/A_{k_2}$-attached elliptic chain of length $r\geq 1$ and with $k_1,k_2=1$ or $3$. Set 
$$\epsilon(k_1,k_2)=
\begin{cases}
0 & \text{ if } k_1=k_2=1, \\
1 & \text{ if } (k_1,k_2)=(1,3) \text{ or } (3,1),\\
2 & \text{ if } k_1=k_2=3.
\end{cases}
$$
We say that $(E,q_1,q_2)$ is of type:
\begin{itemize}
\item $\type(E,q_1,q_2):=\{[0,\{p_i\}],[1,\{p_i\}],\ldots,[2r-1+\epsilon(k_1,k_2),\{p_i\}]\} \subseteq T_{g,n}$ if either $\gamma(q_1)=p_i$ or $\gamma(q_2)=p_i$;

\item $\type(E,q_1,q_2):=\{\irr\}\subseteq T_{g,n} $ if $\gamma(q_1)$ and $\gamma(q_2)$ are singular points (either nodes or tacnodes) of $C$ and $\ov{C \setminus \gamma(E)}$ is connected (which includes also the case of a closed $A_{k_1}/A_{k_2}$-attached elliptic chain, in which case $\ov{C \setminus \gamma(E)}=\emptyset$);
\item  $\type(E,q_1,q_2):=\{[\tau,I],[\tau+1,I],\ldots,[\tau+2r-1+\epsilon(k_1,k_2),I]\} \subseteq T_{g,n}$ if  $\gamma(q_1)$ and $\gamma(q_2)$ are are singular points (either nodes or tacnodes) of $C$ and $\ov{C \setminus \gamma(E)}$ consists of two connected component, one of  which has arithmetic genus  $\tau$ with marked points $\{p_i\}_{i \in I}$.
\end{itemize}
\item \label{D:type3}
Let $\gamma:(R,q_1,q_2)\to (C, \{p_i \}_{i=1}^n)$ be an attached rosary   of length $r$. We say that $(R,q_1,q_2)$ is of type:
\begin{itemize}
\item $\type(R,q_1,q_2):=\{[0,\{p_i\}],[1,\{p_i\}],\ldots,[r-1,\{p_i\}]\} \subseteq T_{g,n}$ if either $\gamma(q_1)=p_i$ or $\gamma(q_2)=p_i$;
\item $\type(R,q_1,q_2):=\{\irr\}\subseteq T_{g,n} $ if  $\gamma(q_1)$ and $\gamma(q_2)$ are nodes of $C$ and $\ov{C \setminus \gamma(R)}$ is connected (which includes also the case where $\ov{C \setminus \gamma(R)}=\emptyset$, which can happen only if $(g, n)=(r, 0)$ and $\gamma(q_1)=\gamma(q_2))$;
\item  $\type(R,q_1,q_2):=\{[\tau,I],[\tau+1,I],\ldots,[\tau+r-1,I]\} \subseteq T_{g,n}$ if  $\gamma(q_1)$ and $\gamma(q_2)$ are nodes of $C$ and $\ov{C \setminus \gamma(R)}$ consists of two connected component, one of  which has arithmetic genus  $\tau$ with marked points $\{p_i\}_{i \in I}$.
\end{itemize}
\item \label{D:type4}
The type of a closed rosary $R$ is set to be $\type(R):=\{\irr\}$.
\end{enumerate}
\end{definition}

\begin{figure}[!h]
	\begin{center}
		\begin{tikzpicture}[scale=0.7]

		\coordinate (x) at (-2,2.5);
		\coordinate (y) at (-1.5, 0.7);
		\coordinate[label=right:{$\tau$}] (z) at (-2, -3);
		\draw [very thick] (x) to[in=90, out=-45] (y) to[in=105, out=-90]  
		node[dot, pos=0.3, label=left:$p_1$]{}
		node[dot, pos=0.5, label=left:$\vdots$]{}
		node[dot, pos=0.6]{}  
		node[dot, pos=0.8, label=left:$p_k$]{}
		(z);

		\coordinate[label=above:1] (a) at (-0.7, 1.5);
		\coordinate (c) at (2, 0);
		\draw [very thick] (a)  to[out=205, in=90]   (y)  to[out=-90, in=180] (c);

		\coordinate (w) at (1.5,2.5);
		\coordinate[label=right:{$g-\tau-2$}] (z) at (1.5, -3);
		\draw [very thick, in=75, out=-125] (w) to 
		node[dot, pos=0.6, label=right:$p_{k+1}$]{}
		node[dot, pos=0.7, label=right:$\vdots$]{}
		node[dot, pos=0.75]{}  
		node[dot, pos=0.85, label=right:$p_n$]{}
		(z);

		\coordinate (x) at (-0.5+8,2.5);
		\coordinate[label=right:{$\tau$}] (y) at (-0.5+8, -3);
		\draw [very thick, in=105, out=-45] (x) to 
		node[dot, pos=0.6, label=left:$p_1$]{}
		node[dot, pos=0.7, label=left:$\vdots$]{}
		node[dot, pos=0.75]{}  
		node[dot, pos=0.85, label=left:$p_k$]{}
		(y);

		\coordinate (a) at (-1+8, 0);
		\coordinate (b) at (2+8, 0.7);
		\coordinate[label=above:1] (c) at (1+8, 1.5);
		\draw [very thick] (a)   to[out=0, in=-90]  (b)  to[out=90, in=-25] (c);

		\coordinate (d) at (0+3+8, 1.5);
		\coordinate (e) at (2+3+8, 0.7);
		\coordinate[label=above:1] (f) at (1+3+8, 1.5); 
		\draw [very thick] (d)  to[out=205, in=90]   (b)  to[out=-90, in=-90] (e) to[out=90, in=-25] (f);

		\coordinate[label=above:1] (l) at (0+6+8, 1.5);
		\coordinate (m) at (1+7+8, 0);
		\draw [very thick] (l)  to[out=205, in=90]   (e)  to[out=-90, in=180] (m);

		\coordinate (w) at (7.5+8,2.5);
		\coordinate[label=right:{$g-\tau-5$}] (z) at (7.5+8, -3);
		\draw [very thick, in=75, out=-125] (w) to 
		node[dot, pos=0.6, label=right:$p_{k+1}$]{}
		node[dot, pos=0.7, label=right:$\vdots$]{}
		node[dot, pos=0.75]{}  
		node[dot, pos=0.85, label=right:$p_n$]{}
		(z);

		\end{tikzpicture}
	\end{center}
	\caption{A curve with an $A_3/A_1$-attached elliptic bridge of type $\{[\tau,I], [\tau+1,I], [\tau+2,I]\}$ and a curve with an $A_1/A_1$-attached elliptic chain of type $\{[\tau,I], [\tau+1,I], \ldots, [\tau+5,I]\}$, where $I=\{1,\ldots, k\}$. }\label{F:attachedbridges}
\end{figure}
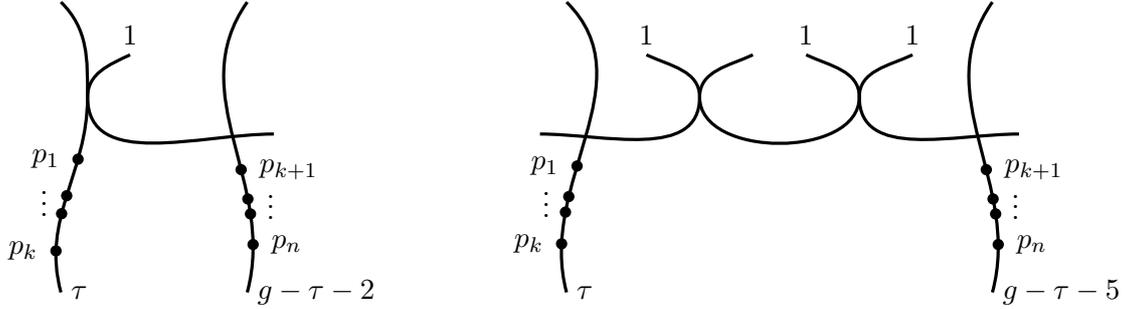

One can check that the above definitions are well posed.

\begin{remark}\label{R:tipo-ros}
Note that the type $\gamma:(R,q_1,q_2)\to (C, \{p_i \}_{i=1}^n)$ of an attached rosary  is the union of the types of all the tacnodes contained in $\gamma(R)$. And similarly for a closed rosary.
\end{remark}

We conclude this subsection by describing some isotrivial specialisations that come from the $\Gm$-action on open rosaries and closed rosaries of even lengths (see Remark \ref{R:ros-eq}) and that will play a crucial role in the sequel.
Given a (possibly $n$-pointed) curve $C$ with a special subcurve $R$, we say that $R$ specialises isotrivially to $R'$ if there exists an isotrivial specialisation of $C$ into a (possibly  $n$-pointed) curve $C'$ which is obtained by attaching $R'$ to $\ov{C\setminus R}$.

\begin{lemma}\label{L:spec-ros}
Assume that $\car(k)\neq 2$.
We have the following isotrivial specialisations:
\begin{enumerate}[(i)]
\item an $A_1/A_1$-attached elliptic chain of length $r\geq 1$ isotrivially specialises to an attached rosary of length $2r$;
\item an $A_1/A_3$-attached elliptic chain of length $r\geq 1$ isotrivially specialises to an attached rosary of length $2r+1$;
\item an $A_3/A_3$-attached elliptic chain of length $r\geq 0$ (which for $r=0$ it is  a tacnode by convention) isotrivially specialises to an attached rosary of length $2r+2$;
\item a closed $A_3/A_3$-attached elliptic chain of length $r\geq 1$ isotrivially specialises to a closed rosary of length $2r$.
\end{enumerate}
Moreover, each of the above isotrivial specialisations  preserves the type, i.e. the type of the attached elliptic chain (or of the tacnode) is the same as the type of the closed or attached rosary to which it specialises.  
\end{lemma}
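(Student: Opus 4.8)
The plan is to construct each isotrivial specialisation explicitly using the coordinate descriptions of open and closed rosaries from Remark \ref{R:ros-eq}, and then to degenerate an $A_{k_1}/A_{k_2}$-attached elliptic chain to the corresponding rosary by a one-parameter family coming from the $\Gm$-action. First I would set up the local model. An elliptic bridge $(E,q_1,q_2)$ with two rational components meeting at a node (or tacnode) can be written, after suitable choice of coordinates, in a form very close to the rosary coordinates: two $\bbP^1$'s $L_i$, glued at a point, where the gluing is governed by the equation of a node $y_i=x_i$ (for an $A_1$ attachment) versus the tacnodal equation $y_i^2=x_i^4$ (for the rosary, i.e. the $A_3$ attachment). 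The key algebraic observation is that the family $\Spec k[x,y,t]/(y^2 - t^2 x^2 - x^4)$ over $\Spec k[t]$ has an irreducible tacnodal fiber $y^2=x^4$ at $t=0$ and nodal-times-nodal fibers $y = \pm\sqrt{t^2 + x^2}\, x$ for $t\neq 0$; this is precisely how a pair of nodes (two branches crossing transversely at two points) degenerates to one tacnode. Iterating this over all $r$ bridges in the chain, one builds a family $\cC\to \Delta$ whose special fiber replaces each elliptic bridge by a tacnodal elliptic bridge, i.e. by an open rosary of length $2$, so that the whole chain becomes a rosary of length $2r$; this gives case (i).

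Next I would handle cases (ii)--(iv) as variants of the same construction: an $A_3$ attachment at an end of the chain contributes an extra rational component (the resolved tacnode spawns one more $\bbP^1$), so $A_1/A_3$ gives length $2r+1$, $A_3/A_3$ gives length $2r+2$ (and the $r=0$ convention, a bare tacnode, degenerates to a rosary of length $2$, which is consistent with the formula), and the closed $A_3/A_3$ case, where $\gamma(q_1)=\gamma(q_2)$, produces a closed rosary of length $2r$ by the cyclic version of the same family. In each case I would verify that the total family $\pi:\cC\to\Delta$ is flat and proper, that $\cC\times_\Delta\Delta^*$ is trivial — this is where the $\Gm$-action is used: the degeneration is realised as the limit of the orbit of a fixed curve under a one-parameter subgroup $\lambda(t)$ acting on the $L_i$ with the weights recorded in Remark \ref{R:ros-eq}, so that all nonzero fibers are abstractly isomorphic — and that the marked points and the complementary subcurve $\ov{C\setminus\gamma(E)}$ are carried along unchanged, so the specialisation is of the asserted form $C\leadsto C'$ with $C'$ obtained by replacing the subcurve. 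That the arithmetic genus is preserved is automatic from flatness, and one should double-check it matches: an elliptic chain of length $r$ has genus $2r-1$, an open rosary of length $2r$ has genus $2r-1$, so (i) is consistent; similarly $2r+1-1 = 2r$ matches the genus of the $A_1/A_3$ chain (which is $2r-1+1$), etc.

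Finally, for the "preserves the type" clause, I would argue combinatorially rather than geometrically: the type of an attached elliptic chain and the type of an attached rosary are both defined (Definition \ref{D:type}) purely in terms of (a) whether the attaching points are marked points, nodes, or tacnodes of the ambient curve, (b) the connectedness of $\ov{C\setminus(\text{subcurve})}$, and (c) the genera and marked-point distributions of its connected components. Since the family $\cC\to\Delta$ leaves $\ov{C\setminus\gamma(E)}$, its marked points, and the nature of the two attaching points untouched (it only modifies the interior of the special subcurve, turning bridges into tacnodal bridges), all three data agree for $C$ and $C'$; hence the types coincide. By Remark \ref{R:tipo-ros} the type of a rosary is the union of the types of its tacnodes, and these are exactly the tacnodes produced in the degeneration, which confirms the match of the two type-sets as subsets of $T_{g,n}$.

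The main obstacle I expect is the bookkeeping for the closed $A_3/A_3$ case and, more generally, getting the cyclic gluing relations to be globally consistent while simultaneously producing a flat family; one must be careful that the one-parameter degeneration chosen on each local chart glues to a well-defined family on the nose (not just locally), and that in the closed case the parity condition $(-1)^{r+1}=(-1)^1$ from Remark \ref{R:ros-eq}\eqref{R:ros-eq2} is respected by the limit — i.e. that a closed $A_3/A_3$ chain of length $r$ really lands on a closed rosary of the even length $2r$ and not on an ill-defined odd-length object. A secondary subtlety is the characteristic-$2$ restriction: the tacnodal equation $y^2=x^4$ and the smoothing $y^2=t^2x^2+x^4$ behave badly in characteristic $2$, which is exactly why the hypothesis $\car(k)\neq 2$ is imposed; I would simply invoke this throughout and not attempt the characteristic-$2$ analysis.
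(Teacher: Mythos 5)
The paper gives no proof of this lemma: it cites Hassett--Hyeon \cite[Prop.~8.3, 8.6]{HH2} outright, so your direct construction is more than the text supplies and is, in outline, the Hassett--Hyeon approach. But your ``key algebraic observation'' does not do what you claim. For $t\neq 0$ the fiber of $\Spec k[x,y,t]/(y^2 - t^2x^2 - x^4)$ is $y^2 = x^2(t^2+x^2)$, which is an \emph{irreducible} curve with a \emph{single} node at the origin --- the two formal branches $y = \pm x\sqrt{t^2+x^2}$ cross transversely at $x=0$ and nowhere else, and a direct check of the partial derivatives shows the origin is the only singularity. So this family degenerates a uninodal irreducible curve into a tacnodal reducible one, with the local $\delta$-invariant jumping from $1$ to $2$ and the number of branches jumping from $1$ to $2$; it does \emph{not} exhibit ``a pair of nodes degenerating to one tacnode.'' The local model you actually need for the two-component elliptic bridge (two $\P^1$'s meeting at two nodes) collapsing to the length-$2$ rosary is instead $y^2 = (x^2-s)^2$, i.e.\ $(y - x^2 + s)(y + x^2 - s) = 0$: for $s\neq 0$ the two smooth branches meet transversely at $x = \pm\sqrt{s}$ (two nodes, total $\delta = 2$), while at $s=0$ they become tangent (one tacnode, $\delta=2$), so $\delta$ is conserved and the family is flat with constant arithmetic genus and two components throughout. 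Your family is a legitimate isotrivial degeneration of the \emph{irreducible} nodal elliptic bridge to the tacnodal one, but that is a different case and your description of it is incorrect.

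Beyond the local model, there is a second gap. You assert that the degeneration ``leaves \dots the nature of the two attaching points untouched,'' but in cases (ii)--(iv) exactly the opposite must happen: an $A_3$-attaching tacnode becomes an $A_1$-attaching node, with the new $\P^1$ born in between, and (in case (iii) for $r=0$) the bare tacnode of the ambient curve is replaced by a rosary of length $2$ attached in two new nodes. The explicit family realizing this --- deforming a tacnode of attachment into a new rational component meeting the surrounding branches in a node on one side and a tacnode on the other --- is the crux of those cases and is absent from your sketch; it also has to be glued compatibly with the interior two-nodes-to-tacnode degenerations, and in case (iv) one must check the cyclic identification closes up with the even-length parity constraint from Remark~\ref{R:ros-eq}\eqref{R:ros-eq2}. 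Your argument for the ``preserves the type'' clause reaches the right conclusion, but the invariant actually preserved is the genus and marked-point distribution of the connected components of $\ov{C\setminus(\text{subcurve})}$, not ``the nature of the attaching points,'' which do change in (ii)--(iv); fortunately Definition~\ref{D:type} only depends on the former.
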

\begin{proof}
See \cite[Prop. 8.3, 8.6]{HH2}
\end{proof}

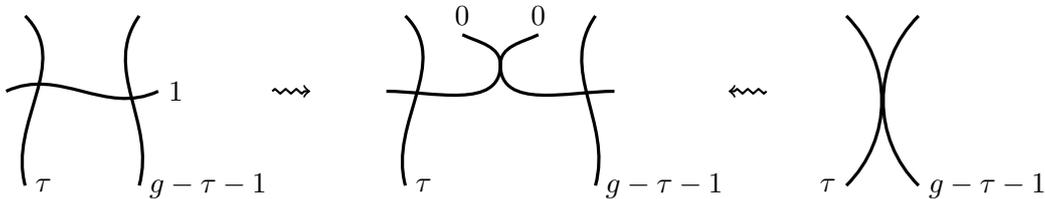
\begin{figure}[!h]
	\begin{center}
		\begin{tikzpicture}[scale=0.5]

		\coordinate (x) at (-1.5,2);
		\coordinate[label=right:{$\tau$}] (y) at (-1.5, -2.5);
		\draw [very thick, in=105, out=-45] (x) to (y);
		
		\coordinate (a) at (-2, 0);
		\coordinate[label=right:{1}] (b) at (2, 0);
		\draw [very thick, in=205, out=25] (a) to (b);
		
		\coordinate (w) at (1.5,2);
		\coordinate[label=right:{$g-\tau-1$}] (z) at (1.5, -2.5);
		\draw [very thick, in=75, out=-125] (w) to (z);

		\draw[thick,->,
		line join=round,
		decorate, decoration={
			zigzag,
			segment length=4,
			amplitude=.9,post=lineto,
			post length=2pt
		}](5,0) -- (6,0);

		\coordinate (x) at (-0.5+9,2);
		\coordinate[label=right:{$\tau$}] (y) at (-0.5+9, -2.5);
		\draw [very thick, in=105, out=-45] (x) to (y);

		\coordinate (a) at (-1+9, 0);
		\coordinate (b) at (2+9, 0.7);
		\coordinate[label=above:0] (c) at (1+9, 1.5);
		\draw [very thick] (a)   to[out=0, in=-90]  (b)  to[out=90, in=-25] (c);

		\coordinate[label=above:0] (d) at (0+3+9, 1.5);
		\coordinate (e) at (1+4+9, 0);
		\draw [very thick] (d)  to[out=205, in=90]   (b)  to[out=-90, in=180] (e);

		\coordinate (w) at (7.5+6,2);
		\coordinate[label=right:{$g-\tau-1$}] (z) at (7.5+6, -2.5);
		\draw [very thick, in=75, out=-125] (w) to (z);
		
		\draw[thick,->,
		line join=round,
		decorate, decoration={
			zigzag,
			segment length=4,
			amplitude=.9,post=lineto,
			post length=2pt
		}](18,0) -- (17,0);

		\coordinate (x) at (20.1,2);
		\coordinate[label=left:{$\tau$}] (y) at (20.1, -2.5);
		\draw [very thick, in=45, out=-45] (x) to (y);

		\coordinate (w) at (22,2);
		\coordinate[label=right:{$g-\tau-1$}] (z) at (22, -2.5);
		\draw [very thick, in=135, out=-135] (w) to (z);

		\end{tikzpicture}
	\end{center}
	\caption{An $A_1/A_1$-attached elliptic bridge and a tacnode that isotrivially specialise to an $A_1/A_1$-attached rosary of length 2. }\label{F:iso2}
\end{figure}

\begin{figure}[!h]
	\begin{center}
		\begin{tikzpicture}[scale=0.5]

		\coordinate (x) at (-2,2.5);
		\coordinate (y) at (-1.5, 0.7);
		\coordinate[label=right:{$\tau$}] (z) at (-2, -2);
		\draw [very thick] (x) to[in=90, out=-45] (y) to[in=105, out=-90] (z);

		\coordinate[label=above:1] (a) at (-0.7, 1.5);
		\coordinate (c) at (2, 0);
		\draw [very thick] (a)  to[out=205, in=90]   (y)  to[out=-90, in=180] (c);

		\coordinate (w) at (1.5,2.5);
		\coordinate[label=right:{$g-\tau-2$}] (z) at (1.5, -2);
		\draw [very thick, in=75, out=-125] (w) to (z);

		\draw[thick,->,
		line join=round,
		decorate, decoration={
			zigzag,
			segment length=4,
			amplitude=.9,post=lineto,
			post length=2pt
		}](4,0) -- (5,0);
		
		\coordinate (x) at (-0.5+8,2.5);
		\coordinate[label=right:{$\tau$}] (y) at (-0.5+8, -2);
		\draw [very thick, in=105, out=-45] (x) to (y);

		\coordinate (a) at (-1+8, 0);
		\coordinate (b) at (2+8, 0.7);
		\coordinate[label=above:0] (c) at (1+8, 1.5);
		\draw [very thick] (a)   to[out=0, in=-90]  (b)  to[out=90, in=-25] (c);

		\coordinate (d) at (0+3+8, 1.5);
		\coordinate (e) at (2+3+8, 0.7);
		\coordinate[label=above:0] (f) at (1+3+8, 1.5); 
		\draw [very thick] (d)  to[out=205, in=90]   (b)  to[out=-90, in=-90] (e) to[out=90, in=-25] (f);

		\coordinate[label=above:0] (l) at (0+6+8, 1.5);
		\coordinate (m) at (1+7+8, 0);
		\draw [very thick] (l)  to[out=205, in=90]   (e)  to[out=-90, in=180] (m);

		\coordinate (w) at (7.5+8,2.5);
		\coordinate[label=right:{$g-\tau-2$}] (z) at (7.5+8, -2);
		\draw [very thick, in=75, out=-125] (w) to (z);

		\draw[thick,->,
		line join=round,
		decorate, decoration={
			zigzag,
			segment length=4,
			amplitude=.9,post=lineto,
			post length=2pt
		}](20,0) -- (19,0);

		\coordinate (x) at (22,2.5);
		\coordinate[label=right:{$\tau$}] (y) at (22, -2);
		\draw [very thick, in=105, out=-45] (x) to (y);

		\coordinate[label=above:1] (a) at (24.3, 1.5);
		\coordinate (b) at (25, 0.7);
		\coordinate (c) at (21.5, 0);
		\draw [very thick] (a)  to[out=-25, in=90]   (b)  to[out=-90, in=0] (c);

		\coordinate (w) at (25.5,2.5);
		\coordinate[label=right:{$g-\tau-2$}] (z) at (25.5, -2);
		\draw [very thick] (w) to[in=90, out=-125] (b)  to[out=-90, in=75] (z);

		\end{tikzpicture}
	\end{center}
	\caption{Two $A_3/A_1$-attached elliptic bridges that isotrivially specialise to an $A_1/A_1$-attached rosary of length 3. }\label{F:iso3}
\end{figure}
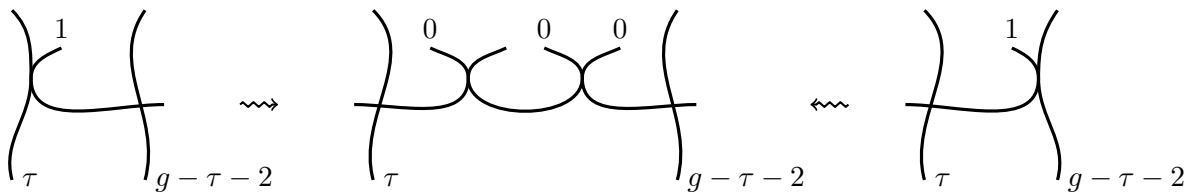

\subsection{The stacks of $T$-semistable curves and $T^+$-semistable curves}\label{S:Tstacks}

The aim of this subsection is to introduce the  stacks of $T$-semistable and $T^+$-semistable $n$-pointed curves.

Let $\U_{g,n}$ (resp. $\U_{g,n}^{lci}$) be the algebraic stack of flat, proper families of $n$-pointed curves $(\pi: \m C \to B,\{\sigma_i\}_{i=1}^n)$, where $\{\sigma_i\}_{i=1}^n$ are distinct sections that lie in the smooth locus of $\pi$, such that the geometric fibres of $\pi$ are  Gorenstein (resp. lci=locally complete intersection) curves of arithmetic genus $g$ and the line bundle $\omega_{\m C /B}(\sum \sigma_i)$ is relatively ample. Note that $\U_{g,n}$ is of finite type over $k$ since it parametrises log canonically polarized $n$-pointed curves and $\U_{g,n}^{lci}$ is an open substack of $\U_{g,n}$ which is smooth and irreducible since lci curves are unobstructed (see \cite[Cor. 3.1.13(ii)]{Ser} or  \cite[\href{http://stacks.math.columbia.edu/tag/0DZX}{Tag 0DZX}]{stacks-project}) and smoothable (see \cite[Ex. 29.0.1, Cor. 29.10]{Har}) and the condition of being lci is open (see \cite[19.3.6, 19.3.8]{EGAIV4}). For any $1\leq k\leq 3$, we denote by $\m U_{g,n}(A_k) \subset \m U_{g,n}^{lci}$ the open substack parametrizing curves with at worst $A_1,\ldots, A_k$-singularities. Note that $\U_{g,n}(A_1)=\M_{g,n}$.

Let us now recall the definition and the basic properties of the stack of pseudo-stable curves.

\begin{definition}\label{D:pseudo-stable}
\noindent 
\begin{enumerate}[(i)]
\item An $n$-pointed \emph{pseudo-stable}  curve of genus $g$ is an $n$-pointed curve $(C, \{p_i \}_{i=1}^n)$ in $\U_{g,n}(A_2)$ that does not have $A_1$-attached elliptic tails.
\item The stack of pseudo-stable $n$-pointed curves of genus $g$ is denoted by $\M_{g,n}^{\ps}$.
\end{enumerate}
\end{definition}
The stack of pseudo-stable curves $\M_{g,n}^{\ps}$ coincides with the stack $\M_{g,n}(9/11-\varepsilon)=\M_{g,n}(7/10+\varepsilon)$ from \cite[Def. 2.5 and Sec. 2.2]{AFSV1}. We have decided to adopt this terminology 
because it is a natural extension of the case $n=0$ originally  considered by Schubert \cite{Sch} (see also Hassett-Hyeon \cite{HH1} and Hyeon-Morrison \cite{HM}).

\begin{fact}[{\cite[Thm. 2.7]{AFSV1}}] \label{F:embAFSV} 
We have the following open embeddings 
$$
\M_{g,n} \hookrightarrow \M_{g,n}(9/11):=\U_{g,n}(A_2)\hookleftarrow \M_{g,n}(9/11-\varepsilon)=\M_{g,n}^{\ps}.
$$
In particular, the stack $\M_{g,n}^{\ps}$ is a smooth irreducible algebraic stack of finite type over $k$.
\end{fact}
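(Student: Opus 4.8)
The assertions that $\U_{g,n}(A_2)$ is a smooth, irreducible algebraic stack of finite type over $k$ and that $\M_{g,n}=\U_{g,n}(A_1)$ is an open substack of it follow immediately from the discussion preceding the statement (the stacks $\U_{g,n}(A_k)$ are open in the smooth irreducible finite-type stack $\U_{g,n}^{lci}$, and $\U_{g,n}(A_1)=\M_{g,n}$). So the real content is the second open embedding $\M_{g,n}^{\ps}\hookrightarrow\U_{g,n}(A_2)$, i.e.\ that $\M_{g,n}^{\ps}$ is an \emph{open} substack of $\U_{g,n}(A_2)$. Granting this, the final clause is automatic: $\M_{g,n}^{\ps}$ is a nonempty open substack of the smooth irreducible finite-type stack $\U_{g,n}(A_2)$ — nonempty since it contains $M_{g,n}$, a smooth $n$-pointed curve having no singular subcurves and hence no $A_1$-attached elliptic tail — so it is itself smooth, irreducible and of finite type.

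By Definition \ref{D:pseudo-stable}, $\M_{g,n}^{\ps}$ is the substack of $\U_{g,n}(A_2)$ of curves \emph{without} an $A_1$-attached elliptic tail, so I would show that the locus $\cZ\subseteq\U_{g,n}(A_2)$ of curves that \emph{do} carry one is closed. My plan is to realise $\cZ$ (for $(g,n)\neq(2,0)$; the case $(2,0)$ needs in addition one self-gluing morphism) as the image of the clutching morphism
$$
\kappa\colon\ \U_{1,1}(A_2)\times_k\U_{g-1,n+1}(A_2)\ \longrightarrow\ \U_{g,n}(A_2)
$$
that glues the marked point of the genus-$1$ factor to the last marked point of the genus-$(g-1)$ factor into a node; here $\U_{1,1}(A_2)$ is precisely the stack of elliptic tails, since an irreducible lci $1$-pointed curve of arithmetic genus $1$ is a smooth elliptic curve, a nodal cubic or a cuspidal cubic and $\omega_E(q)$, of degree $1$, is automatically ample. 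I would check that $\kappa$ is well defined (the glued curve has arithmetic genus $g$, carries $n$ marked points, has only $A_{\le 2}$-singularities — those of the two pieces together with the new node — and has ample log-canonical class, of degree $1$ on the tail and restricting on the other piece to its original ample polarization with the gluing point as the $(n+1)$-st marked point), that its image is exactly $\cZ$ (an $A_1$-attached elliptic tail of $C$ is cut off from $C$ along its attaching node to recover the gluing data), and that $\kappa$ is closed — clutching morphisms being representable, finite and unramified, as in the case of $\M_{g,n}$. Hence $\cZ$ is closed and $\M_{g,n}^{\ps}$ is open.

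The step I expect to be the main obstacle — present whichever route one takes — is controlling how an elliptic tail can degenerate. This is most visible in the direct, valuative-criterion proof that $\cZ$ is closed: one takes a family $\cC\to\Delta$ in $\U_{g,n}(A_2)$ over a DVR whose geometric generic fibre carries an $A_1$-attached elliptic tail, forms the flat closure $\mathcal{E}\subset\cC$ of the tail, so that $\mathcal{E}_0$ is a connected subcurve of $\cC_0$ of arithmetic genus $1$ carrying no marked points, and must then deduce that $\cC_0$ still carries an $A_1$-attached elliptic tail (necessarily contained in $\mathcal{E}_0$). This is not formal, because a priori $\mathcal{E}_0$ could become reducible, or could meet the rest of $\cC_0$ in more than one point. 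It is exactly the ampleness of $\omega_{\cC_0}(\sum_i\sigma_i(0))$, i.e.\ the positivity of the log-canonical degree on every component of $\cC_0$, that rules out these configurations; making this precise is the heart of the matter, and is what the quoted theorem of Alper--Fedorchuk--Smyth--van der Wyck accomplishes.
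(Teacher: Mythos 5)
The paper does not actually prove this statement: it is recorded as a Fact and attributed wholesale to \cite[Thm.\ 2.7]{AFSV1}, so there is no internal argument to compare yours against. Your reduction is correct as far as it goes: the only nontrivial point is that the locus $\cZ\subseteq \U_{g,n}(A_2)$ of curves carrying an $A_1$-attached elliptic tail is closed, and everything else (smoothness, irreducibility, finite type, the first open embedding, and the "in particular" clause) follows from the ambient properties of $\U_{g,n}^{lci}$ exactly as you say.

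The gap is that the proposal never establishes that closedness. Realising $\cZ$ as the image of the clutching map $\kappa$ only helps if $\kappa$ is closed, and properness of $\kappa$, tested via the valuative criterion, is \emph{literally} the degeneration problem you postpone to your last paragraph: given a family whose generic fibre splits as $E_\eta\cup_q C'_\eta$, one must show the special fibre still splits off an irreducible genus-one piece attached at a single node. You cannot import finiteness of clutching maps "as in the case of $\M_{g,n}$": Knudsen's argument uses stability and separatedness, whereas $\U_{g,n}(A_2)$ is not even separated. So the clutching route does not sidestep the difficulty; either way the content is the specialization analysis, which your final paragraph correctly describes (flat closures of the two pieces, constancy of their arithmetic genera, the fact that the disconnecting node cannot limit to a unibranch cusp since the two flat closures contribute distinct branches at the limit point, and ampleness of $\omega_{\cC_0}(\sum_i\sigma_i(0))$ forcing the limit tail to stay irreducible and attached at one node) but explicitly declines to carry out, deferring it to the quoted theorem. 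That analysis is precisely \cite[Prop.\ 2.15(1)]{AFSV1}; compare the paper's own Lemma \ref{lem:A_3(T)}, which runs the same "constructible, plus closed under specialization via partial normalization along the singular section" argument for tacnodes. As written, then, the crux of your proof is outsourced to the very citation the Fact already carries. A minor further point: your caveat about a self-gluing morphism for $(g,n)=(2,0)$ is unnecessary, since under Definition \ref{D:tailchain} a self-glued genus-one curve has no $A_1$-attached elliptic tail ($E\setminus\{q\}\to C$ fails to be an open embedding at the resulting node), so the ordinary clutching map already covers $\cZ$ in that case.
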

Note that for small values of $(g,n)$, the stack $\M_{g,n}^{\ps}$ is degenerate: if $g=0$ then $\M_{0,n}^{\ps}=\M_{0,n}$, while for $(g,n)=(1,1)$ we have that  $\M_{1,1}^{\ps}=\emptyset$. 
%Hence, from now on, whenever we talk about $\M_{g,n}^{\ps}$, we assume that $(g,n)\neq (1,1)$ in order to avoid trivialities.

The properties of the algebraic stack $\M_{g,n}^{\ps}$ and its relation with the stack $\M_{g,n}$  of stable curves are collected in the following Proposition.
%which is probably well-known to the experts. 
\begin{proposition}\label{P:Mgps-DM}
Assume that $(g,n)\neq (1,1), (2,0)$. 
\begin{enumerate}[(i)]
\item \label{P:Mgps-DM0}
There is a surjective morphism $\wh{\Upsilon}:\M_{g,n}\to \M_{g,n}^{\ps}$ which, on geometric points, sends a stable $n$-pointed curve $(C,\{p_i\})$ into the pseudostable $n$-pointed curve $\wh{\Upsilon}(C,\{p_i\})$ which is obtained by replacing every ($A_1$-attached) elliptic tail of $(C,\{p_i\})$ by a cusp. 

\item \label{P:Mgps-DM1}
 $\M_{g,n}^{\ps}$ is a proper stack with finite inertia. 
\item \label{P:Mgps-DM2}
If $\car(k)\neq 2 \textrm{ or }3$, then $\M_{g,n}^{\ps}$ is a Deligne-Mumford (DM) stack. 
\end{enumerate}
\end{proposition}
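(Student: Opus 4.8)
The plan is to prove Proposition~\ref{P:Mgps-DM} in three parts, following the structure of the statement.

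\textbf{Part \eqref{P:Mgps-DM0}: construction of $\wh{\Upsilon}$.} The morphism should be constructed using the deformation/moduli-theoretic universal property of $\M_{g,n}^{\ps}$. Given a family of stable curves $\pi:\mathcal{C}\to B$ with sections $\{\sigma_i\}$, I would contract the $A_1$-attached elliptic tails in the fibres simultaneously to cusps. The standard way to do this (cf.\ Hassett--Hyeon \cite{HH1}, and the relative version in \cite[Sec.~2.2]{AFSV1}) is to consider the line bundle $\omega_{\mathcal{C}/B}(\sum\sigma_i)^{\otimes m}$ for a suitable $m$ (e.g.\ $m\geq 3$) and take the image of the morphism to projective space it defines, or equivalently to use the contraction of the extremal faces in the relative setting. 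One must check that this construction is compatible with base change, so that it does define a morphism of stacks $\M_{g,n}\to \M_{g,n}^{\ps}$, and that on geometric points it performs exactly the elliptic-tail-to-cusp replacement; this last point is a local computation at an elliptic tail, which is classical (the semistable reduction / stable reduction picture for the cusp $y^2=x^3$). I would remark that the existence of this morphism on geometric points already follows from Fact~\ref{F:embAFSV} together with the fact that both $\M_{g,n}$ and $\M_{g,n}^{\ps}$ sit inside $\U_{g,n}(A_2)$, but the nontrivial content is the existence as a morphism of algebraic stacks, which is where the relative contraction argument is needed.

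\textbf{Part \eqref{P:Mgps-DM1}: properness and finite inertia.} For properness, by Fact~\ref{F:embAFSV} the stack $\M_{g,n}^{\ps}$ is of finite type over $k$, so it remains to verify separatedness and universal closedness via the valuative criterion. This is the heart of the Hassett--Keel machinery at $\alpha=9/11-\varepsilon$: given a family of pseudostable curves over $\Delta^*$, one first does stable reduction to get a family of stable curves over $\Delta$ (after base change), then applies $\wh{\Upsilon}$ from Part~\eqref{P:Mgps-DM0} to the special fibre; uniqueness of the limit (separatedness) follows because any two pseudostable limits both dominate the stable limit and the elliptic-tail contraction is canonical. I would cite \cite[Thm.~2.7]{AFSV1} (equivalently \cite{HH1,HM,Sch}) for this, since the statement is that $\M_{g,n}(9/11-\varepsilon)$ is a proper algebraic stack. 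For finite inertia: a pseudostable curve has at worst finitely many automorphisms unless it contains a subcurve with a positive-dimensional automorphism group; the only log-canonically-polarized genus-$g$ curves with infinitely many automorphisms among cuspidal/nodal curves would involve rational components meeting the rest in too few points, but the ampleness of $\omega_C(\sum p_i)$ together with the ban on elliptic tails rules out the problematic configurations (a cuspidal rational tail, which would have a $\Gm$ of automorphisms, is excluded precisely because it is an elliptic tail). So the automorphism group schemes are finite, hence the inertia is finite; this is again in \cite{AFSV1}.

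\textbf{Part \eqref{P:Mgps-DM2}: Deligne--Mumford-ness when $\car k\neq 2,3$.} Having finite inertia, by a theorem of Deligne--Mumford / Conrad the stack is DM if and only if all its automorphism group schemes are \'etale, equivalently reduced (unramified). The only way an automorphism group scheme of a curve can be non-reduced is in the presence of a cusp ($A_2$ singularity) or of wild phenomena in small characteristic: an automorphism fixing a cusp can act trivially on the normalization while acting nontrivially on the cuspidal structure, producing infinitesimal automorphisms exactly when $\car k = 2$ or $3$ (the automorphism group of the cusp $\Spec k[[t^2,t^3]]$ is $\Gm \ltimes \Ga$-like but the relevant finite piece can fail to be reduced in characteristic $2,3$). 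I would argue: away from $\car k =2,3$, the automorphism of any $A_2$-singularity that is induced by a global automorphism of a pseudostable curve is determined by its action on the normalization and is reduced, so $\Aut(C)$ is \'etale; hence $\M_{g,n}^{\ps}$ is DM. (There is a typo in the statement: "$\M_{g,n}$ is a DM stack" should read "$\M_{g,n}^{\ps}$"; the ordinary $\M_{g,n}$ is of course always DM.)

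\textbf{Main obstacle.} The genuinely delicate point is Part~\eqref{P:Mgps-DM0}---constructing $\wh{\Upsilon}$ as a morphism of \emph{stacks}, i.e.\ functorially in families, rather than just on geometric points. One must show the simultaneous contraction of elliptic tails in a family is flat, commutes with base change, and the resulting total space still has fibres that are pseudostable (in particular that no worse-than-cuspidal singularities appear and that the relative dualizing sheaf stays relatively ample). After that, the valuative criterion in Part~\eqref{P:Mgps-DM1} and the \'etale-ness check in Part~\eqref{P:Mgps-DM2} are comparatively routine given the cited results, modulo care about the characteristic hypotheses in the cuspidal local model.
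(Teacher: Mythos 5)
Your proposal follows essentially the same route as the paper: part \eqref{P:Mgps-DM0} is handled by citing Hassett--Hyeon's construction (the paper only adds that the characteristic-zero hypothesis of loc.\ cit.\ is not actually used), and your finite-inertia argument via smooth rational components of the normalization with too few special points is exactly the paper's case analysis. Two differences in execution are worth flagging. For part \eqref{P:Mgps-DM1}, the paper proves separatedness by showing the Isom scheme of two pseudostable families is proper: quasi-projectivity comes from the relative log canonical polarization as in Deligne--Mumford, and the valuative criterion is precisely your stable-reduction-then-contract argument, made rigorous via Schubert's observation that $\wh\Upsilon$ applied to the stable model recovers the original pseudostable family; universal closedness is then obtained for free from the surjectivity of $\wh{\Upsilon}$ together with properness of $\M_{g,n}$ and separatedness of the target, rather than by a second run of the valuative criterion. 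Note also that Fact~\ref{F:embAFSV} (i.e.\ the cited theorem of AFSV) only gives that $\M_{g,n}^{\ps}$ is an algebraic stack of finite type, not that it is proper, so it cannot be invoked for properness as you suggest. For part \eqref{P:Mgps-DM2}, your claim that an automorphism of a cusp ``is determined by its action on the normalization and is reduced'' away from characteristic $2,3$ is exactly where the substantive input sits: the paper invokes Smyth's result that $H^0(C,T_C(-\sum p_i))$ injects into $H^0(\wt{C},T_{\wt{C}}(-\sum p_i-\sum q_j))$ when $\car(k)\neq 2,3$, and then the latter group vanishes by the same no-rational-component-with-at-most-two-special-points analysis used for finite inertia; as written, this step of your argument is a gap that should be filled either by that citation or by an explicit local computation at the cusp.
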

\begin{proof}
This is well-known to the expert, so we only give a sketch of the proof. 

Part \eqref{P:Mgps-DM0}: the morphism of stacks $\wh{\Upsilon}$ can be constructed (using that $(g,n)\neq (2,0)$) as in \cite[Thm. 1.1]{HH1}, which deals with $n=0$ (note that the assumption  $\car(k)=0$ is not  needed in the proof of loc. cit.).

Part \eqref{P:Mgps-DM1}: the properness of $\M_{g,n}^{\ps}$ can be deduced from the properness of $\M_{g,n}$ and the existence of the surjective morphism $\wh{\Upsilon}:\M_{g,n}\to \M_{g,n}^{\ps}$, as in \cite[Prop. 2.23]{FS}.

In order to show that $\M_{g,n}^{\ps}$ has finite inertia and to prove part \eqref{P:Mgps-DM2}, consider $(C,\{p_i\})\in \M_{g,n}^{\ps}(k)$, with  $k=\ov k$, and denote by $(\wt{C},\{q_j\})$ the normalization of $C$ together with special points $\{q_j\}$ that are  either the inverse images of the points $\{p_i\}$ or  the inverse images of the singular points of $C$. It can be checked (using that $(g,n)\neq (1,1)$)  that
\begin{equation*}
\text{ Every  component of $\wt{C}$ of genus  $0$ (resp. $1$)  has at least $3$ (resp. $1$)  special points. } \tag{*}
\end{equation*} 
Since the abstract automorphism group of $(C,\{p_i\})$ injects into the abstract automorphism group of $(\wt{C},\{q_j\})$, and this latter is finite by (*), we deduce that $\M_{g,n}^{\ps}$ has finite inertia. 

Moreover, if $\car(k)\neq 2,3$, then the Lie algebra of the automorphism group scheme of $(C,\{p_i\})$, which is isomorphic to  $H^0(C,T_C(-\sum p_i))$, injects into $H^0(\wt{C},T_{\wt{C}}(-\sum q_j))$ by \cite[Proposition 2.3]{Smyth_non_reduced}, and this latter vector space is zero by (*), which shows part \eqref{P:Mgps-DM2}. 
\end{proof}

\begin{remark}
If $\car(k)$ is equal to $2$ or $3$,  \cite[Example 1]{Smyth_non_reduced} shows that a high genus cuspidal curve can have non-zero vector fields, hence $\M_{g,n}^{\ps}$ is not a Deligne-Mumford stack.

\end{remark}

If $(g,n)=(2,0)$ then the stack $\M_{g,n}^{\ps}$ does not have finite inertia and it is not separated (hence it is neither proper nor DM), as we now discuss. 

\begin{remark}\label{R:ps-g2}[Pseudostable curves with $(g,n)=(2,0)$]
In the special case $(g,n)=(2,0)$, pseudostable curves are of these types: smooth curve $C_{\emptyset}$, integral curve $C_n$ with one node and geometric genus $1$,   integral curve  $C_c$ with one cusp and geometric genus $1$, rational curve with two nodes $C_{nn}$,  rational curve $C_{nc}$ with one node and one cusp, curve $C_{nnn}$ made of two smooth rational curves meeting in three nodes, and rational curve $C_{cc}$ with two cusps  (see \cite[Fig. 1]{CTV} for a picture of all the strata of $\M_2^{\ps}$).
A pseudostable curve in $\M_{2}^{\ps}$ is a closed point if and only if it is either nodal or  it is the curve $C_{cc}$ with two cusps.
The pseudostable $C_c$ and $C_{nc}$ with only one cusp isotrivially specialises to $C_{cc}$ and hence they both contain $C_{cc}$ in their closure (see \cite[Thm. 1]{HL}). Moreover, the automorphism group of $C_{cc}$ is equal to $\Gm$. 
\end{remark}

Since $\M_{g,n}^{\ps}$ is a proper (smooth and irreducible) stack with  finite inertia, we can apply \cite{KeM} in order to deduce the following result.
\begin{cor}\label{C:coarseps}
If $(g,n)\neq (1,1), (2,0)$, then there exists a proper normal irreducible algebraic space $\MM_{g,n}^{\ps}$ together with a morphism $\phi^{\ps}:\M_{g,n}^{\ps}\to \MM_{g,n}^{\ps}$ which is a coarse moduli space.
\end{cor}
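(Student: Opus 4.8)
The plan is to reduce everything to the Keel--Mori theorem \cite{KeM}, whose hypotheses have in fact already been established. First I would recall that, under the standing assumption $(g,n)\neq (1,1),(2,0)$, Fact~\ref{F:embAFSV} together with Proposition~\ref{P:Mgps-DM} shows that $\M_{g,n}^{\ps}$ is a proper, smooth, irreducible algebraic stack of finite type over $k$ which moreover has finite inertia. Applying \cite{KeM} to $\M_{g,n}^{\ps}$ then produces an algebraic space $\MM_{g,n}^{\ps}$ together with a morphism $\phi^{\ps}\colon \M_{g,n}^{\ps}\to \MM_{g,n}^{\ps}$ that is a coarse moduli space; in particular $\phi^{\ps}$ is proper, quasi-finite and surjective, it is initial among morphisms from $\M_{g,n}^{\ps}$ to algebraic spaces, it induces a bijection on $k$-points, and \'etale-locally on $\MM_{g,n}^{\ps}$ it has the shape $[\Spec A/G]\to \Spec A^{G}$ for a finite group $G$ acting on an affine scheme $\Spec A$ with $A$ a normal domain (the latter because $\M_{g,n}^{\ps}$ is smooth and irreducible).

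It then remains to verify the three adjectives in the statement. For \emph{properness}: the composite $\M_{g,n}^{\ps}\to \MM_{g,n}^{\ps}\to \Spec k$ is proper and $\phi^{\ps}$ is surjective, so $\MM_{g,n}^{\ps}\to \Spec k$ is proper by descent of properness along surjective proper morphisms, exactly as invoked in the proof of Proposition~\ref{P:Mgps-DM}. \emph{Irreducibility} of $\MM_{g,n}^{\ps}$ is immediate, since it is the continuous image of the irreducible stack $\M_{g,n}^{\ps}$ under the surjection $\phi^{\ps}$. For \emph{normality}, I would use the \'etale-local presentation above: since the affine charts $\Spec A$ are normal, one checks that the ring of invariants $A^{G}$ is again normal, being integrally closed in its fraction field $(\operatorname{Frac} A)^{G}$ (any element of $(\operatorname{Frac} A)^{G}$ integral over $A^{G}$ is integral over $A$, hence lies in $A$, hence in $A\cap(\operatorname{Frac} A)^{G}=A^{G}$); thus $\MM_{g,n}^{\ps}$ is normal.

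There is essentially no genuine obstacle here: the substantive work --- separatedness, properness, and finiteness of the inertia of $\M_{g,n}^{\ps}$ --- is already carried out in Proposition~\ref{P:Mgps-DM}, and the present statement is a formal consequence of the Keel--Mori theorem plus standard descent arguments. The only point worth flagging is that the excluded case $(g,n)=(2,0)$ is genuinely excluded: by Remark~\ref{R:ps-g2} the stack $\M_{2,0}^{\ps}$ fails to have finite inertia (and is not even separated), so \cite{KeM} does not apply and one only obtains an adequate moduli space rather than a coarse one.
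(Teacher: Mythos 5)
Your proposal is correct and follows exactly the route the paper takes: the paper's entire justification is the one-line observation that $\M_{g,n}^{\ps}$ is a proper, smooth, irreducible stack with finite inertia (established in Proposition~\ref{P:Mgps-DM}), so Keel--Mori applies. The extra verifications you supply (descent of properness along the surjection $\phi^{\ps}$, irreducibility of the image, normality of invariant rings in the \'etale-local charts) are standard and left implicit in the paper, and your remark about the genuine failure for $(g,n)=(2,0)$ matches Remarks~\ref{R:ps-g2} and~\ref{R:mod-g2}.
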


\begin{remark}\label{R:mod-g2}
If $(g,n)=(2,0)$ then it follows from \cite[Thm. 1]{HL} that $\M_2^{\ps}$ is the quotient stack of  the GIT semistable locus in  the Chow variety of tricanonical curves of genus $2$. This implies that the associated GIT quotient, which we will denote by $\MM_2^{\ps}$, is a normal irreducible projective variety that comes equipped with a morphism $\phi^{\ps}:\M_{2}^{\ps}\to \MM_{2}^{\ps}$ which is an adequate moduli space in the sense of Alper \cite{Alper2}. 
\end{remark}

%\begin{proposition}\label{P:Mgps-DM}
%If $(g,n)\neq (2,0)$ then  $\M_{g,n}^{\ps}$ is a proper Deligne-Mumford stack;  in  particular,  every $k$-point of $\M_{g,n}^{\ps}$ is closed. Moreover, there exists a proper normal irreducible algebraic space $\MM_{g,n}^{\ps}$ together with a morphism $\phi^{\ps}:\M_{g,n}^{\ps}\to \MM_{g,n}^{\ps}$ which is a coarse moduli space.
%\end{proposition}
%\begin{proof}
%This fact is well-know to the experts. The proof for $n=0$  follows  by combining the arguments of \cite[Sec. 4]{Sch}, \cite[Sec. 3]{HH1} and \cite[Prop. 2.23]{FS}. A proof for $n>0$ can be found in \cite{CTV}. The existence of a coarse moduli space follows from \cite{KeM}.
%\end{proof}
%The stack $\M_{2}^{\ps}$ is not DM,  however still exists an adequate moduli space $\phi^{\ps}:\M_{2}^{\ps}\to \MM_{2}^{\ps}$ which is a normal irreducible projective variety, see \cite{CTV}  and the references therein.

We now define the stack of $T$-semistable and $T^+$-semistable curves, for  $T\subseteq T_{g,n}$ (see \eqref{E:Tset}).

\begin{definition}\label{D:ourstacks}
Fix a subset $T\subseteq T_{g,n}$.
\begin{enumerate}
\item Let $\m U_{g,n}(A_3(T))$ be the substack of $\U_{g,n}(A_3)$ parametrizing $n$-pointed curves in  $\m U_{g,n}(A_{3})$ such that all their tacnodes have type contained in $T$.
\item  In $\m U_{g,n}$ define the following constructible loci:
$$
\m B^T :=\{\mbox{Curves containing an $A_1/A_1$-attached elliptic chain of type contained in $T$}\},%\subseteq \m B^{A_1/A_1};
$$
$$
\m T^{A_k}:= \{\mbox{Curves containing an $A_k$-attached elliptic tail}\}, \, \textrm{for } k=1 , 3 \, .
$$
\item Consider the following substacks of $\U_{g,n}(A_3(T))$:
$$
\M_{g,n}^T := \m U_{g,n}(A_3(T)) \setminus \left(\m  T^{A_1} \cup \m  T^{A_3}\right),\quad 
\M_{g,n}^{T,+} := \M_{g,n}^T \setminus \m B^T.
$$
The $n$-pointed curves in $\M_{g,n}^T$ are called \emph{$T$-semistable} while the $n$-pointed curves in $\M_{g,n}^{T,+}$ are called \emph{$T^+$-semistable}.
\end{enumerate}
\end{definition}

\begin{remark}\label{R:ourstacks}
\noindent 
The two extreme cases of the above definition are easily described. 
\begin{enumerate}[(i)]
\item If $T=\emptyset$ then 
$\M_{g,n}^T=\M_{g,n}^{T,+}=\M_{g,n}^{\ps}.$
\item If $T=T_{g,n}$ then 
$$\M_{g,n}^T=\M_{g,n}(7/10) \quad \text{ and } \quad  \M_{g,n}^{T,+} =\M_{g,n}(7/10-\epsilon),$$
with the notation of \cite[Def. 2.8]{AFSV1}.
\end{enumerate}
\end{remark}

We now want to prove that $\M_{g,n}^T$ and $\M_{g,n}^{T,+}$ are algebraic stacks of finite type over $k$.  Let us first consider the stack $\U_{g,n}(A_3(T))$.

\begin{lemma}\label{lem:A_3(T)}
The locus $\m U_{g,n}(A_3(T))$ is open in $\m U_{g,n}(A_{3})$.  In particular, $\U_{g,n}(A_3(T))$ is an algebraic  stack of finite type over $k$.
\end{lemma}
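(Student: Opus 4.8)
The plan is to show that $\mathcal{U}_{g,n}(A_3(T))$ is obtained from $\mathcal{U}_{g,n}(A_3)$ by removing a closed substack, namely the locus of curves carrying a tacnode whose type is \emph{not} contained in $T$. Since $\mathcal{U}_{g,n}(A_3)$ is already known to be an algebraic stack of finite type over $k$ (it is an open substack of $\mathcal{U}_{g,n}^{lci}$, which is smooth, irreducible and of finite type), an open substack of it inherits the same properties, so the ``in particular'' clause is immediate once openness is established.

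First I would reduce to a statement about a single universal family. Let $(\pi\colon \mathcal{C}\to B, \{\sigma_i\})$ be an object of $\mathcal{U}_{g,n}(A_3)(B)$; it suffices to prove that the locus $B_T\subseteq B$ of points $b$ such that every tacnode of the fibre $\mathcal{C}_b$ has type contained in $T$ is open in $B$. Equivalently, I want the \emph{bad} locus $B\setminus B_T$ — points whose fibre has at least one tacnode of type not in $T$ — to be closed. The key geometric input is that the type of a tacnode is a \emph{discrete} invariant that is locally constant in families: the type records (i) whether the partial normalisation at the tacnode is connected or disconnected, and (ii) in the disconnected case, the genus $\tau$ and the marked-point distribution $I$ of one of the two components. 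Both of these are constant on a connected base once we normalise simultaneously along a section of tacnodes.

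The technical heart is therefore to organise the tacnodes of the fibres into a relative structure. Inside $\mathcal{C}$ the relative singular locus is closed and finite over $B$; within it, the locus $\mathcal{T}\subseteq \mathcal{C}$ of points that are tacnodes of their fibre is locally closed (the $A_k$-singularity types stratify the singular locus, this being a standard fact about deformations of curve singularities, cf. the discussion of $A_k$-singularities in the Notation section), and $\mathcal{T}\to B$ is unramified and finite onto its image. After an \'etale localisation on $B$ I may assume $\mathcal{T}\to B$ is a disjoint union of sections $\{\delta_j\colon B\to \mathcal{C}\}$. For each $j$ I form the partial normalisation $\nu_j\colon \widetilde{\mathcal{C}}^{(j)}\to \mathcal{C}$ of the tacnode along $\delta_j$; this is again a flat family of curves over $B$, and on it the number of connected components of the fibres, and (when this number is $2$) the genera and the marked-point partition of the two components, are locally constant functions of $b\in B$ (connected components of fibres of a proper flat morphism are locally constant in flat families, and arithmetic genus is constant in flat families). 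Hence $b\mapsto \mathrm{type}(\delta_j(b))$ is locally constant on $B$. The bad locus is then $\bigcup_j \{b : \mathrm{type}(\delta_j(b))\notin T\}$, a finite union of open-and-closed subsets, hence in particular closed; therefore $B_T$ is open.

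The step I expect to require the most care is making precise that ``tacnode of type not in $T$'' cuts out a \emph{closed} (not merely constructible) condition, i.e.\ that a tacnode cannot degenerate, within $\mathcal{U}_{g,n}(A_3)$, to something whose type is in $T$ while its neighbours have type not in $T$ — equivalently, that along a one-parameter family with tacnodal generic fibre the special fibre either keeps a tacnode of the same type or acquires a strictly worse singularity (which is impossible in $\mathcal{U}_{g,n}(A_3)$, where only $A_1,A_2,A_3$ occur) or smooths the tacnode out. Since in $\mathcal{U}_{g,n}(A_3)$ an $A_3$-point can only deform to $A_3$, $A_2$, $A_1$, or a smooth point, and in the first case the type is preserved by the local-constancy argument above, the bad locus is stable under specialisation, hence closed. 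This is exactly what is used in \cite{AFSV1} and \cite{AFS2} in the analysis of the tacnodal strata, so I would simply invoke the same deformation-theoretic facts about $A_k$-singularities rather than reprove them.
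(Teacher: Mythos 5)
Your overall strategy is the same as the paper's: reduce to showing that the locus of fibres carrying a tacnode of type outside $T$ is closed, and then argue that the type is a discrete invariant constant in families, via partial normalisation along the tacnodal section and the local constancy of the number of connected components, the arithmetic genera, and the marked-point distribution of the fibres. The paper carries out the closedness-under-specialisation argument directly over a DVR, whereas you phrase it via étale localisation to split the tacnodal locus into sections; these are essentially equivalent reductions.

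There is, however, a real imprecision in your final paragraph that happens to be exactly the point the paper makes explicit. You list, among the possibilities for the special fibre of a one-parameter family whose generic fibre has a tacnode, that the special fibre ``smooths the tacnode out.'' If that could happen, your argument would fail: the generic fibre could have a tacnode of type outside $T$ while the special fibre had no tacnode at all, so the bad locus would not be stable under specialisation. In fact this cannot happen, but not for the reason you give (``an $A_3$-point can only deform to $A_3$, $A_2$, $A_1$, or a smooth point'' concerns \emph{generisation}, not specialisation, and is irrelevant here). The correct input is the upper semicontinuity of the $\delta$-invariant: the generic fibre has a point with $\delta=2$, so the special fibre has a point with $\delta\geq 2$; inside $\m U_{g,n}(A_3)$, only an $A_3$-singularity has $\delta\geq 2$, so the tacnode persists. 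This is precisely how the paper begins its specialisation argument, and it is the step you should make explicit. A related small issue: the sets $\{b : \type(\delta_j(b))\notin T\}$ are open-and-closed only inside the image of $\mathcal{T}$ in $B$ (the locus of fibres actually having a tacnode along $\delta_j$), not in $B$ itself; you need the $\delta$-invariant argument to know that this image is closed in $B$ before you can conclude that the bad locus is closed in $B$. With these two points filled in, your proof coincides in substance with the paper's.
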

\begin{proof}
We will show that $\m U_{g,n}(A_{3}) \setminus \m U_{g,n}(A_3(T))$ is closed. Since $ \m U_{g,n}(A_3(T))$ is clearly constructible in $\U_{g,n}(A_3)$, it suffices to show that $\m U_{g,n}(A_{3}) \setminus \m U_{g,n}(A_3(T))$ is closed under specialisations. 

To this aim, consider a family $(\pi: \m C \to \D, \{\sigma_i\}_{i=1}^n)$ of curves in $\m U_{g,n}(A_3)$ (over the spectrum $\Delta=\Spec R$ of a DVR) such that $\m C_{\ov \eta}$ has a tacnode $p_{\ov \eta}$. It is enough to show that the central fibre $\m C_0$ has a tacnode $p_0$ of the same type of $p_{\ov\eta}$. Up to passing to a finite base change of $\Delta$, we can assume that there exists a section $s$ of $\pi$ such that $s(\bar\eta)=p_{\ov \eta}$. We are now going to show that $p_0:=s(0)$ is a tacnode of $\cC_0$ of the same type of $s(\ov \eta)$.

Since the $\delta$-invariant is upper semicontinuous and  the tacnodes are the unique singular points of curves in $\U_{g,n}(A_3)$ that have $\delta$-invariant equal to $2$, we get that $s(0) \in C_0$ is also a tacnode. Hence the family $\pi:\cC\to \Delta$ is equigeneric (even equisingular) along the section 
    $s$; this implies that the partial normalisation of $\cC$ along the section $s$ produces a flat and proper family $\pi':\cY\to \Delta$ of curves whose geometric fibres $\cY_0$ and $\cY_{\ov \eta}$ are the partial normalisations of, respectively, $\cC_0$ and $\cC_{\ov \eta}$ at the points, respectively, $s(0)$ ans $s(\ov \eta)$ (see I.1.3.2 of the first paper of Teissier in \cite{Tei} for $k=\bbC$ and \cite[Thm. 4.1]{CHL} for an arbitrary  field $k=\ov k$; see also \cite[Prop. 2.10]{AFSV1} for an ad hoc proof in the case of outer $A$-singularities). Since in a flat and proper morphism with reduced geometric fibres, the number of connected components of the  fibres stays constant and it coincides with the number of connected components of the geometric fibres, we see that there are two possibilities: either $\cY_0$ and $\cY_{\ov \eta}$ are both connected or they have both two connected components. In the first case, we have that  $\type(s(\ov \eta))=\irr=\type(s(0))$. In the second case, we have that $\cY$ is the disjoint union of two flat and proper families $\pi_1:\cY_1\to \Delta$ and $\pi_2:\cY_2\to \Delta$ with geometrically connected fibres of arithmetic genera equal to, respectively, $\tau\geq 0$ and $g-\tau-1\geq 0$. Moreover, since the sections $\sigma_i$ of $\pi$ do not meet the section $s$, they can be lifted uniquely to sections $\sigma_i'$ of $\pi'$ and hence there will exists $I\subseteq [n]$ such that $\{\sigma_i'\}_{i\in I}$ are sections of $\pi_1$ and $\{\sigma_i'\}_{i\in I^c}$ are sections of $\pi_2$. This  clearly implies that $\type(s(0))=\{[\tau,I],[\tau+1,I]\}=\type(s(\ov \eta))$. 
\end{proof}

This is the main result of this subsection.

\begin{theorem}\label{T:algstack}
Assume that $(g,n)\neq (2,0)$ and fix a subset  $T\subseteq T_{g,n}$.
The stack $\M_{g,n}^T$ is algebraic, smooth, irreducible  and of finite type over $k$ and we have open embeddings 
$$
\xymatrix{
\M_{g,n}^{\ps} \ar@{^{(}->}[r]^{\iota_T} & \M_{g,n}^T & \ar@{_{(}->}[l] _{\iota_T^+}  \M_{g,n}^{T,+}.
}
$$
\end{theorem}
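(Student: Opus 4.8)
The plan is to prove the three assertions (algebraicity/smoothness/irreducibility/finite type, and the two open embeddings) in a logical order, leveraging Lemma~\ref{lem:A_3(T)} as the starting point. First I would recall that by Lemma~\ref{lem:A_3(T)} the stack $\U_{g,n}(A_3(T))$ is an algebraic stack of finite type over $k$, and it is moreover smooth and irreducible: indeed it is an open substack of $\U_{g,n}^{lci}$ (since $A_1, A_2, A_3$ singularities are lci), which is smooth and irreducible by the discussion in Section~\ref{S:Tstacks} (lci curves are unobstructed and smoothable, and the lci condition is open). So the only thing to check for the algebraicity part is that $\M_{g,n}^T = \U_{g,n}(A_3(T)) \setminus (\m T^{A_1} \cup \m T^{A_3})$ and $\M_{g,n}^{T,+} = \M_{g,n}^T \setminus \m B^T$ are \emph{open} substacks of $\U_{g,n}(A_3(T))$; openness then automatically gives that they are algebraic, smooth, of finite type, and — being dense opens in the irreducible $\U_{g,n}(A_3(T))$ (density because the generic point, a smooth curve, has no elliptic tails or bridges for $(g,n)$ hyperbolic and $(g,n)\ne(2,0)$) — irreducible.

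The technical heart is therefore showing that the loci $\m T^{A_1}$, $\m T^{A_3}$, and $\m B^T$ are \emph{closed} in $\U_{g,n}(A_3(T))$ (they are manifestly constructible, being defined by the existence of certain attached subcurves, so it suffices to check closedness under specialisation, exactly as in the proof of Lemma~\ref{lem:A_3(T)}). For $\m T^{A_1}$ (curves with an $A_1$-attached elliptic tail): given a one-parameter family in $\U_{g,n}(A_3(T))$ over $\Delta=\Spec R$ whose geometric generic fibre has an $A_1$-attached elliptic tail $(E_{\ov\eta},q_{\ov\eta})$, after a finite base change I would spread out the attaching node by a section $s$ of $\pi$, partially normalise $\cC$ along $s$ (using the equisingularity/simultaneous-normalisation results cited in Lemma~\ref{lem:A_3(T)}, namely \cite{Tei}, \cite[Thm.~4.1]{CHL}, \cite[Prop.~2.10]{AFSV1}) to split off a connected family of arithmetic genus $1$, and conclude that the central fibre again carries an elliptic tail attached at a node; one must check the attached genus-$1$ piece does not acquire worse singularities, which is automatic since curves in $\U_{g,n}(A_3)$ have at worst $A_3$ singularities and a genus-$1$ subcurve attached at one point can only degenerate to a smooth elliptic curve, a nodal, or a cuspidal rational curve. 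The argument for $\m T^{A_3}$ is identical, replacing the node by a tacnode and using upper semicontinuity of the $\delta$-invariant to keep the attaching point a tacnode (and the type-preservation already established in Lemma~\ref{lem:A_3(T)} to stay inside $\U_{g,n}(A_3(T))$). For $\m B^T$ (curves with an $A_1/A_1$-attached elliptic chain of type in $T$): the same strategy applies, spreading out the two attaching nodes and the internal tacnodes of the chain by sections and partially normalising; the subtlety is that in the limit an elliptic bridge in the chain may degenerate to the tacnodal elliptic bridge (an open rosary of length $2$), but this is still an $A_1/A_1$-attached elliptic chain of the same length and type, so the limit stays in $\m B^T$ — I would invoke Lemma~\ref{L:spec-ros} and Remark~\ref{R:tipo-ros} for the bookkeeping of types.

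The open embeddings $\iota_T$ and $\iota_T^+$ then follow formally: $\M_{g,n}^{\ps} = \U_{g,n}(A_2)\setminus \m T^{A_1}$ by Definition~\ref{D:pseudo-stable}, and $\U_{g,n}(A_2) = \U_{g,n}(A_3(T))\cap \U_{g,n}(A_2)$ sits inside $\U_{g,n}(A_3(T))$ as the open locus of curves with no tacnodes at all (tacnode-free curves trivially have all tacnodes of type in $T$, vacuously). A curve in $\M_{g,n}^{\ps}$ has no tacnodes, hence no $A_3$-attached elliptic tails and no $A_1/A_1$-attached elliptic chains (every such chain contains a tacnode, being of arithmetic genus $\geq 1$ glued at nodes — more precisely, an elliptic bridge has either two nodes or one tacnode, and a chain of length $\geq 2$ contains internal tacnodes; the length-$1$ case is a non-tacnodal elliptic bridge, but such a bridge still contains no cusp and is genus $1$, so ruling it out requires noting that $\m B^T$ for the relevant $T$ imposes the $A_1/A_1$ attaching — wait, this needs care). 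Let me instead argue: $\M_{g,n}^{\ps}\subseteq \M_{g,n}^T$ because $\M_{g,n}^{\ps}\subseteq \U_{g,n}(A_3(T))$, it avoids $\m T^{A_1}$ by definition of pseudostability, and it avoids $\m T^{A_3}$ since $A_3$-attachments require tacnodes which pseudostable curves lack; this is an open immersion since $\M_{g,n}^{\ps}$ is open in $\U_{g,n}(A_2)$ which is open in $\U_{g,n}(A_3(T))$ and $\M_{g,n}^T$ is open in the latter. Finally $\iota_T^+$ is an open immersion simply because $\M_{g,n}^{T,+}$ is by construction an open (the complement of the closed $\m B^T$) inside the algebraic stack $\M_{g,n}^T$. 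The main obstacle I anticipate is the careful verification that the limiting subcurves in the three closedness arguments stay of the prescribed singularity type and attachment type — this is where the simultaneous-normalisation machinery and Lemma~\ref{L:spec-ros} do the real work, and where a sloppy argument could miss a degeneration that escapes the locus.
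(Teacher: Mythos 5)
Your overall strategy matches the paper's: reduce to showing the loci $\m T^{A_1}$, $\m T^{A_3}$, and $\m B^T$ are closed, then deduce openness, smoothness, irreducibility and finite type from $\U_{g,n}^{lci}$ and Lemma~\ref{lem:A_3(T)}. For the two tail loci the paper directly cites \cite[Prop.~2.15(1)]{AFSV1} for closedness in the ambient $\U_{g,n}(A_3)$, so your plan to re-derive them via simultaneous normalisation is viable but amounts to unwinding that reference.

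The genuine gap is in your treatment of $\m B^T$. You aim to show $\m B^T$ is closed in $\U_{g,n}(A_3(T))$, but the paper proves the weaker statement that $\m B^T$ is closed in $\M_{g,n}^T$, and the ambient constraint is used essentially. Inside $\U_{g,n}(A_3(T))$ an $A_1/A_1$-attached elliptic chain of length $r$ can degenerate in ways your sketch does not account for: the chain can shorten (a sub-bridge can be absorbed by the rest of the curve), or an attaching node can become a tacnode so the attachment type changes from $A_1$ to $A_3$ --- the full list of limits is classified in \cite[Lemma~2.14]{AFSV1}, of which ``elliptic bridge $\leadsto$ tacnodal elliptic bridge'' is only one case. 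In particular the limit could be a curve with an $A_3$-attached elliptic tail and no $A_1/A_1$-attached chain at all, which lies outside $\m B^T$. The paper rules out exactly these escapes by working inside $\M_{g,n}^T$: once one knows the special fibre has no $A_1$- or $A_3$-attached elliptic tails, the classification of limits forces it to contain an $A_1/A_1$-attached chain of some length $s\leq r$, of type contained in that of the original chain and hence in $T$. Two further remarks: the argument tacitly uses $(g,n)\neq(2,0)$ so that the two attaching points of the chain are distinct, as you implicitly assume when spreading them out by sections; and Lemma~\ref{L:spec-ros} does not repair the gap, since it concerns only \emph{isotrivial} specialisations, whereas closedness under specialisation must control arbitrary one-parameter degenerations.
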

The above result is false for $(g,n)=(2,0)$, see \cite[Rmk. 3.9]{CTV}.  
If $T=T_{g,n}$ then, using  Remark \ref{R:ourstacks}, the above Theorem reduces to \cite[Thm. 2.7]{AFSV1} for $\alpha_c=7/10$ (but one has to assume that $(g,n)\neq (2,0)$).

\begin{proof} 
Since the locus $\m T^{A_1}\cup \m T^{A_3}$ is closed in $\U_{g,n}(A_3)$ by \cite[Prop. 2.15(1)]{AFSV1}, we get that $\M_{g,n}^T$ is open in $\U_{g,n}(A_3(T))$, and hence it is open in $\U_{g,n}^{lci}$ 
by Lemma \ref{lem:A_3(T)}. Therefore, we conclude that $\M_{g,n}^T$ is a smooth and irreducible algebraic stack of finite type over $k$ because the same is true for $\U_{g,n}^{lci}$.
Moreover, since $\U_{g,n}(A_2)$ is open in $\U_{g,n}(A_3(T))$, we get that the inclusion 
$$\M_{g,n}^{\ps}=\U_{g,n}(A_2)\setminus \m T^{A_1}=\U_{g,n}(A_2)\setminus (\m T^{A_1} \cup \m T^{A_3})\subseteq \U_{g,n}(A_3(T))\setminus (\m T^{A_1} \cup \m T^{A_3})$$ 
is an open embedding. 
 It remains to prove that $\m B^T$ is closed in $\M_{g,n}^T$. Since $\m B^T$ is constructible, it is enough to prove that $\m B^T$ is closed under specialisation. 

 To this aim, consider  a family  $(\m C \to \D, \{\sigma_i\})$ of curves in $\M_{g,n}^T$ (over the spectrum $\Delta=\Spec R$ of a DVR) such that $(\m C_{\ov \eta},  \{\sigma_i(\ov \eta)\})$ contains an $A_1/A_1$-attached elliptic chain $(E,q_1,q_2)$ of length $r$ (for some $r\geq 1$) and type contained in $T$. 
 Since $(g,n)\neq (2,0)$ then $q_1$ is not attached to $q_2$. Therefore, 
following the proof of \cite[Prop. 2.15(2)]{AFSV1} \footnote{The proof of this result is correct  if one assumes that $q_1$ is not attached to $q_2$ (which is always the case if $(g,n)\neq (2,0)$), while the result is not in general true if $q_1$ is attached to $q_2$ (which always happens for $(g,n)=(2,0)$).
%see Remark \ref{R:Tstable-g2}.
}
and using that $(\m C_0,  \{\sigma_i(0)\})$ is not contained in $\m T^{A_1} \cup \m  T^{A_3}$, we get that $(\m C_0,  \{\sigma_i(0)\})$ contains an $A_1/A_1$-attached elliptic chain $(E_0,t_1,t_2)$ of length $s\le r$ which is contained in the limit of $(E, q_1,q_2)$. From the explicit description of all such possible limits given in \cite[Lemma 2.14]{AFSV1}, it follows that $\type(E_0,t_1,t_2)\subseteq \type(E,q_1,q_2)$, and hence that $\type(E_0,t_1,t_2)\subseteq T$. Therefore the central fibre $(\m C_0,  \{\sigma_i(0)\})$ is contained in $\m B^T$ and we are done.      
\end{proof}

\begin{remark}\label{R:quotstack}
As  observed after  \cite[Thm. 2.7]{AFSV1}, the stack $\m U_{g,n}$ is the quotient stack of a locally closed smooth subscheme of an appropriate Hilbert scheme of some projective space $\P^N$ by $\PGL_{N+1}$. 
Hence the same is true for all the stacks $\M_{g,n}^T$ and $\M_{g,n}^{T+}$ since they are open substacks of $\m U_{g,n}$. 
\end{remark}

The containment relation among the different stacks $\M_{g,n}^T$ is determined in the Proposition that follows, whose proof is given in \cite{CTV}. Before that, we need the following 

\begin{definition}\label{def:admissible}
\noindent
\begin{enumerate}[(i)]
\item A subset  $T \subseteq T_{g,n}$  is called \emph{admissible} if $[1,\emptyset]\not\in T$ and $\irr \not \in T$ if $g=1$ 
and for every $[\tau,I]$ in $T$ then either $[\tau-1,I]$ or $[\tau+1,I]$ are in $T$.
\item Given a subset $T\subset T_{g,n}$, we obtain an admissible subset $T^{\adm}\subseteq T$ as follows:
\begin{itemize}
\item first we set $\wt T:=T-\{[1,\emptyset]\}$ if $g\geq 2$ and $\wt T:=T-\{[1,\emptyset], \irr\}$ if $g\leq 1$;
\item then we remove from $\wt T$ all the elements $[\tau,I]\in \wt T$ such that $[\tau-1,I]\not \in \wt T$ and $[\tau+1,I]\not \in \wt T$.
\end{itemize}  
\item A subset $T\subset T_{g,n}$ is said to be \emph{minimal} if $T=\{\irr\}$ and $g\geq 2$ or $T=\{[\tau, I],[\tau+1,I]\}$ (which then forces $g\geq 2$ or $g=1$ and $n\geq 2$) for some element $[\tau, I]\neq [1,\emptyset]$ of $T_{g,n}$. 
\end{enumerate}
\end{definition}
Observe that the empty set is admissible and it is the unique admissible subset if $g=0$ or if $(g,n)=(1,0)$. If $g\geq 2$ or $g=1$ and $n\geq 2$ then the minimal subsets are exactly the minimal admissible non-empty subsets of $T_{g,n}$. Moreover,  a subset $T\subset T_{g,n}$ is admissible if and only if it is the union of the minimal subsets contained in $T$.

\begin{proposition}\label{P:equaT}\cite[Prop. 3.4]{CTV}
Given two subsets $T,S\subseteq T_{g,n}$, we have that 
$$\M_{g,n}^T\subseteq \M_{g,n}^S \subset \U_{g,n}(A_3) \Longleftrightarrow  T^{\adm}\subseteq S^{\adm}.$$ 
In particular, we have that $\M_{g,n}^T= \M_{g,n}^S  \Longleftrightarrow  T^{\adm}= S^{\adm}$, in which case we also have that 
$\M_{g,n}^{T,+}=\M_{g,n}^{S,+}$. 
%On the other hand,
%$$S^{\adm}\neq T^{\adm}\Rightarrow \M_{g,n}^{T,+}\not \subseteq \M_{g,n}^{S,+} \text{ and } \M_{g,n}^{S,+}\not \subseteq \M_{g,n}^{T,+}.$$
\end{proposition}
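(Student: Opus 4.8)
The plan is to show that the open substacks $\M_{g,n}^T\subseteq \U_{g,n}(A_3)$ depend on $T$ only through the admissible subset $T^{\adm}$, and then to translate the two inclusions in the statement into set‑theoretic inclusions between the $T^{\adm}$'s. I would first prove \emph{(a): $\M_{g,n}^T=\M_{g,n}^{T^{\adm}}$ for every $T$.} Since $T^{\adm}\subseteq T$ and, by Definition \ref{D:ourstacks}, $\m U_{g,n}(A_3(T))$ is monotone for inclusion of $T$ while $\m T^{A_1}\cup\m T^{A_3}$ does not depend on $T$, the inclusion $\M_{g,n}^{T^{\adm}}\subseteq\M_{g,n}^T$ is immediate. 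For the converse, let $C$ be $T$-semistable and $p\in C$ a tacnode; I claim $\type(p)\subseteq T^{\adm}$. If $\type(p)=\{\irr\}$, then normalising $C$ at $p$ keeps the curve connected and lowers the arithmetic genus by $\delta_p=2$, so $g\geq 2$; then neither deletion step in Definition \ref{def:admissible} removes $\irr$, and $\irr\in T^{\adm}$. If $\type(p)=\{[\tau,I],[\tau+1,I]\}$, I first rule out that either class equals $[1,\emptyset]$: were that the case, one of the two connected components of the normalisation of $C$ at $p$ would be a subcurve of arithmetic genus $1$ carrying no marked point and meeting the rest of $C$ only along one branch of the $A_3$-point $p$, and using the ampleness of $\omega_C(\sum p_i)$ one checks that such a subcurve is either an $A_3$-attached elliptic tail or contains an $A_1$-attached one — contradicting $T$-semistability. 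Hence $[\tau,I]$ and $[\tau+1,I]$ are both $\neq[1,\emptyset]$ and are mutual neighbours, so neither is deleted in forming $T^{\adm}$ and $\type(p)\subseteq T^{\adm}$. Granting (a), the implication $T^{\adm}\subseteq S^{\adm}\Rightarrow\M_{g,n}^T\subseteq\M_{g,n}^S$ is immediate: replace $T,S$ by $T^{\adm},S^{\adm}$ and use monotonicity of $\m U_{g,n}(A_3(-))$ together with the fact that deleting $\m T^{A_1}\cup\m T^{A_3}$ is independent of the subset.

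The substantive point is the converse, \emph{(c): $\M_{g,n}^T\subseteq\M_{g,n}^S\Rightarrow T^{\adm}\subseteq S^{\adm}$}. Recall from Definition \ref{def:admissible} that $T^{\adm}$, being admissible, is the union of the minimal subsets $M\subseteq T^{\adm}$, each of which is $\{\irr\}$ (only when $g\geq 2$) or a consecutive pair $\{[\tau,I],[\tau+1,I]\}$ with both classes different from $[1,\emptyset]$. For each such $M\subseteq T^{\adm}\subseteq T$ I would exhibit a $T$-semistable test curve $C_M$ having a single tacnode, of type exactly $M$: for $M=\{\irr\}$ take the curve obtained from a smooth $n$-pointed curve of genus $g-2$ by glueing two further points into a tacnode; for $M=\{[\tau,I],[\tau+1,I]\}$ glue a smooth genus-$\tau$ curve with marked points $I$ at a tacnode to a smooth genus-$(g-1-\tau)$ curve with marked points $I^c$. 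Using that $(0,\emptyset),(g,[n])\notin T_{g,n}$ one verifies that $\omega_{C_M}(\sum p_i)$ is ample on every component, so $C_M\in\U_{g,n}(A_3)$; using $[\tau,I],[\tau+1,I]\neq[1,\emptyset]$ (together with a finite check of the degenerate small $(g,n)$) one verifies that $C_M$ has neither an $A_1$- nor an $A_3$-attached elliptic tail, so $C_M$ is $T$-semistable. By hypothesis $C_M$ is then $S$-semistable, which by Definition \ref{D:ourstacks} forces $M=\type(p_{C_M})\subseteq S$. Finally, since $M$ is admissible and contains neither $[1,\emptyset]$ nor (when $g\leq 1$) $\irr$, none of its elements is deleted in passing from $S$ to $S^{\adm}$, so $M\subseteq S^{\adm}$; taking the union over all minimal $M\subseteq T^{\adm}$ yields $T^{\adm}\subseteq S^{\adm}$. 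Combining with (a) and the easy direction gives the stated equivalence, and $\M_{g,n}^T=\M_{g,n}^S\Leftrightarrow T^{\adm}=S^{\adm}$ follows by applying it in both directions.

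For the last assertion, given $T^{\adm}=S^{\adm}$ I must show $\M_{g,n}^{T,+}=\M_{g,n}^{S,+}$, equivalently that $\m B^T$ and $\m B^{T^{\adm}}$ cut out the same locus inside $\M_{g,n}^T=\M_{g,n}^{T^{\adm}}$ (and likewise for $S$). Here the key observation is that inside a $T$-semistable curve an $A_1/A_1$-attached elliptic chain can never have type containing $[1,\emptyset]$, since that would produce an $A_1$-attached elliptic tail; hence for $C\in\M_{g,n}^T$ the presence of such a chain of type contained in $T$ is equivalent to the presence of one of type contained in $T\setminus\{[1,\emptyset]\}$. Moreover, by Definition \ref{D:type} the type of an $A_1/A_1$-attached elliptic chain is either $\{\irr\}$ or a consecutive block $\{[\tau,I],\ldots,[\tau+2r-1,I]\}$ of at least two classes, so any such type contained in $T\setminus\{[1,\emptyset]\}$ has none of its elements isolated there and is therefore contained in $T^{\adm}$. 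Thus $\m B^T$ and $\m B^{T^{\adm}}$ agree on $\M_{g,n}^T$, giving $\M_{g,n}^{T,+}=\M_{g,n}^{T^{\adm},+}=\M_{g,n}^{S^{\adm},+}=\M_{g,n}^{S,+}$.

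I expect the main obstacle to be step (c): constructing, for each minimal piece $M$ of $T^{\adm}$, a curve that is \emph{provably} $T$-semistable — the delicate part being the exclusion of $A_1$- and $A_3$-attached elliptic tails, which forces one to keep careful track of the involution $\sim$ on $T_{g,n}$ and of the excluded classes $[1,\emptyset]$, $(0,\emptyset)$, $(g,[n])$, and to dispose of a handful of degenerate small-genus cases. The auxiliary claim used in step (a) — that a tacnode of type involving $[1,\emptyset]$ forces an attached elliptic tail — is of exactly the same flavour and relies on the same ampleness‑of‑$\omega_C(\sum p_i)$ bookkeeping. Once these are in hand, everything else is formal manipulation of the combinatorial definition of $T^{\adm}$.
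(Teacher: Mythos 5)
The paper does not give its own proof of this proposition; it is deferred entirely to the sequel \cite{CTV} (cited as Prop.~3.4 there), so a direct side-by-side comparison is not possible. Judged on its own, your argument is essentially correct and follows what is almost certainly the intended strategy: (a) show that $\M_{g,n}^T=\M_{g,n}^{T^{\adm}}$ by proving that the absence of $A_1$- and $A_3$-attached elliptic tails, together with ampleness of $\omega_C(\sum p_i)$, forbids any tacnode whose type involves $[1,\emptyset]$ or an isolated element of $T$; (b) deduce the easy implication $T^{\adm}\subseteq S^{\adm}\Rightarrow\M_{g,n}^T\subseteq\M_{g,n}^S$; (c) for the converse, exhibit for each minimal $M\subseteq T^{\adm}$ a $T$-semistable test curve with a single tacnode of type $M$; and (d) for the final claim, show that the locus $\mathcal{B}^T$ of curves with an $A_1/A_1$-attached elliptic chain of type in $T$ equals $\mathcal{B}^{T^{\adm}}$ inside $\M_{g,n}^T$, because such chain types form consecutive blocks of length $\geq 2$ not containing $[1,\emptyset]$.

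Two places you should flesh out, since they carry the whole weight of the argument. First, the claim in step (a) that a tacnode with $[1,\emptyset]\in\type(p)$ produces an $A_1$- or $A_3$-attached elliptic tail requires you to show that the offending genus-$1$ subcurve (which is a priori only connected, not irreducible) either is itself an irreducible elliptic tail or contains one meeting the rest in a node; this is a genuine ampleness/dual-graph case analysis, not a one-liner, and you correctly flag it. Second, in step (c) you must be careful in the degenerate cases where the involution $\sim$ collapses pairs: e.g.\ for $(g,n)=(2,1)$ one has $[1,\{1\}]=[1,\emptyset]$, so $\{[0,\{1\}],[1,\{1\}]\}$ is \emph{not} a minimal subset (despite $[0,\{1\}]\neq[1,\emptyset]$), and no test curve should be built for it. You do say ``with both classes different from $[1,\emptyset]$'', which is the right reading of Definition~\ref{def:admissible}(iii), but it is worth making this explicit because a naive reading of that definition (checking only $[\tau,I]\neq[1,\emptyset]$) would force you into constructing a test curve that is in fact not $T$-semistable. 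Modulo carrying out this bookkeeping carefully, and granting the implicit standing hypothesis $(g,n)\neq(2,0)$ (without which the $T^+$-stacks and Theorem~\ref{T:algstack} already break down), the proof is sound.
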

On the other hand, it can be shown that if $S^{\adm}\neq T^{\adm}$ then $\M_{g,n}^{T+}$ and $\M_{g,n}^{S+}$ are incomparable.

\subsection{$T$-closed and $T^+$-closed curves}

The aim of this subsection is to describe the closed points of the stacks of $T$-semistable and $T^+$-semistable curves. \footnote{In analogy with GIT, we could call these closed points $T$-polystable (resp. $T^+$-polystable) curves. We decided not to use this terminology.}

Let us start by describing the closed points of the stack of $T$-semistable curves.

\begin{definition}\label{def:Tclosed}($T$-closed curves)
Assume that $(g,n)\neq (2,0)$. 
A curve $(C, \{p_i \})$ in $\M_{g,n}^T(k)$ is  \emph{$T$-closed} if there is a decomposition $(C,\{p_i\})=K \cup (E_1,q_1^1,q_2^1) \cup \cdots \cup (E_r,q^r_1,q^r_2)$, where
\begin{enumerate}

\item  \label{def:Tclosed1} $ (E_1,q_1^1,q_2^1), \ldots, (E_r,q^r_1,q^r_2)$ are attached rosaries of length two, or equivalently $A_1$/$A_1$-attached tacnodal elliptic bridges,  of type contained in $T$.

\item \label{def:Tclosed2} $K$ does not contain tacnodes nor  $A_1$/$A_1$-attached elliptic bridges of type contained in $T$. In particular, every connected component of $K$  is a pseudo-stable curve that does not contain any 
$A_1$/$A_1$-attached elliptic bridge of type contained in $T$.
\end{enumerate}
Here $K$ (which could be empty or disconnected) is regarded as a pointed curve with marked points given by the union  of $\{p_i\}_{i=1}^n \cap K$ and of $K\cap (\overline{C \setminus K})$. We call $K$ the $T$-\emph{core} of $(C, \{p_i \}_{i=1}^n)$ and we call the decomposition $C=K \cup E_1 \cup \cdots \cup E_r$  the $T$-\emph{canonical decomposition} of $C$. 
\end{definition}

Note that a $T_{g,n}$-closed curve is the same as a $7/10$-closed curve as in \cite[Def. 2.21]{AFSV1}.

\begin{proposition}\label{prop:Tclosed}
Fix a subset $T\subset T_{g,n}$ and assume that $(g,n)\neq (2,0)$ and $\car(k)\neq 2$.
\begin{enumerate}[(i)]
\item \label{prop:Tclosed0} A curve $(C, \{p_i \})\in \M_{g,n}^T(k)$ isotrivially specialises to the $T$-closed curve $(C, \{p_i \})^{\star}$ which is the stabilisation of the $n$-pointed curve obtained from $(C, \{p_i \})$ by replacing each tacnode (necessarily of type contained in $T$) by a
rosary of length $2$ and each  $A_1$/$A_1$-attached elliptic bridges of type contained in $T$ by a rosary of length $2$.

\item \label{prop:Tclosed1} A curve $(C, \{p_i \})$ is a closed point of $\M_{g,n}^T$ if and only if $(C, \{p_i \})$ is $T$-closed.

\end{enumerate}
\end{proposition}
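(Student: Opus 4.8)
The plan is to prove the two statements in tandem, since the isotrivial specialisation in part \eqref{prop:Tclosed0} is exactly the tool needed to establish one implication of part \eqref{prop:Tclosed1}. First I would construct the specialisation $(C,\{p_i\})\leadsto (C,\{p_i\})^\star$. Each tacnode of $(C,\{p_i\})$ has type in $T$ (by definition of $\M_{g,n}^T$) and each $A_1/A_1$-attached elliptic bridge of type in $T$ can be replaced by a tacnodal elliptic bridge, i.e. a rosary of length $2$: this is precisely the content of Lemma \ref{L:spec-ros}(i),(iii) for $r=1$ (for a tacnode, the $r=0$ convention of $A_3/A_3$-chains) together with the fact that these specialisations preserve the type. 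Performing these specialisations simultaneously (one can do them one at a time, or glue local families over $\Delta$ since the loci are disjoint) and then stabilising — replacing any resulting unstable rational components — yields a curve $(C,\{p_i\})^\star$ whose tacnodes and $A_1/A_1$-attached elliptic bridges of type in $T$ are exactly the rosaries of length $2$ produced this way, and whose complement (the $T$-core) contains none of these configurations. Hence $(C,\{p_i\})^\star$ is $T$-closed by Definition \ref{def:Tclosed}, and it lies in $\M_{g,n}^T$ since $T$-semistability is closed under specialisation inside $\U_{g,n}(A_3(T))$ by the argument in the proof of Theorem \ref{T:algstack} (the limit of a curve without $A_1$- or $A_3$-attached elliptic tails still has none: the specialisations of Lemma \ref{L:spec-ros} do not create elliptic tails, as each replacement is purely local around the tacnode/bridge). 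This proves \eqref{prop:Tclosed0}.

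\textbf{Closed points are $T$-closed.} For \eqref{prop:Tclosed1}, one direction is immediate from \eqref{prop:Tclosed0}: if $(C,\{p_i\})$ is a closed point of $\M_{g,n}^T$, then the isotrivial specialisation $(C,\{p_i\})\leadsto (C,\{p_i\})^\star$ inside $\M_{g,n}^T$ forces $(C,\{p_i\})\cong (C,\{p_i\})^\star$, so $(C,\{p_i\})$ is $T$-closed.

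\textbf{$T$-closed curves are closed points.} For the converse, suppose $(C,\{p_i\})$ is $T$-closed with $T$-canonical decomposition $C = K \cup E_1 \cup \cdots \cup E_r$, and let $(C,\{p_i\})\leadsto (C_0,\{p_i(0)\})$ be a non-trivial isotrivial specialisation inside $\M_{g,n}^T$ over $\Delta$; I must derive a contradiction. The idea is to analyse the specialisation componentwise. By standard deformation theory of curves with $A$-singularities (cf. the semiuniversal deformation spaces recalled in the Notation section, and the equisingular/equigeneric arguments used in Lemma \ref{lem:A_3(T)}), a non-trivial isotrivial specialisation must either degenerate a smooth or nodal locus into a worse singularity, or act non-trivially on a component admitting a positive-dimensional automorphism group. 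Since $(C,\{p_i\})$ has only nodes, cusps and tacnodes and stays in $\U_{g,n}(A_3(T))$, no new singularities can appear except by the $\Gm$-degenerations classified in Lemma \ref{L:spec-ros} running "in reverse" — but those would require $C$ to contain an $A_1/A_1$- or $A_1/A_3$-attached elliptic chain or a non-rosary tacnodal configuration, contradicting $T$-closedness of $K$ and the fact that each $E_j$ is already a rosary of length $2$ (which is rigid among $2$-pointed genus-$1$ curves up to the $\Gm$ it already carries). Concretely, I would argue: (a) the $T$-core $K$ is a disjoint union of pseudostable curves, each of which has finite automorphism group by Proposition \ref{P:Mgps-DM} (applied to each connected component, using the marked/nodal points), so $K$ does not move; (b) each rosary $E_j$ of length $2$ has automorphism group $\Gm$ by Remark \ref{R:ros-eq}\eqref{R:ros-eq1}, but its two attaching points $q_1^j,q_2^j$ are nodes of $C$ (attached to $K$ or to another $E_{j'}$), and the $\Gm$-action on $E_j$ acts with weights $1$ and $(-1)^2 = 1$ on the two tangent directions at $q_1^j, q_2^j$ — hence acts on the node-smoothing parameters — so the only $\Gm$-orbit closure one can reach is $E_j$ itself, producing no new curve; (c) therefore any isotrivial specialisation of $(C,\{p_i\})$ inside $\M_{g,n}^T$ is trivial. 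Assembling (a)–(c), $(C,\{p_i\})$ admits no non-trivial isotrivial specialisation, so it is a closed point of $\M_{g,n}^T$ (using that a point of a finite-type algebraic stack is closed iff it admits no non-trivial isotrivial specialisation, cf. \cite[Prop. 2.6]{AFSV1} or the characterisation used throughout).

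\textbf{Main obstacle.} The delicate point is step (b)–(c): ruling out \emph{all} non-trivial isotrivial specialisations of a $T$-closed curve, not merely the ones that preserve the combinatorial type. The subtlety is that a priori a specialisation could, say, merge two rosaries $E_j, E_{j+1}$ sharing a node into a longer rosary, or degenerate a node between $K$ and some $E_j$ into a tacnode, thereby exiting the "$T$-closed" stratum — one must show such degenerations are obstructed or would violate the hypotheses (e.g. would create a tacnode of type not in $T$, or an $A_3$-attached elliptic tail, or would require $K$ to have contained a forbidden configuration). I expect the cleanest route is to reduce, via the local structure of $\M_{g,n}^T$ at $(C,\{p_i\})$ (which is smooth, and whose local VGIT presentation in the sense of \cite[Def. 3.14]{AFSV1} is recalled in the introduction), to a toric/linear-algebra computation of $\Gm$-weights on the tangent space — the relevant deformation directions are node-smoothings and tacnode-deformations, and one checks the automorphism torus of $(C,\{p_i\})$ (a product of the $\Gm$'s from the rosaries) acts on them with no zero weight on the "escaping" directions, so the closed orbit is the point itself. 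This mirrors the weight computations in \cite[Sec. 2]{AFSV1} for $7/10$-closed curves, to which this reduces when $T = T_{g,n}$.
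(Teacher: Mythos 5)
Your treatment of part \eqref{prop:Tclosed0} and of the implication ``closed point $\Rightarrow$ $T$-closed'' in part \eqref{prop:Tclosed1} matches the paper: both are read off from Lemma \ref{L:spec-ros} and the observation that a closed point cannot specialise non-trivially.

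For the converse direction ``$T$-closed $\Rightarrow$ closed point'' you take a genuinely different route from the paper. The paper places the Luna slice at the \emph{limit} $(C',\{p_i'\})$ of the isotrivial specialisation, then invokes Kempf's theorem to obtain a one-parameter subgroup $\lambda$ of $\Aut(C',\{p_i'\})$ with $\lim_{t\to 0}\lambda(t)\cdot[(C,\{p_i\})]=[(C',\{p_i'\})]$, and finally argues, via the explicit description of basins of attraction à la \cite[Prop.\ 9.7]{HH2}, that the only $T$-closed curve in any such basin is the fixed point itself. Your argument instead works at the Luna slice of the \emph{starting} point $(C,\{p_i\})$ and tries to conclude from a weight computation of $\Aut(C,\{p_i\})^o\cong\Gm^{\times r}$ on its own deformation space. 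That approach is closer in spirit to the alternative sketched in Remark \ref{r:altra_dim} (arguing as in \cite[Thm.\ 2.22]{AFSV1}), but as written it has a gap that is not merely the one you flag.

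The gap is structural, not just a missing case: closedness of $(C,\{p_i\})$ in $\M_{g,n}^T$ is \emph{not} detected by the $\Gm$-orbit geometry of the origin in the slice $[W/\Aut(C,\{p_i\})]$. The origin has trivially closed orbit in $W$, and the relevant degenerating family $\cC\to\Delta$ need not land inside the image of this étale chart --- typically the special fibre $(C_0,\{p_i(0)\})$ has strictly larger automorphism group and therefore lies outside it. So ``no zero weight on the escaping directions, hence the closed orbit is the point itself'' is not a valid conclusion in this setup. The correct technical device is precisely the one the paper uses: Kempf applied in the chart of the \emph{limit}, followed by a basin-of-attraction analysis, where the weight computations you envisage (and which reappear in Lemma \ref{L:weight+} and Remark \ref{R:ros-eq}) do become decisive. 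If you prefer the direct route of Remark \ref{r:altra_dim}, you would instead have to analyse the semistable reduction of the family $\cC\to\Delta$ itself, as in the proof of \cite[Thm.\ 2.22]{AFSV1}, rather than the tangent space at $(C,\{p_i\})$. Your steps (a)--(c), and in particular the claim in (b) that ``the only $\Gm$-orbit closure one can reach is $E_j$ itself,'' therefore do not yet constitute a proof, and the proposed fix points at the wrong chart.
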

Note that if $T=T_{g,n}$ then the above Proposition becomes \cite[Thm. 2.22]{AFSV1} for $\alpha_c=7/10$ (or \cite[Prop. 9.7]{HH2} if furthermore $n=0$). 

The above Proposition is false for $(g,n)=(2,0)$ and $T=\{\irr\}$; see \cite[Rmk. 3.8]{CTV}  for an explicit description of all the isotrivial specialisations and of the closed points of $\M_{2}^{\irr}$.

 \begin{proof}

Part \eqref{prop:Tclosed0} follows directly from Lemma \ref{L:spec-ros}. 

Let us now prove part \eqref{prop:Tclosed1}. Part \eqref{prop:Tclosed0} implies that if $(C, \{p_i \})\in \M_{g,n}^T(k)$ is a closed point of $\M_{g,n}^T$ then it must be $T$-closed. Conversely, let $(C, \{p_i \})\in \M_{g,n}^T(k)$ be $T$-closed and consider an isotrivial specialisation $(C, \{p_i \})\leadsto (C', \{p_i' \})$ to a closed (and hence $T$-closed) point $(C', \{p_i' \})$ of $\M_{g,n}^T$. Since the connected component $\Aut(C', \{p_i' \})^o$ of the automorphism group scheme $\Aut(C', \{p_i' \})$ is a torus (see \cite[Prop. 2.6]{AFSV1}), we can apply  \cite[Thm. 1.2]{AHR} in order to deduce that 
%Applying Luna's slice theorem to the quotient stack $\M_{g,n}^T$ (see Remark \ref{R:quotstack}), we deduce that 
$\M_{g,n}^T$ is \'etale locally at  $(C', \{p_i' \})$ isomorphic to $[W/\Aut(C', \{p_i' \})^o]$, for some affine variety $W$ endowed with an action of the torus $\Aut(C', \{p_i' \})^o$. We can now apply \cite[Thm. 1.4]{Kempf} in order to deduce that there exists a one parameter subgroup $\lambda:\Gm\to \Aut(C', \{p_i' \})^o$ such that $\lim_{t\to 0} \lambda(t)\cdot [(C, \{p_i \})]=[(C', \{p_i' \})]$. In other words, $(C, \{p_i \})$ is in the basin of attraction of $(C', \{p_i' \})$ with respect  $\lambda$.  

Now, mimicking the explicit analysis in \cite[Prop. 9.7]{HH2} of the basin of attraction of the one parameter subgroups of $\Aut(C', \{p_i' \})^o$ (which come from the automorphism groups of the attached length $2$ rosaries of $(C', \{p_i' \})$, as described in Remark \ref{R:ros-eq}), one deduces that $(C, \{p_i \})\cong (C', \{p_i' \})$, and hence that $(C, \{p_i \})$ is a closed point of $\M_{g,n}^T$.
\end{proof}

\begin{remark}\label{r:altra_dim}
It is possible to give an alternative proof of  Proposition \ref{prop:Tclosed}\eqref{prop:Tclosed1} (and also of  Proposition \ref{P:T+closed}\eqref{P:T+closed1} below) by proving directly,
by arguing as in \cite[Thm. 2.22]{AFSV1}, that any isotrivial specialisation of a $T$-closed (or of a $T^+$-closed) curve is actually trivial. 
%This can be done arguing as in \cite[Thm. 2.22]{AFSV1}.
\end{remark}

We now move to the description of the closed points of the stack of $T^+$-semistable curves.

\begin{definition}\label{def:T+closed}($T^+$-closed curves)
	We say that a curve $(C, \{p_i \})$ in $\M_{g,n}^{T,+}$ is \emph{$T^+$-closed} if either $C$ is a closed rosary of even length $r$ (which can happen only if $(g,n)=(r+1,0)$ and $\irr\in T$) or 
	if there is a decomposition $(C,\{p_i\})=K \cup (R_1,q_1^1,q_2^1) \cup \cdots \cup (R_r,q^r_1,q^r_2)$, where
	\begin{enumerate}
		\item  \label{def:T+closed1} $ (R_1,q_1^1,q_2^1), \ldots, (R_r,q^r_1,q^r_2)$ are attached rosaries of length $3$  (automatically of type contained in $T$);
		\item \label{def:T+closed2} $K$ does not contain $A_1$/$A_3$-attached elliptic bridges of type contained in $T$ nor closed $A_3$/$A_3$-attached elliptic chains of type contained in $T$.
	\end{enumerate}
	
	Here $K$ (which is allowed to be empty or disconnected) is regarded as a pointed curve with marked points given by the union  of $\{p_i\}_{i=1}^n \cap K$ and of $K\cap (\overline{C \setminus K})$.	
	
	We call $K$ the $T^+$-\emph{core} of $(C, \{p_i \}_{i=1}^n)$ and we call the decomposition $C=K \cup R_1 \cup \cdots \cup R_r$  the $T^+$-\emph{canonical decomposition} of $C$. Note that $K$ does not contain any $A_1$/$A_3$-attached elliptic chain of type contained in $T$
	because such a chain would necessarily contain an $A_1$/$A_3$-attached elliptic bridge of type contained in $T$, contradicting the assumptions on $K$. 
\end{definition}

\begin{proposition}\label{P:T+closed}
Fix a subset $T\subset T_{g,n}$ and assume that $(g,n)\neq (2,0)$ and $\car(k)\neq 2$.
\begin{enumerate}[(i)]
\item \label{P:T+closed0} A curve $(C, \{p_i \})\in \M_{g,n}^{T,+}(k)$ isotrivially specialises to the $T^+$-closed curve $(C, \{p_i \})^{\dagger}$ which is the stabilisation of the $n$-pointed curve obtained from $(C, \{p_i \})$ by replacing each $A_1$/$A_3$-attached elliptic bridge of type contained in $T$
by a rosary of length $3$ and each closed $A_3$/$A_3$-attached elliptic chain of length $r$ and of type contained in $T$ by a closed rosary of length $2r$. 

\item \label{P:T+closed1} A curve $(C, \{p_i \})$ is a closed point of $\M_{g,n}^{T+}$ if and only if $(C, \{p_i \})$ is $T^+$-closed.

\end{enumerate}
\end{proposition}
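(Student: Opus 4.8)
The plan is to mirror the proof of Proposition \ref{prop:Tclosed}, systematically replacing ``attached rosary of length two'' by ``attached rosary of length three'' and allowing in addition even-length closed rosaries; the standing hypotheses $(g,n)\neq(2,0)$ and $\car(k)\neq 2$ will be used exactly as there (the latter through Lemma \ref{L:spec-ros} and Remark \ref{R:ros-eq}).

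For part \eqref{P:T+closed0}, I would build the isotrivial specialisation $(C,\{p_i\})\leadsto (C,\{p_i\})^{\dagger}$ by composing the specialisations of Lemma \ref{L:spec-ros}: apply item (ii) with $r=1$ to each $A_1/A_3$-attached elliptic bridge of type contained in $T$, and item (iv) to each closed $A_3/A_3$-attached elliptic chain of type contained in $T$, one subcurve at a time. Three routine points need to be checked. (1)~Each such elementary specialisation preserves the types of all singular points and attached subcurves (last assertion of Lemma \ref{L:spec-ros}) and creates no $A_1$- or $A_3$-attached elliptic tail, so every intermediate curve is still $T^+$-semistable; moreover the rosary of length $3$ produced at each step is $A_1/A_1$-attached and, having odd length, is not an elliptic chain, so no new $A_1/A_1$-attached elliptic chain of type in $T$ appears. (2)~A length-$3$ rosary contains again $A_1/A_3$-attached elliptic bridges, but replacing one of these by a length-$3$ rosary and stabilising reproduces the same curve; hence the procedure terminates and its output is well-defined, and an $A_1/A_3$-attached elliptic chain of length $r\geq 2$ is turned, after replacing its bridges and stabilising, into $r$ consecutive $A_1/A_1$-attached rosaries of length $3$. (3)~Consequently the stabilisation of the resulting curve admits a $T^+$-canonical decomposition $K\cup R_1\cup\cdots\cup R_r$, or is an even-length closed rosary when $(g,n)=(r+1,0)$ and $\irr\in T$; that is, it is $T^+$-closed, and it is the curve $(C,\{p_i\})^{\dagger}$ of the statement.

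For part \eqref{P:T+closed1}, the implication ``$\Rightarrow$'' is immediate: if $(C,\{p_i\})$ is a closed point of $\M_{g,n}^{T+}$, the specialisation of part \eqref{P:T+closed0} is trivial, so $(C,\{p_i\})\cong(C,\{p_i\})^{\dagger}$ is $T^+$-closed. For ``$\Leftarrow$'' I would argue as in Proposition \ref{prop:Tclosed}\eqref{prop:Tclosed1}. Let $(C,\{p_i\})$ be $T^+$-closed and choose an isotrivial specialisation $(C,\{p_i\})\leadsto(C',\{p_i'\})$ to a closed point, which is $T^+$-closed by the previous implication. By the quotient presentation of $\M_{g,n}^{T+}$ (Remark \ref{R:quotstack}) and Luna's \'etale slice theorem, $\M_{g,n}^{T+}$ is \'etale locally at $(C',\{p_i'\})$ isomorphic to $[W/\Aut(C',\{p_i'\})]$ with $W$ affine and $\Aut(C',\{p_i'\})$ reductive; reductivity holds because the identity component of $\Aut(C',\{p_i'\})$ is a torus, namely the product of the $\Gm$'s acting on the attached length-$3$ rosaries of $C'$ and, when $C'$ is an even-length closed rosary, on $C'$ itself (Remark \ref{R:ros-eq}). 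Then \cite[Thm. 1.4]{Kempf} provides a one-parameter subgroup $\lambda:\Gm\to\Aut(C',\{p_i'\})$ with $\lim_{t\to 0}\lambda(t)\cdot[(C,\{p_i\})]=[(C',\{p_i'\})]$, so $(C,\{p_i\})$ lies in the basin of attraction of $(C',\{p_i'\})$ with respect to $\lambda$.

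The main obstacle is the final step: proving that every curve in that basin of attraction is isomorphic to $(C',\{p_i'\})$. This is the analogue, for open rosaries of length $3$ and for even-length closed rosaries, of the basin-of-attraction computation of \cite[Prop. 9.7]{HH2} (compare the proof of \cite[Thm. 2.22]{AFSV1}), which was carried out there only for length-$2$ rosaries. Concretely, after conjugating one may take $\lambda$ inside the maximal torus, split it into its components on the individual rosaries, and use the explicit $\Gm$-action and the weights at the attaching nodes from Remark \ref{R:ros-eq} to show, on the semiuniversal deformation of each length-$3$ rosary glued along two $A_1$-attachments, that the only nearby curves attracted to $(C',\{p_i'\})$ are those isomorphic to it; the closed-rosary case $(g,n)=(r+1,0)$, where the torus acts on all of $C'$, is treated separately in the same spirit. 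Alternatively, as in Remark \ref{r:altra_dim}, one could avoid Kempf's theorem and instead imitate \cite[Thm. 2.22]{AFSV1} to prove directly that any isotrivial specialisation of a $T^+$-closed curve is trivial; the combinatorial core, classifying the elliptic subcurves and rosaries that can occur in a limit, is the same, and is again where the work concentrates.
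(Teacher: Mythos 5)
Your proposal is correct and follows essentially the same route as the paper: part \eqref{P:T+closed0} is reduced to Lemma \ref{L:spec-ros}, and part \eqref{P:T+closed1} is proved via the Luna slice and Kempf one-parameter-subgroup argument together with a check that no $T^+$-closed curve lies in the basin of attraction of another. The one thing you missed is that the basin-of-attraction computation for length-$3$ open rosaries and even-length closed rosaries is not an analogue of \cite[Prop.\ 9.7]{HH2} that needs to be carried out from scratch (as you flag it, calling it the main obstacle): it is precisely \cite[Prop.\ 9.9]{HH2}, which Hassett--Hyeon already proved in the $n=0$ case and which the paper cites, so that step is an adaptation of an existing result rather than new work.
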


Note that if $T=T_{g,n}$ and $n=0$ then the above Proposition recovers \cite[Prop. 9.9]{HH2}.

\begin{proof}
Part \eqref{P:T+closed0} follows directly from Lemma \ref{L:spec-ros}. Arguing as in the proof of Proposition \ref{prop:Tclosed}\eqref{prop:Tclosed1}, part \eqref{P:T+closed1} follows from \eqref{P:T+closed0} and the fact that a $T^+$-closed curve does not lie on any basin of attraction of any other $T^+$-closed curve, a  property that is checked  as in \cite[Prop. 9.9]{HH2}. 
\end{proof}

\subsection{Line bundles on the stacks $\M_{g,n}^{\ps}$ $\M_{g,n}^T$ and $\M_{g,n}^{T+}$}\label{S:Pic-stacks}

The aim of this section is to describe the Picard group of the three stacks $\M_{g,n}^{\ps}$, $\M_{g,n}^T$ and $\M_{g,n}^{T+}$ that were introduced in \S\ref{S:Tstacks}.

From the deformation theory of lci singularities, it follows that the stack $\U_{g,n}^{lci}$ is smooth and the open substack $\M_{g,n}=\U_{g,n}(A_1)\subset \U_{g,n}^{lci}$ has complement of codimension two (which can be proved as in \cite[Prop. 3.1.5]{Ser}).
Hence, any line bundle on $\M_{g,n}$ extends uniquely to a line bundle on $\U_{g,n}^{lci}$. In particular, we can define the Hodge line bundle $\lambda$, the canonical  line bundle $K$, the cotangent line bundles $\psi_i$, the boundary line bundles  $\delta_{\irr}$ and $\delta_{i,I}$ (for every $[i,I]\in T_{g,n}-\{\irr\}$ such that $|I|\geq 2$ if $i=0$) associated to the boundary divisors $\Delta_{\irr}$ and $\Delta_{i,I}$
(for an explicit definition of the line bundles $\lambda$ and $K$ on the entire $\U_{g,n}$, see \cite[Sec. 1.1]{AFS0}.)
Following \cite{GKM}, we will set $\delta_{0,\{i\}}=-\psi_i$ so that the  divisors $\delta_{i,I}$ are defined for every $[i,I]\in T_{g,n}^*$. The total boundary line bundle, the total cotangent line bundle and the extended total boundary line bundle  are defined as follows
$$
\begin{sis}
& \delta:=\sum_{\substack{[i,I]\in T_{g,n}^*:\\ |I|\geq 2\: \text{ if } \: i=0}}\delta_{i,I}+\delta_{\irr}, \\
& \psi:=\sum_{i=1}^n \psi_i, \\
& \wh\delta=\delta-\psi=\sum_{[i,I]\in T_{g,n}^*}\delta_{i,I}+\delta_{\irr}.
\end{sis}
$$

\begin{fact}\label{F:PicU}
\noindent 
\begin{enumerate}
\item \label{F:PicU1}
The rational Picard group $\Pic(\U_{g,n}^{lci})_{\bbQ}=\Pic(\U_{g,n}^{lci})\otimes \bbQ$ of $\U_{g,n}^{lci}$ is generated by $\lambda$, $\delta_{\irr}$ and $\{\delta_{i,I}\}_{[i,I]\in T_{g,n}-\{\irr\}}$ with no relations if $g\geq 3$ and with the following relations for $g=1, 2$:
\begin{enumerate}[(i)]
\item If $g=2$ then 
$$10\lambda=\delta_{\irr}+2\delta_1 \quad \text{ where } \delta_1:=\sum_{[1,I]\in T_{2.n}^*} \delta_{1,I}.
$$
\item If $g=1$ then 
$$
\begin{sis}
& 12 \lambda=\delta_{\irr}, \\
& \delta_{\irr}+12 \sum_{\substack{[0,I]\in T_{1,n}^*: \\ p\in I}}\delta_{0,I}=0 \quad \text{ for any } 1\leq p\leq n.
\end{sis}
$$
%\item If $g=0$ then 
%$$
%\begin{sis}
%& \delta_{\irr}=0, \\
%& 12 \lambda+\wh{\delta}=\sum_{\stackrel{[0,I]\in T_{0,n}-\{\irr\}:}{p, q\not \in I}}(|I|-1)\delta_{0,I} \quad \text{ for any } 1\leq p\neq q\leq n, \\
%&\sum_{\stackrel{[0,I]\in T_{0,n}-\{\irr\}:}{p \in I, q,r\not \in I}}\delta_{0,I}=0 \quad \text{ for any pairwise distinct } 1\leq p, q, r\leq n,\\
%\end{sis}
%$$
\end{enumerate}
\item  \label{F:PicU2} [Mumford's formula] 
The canonical line bundle $K$ is equal to 
$$K=13\lambda-2\delta+\psi.$$
\end{enumerate}
\end{fact}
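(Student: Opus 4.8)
The plan is to reduce both assertions to the corresponding classical facts about the moduli stack $\M_{g,n}$ of Deligne--Mumford stable $n$-pointed curves of genus $g$. By the discussion immediately preceding the statement, $\U_{g,n}^{lci}$ is a regular stack in which $\M_{g,n}=\U_{g,n}(A_1)$ is open with complement of codimension $\geq 2$ (the non-nodal locus being cut out, inside every semiuniversal deformation, by at least two equations, as in \cite[Prop.~3.1.5]{Ser}), so the restriction map $\Pic(\U_{g,n}^{lci})\to\Pic(\M_{g,n})$ is an isomorphism, and likewise rationally. Moreover the classes $\lambda$, $\psi_i$, $\delta_{\irr}$, $\delta_{i,I}$ and $K$ on $\U_{g,n}^{lci}$ are, by construction (via $\omega_{\cC/B}$, the tautological sections, the Zariski closures of the boundary divisors, and \cite[Sec.~1.1]{AFS0}), the unique extensions of the like-named classes on $\M_{g,n}$. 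Hence it is enough to prove \eqref{F:PicU1} and \eqref{F:PicU2} on $\M_{g,n}$.

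For \eqref{F:PicU1} I would invoke the theorem of Arbarello--Cornalba on $\Pic(\M_{g,n})_{\bbQ}$: for $g\geq 3$ it is freely generated by $\lambda$, the $\psi_i$, $\delta_{\irr}$ and the $\delta_{i,I}$; for $g=1,2$ the only additional relations are those obtained by pulling back, along the clutching and forgetful morphisms, the classical relation $10\lambda=\delta_{\irr}+2\delta_1$ on $\M_2$ and the relations $12\lambda=\delta_{\irr}$ and $\delta_{\irr}+12\sum_{p\in I}\delta_{0,I}=0$ on $\M_{1,n}$ (see \cite[Chap.~XIII]{GAC2}). It then remains to rewrite these relations using the convention $\delta_{0,\{i\}}=-\psi_i$ of \cite{GKM} and to check that one obtains exactly the relations displayed in (i) and (ii); this is routine bookkeeping.

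For \eqref{F:PicU2} I would invoke Mumford's formula $K=13\lambda-2\delta+\psi$ on $\M_{g,n}$: for $n=0$ this is the Harris--Mumford computation of $K_{\M_g}$, and for $n>0$ it follows by the standard ramification analysis of the forgetful maps $\M_{g,n}\to\M_{g,n-1}$, both being contained in \cite[Chap.~XIII]{GAC2}. Pulling this identity back through the isomorphism $\Pic(\U_{g,n}^{lci})\cong\Pic(\M_{g,n})$ of the first paragraph — together with the remark that the line bundle $K$ of \cite[Sec.~1.1]{AFS0} restricts to the canonical bundle of $\M_{g,n}$, since both are built from the cotangent complex of the stack, which on the nodal locus is controlled by the deformation theory of nodal curves — yields the formula on all of $\U_{g,n}^{lci}$.

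The genuine content is thus entirely contained in the cited results on $\M_{g,n}$; the only steps requiring care are the codimension-$\geq 2$ estimate (already granted in the text) and, in the $g=1,2$ cases, the sign bookkeeping needed to pass between the Arbarello--Cornalba presentation and the $\psi_i=-\delta_{0,\{i\}}$ convention of \cite{GKM}, which I expect to be the most error-prone point.
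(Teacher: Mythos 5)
Your proof takes essentially the same route as the paper's: identify $\Pic(\U_{g,n}^{lci})_{\bbQ}$ with $\Pic(\M_{g,n})_{\bbQ}$ via the smoothness of $\U_{g,n}^{lci}$ and the codimension-$\geq 2$ bound on the non-nodal locus, then quote Arbarello--Cornalba for part (1) and Mumford's formula for part (2). The only thing the paper adds that you omit is the citation to Moriwaki for the positive-characteristic case of (1) (the Arbarello--Cornalba result in \cite{GAC2} is stated over $\bbC$); the remark that (2) holds in arbitrary characteristic is also noted in the paper.
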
   
Indeed the relations for $g=0$ are also known, but we do not include them in the above statement since we will not need them in this paper (see  \cite[Chap. XIX]{GAC2}).
%and the references therein).
\begin{proof}
Since $\U_{g,n}^{lci}$ is smooth then the Picard group of $\U_{g,n}^{lci}$ is equal to its divisor class group ${\rm Cl}(\U_{g,n}^{lci})$ and moreover, since $\M_{g,n}$ is an open subset of $\U_{g,n}^{lci}$ whose 
complement has codimension two, we get that ${\rm Cl}(\U_{g,n}^{lci})={\rm Cl}(\M_{g,n})=\Pic(\M_{g,n})$. Hence, both  statements follow from the analogous statements for $\M_{g,n}$: for \eqref{F:PicU1} see \cite[Chap. XIX]{GAC2}  and the references therein  if ${\rm char}(k)=0$
% (indeed, if ${\rm char}(k)=0$ the statement holds for the integral Picard group of $\M_{g,n}$), 
and \cite{Mor} if ${\rm char}(k)>0$; for \eqref{F:PicU2} see \cite[Chap. XIII, Thm. 7.15]{GAC2} (whose proof works over an arbitrary field).
\end{proof}

As a corollary of the above Fact, we can determine the rational Picard group of the stacks $\M_{g,n}^{\ps}$, $\M_{g,n}^T$ and $\M_{g,n}^{T+}$. 

\begin{cor}\label{C:Pic-ps}
We have that:
$$\begin{sis} 
& \Pic(\M_{g,n}^{\ps})_{\bbQ}=\Pic(\M_{g,n}^T)_{\bbQ}=\frac{\Pic(\U_{g,n}^{lci})_{\bbQ}}{(\delta_{1,\emptyset})}, \\
& \Pic(\M_{g,n}^{T+})_{\bbQ}=\frac{\Pic(\U_{g,n}^{lci})_{\bbQ}}{(\delta_{1,\{i\}}: \{[0,\{i\}, [1,\{i\}]\subseteq T)}. 
\end{sis}$$
\end{cor}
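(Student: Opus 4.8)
The plan is to deduce this corollary from Fact~\ref{F:PicU} together with the geometry of the open embeddings constructed in Theorem~\ref{T:algstack}. The key point is that removing a closed substack of codimension $\geq 2$ does not change the (rational) Picard group, while removing an irreducible divisor quotients the Picard group by the class of that divisor. So first I would recall that $\M_{g,n}^T$ is an open substack of $\U_{g,n}^{lci}$ (proved in the course of Theorem~\ref{T:algstack}), and I would identify its complement. By Definition~\ref{D:ourstacks} and Lemma~\ref{lem:A_3(T)}, $\M_{g,n}^T = \U_{g,n}(A_3(T))\setminus(\m T^{A_1}\cup\m T^{A_3})$, and $\U_{g,n}(A_3(T))$ is open in $\U_{g,n}^{lci}$ with complement of codimension $\geq 2$ (the locus of curves with a non-$A_3$ singularity, or with a tacnode of type not in $T$, each imposes at least two conditions on an lci curve). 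The locus $\m T^{A_3}$ of curves with an $A_3$-attached elliptic tail also has codimension $\geq 2$. The only codimension-one piece is $\m T^{A_1}$, the locus of curves with an $A_1$-attached elliptic tail, and its closure in $\U_{g,n}^{lci}$ is exactly the boundary divisor $\Delta_{1,\emptyset}$ (a generic such curve is a smooth genus-$g-1$ curve with a smooth elliptic tail attached nodally). Hence
\[
\Pic(\M_{g,n}^T)_\bbQ = \frac{\Pic(\U_{g,n}^{lci})_\bbQ}{(\delta_{1,\emptyset})},
\]
and the same computation with $T=\emptyset$ (using Remark~\ref{R:ourstacks}(i), $\M_{g,n}^\emptyset=\M_{g,n}^{\ps}$) gives $\Pic(\M_{g,n}^{\ps})_\bbQ$; alternatively one invokes Fact~\ref{F:embAFSV}, since $\M_{g,n}^{\ps}$ is open in $\U_{g,n}(A_2)$ which is open in $\U_{g,n}^{lci}$ with complement of codimension $\geq 2$, and its complement inside $\U_{g,n}(A_2)$ is again $\m T^{A_1}$ with closure $\Delta_{1,\emptyset}$.

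For the third formula I would argue similarly with the further open embedding $\iota_T^+\colon \M_{g,n}^{T+}\hookrightarrow\M_{g,n}^T$ of Theorem~\ref{T:algstack}, whose complement is the locus $\m B^T$ of curves containing an $A_1/A_1$-attached elliptic chain of type contained in $T$ (Definition~\ref{D:ourstacks}(3)). An $A_1/A_1$-attached elliptic chain of type in $T$ always contains an $A_1/A_1$-attached elliptic \emph{bridge} of type in $T$, so $\m B^T$ is the union over the minimal bridge-types of the corresponding loci. A generic curve in such a locus has exactly one $A_1/A_1$-attached elliptic bridge: for an irreducible bridge of type $\{[0,\{i\}],[1,\{i\}]\}$ this is a curve with a nodally attached genus-$1$ component carrying the marked point $p_i$ and one further node, whose closure in $\M_{g,n}^T$ is the boundary divisor $\Delta_{1,\{i\}}$; for types with $\tau\geq 1$ or $|I|\geq 2$ the corresponding locus has codimension $\geq 2$ in $\U_{g,n}^{lci}$ (attaching a genus-$1$ bridge to two distinct higher-genus pieces imposes two independent conditions) and so contributes nothing to the Picard group. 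This gives exactly
\[
\Pic(\M_{g,n}^{T+})_\bbQ = \frac{\Pic(\U_{g,n}^{lci})_\bbQ}{\bigl(\delta_{1,\{i\}}\ :\ \{[0,\{i\}],[1,\{i\}]\}\subseteq T\bigr)},
\]
matching the statement (using the convention $\{[0,\{i\}],[1,\{i\}]\}\subseteq T \Leftrightarrow [1,\{i\}]\in T$ together with admissibility, since $[1,\{i\}]\in T^{\adm}$ forces $[0,\{i\}]\in T$ — here one may replace $T$ by $T^{\adm}$ by Proposition~\ref{P:equaT} without changing the stack).

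The main obstacle — and the step requiring the most care — is the codimension bookkeeping: I must check precisely which irreducible components of the various complements $\U_{g,n}^{lci}\setminus\M_{g,n}^T$ and $\M_{g,n}^T\setminus\M_{g,n}^{T+}$ are divisorial and identify them with the named boundary divisors $\Delta_{1,\emptyset}$, $\Delta_{1,\{i\}}$. This is a local analysis at a generic point of each such stratum using the versal deformation spaces of the relevant singularities (a node has a $1$-dimensional deformation, a cusp a $2$-dimensional one, a tacnode a $3$-dimensional one in $\car\neq 2$), exactly the kind of computation already carried out in \cite[Prop. 2.15]{AFSV1} and in Lemma~\ref{lem:A_3(T)}; I would phrase it by saying that smoothing the "extra" structure (the cusp, the higher tacnode, or the genus of the attached component) out of a generic member of a codimension-$\geq 2$ stratum requires at least two independent parameters, hence that stratum is not a divisor, while smoothing the single attaching node of a generic $\Delta_{1,\emptyset}$- or $\Delta_{1,\{i\}}$-curve costs exactly one parameter. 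Once this is in place, the corollary is immediate from the exact sequence $\Pic(\ov X)_\bbQ \to \Pic(X)_\bbQ \to 0$ for an open $X\subseteq\ov X$ with divisorial complement a union of the named divisors, combined with Fact~\ref{F:PicU}(1) for the presentation of $\Pic(\U_{g,n}^{lci})_\bbQ$ itself.
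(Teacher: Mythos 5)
Your argument is essentially the paper's own: since $\M_{g,n}^{\ps}$, $\M_{g,n}^T$, and $\M_{g,n}^{T+}$ are open substacks of the smooth stack $\U_{g,n}^{lci}$, their rational Picard groups are obtained from $\Pic(\U_{g,n}^{lci})_{\bbQ}=\Cl(\U_{g,n}^{lci})_{\bbQ}$ by quotienting out the classes of the irreducible divisorial components of the complements, which you correctly identify as $\Delta_{1,\emptyset}$ and (for $\M_{g,n}^{T+}$) the divisors $\Delta_{1,\{i\}}$ with $\{[0,\{i\}],[1,\{i\}]\}\subseteq T$. One intermediate claim should be repaired: an $A_1/A_1$-attached elliptic chain of length $r>1$ does \emph{not} contain an $A_1/A_1$-attached elliptic bridge (its end bridges are $A_1/A_3$-attached and the internal ones $A_3/A_3$-attached), but this does not affect the conclusion, since such chains contribute strata of codimension at least $2(r-1)+1\geq 3$ and hence no extra divisors.
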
 
\begin{proof}
Since $\M_{g.n}^{\ps}$ is an open substack of the smooth stack $\U_{g,n}^{lci}$,  its rational Picard group coincide with its rational divisor class group and it is a quotient of $\Cl(\U_{g,n}^{lci})_{\bbQ}$ by the classes of the irreducible divisors in  $\U_{g,n}^{lci}\setminus \M_{g,n}^{\ps}$, namely $\delta_{1,\emptyset}$. The argument for  $\M_{g,n}^T$ and $\M_{g,n}^{T+}$ is similar using that unique divisor in  $\U_{g,n}^{lci}\setminus \M_{g,n}^T$ is again $\Delta_{1,\emptyset}$ while the irreducible divisors in $\U_{g,n}^{lci}\setminus \M_{g,n}^{T+}$ are $\Delta_{1,\emptyset}$ and $\{\Delta_{1,\{i\}}:  \{[0,\{i\}, [1,\{i\}]\}\subseteq T\}$.
\end{proof}
From now on, we will denote the restriction of a line bundle on $\U_{g,n}^{lci}$ to one of the open substacks $\M_{g,n}^{\ps}$, $\M_{g,n}^T$ and $\M_{g,n}^{T+}$ with the same symbol.

\begin{remark}
Recently, Fringuelli and the third author have proved in \cite{FV} that, if $\car(k)\neq 2$, then $\Pic(\M_{g,n})$ is generated by the tautological line bundles subject to the relations of Fact \ref{F:PicU}\eqref{F:PicU1} (if $g\geq 1$).
Hence Fact  \ref{F:PicU}\eqref{F:PicU1} and Corollary \ref{C:Pic-ps} holds true for the integral Picard group  if $\car(k)\neq 2$.
\end{remark}

\end{section}

\begin{section}{Existence of good moduli spaces}\label{S:modspace}

In this section we want to prove that the moduli stacks of $T$-semistable and $T^+$-semistable curves admit a good moduli space in the sense of Alper \cite{Alper}.

From now on, we will assume that the characteristic is big enough as specified in the following

\begin{definition}[Characteristic big enough with respect to $T$ or $(g,n)$]\label{A:char}
Given $T\subseteq T_{g,n}$, we will say that the base field $k$ has characteristic  \emph{big enough with respect to $T$}, and we will write ${\rm char}(k)\gg T$,  if either ${\rm char}(k)=0$ or the characteristic is positive and it does  not divide the order of the \'etale  group scheme of connected components of the automorphism group schemes of every $k$-point of $\M_{g,n}^T$. Given an hyperbolic pair $(g,n)$, we will say that the base field $k$ has characteristic \emph{big enough with respect to $(g,n)$}, and we will write $\car(k)\gg (g,n)$,  if   ${\rm char}(k)\gg T_{g',n'}$ for any hyperbolic pair $(g',n')$ such that $g'\leq g$ and $n'\leq n+(g-g')$. 
\end{definition}
The relevance of the first condition ${\rm char}(k)\gg T$ for the moduli stack $\M_{g,n}^T$ is explained in the following Lemma, while the definition of $\car(k)\gg (g,n)$ is dictated by the induction used in the proof of Theorem \ref{T:goodspaces} below.

\begin{lemma}\label{L:linred}
Given $T\subseteq T_{g,n}$,   the automorphism group scheme of every $k$-point $\M_{g,n}^T$ is linearly reductive if and only if ${\rm char}(k)\gg T$.
\end{lemma}
\begin{proof}
The automorphism group scheme $\un{\Aut}(C,\{p_i\})$ of every $k$-point $(C,\{p_i\})$ of $\M_{g,n}^T$ is an extension of the  \'etale group scheme $\pi_0(\un{\Aut}(C,\{p_i\}))$ of its connected components 
by the connected component $\un{\Aut}(C,\{p_i\})^o$ containing the identity, which is a torus by  \cite[Prop. 2.6]{AFSV1}. Hence 
$\un{\Aut}(C,\{p_i\})$  is linearly reductive if and only if $\car(k)$ does not divide the order of the \'etale group scheme $\pi_0(\un{\Aut}(C,\{p_i\}))$ (see \cite[\S 2]{AOV}), i.e. if and only if $\car(k)\gg T$.
\end{proof}

\begin{remark}\label{R:linred}
For any $T\subseteq T_{g,n}$, there exists a constant $C(T)$ such that if $\car(k)\geq C(T)$ then $\car(k)\gg T$. This follows from the fact that, since $\M_{g,n}^T$ is of finite type over $k$, the order of the  finite  group schemes of connected components of $k$-points of $\M_{g,n}^T$ is bounded from above. Similarly, for any hyperbolic pair $(g,n)$ there exists a constant $C(g,n)$ such that if $\car(k)\geq C(g,n)$ then $\car(k)\gg (g,n)$.

It would be interesting to  find upper bounds for $C(T)$ and for $C(g,n)$ (for the analogue problem for $\M_{g}$, see \cite{vOV}).
\end{remark}

%Old Remark
%\begin{remark}\label{R:linred}
%\noindent 
%\begin{enumerate}[(i)]
%\item \label{R:linred1} For any $T\subseteq T_{g,n}$, there exists a constant $C(T)$ such that if $\car(k)\geq C(T)$ then $\car(k)\gg T$. This follows from the fact that, since $\M_{g,n}^T$ is of finite type over $k$, the order of the  finite  group schemes of connected components of $k$-points of $\M_{g,n}^T$ is bounded from above.

%Similarly, for any hyperbolic pair $(g,n)$ there exists a constant $C(g,n)$ such that if $\car(k)\geq C(g,n)$ then $\car(k)\gg (g,n)$.
%\item \label{R:linred2} The automorphism group scheme of every $k$-point $\M_{g,n}^T$ is linearly reductive if and only if ${\rm char}(k)\gg T$. Indeed, the automorphism group scheme of every $k$-point of $\M_{g,n}^T$ is an extension of the finite  group scheme of its connected components, which is moreover \'etale, by a torus (see \cite[Prop. 2.6]{AFSV1}). Hence  such an automorphism group scheme is linearly reductive if and only if $\car(k)$ does not divide the order of the group scheme of its connected components (see \cite[\S 2]{AOV}). 
%\end{enumerate}
%\end{remark}

%It would be interesting to  find upper bounds for $C(T)$ and for $C(g,n)$ (for the analogue problem for $\M_{g}$, see \cite{vOV}).

\begin{theorem}\label{T:goodspaces}
Let $(g,n)\neq (2,0)$ and fix a subset $T\subseteq T_{g,n}$. Assume  that $\car(k)\gg (g,n)$ as in Definition \ref{A:char}.
The algebraic stacks $\M_{g,n}^{\ps}$, $\M_{g,n}^T$ and $\M_{g,n}^{T+}$ admit good moduli spaces $\MM_{g,n}^{\ps}$, $\MMM_{g,n}^T$ and $\MMM_{g, n}^{T+}$ respectively, which are normal proper irreducible algebraic spaces over $k$. Moreover, there exists a commutative diagram 
\begin{equation}\label{E:diag-spaces}
\xymatrix{
\M_{g,n}^{\ps} \ar@{^{(}->}[r]^{\iota_T} \ar[d]^{\phi^{\ps}} & \M_{g,n}^T \ar[d]^{\phi^T} & \M_{g,n}^{T+} \ar@{_{(}->}[l]_{\iota_T^+} \ar[d]^{\phi^{T+}}\\
\MM_{g,n}^{\ps} \ar[r]^{f_T}&   \MMM_{g,n}^T  & \MMM_{g,n}^{T+} \ar[l]_{f_T^+} \\
}
\end{equation}
where the vertical maps are the natural morphisms to the good moduli spaces and the bottom horizontal morphisms $f_T$ and $f_T^+$   are proper   morphisms. 

\end{theorem}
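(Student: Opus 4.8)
The plan is to reduce the existence of the good moduli spaces of $\M_{g,n}^{\ps}$, $\M_{g,n}^T$ and $\M_{g,n}^{T+}$ to the local VGIT machinery of Alper--Fedorchuk--Smyth--van der Wyck, exactly as in \cite[\S 3--4]{AFSV1} and \cite[\S 1--2]{AFS2}, and then to deduce properness and normality from the general theory of good moduli spaces plus the fact that the stacks sit inside the proper stack $\M_{g,n}$ via the morphism $\wh\Upsilon$. Concretely, I would proceed in the following steps.

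\textbf{Step 1 (Reductive stabilisers).} First I would record that, since $\car(k)\gg(g,n)$, the automorphism group scheme of every $k$-point of $\M_{g,n}^{\ps}$, $\M_{g,n}^T$ and $\M_{g,n}^{T+}$ is linearly reductive: this is immediate from Remark \ref{R:linred}\eqref{R:linred2} for $\M_{g,n}^T$, and it follows for the open substacks $\M_{g,n}^{\ps}\subseteq\M_{g,n}^T$ and $\M_{g,n}^{T+}\subseteq\M_{g,n}^T$. One should be slightly careful that the definition of $\car(k)\gg(g,n)$ is engineered precisely so that the induction on strata below goes through; the auxiliary pairs $(g',n')$ with $g'\le g$, $n'\le n+(g-g')$ are the ones appearing as normalisations/components of $T$-closed curves. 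Here I would invoke Proposition \ref{prop:Tclosed} and Proposition \ref{P:T+closed} to enumerate the closed points and their automorphism groups: a $T$-closed (resp.\ $T^+$-closed) curve $K\cup E_1\cup\cdots\cup E_r$ has automorphism group an extension of a finite \'etale group scheme by $\Gm^r$ acting on the attached rosaries as in Remark \ref{R:ros-eq}, with $K$ contributing only finite automorphisms (cf.\ the proof of Proposition \ref{P:Mgps-DM}).

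\textbf{Step 2 (Local VGIT and existence).} The key input, as stated in the introduction after Theorem A, is that the open immersions $\iota_T:\M_{g,n}^{\ps}\hookrightarrow\M_{g,n}^T\hookleftarrow\M_{g,n}^{T+}:\iota_T^+$ arise from local VGIT with respect to $\wh\delta=\delta-\psi$ in the sense of \cite[Def. 3.14]{AFSV1}. Granting this (which is the content of a separate proposition, referenced as generalising \cite[Prop. 1.4]{AFS2} via Proposition \ref{P:fincov} to positive characteristic), the existence of good moduli spaces $\MMM_{g,n}^T$ and $\MMM_{g,n}^{T+}$, together with the induced morphisms $f_T$ and $f_T^+$ to and from $\MM_{g,n}^{\ps}$, follows by the same argument as \cite[Thm. 1.9]{AFS2}: one uses the local structure theorem for Artin stacks with linearly reductive stabilisers (Luna-type \'etale slices, cf.\ Remark \ref{R:quotstack}) to produce, \'etale-locally around each closed point, a GIT quotient chart, and then glues using Alper's existence/uniqueness of good moduli spaces. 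The morphisms $f_T$, $f_T^+$ exist because the open immersions are compatible with the VGIT semistability, and they are surjective (hence contractions, since good moduli space morphisms have $\mathcal O$-surjective pushforward). Properness of $f_T$ and $f_T^+$ then follows from properness of $\M_{g,n}^{\ps}$ together with \cite[Tag 0CQK]{stacks-project}, the same way properness of $\M_{g,n}^{\ps}$ itself was deduced in Proposition \ref{P:Mgps-DM}.

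\textbf{Step 3 (Properness, normality, irreducibility of the spaces).} Irreducibility and normality of $\MMM_{g,n}^T$, $\MMM_{g,n}^{T+}$ are inherited from the stacks: $\M_{g,n}^T$ and $\M_{g,n}^{T+}$ are smooth and irreducible by Theorem \ref{T:algstack}, and a good moduli space of a normal (resp.\ irreducible) stack is normal (resp.\ irreducible) by Alper's theory. For properness of $\MMM_{g,n}^T$ itself: $\M_{g,n}^{\ps}$ is proper and has a good (indeed coarse, cf.\ Corollary \ref{C:coarseps}, and a good moduli space when $\car(k)\gg(g,n)$) moduli space $\MM_{g,n}^{\ps}$, which is therefore proper; since $f_T:\MM_{g,n}^{\ps}\to\MMM_{g,n}^T$ is proper and surjective, $\MMM_{g,n}^T$ is proper. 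For $\MMM_{g,n}^{T+}$, properness follows once we know $f_T^+$ is proper and surjective onto $\MMM_{g,n}^T$; surjectivity holds because every $T$-closed curve is a degeneration of a $T^+$-closed one by Proposition \ref{P:T+closed}\eqref{P:T+closed0} composed with Proposition \ref{prop:Tclosed}\eqref{prop:Tclosed0}, so the image of $f_T^+$ is closed and contains a dense open, hence everything. Finally the parenthetical in the statement, that $\phi^{\ps}$ is itself a good moduli space when $\car(k)\gg(g,n)$, follows because under this hypothesis the stabilisers of $\M_{g,n}^{\ps}$ are linearly reductive (Step 1), and a separated stack with linearly reductive finite-type inertia and a coarse space has that coarse space as good moduli space.

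\textbf{Main obstacle.} The heart of the matter, and the step I expect to require real work, is verifying that the pair of open immersions in Theorem \ref{T:algstack} genuinely arises from local VGIT with respect to $\wh\delta$: i.e.\ that \'etale-locally at every closed point of $\M_{g,n}^T$ one has a $\Gm$-chart (or $\Gm^r$-chart) in which $\M_{g,n}^{\ps}$ and $\M_{g,n}^{T+}$ cut out precisely the $\pm$-semistable loci for the linearisation determined by $\delta-\psi$. This is a local computation at the $T$-closed curves, using the explicit $\Gm$-actions on rosaries from Remark \ref{R:ros-eq} and the weight calculations of \cite{HH2, AFSV1}, and the new point over positive characteristic is the finite-covering argument of Proposition \ref{P:fincov}. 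Everything else is a formal consequence of Alper's theory of good moduli spaces and of the properness of $\M_{g,n}$.
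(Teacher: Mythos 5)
Your Steps 1 and 3, and the opening of Step 2, capture the right framework: local VGIT with respect to $\widehat\delta$ plus linearly reductive stabilisers plus Alper's general theory. But the proposal has a genuine gap in the middle, and it misidentifies where the real work lies.

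The paper invokes \cite[Thm.\ 1.3]{AFS2}, which reduces the existence of good moduli spaces for $\M_{g,n}^T$ and $\M_{g,n}^{T+}$ to three inputs: (a) the two open immersions arise from local VGIT (this is Proposition \ref{P:VGIT}, which the paper dispatches in one line by quoting \cite[Thm.\ 3.17]{AFSV1}); (b) $\M_{g,n}^{\ps}$ has a proper good moduli space (it has a proper coarse space by Proposition \ref{P:Mgps-DM}, and this is a good moduli space because under $\car(k)\gg(g,n)$ its stabilisers are linearly reductive); and, crucially, (c) the two closed complements $\m Z_T^-=\M_{g,n}^T\setminus\M_{g,n}^{\ps}$ and $\m Z_T^+=\M_{g,n}^T\setminus\M_{g,n}^{T+}$ each admit a proper good moduli space. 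Your proposal never isolates (c), and this is where essentially all of the substance of the proof lives: Propositions \ref{prop:-} and \ref{prop:+} produce finite surjective covers of $\m Z_T^-$ by stacky projective bundles over lower-complexity stacks $\M_{g',n'}^{T'}$ (via the sprouting stack $\Spr$ and the maps $\cN_{\bullet}$, analysed type-by-type in Lemma \ref{L:spr-T}) and of $\m Z_T^+$ by products of the stacks $\EC_r$ with lower-complexity stacks (via the gluing maps $\cG^r_{\bullet}$ and Lemma \ref{L:chain-T}), and then invoke Proposition \ref{P:fincov} to descend good moduli spaces along these finite covers. This forces an induction on $g$ (with base case $g=0$, where $\M_{0,n}^T=\M_{0,n}$ is its own good moduli space), and the slightly baroque shape of the hypothesis $\car(k)\gg(g,n)$ exists precisely to keep that induction consistent. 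You allude to Proposition \ref{P:fincov} but attach it to the wrong step; it is not part of verifying local VGIT, it is the positive-characteristic replacement of \cite[Prop.\ 1.4]{AFS2} used to descend good moduli spaces from the finite covers of the complements.

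You also misidentify the ``main obstacle'': checking that the immersions arise from local VGIT is essentially citation (the argument of \cite[Thm.\ 3.17]{AFSV1} transfers), whereas proving existence of good moduli spaces for $\m Z_T^\pm$ is the bulk of Section \ref{S:modspace}. Finally, your properness argument for $\MMM_{g,n}^{T+}$ is circular: you assume $f_T^+$ is proper to conclude $\MMM_{g,n}^{T+}$ is proper. In the paper, \cite[Thm.\ 1.3]{AFS2} delivers properness of all three good moduli spaces directly, after which properness of $f_T$ and $f_T^+$ is automatic because they are morphisms between proper algebraic spaces.
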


By Remark  \ref{R:ourstacks}, the two extremal cases of the above theorem are either trivial or already known at least in characteristic zero: if $T^{\adm}=\emptyset$ (which is always the case for $g=0$ or $(g,n)=(1,1)$), then the theorem is trivially true since $\M_{g,n}^{\ps}=\M_{g,n}^T=\M_{g,n}^{T+}$; if  $T^{\adm}=T_{g,n}^{\adm}$ and $\car(k)=0$ then the theorem reduces to \cite[Thm. 1.1]{AFS2} for $\alpha_c=7/10$ (but one has to exclude the case $(g,n)=(2,0)$).

\begin{remark}\label{R:special-gn}
The above theorem degenerates (but it is still true) in the cases $(g,n)=(1,1)$ and $(g,n)=(1,2)$ while it is false for $(g,n)=(2,0)$ and $T^{\adm}\neq \emptyset$ (which implies that $T^{\adm}=\{\irr\}$), as we now discuss.
\begin{enumerate}
\item If $(g,n)=(1,1)$ then $\M_{g,n}^{\ps}=\M_{g,n}^T=\M_{g,n}^{T+}=\emptyset$ for every $T$.
\item If $(g,n)=(1,2)$ and $T^{\adm}\neq \emptyset$ (in which case it must be the case that $T^{\adm}=\{[0,\{1\}], [1,\{1\}]\}$) then all the curves in $\M_{1,2}^T$ isotrivially specialise to the tacnodal elliptic bridge so that $\MMM_{1,2}^{T}$  is equal to a point. On the other hand, the stack $\M_{1,2}^{T+}$ (and hence also its good moduli space $\MMM_{1,2}^{T+}$) is empty. 
\item If $(g,n)=(2,0)$ and $T^{\adm}=\{\irr\}$ then we do not know if the good moduli space for $\M_2^{T}=\M_2^{\irr}$ exists but certainly,  if it exists, it will not be separated, see \cite{CTV}. On the other hand, the stack $\M_2^{T+}=\M_2^{\irr+}$ is not well-defined since it is not an open substack of $\M_2^{\irr}$ (but only locally closed), see \cite[Rmk. 3.9]{CTV}.
\end{enumerate}
\end{remark}

Following the strategy of \cite{AFS2}, there are two key ingredients in the proof.  The first one is the following

\begin{proposition}\label{P:VGIT}
Assume that $(g,n)\neq (2,0)$, $\car(k)\gg T$ and fix a subset  $T\subseteq T_{g,n}$.
Then the open embeddings 
$$
\xymatrix{
\M_{g,n}^{\ps} \ar@{^{(}->}[r]^{\iota_T} & \M_{g,n}^T & \ar@{_{(}->}[l] _{\iota_T^+}  \M_{g,n}^{T,+}.
}
$$
arise from local VGIT with respect to the line bundle $\delta-\psi$ on $\M_{g,n}^T$.
\end{proposition}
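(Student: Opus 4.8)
The plan is to verify, following the framework of \cite[Sec. 3]{AFSV1} and its refinement in \cite{AFS2}, the two defining conditions for ``local VGIT with respect to $\delta-\psi$'' (in the sense of \cite[Def. 3.14]{AFSV1}): first, that the complement $\M_{g,n}^T\setminus\M_{g,n}^{\ps}$ consists of curves that are destabilised by $\delta-\psi$ via a one-parameter degeneration replacing a cusp (or an $A_3$/$A_3$-attached elliptic chain) by a tacnodal elliptic bridge, and dually that $\M_{g,n}^T\setminus\M_{g,n}^{T,+}$ consists of curves with an $A_1/A_1$-attached elliptic chain of type in $T$ that specialises, via the $\Gm$-action of Remark \ref{R:ros-eq}, to an attached rosary of even length; and second, that along each such isotrivial specialisation the line bundle $\delta-\psi$ has the correct sign, namely strictly positive going from the ``$\ps$'' side and strictly negative going from the ``$+$'' side, with the specialisation in the middle stack $\M_{g,n}^T$ having weight zero. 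Concretely, one reduces to the local model: near a tacnode (resp. near an $A_1/A_1$-attached tacnodal elliptic bridge) the stack $\U_{g,n}(A_3(T))$ looks étale-locally like $[\Spec k[x,y,z]/\Gm]$ for the $\Gm$-action described in Remark \ref{R:ros-eq}\eqref{R:ros-eq1} (the weights $1,-1,\ldots$ on the coordinates), and one must compute the weight of (the pullback of) $\delta-\psi$ at the fixed point.

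First I would isolate, using Proposition \ref{P:equaT} and Definition \ref{D:ourstacks}, exactly which divisors appear in the three complements: $\U_{g,n}^{lci}\setminus\M_{g,n}^{\ps}$, $\U_{g,n}^{lci}\setminus\M_{g,n}^T$ and $\U_{g,n}^{lci}\setminus\M_{g,n}^{T,+}$ — by Corollary \ref{C:Pic-ps} these are $\Delta_{1,\emptyset}$ in the first two cases and $\Delta_{1,\emptyset}$ together with the $\Delta_{1,\{i\}}$ with $\{[0,\{i\}],[1,\{i\}]\}\subseteq T$ in the third. Then, working on the smooth stack $\U_{g,n}(A_3(T))$, I would describe the open immersions $\iota_T,\iota_T^+$ in terms of the local picture at a curve $C$ lying in the relevant wall: a $T$-semistable curve with a tacnode $p$ of type in $T$ (or with an attached tacnodal elliptic bridge, equivalently a length-$2$ rosary) has an isotrivial specialisation governed by the $\Gm\subseteq\Aut$ of Remark \ref{R:ros-eq}, and the two ``flanks'' of this $\Gm$-orbit closure are precisely the pseudostable limit (the tacnode opening to a node plus a cusp, or the elliptic bridge degenerating to a nodal curve) on one side and the $T^+$-limit (the length-$3$ rosary, a node plus a cusp, etc.) on the other — this is exactly the content of Lemma \ref{L:spec-ros} read in reverse, combined with Propositions \ref{prop:Tclosed} and \ref{P:T+closed}. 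The key computation is then the weight of $\delta-\psi$: intersecting $\delta-\psi$ with the degenerating $\bbP^1$'s of the rosary, using that $\delta-\psi=\widehat\delta=\sum\delta_{i,I}+\delta_{\irr}$ and Mumford's formula (Fact \ref{F:PicU}\eqref{F:PicU2}), one checks that this intersection number is $>0$ on the curves forming the elliptic bridge face $F_T$ (as already recorded in Proposition \ref{P:face}) and $<0$ on the dual degenerations to length-$3$ rosaries, with the $T$-semistable specialisation itself being $(\delta-\psi)$-trivial; this is precisely the positivity/negativity/neutrality that \cite[Def. 3.14]{AFSV1} requires.

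The main obstacle I anticipate is the bookkeeping at the local model: one must verify that the \emph{entire} $\Gm$-action near the tacnodal singularity (including the action on the sections $\sigma_i$ through the terms $\delta_{0,\{i\}}=-\psi_i$) has exactly three nonzero weights with the prescribed signs, so that the local VGIT chamber structure on $\Spec k[x,y,z]$ with the linearisation coming from $\delta-\psi$ produces the three stacks $\M_{g,n}^{\ps}$, $\M_{g,n}^T$, $\M_{g,n}^{T,+}$ as the three GIT strata (semistable locus, the wall, and the opposite semistable locus). A further subtlety is handling the closed rosaries in the case $\irr\in T$ and $(g,n)=(r+1,0)$, where one works near a closed $A_3/A_3$-attached elliptic chain and the local group is again a single $\Gm$ (Remark \ref{R:ros-eq}\eqref{R:ros-eq2}); the same weight computation applies but one must be careful that the relevant boundary divisor is $\delta_{\irr}$ rather than a separating one. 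Once these local weight computations are in place, the statement follows by patching: the loci where $\iota_T$ (resp. $\iota_T^+$) fails to be an isomorphism are covered by étale charts of the above form, on each of which the open immersion is a VGIT wall-crossing for $\delta-\psi$, which is exactly the definition of the open embeddings arising from local VGIT with respect to $\delta-\psi$.
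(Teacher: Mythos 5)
Your outline is exactly the paper's intended argument: the paper's own proof of this proposition consists of the single sentence ``the proof of \cite[Thm.\ 3.17]{AFSV1} carries through mutatis mutandis,'' and what you describe---reducing to the local quotient presentation at a $T$-closed point, computing the $\Gm$-weight of $\delta-\psi$ via the length-$2$ rosary action of Remark \ref{R:ros-eq}, and matching the two sides of the wall to $\M_{g,n}^{\ps}$ and $\M_{g,n}^{T,+}$---is precisely what \cite[Thm.\ 3.17]{AFSV1} does for $T=T_{g,n}$. Passing to a general $T\subseteq T_{g,n}$ changes nothing in the local analysis, since the weight computation at a given tacnode or rosary does not depend on the global type set $T$ but only on the local singularity, so the proposal is correct and takes the same route as the paper.
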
 
We refer to \cite[Def. 3.14]{AFSV1} for the definition of when two open substacks of a given algebraic stack $\cX$ arise from local VGIT at some (or any) closed point $x\in \cX(k)$  with respect to a line bundle $\calL$ on $\cX$.
\begin{proof}
The proof of \cite[Thm. 3.17]{AFSV1} (or its expanded version in \cite[Thm. 3.11]{ASV}) carries through, as we now briefly indicate.  

Let $(C,\{p_i\})$ be a closed point of $\M_{g,n}^T$, i.e. $(C,\{p_i\})$ is a $T$-closed curve by Proposition \ref{prop:Tclosed}\eqref{prop:Tclosed1}.
% In particular, all tacnodes of $C$ lie on a rosary, and all elliptic bridges of type $T$ are rosaries\footnote{Let us point out that $(C,\{p_i\})$ might have elliptic bridges of type not contained in $T$; these bridges are not tacnodal, as the type of a tacnodal elliptic bridge is the same of the type of the tacnodes, and all tacnodes have type contained in $T$. These bridges do not contribute to the automorphism groups, hence are not relevant for this proof.}.
As explained in Remark \ref{R:ros-eq}, for every $A_1/A_1$-attached tacnodal elliptic bridge $(E_i, q_i^1, q_i^2)$ of $(C,\{p_i\})$, we have a one parameter subgroup $\rho_i:\Aut((E_i, q_i^1, q_i^2))\cong \Gm\hookrightarrow \Aut(C,\{p_i\})$, and these one parameter subgroups freely generate the connected component $\Aut(C,\{p_i\})^o$ of $\Aut(C,\{p_i\})$ containing the identity.
% i.e.$$\Aut(C,\{p_i\})^o\cong \bigotimes_{i=1}^r \Aut((E_i, q_i^1, q_i^2)).$$
Arguing as in \cite[Prop. 3.25]{AFSV1}, the character $\chi_{\delta-\psi}$ of $\Aut(C,\{p_i\})^o$ induced by the line bundle $\delta-\psi$ is equal to a positive power of the diagonal character
$$
\begin{aligned}
\chi_*: \Aut(C,\{p_i\})^o\cong \prod_{i=1}^r \Aut((E_i, q_i^1, q_i^2)) &\longrightarrow \Gm, \\ 
(t_1,\ldots, t_r) & \mapsto t_1\ldots t_r. 
\end{aligned}
$$

We can choose coordinates on the first order deformation space $T^1:=T^1((C,\{p_i\})$ of $(C,\{p_i\})$ as in  \cite[Prop. 3.22]{ASV}, in such a way that the  action of $\Aut(C,\{p_i\})^o$ is diagonal. 
Arguing as in \cite[Prop. 3.26]{AFSV1} (see also \cite[Prop. 3.29]{ASV} for more details), the VGIT ideals $I_{\chi_*}^+$ and $I_{\chi_*}^-$ of the action of $\Aut(C,\{p_i\})^o$ on $A:=k[T^1]$ with respect to the character $\chi_*$ 
are such that $V(I_{\chi_*}^+)$ is the locus of deformations of $(C,\{p_i\})$ that keep the tacnode of some  $(E_i, q_i^1, q_i^2)$ and smoothen out the attaching nodes, while $V(I_{\chi_*}^-)$ is the locus of deformations of $(C,\{p_i\})$ that 
smoothen out  the tacnode of some  $(E_i, q_i^1, q_i^2)$ and keep the attaching nodes. Remark that, given a $A_1/A_1$-attached tacnodal elliptic bridge of type $S \subset T$, if we smooth out the attaching nodes we are left with a tacnode of type $S$, and if we smooth out the tacnode we are left with an elliptic bridge of type $S$.  Therefore, the images $I_{\chi_*}^+\wh{A}$ and $I_{\chi_*}^-\wh{A}$ of the VGIT ideals in the completion $\wh{A}:=k[[T^1]]$ are equal to the ideals induced by, respectively, the reduced closed substacks  $\M_{g,n}^T\setminus \M_{g,n}^{\ps}$ and $\M_{g,n}^T\setminus \M_{g,n}^{T,+}$ of $\M_{g,n}^T$. 
By \cite[Prop. 3.15]{AFSV1}, this is enough to conclude that the open embeddings 
$$
\xymatrix{
\M_{g,n}^{\ps} \ar@{^{(}->}[r]^{\iota_T} & \M_{g,n}^T & \ar@{_{(}->}[l] _{\iota_T^+}  \M_{g,n}^{T,+}.
}
$$
arise from local VGIT with respect to the line bundle $\delta-\psi$ on $\M_{g,n}^T$.
%on the first order deformation space $T^1:=T^1((C,\{p_i\})$ of $(C,\{p_i\})$ is diagonal with respect to the local coordinates introduced in  \cite[Prop. 3.22]{ASV}.
%\cite[Section 3]{AFSV1}, and in particular \cite[Proposition 3.20]{AFSV1}. 
%The action of a one parameter subgroup associated to an $A_1/A_1$-attached  tacnodal elliptic bridge $(R,q_1,q_2)$ is non-trivial only on the smoothing coordinates of $(R,q_1,q_2)$. Looking at the explicit description of the action from \cite[Section 3]{AFSV1}, or from the proof of \cite[Lemma 3.14]{CTV},  one sees that the ideals $I^+$ and $I^-$ define respectively the locus $\mathcal{B}^T$ and the locus of curves with a tacnode of type $T$,  hence the claim follows from the definition of VGIT.
\end{proof} 

The second key point is the proof that the complements of $\M_{g,n}^{\ps}$ and of $\M_{g,n}^{T+}$ in $\M_{g,n}^T$ admit good moduli spaces. Let us introduce a notation for these complements.

\begin{definition}\label{D:lociZ}
Consider the following closed substacks (with reduced structure) in $\M_{g,n}^T$:
\begin{equation*}\label{E;def-Z}
\m Z^-_T= \M_{g,n}^T \setminus \M_{g,n}^{\ps} \quad \text{ and, for  } (g,n)\neq (2,0), \quad  \m Z^+_T = \M_{g,n}^T \setminus \M_{g,n}^{T,+}.
\end{equation*}
\end{definition}

\noindent Observe that these loci have the following explicit description:
$$ 
\m Z^-_T= \mbox{\{Curves in $\M_{g,n}^T$ with at least one tacnode (of type contained in $T$)}  \},
$$
$$
\m Z^+_T =\mbox{\{Curves in $\M_{g,n}^T$ with at least one $A_1/A_1$-attached elliptic chain of type contained in $T$\}}.
$$

We first focus on the existence of a good moduli space for the stack $\m Z^-_T$.

\begin{proposition}\label{prop:-}
Fix $T\subseteq T_{g,n}$ and assume that $\car(k)\gg T$.  If $\M^{T'}_{g',n'}$ admits a proper good moduli space  for all $T'\subseteq T_{g',n'}$  with either $g'<g$ and $1\leq n'\leq n+1$ or $(g',n')=(g-2,n+2)$,  then $\m Z^-_T\subset \M_{g,n}^T$ admits a proper good moduli space.
\end{proposition}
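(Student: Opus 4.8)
The plan is to decompose $\mathcal{Z}^-_T$ into locally closed pieces according to the combinatorial type of the tacnodal configuration, and to show that each piece — together with the way the pieces glue — is built out of smaller moduli stacks of the same kind, to which the inductive hypothesis applies. Concretely, a curve $(C,\{p_i\})$ in $\mathcal{Z}^-_T$ has at least one tacnode of type contained in $T$; by Proposition~\ref{prop:Tclosed}\eqref{prop:Tclosed0} such a curve isotrivially specialises to its $T$-closed model, and by Lemma~\ref{L:spec-ros} (together with the description of $T$-closed curves in Definition~\ref{def:Tclosed}) every closed point of $\mathcal{Z}^-_T$ is a curve $K \cup E_1 \cup \cdots \cup E_r$ with $r \geq 1$ attached length-$2$ rosaries (tacnodal elliptic bridges) of type in $T$ glued to a $T$-core $K$. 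The first step is therefore to stratify $\mathcal{Z}^-_T$ by the \emph{normalisation type}: for each way of detaching a tacnode (i.e.\ for each element of $T$ that is realised by a tacnode of the curve) one records the genera, the partition of the marked points, and whether the partial normalisation is connected (type $\irr$) or splits into two pieces (type $[\tau,I]$). This is exactly the data appearing in the proof of Lemma~\ref{lem:A_3(T)}, and the normalisation-at-a-section construction used there (via \cite[Thm.~4.1]{CHL}) is what makes the stratification well-behaved.

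Next I would identify each stratum explicitly. Fix a tacnode of type $\{\irr\}$: normalising at it produces a curve in $\mathcal{U}_{g-2,n+2}^{lci}$, and imposing that the remaining singularities keep the curve $T'$-semistable for the appropriate $T' \subseteq T_{g-2,n+2}$ (induced from $T$ by the normalisation map on $T$-sets) shows that the corresponding locus is a gluing quotient of $\mathcal{M}_{g-2,n+2}^{T'}$ by the relative automorphisms that swap or fix the two new branches — concretely one takes the $\mathbb{G}_m$-rigidified gluing of the two new sections and then a finite group quotient. Similarly, a tacnode of type $\{[\tau,I],[\tau+1,I]\}$ normalises to a pair of curves of genera $\tau$ and $g-1-\tau$ with marked points indexed by $I$ and $I^c$ plus one new section each; the locus is a gluing quotient of a product $\mathcal{M}_{\tau,|I|+1}^{T_1} \times \mathcal{M}_{g-1-\tau,|I^c|+1}^{T_2}$. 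In all these cases the genera and numbers of marked points satisfy the inequalities in the hypothesis ($g' < g$ and $n' \leq n+1$, or $(g',n') = (g-2,n+2)$), so by assumption each factor has a proper good moduli space; taking products, $\mathbb{G}_m$-rigidifications, and quotients by the finite linearly reductive (using $\car(k) \gg T$, see Remark~\ref{R:linred}) automorphism groups of the glued branches preserves the property of admitting a proper good moduli space. This realises each stratum of $\mathcal{Z}^-_T$ as having a proper good moduli space.

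The third step — and the main obstacle — is to pass from the strata to the whole stack $\mathcal{Z}^-_T$, i.e.\ to show that these stratum-by-stratum good moduli spaces glue to a proper good moduli space for $\mathcal{Z}^-_T$ itself. The subtlety is that $\mathcal{Z}^-_T$ is not a disjoint union of its strata: a curve with two tacnodes lies in the closure of loci coming from detaching only one of them, and isotrivial degenerations (Lemma~\ref{L:spec-ros}, the $\mathbb{G}_m$-actions on rosaries) move between strata. The right tool here is the local structure of the situation: by Remark~\ref{R:quotstack} the ambient stack is a quotient stack, so Luna's étale slice theorem applies near each closed point, and the plan is to run the same argument as in \cite{AFS2} (following \cite{AFSV1}), checking the two hypotheses of Alper's existence criterion — that closed points have linearly reductive stabilisers (done, via $\car(k)\gg T$) and that the stack is $\Theta$-reductive and satisfies the appropriate unpunctured-inertia / S-completeness conditions. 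The verification of $\Theta$-reductivity and S-completeness for $\mathcal{Z}^-_T$ is where the real work lies: one must show that any one-parameter family of $T$-semistable curves with a tacnode, together with a $\mathbb{G}_m$-action degenerating the special fibre, has a unique completion inside $\mathcal{Z}^-_T$ — and this reduces, via the normalisation-at-the-tacnode construction, to the same statement for the smaller stacks $\mathcal{M}^{T'}_{g',n'}$, where it holds because those have \emph{proper} good moduli spaces by the inductive hypothesis. Properness of the resulting good moduli space follows from properness of $\mathcal{Z}^-_T$ (it is closed in the proper-over-its-good-moduli-space stack $\mathcal{M}^T_{g,n}$) together with the valuative criterion applied to the good moduli space morphism.

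Finally I would record that $\mathcal{Z}^-_T$ is reduced and of finite type (it is a closed substack of $\mathcal{M}^T_{g,n}$, which is of finite type by Theorem~\ref{T:algstack}), so that its good moduli space is automatically a reduced, separated algebraic space of finite type over $k$; normality is not asserted here and is not needed. The upshot is that the entire argument is an induction on $(g,n)$ — hence the precise form of the hypothesis in the statement, and the reason the proof of the main Theorem~\ref{T:goodspaces} threads this Proposition together with Proposition~\ref{P:VGIT} (local VGIT with respect to $\delta - \psi$) and the analogous statement for $\mathcal{Z}^+_T$.
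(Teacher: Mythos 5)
Your proposal captures the right moral — induct on $(g,n)$ by normalising at tacnodes and landing in smaller stacks $\mathcal{M}^{T'}_{g',n'}$ — but the way you propose to pass from local pieces to all of $\mathcal{Z}^-_T$ has a genuine gap, and the paper takes a different and cleaner route that you should compare with.

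The paper does \emph{not} stratify $\mathcal{Z}^-_T$ and then glue good moduli spaces of strata. Instead it builds a single \emph{finite cover} of $\mathcal{Z}^-_T$. Following \cite[Sec.~4.2]{AFS2}, one introduces the \emph{sprouting stack} $\Spr$ of $n$-pointed $A_3$-curves together with a chosen section along a tacnode; the forgetful morphism $\cF:\Spr\to\U_{g,n}(A_3)$ is finite by \cite[Prop.~4.7]{AFS2}, and after passing to the appropriate open piece $E_T\subset\Spr$ (computed by Lemma~\ref{L:spr-T}), the restriction $\cF_T:E_T\to\M_{g,n}^T$ is a finite morphism with image exactly $\mathcal{Z}^-_T$. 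The crucial point is that $E_T$ honestly \emph{is} a disjoint union — indexed by the type of the chosen tacnodal section — and each piece is a stacky projective bundle (via the normalisation morphisms $\cN_{\irr}$, $\cN_{0,\{j\}}$, $\cN_{h,M}$, \cite[Prop.~4.9]{AFS2}) over a stack $\M^{T'}_{g',n'}$ or a product of two such, falling under the inductive hypothesis. Thus $E_T$ admits a proper good moduli space, and Proposition~\ref{P:fincov} (the Chevalley-type descent statement, which is exactly where linear reductivity of stabilisers, i.e.\ $\car(k)\gg T$, enters) transports this to $\mathcal{Z}^-_T$.

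By contrast, your third step is where your argument breaks: the decomposition of $\mathcal{Z}^-_T$ by number/type of tacnodes is only a locally closed stratification, not a disjoint union, so there is no automatic procedure to glue stratum-by-stratum good moduli spaces into one for $\mathcal{Z}^-_T$ — a curve with two tacnodes lies in the closures of two strata, and one must prove compatibility of the specialisation maps. You gesture at verifying $\Theta$-reductivity and S-completeness directly, but you do not actually do it, and carrying that out for $\mathcal{Z}^-_T$ is a significant undertaking that the paper deliberately avoids. The virtue of the sprouting-stack cover is precisely that it converts a stratification problem into a disjoint union upstairs, for which good moduli spaces obviously exist piece by piece, and then a finite-descent lemma brings the result back down. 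If you want to salvage the stratification approach you would need to independently establish the gluing, which is in effect as hard as the original problem; the finite cover is the intended shortcut.
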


Note that $\m Z_T^-$ coincides with the stack $\ov{\mathcal S}_{g,n}(7/10)$ of \cite[Section 4]{AFS2} in the case where $T^{\adm}=T_{g,n}^{\adm}$. Hence, the above Proposition generalises \cite[Prop. 4.10]{AFS2} for $\alpha_c=7/10$. At the other extreme, if $T^{\adm}=\emptyset$  then $\m Z_T^-=\emptyset$ by Remark \ref{R:ourstacks} and the result is trivial.
Moreover, if $(g,n)=(1,2)$ and $T^{\adm}\neq \emptyset$ then $\m Z_T^-=\ov{\mathcal S}_{1,2}(7/10)\cong B\bbG_m$ because it consists of one point, namely the tacnodal elliptic bridge, which has automorphism group $\Gm$
(see \cite[Lemma 4.3]{AFS2}) and the good moduli space is just a point. 

The strategy of proof of Proposition \ref{prop:-} is similar to the one of loc. cit. and it consists in finding a finite cover of $\m Z_T^-$ which is a stacky projective bundle over suitable stacks  $\M^{T'}_{g',n'}$ (as in the statement of Proposition \ref{prop:-})  and then concluding by applying the criterion contained in the following proposition, which generalises \cite[Prop. 1.4]{AFS2} from $\car(k)=0$ to arbitrary characteristic.

\begin{proposition}\label{P:fincov}
Let $f:\cX\to \cY$ be a morphism of algebraic stacks of finite type over an algebraically closed field $k$ (of arbitrary characteristic). Suppose that:
\begin{enumerate}[(i)]
\item \label{P:fincov1} the morphism $f:\cX\to \cY$ is finite and surjective;
\item \label{P:fincov2} there exists a good moduli space with $\cX\to X$ with $X$ separated; 
\item \label{P:fincov3} the algebraic stack $\cY$ is a \emph{global quotient stack}, i.e. it is isomorphic to   $[Z/G]$ for an algebraic space $Z$ of finite type over $k$ and a reductive algebraic $k$-group $G$, and it admits local quotient presentations (which implies that  the stabilisers of  its closed $k$-points are linearly reductive).

%where $Z$ is a normal and separated scheme of finite type over $k$ and $G$ is a smooth linear algebraic group over $k$;
%\item \label{P:fincov4} the stabilisers of the closed $k$-points of $\cY$ are linearly reductive. 
\end{enumerate}
Then  there is a good moduli space $\cY\to Y$ with $Y$ separated. Moreover, if $X$ is proper, so is $Y$.
\end{proposition}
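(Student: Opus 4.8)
The plan is to adapt the argument of \cite[Prop.~1.4]{AFS2}, which treats the case $\car(k)=0$, checking that each of its ingredients survives in positive characteristic once every closed point of $\cY$ has a linearly reductive automorphism group — which is exactly what hypothesis (iii) grants. Write $\phi\colon\cX\to X$ for the given good moduli space. The outcome will be a separated algebraic space $Y$, a good moduli space morphism $\psi\colon\cY\to Y$, and a finite surjection $\bar f\colon X\to Y$ with $\psi\circ f=\bar f\circ\phi$; properness of $Y$ when $X$ is proper will then be immediate, descending along the finite surjection $\bar f$, and likewise for separatedness.

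First I would unwind the local structure of $\cY$. Since $\cY$ admits local quotient presentations, each closed point $y\in\cY(k)$ has an \'etale neighbourhood $[\Spec A_y/G_y]\to\cY$ with $G_y=\Aut_{\cY}(y)$ linearly reductive; as $G_y$ is linearly reductive, $[\Spec A_y/G_y]$ carries the good moduli space $\Spec A_y^{G_y}=:V_y$, so that $\cY$ has good moduli spaces \'etale-locally. The local pieces $V_y$ must then be glued. Here I would use the finite cover: pulling $f$ back along the local presentations and along their overlaps over $\cY$, and invoking the compatibility of good moduli space morphisms with finite morphisms and with (strongly) \'etale base change (as in \cite{Alper}), one obtains good moduli spaces of the pulled-back stacks that are finite over $X$, and hence an \'etale groupoid relating the various $V_y$. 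Its quotient is a candidate $Y$; since it receives a finite surjection from the algebraic space $X$ and has trivial stabilisers, it is a separated algebraic space (cf.\ \cite{KeM}). Finally the local morphisms $[\Spec A_y/G_y]\to V_y$ agree on overlaps by construction, so they glue to a morphism $\psi\colon\cY\to Y$, and the finite surjection $\bar f\colon X\to Y$ is obtained in the same way from $\phi$.

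It remains to check that $\psi$ is a good moduli space: cohomological affineness of $\psi$ and the isomorphism $\cO_Y\xrightarrow{\,\sim\,}\psi_{*}\cO_{\cY}$ can both be verified after an \'etale base change on $Y$, and after such a base change $\psi$ is, by construction, pulled back from one of the local models $[\Spec A_y/G_y]\to V_y$, for which both hold. The step I expect to be the real obstacle is precisely this globalisation — patching the \'etale-local good moduli spaces of $\cY$ into an honest separated algebraic space $Y$ and producing $\psi$ — together with the bookkeeping needed to see that the pulled-back stacks do admit good moduli spaces compatibly. In characteristic zero this is exactly \cite[Prop.~1.4]{AFS2}; in general one must make sure each of its inputs — the Luna-type \'etale slices underlying the local quotient presentations, the exactness and base-change properties of good moduli space pushforwards, and the existence of quotients by finite (\'etale) equivalence relations — holds with no hypothesis on $\car(k)$ beyond linear reductivity of the stabilisers, which in our applications is guaranteed by $\car(k)\gg T$. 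Everything else is formal.
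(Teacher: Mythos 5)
Your proposal gets the high-level goal right (adapt \cite[Prop.\ 1.4]{AFS2} to positive characteristic, with linear reductivity of stabilisers as the replacement for $\car(k)=0$), but both the route and the diagnosis of where the difficulty lies differ from the paper, and the route you sketch has a real gap.

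The paper's proof is a one-line reduction: the argument of \cite[Prop.\ 1.4]{AFS2} goes through verbatim, and the single ingredient that requires the characteristic-zero hypothesis is \cite[Lemma 3.6]{AFS2}, a Chevalley-type statement — if $\cX\to\cY\to X$ has $\cX\to\cY$ finite surjective, $\cX\to X$ cohomologically affine, and $\cY=[V/G]$ a global quotient, then $\cY\to X$ is cohomologically affine. The paper's Lemma~\ref{L:Cheva} reproves this under the extra hypothesis that the stabilisers of closed $k$-points of $\cY$ are linearly reductive. The key structural point you miss is that the global quotient presentation $\cY=[V/G]$ is used \emph{directly}: one pulls back $f$ to $U=V\times_{\cY}\cX\to V$, shows $U\to X$ is affine, applies Chevalley to get $V\to X$ affine and $G$-invariant, and then factors $V\to[V/G]\to V/G:=\Spec_{\cO_X}(p_*\cO_V)^G\to X$. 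The candidate good moduli space $Y=V/G$ is thus built in one stroke as a relative $\Spec$ of invariants; there is no gluing problem at all. Linear reductivity and Luna slices enter only at the last step, to check that $[V/G]\to V/G$ is a good moduli space, and this can be done locally on $V/G$ because the Luna slice at each closed $G$-orbit automatically produces the needed Cartesian diagram.

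Your sketch instead constructs $Y$ by gluing the \'etale-local good moduli spaces $V_y$ of the local quotient presentations $[\Spec A_y/G_y]\to\cY$ via an \'etale groupoid. This is where the gap is. For your verification at the end (``after such a base change $\psi$ is, by construction, pulled back from one of the local models'') to make sense, you need the maps $V_y\to Y$ to be \'etale and the squares over $\cY\to Y$ to be Cartesian — i.e.\ the local presentations must be \emph{strongly} \'etale over $Y$. This is not ``by construction'': it is precisely the non-trivial compatibility that the Luna slice argument (or, in full generality, the Alper–Halpern-Leistner–Heinloth local structure theorem) has to deliver, and you never prove it. The global-quotient route in the paper sidesteps this entirely because $Y$ is defined without gluing and the slice theorem hands you the strongly \'etale cover of $V/G$ for free. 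So while your final paragraph correctly identifies linear reductivity of stabilisers as the hypothesis that replaces $\car(k)=0$, it attributes the main difficulty to the patching of local good moduli spaces, whereas the actual new content is the cohomological-affineness (Chevalley) step; and your patching argument, as written, assumes the very compatibility it is supposed to establish.
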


\begin{proof} 
The proof of \cite[Prop. 1.4]{AFS2}  works verbatim provided that one replaces \cite[Lemma 3.6]{AFS2} with the Lemma below. 
\end{proof}

\begin{lemma}\label{L:Cheva}[Chevalley theorem for stacks]
Consider a commutative diagram
$$\cX \to \cY  \to X$$ 
of algebraic stacks of finite type over an algebraically closed field k (of arbitrary characteristic), where $X$ is an algebraic space. Suppose that:
\begin{enumerate}[(i)]
\item  the morphism $\cX\to \cY$ is finite and surjective;
\item  the morphism $\cX \to X$ is cohomologically affine;
\item  the algebraic stack $\cY$ is a global quotient stack such that the stabilisers of its closed $k$-points are linearly reductive. 
\end{enumerate}
Then $\cY \to X$ is cohomologically affine.
\end{lemma}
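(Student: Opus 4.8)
The plan is to reduce the statement to the classical theorem of Chevalley on affineness, by means of two standard manoeuvres: localising on $X$, and exploiting that $\pi\colon\cX\to\cY$ is finite (hence affine). First I would reduce to the case where $X=\Spec R$ is affine. Cohomological affineness of a morphism into an algebraic space can be checked after a smooth surjective base change of the target (see \cite{Alper}), and every hypothesis of the lemma is stable under base change along a smooth surjection $X'\to X$: the morphism $\cX\times_X X'\to\cY\times_X X'$ is again finite and surjective, $\cX\times_X X'\to X'$ is again cohomologically affine, and $\cY\times_X X'$ is again a global quotient stack whose stabilisers at closed points are unchanged, hence still linearly reductive. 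Choosing $X'$ to be an affine scheme, I may therefore assume $X=\Spec R$, so that the goal becomes the vanishing of $H^i(\cY,\cF)$ for every $i>0$ and every quasi-coherent sheaf $\cF$ on $\cY$.

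Next I would extract the only consequence of the hypotheses on $\cX$ that is needed. Since $\cX\to X$ is cohomologically affine and $X$ is affine, $H^i(\cX,\cM)=0$ for all $i>0$ and all quasi-coherent $\cM$ on $\cX$. As $\pi$ is finite, hence affine, $R^q\pi_*$ vanishes for $q>0$; the Leray spectral sequence for $\cX\to\cY\to\Spec k$ degenerates and gives $H^i(\cY,\pi_*\cM)=H^i(\cX,\cM)=0$ for all $i>0$. Applying the projection formula for the affine morphism $\pi$ (so that $\pi_*\pi^*\cF\cong\cF\otimes_{\cO_\cY}\pi_*\cO_\cX$), this says $H^i\bigl(\cY,\cF\otimes_{\cO_\cY}\pi_*\cO_\cX\bigr)=0$ for all $i>0$ and all quasi-coherent $\cF$ on $\cY$: higher cohomology on $\cY$ already vanishes after tensoring with the finite $\cO_\cY$-algebra $\pi_*\cO_\cX$.

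What remains is to remove this tensor factor, which is exactly a Chevalley-type assertion, and I would run it through the notion of an \emph{adequately affine} morphism in the sense of \cite{Alper}. The three facts I would use are: (a) a cohomologically affine morphism is adequately affine; (b) an adequately affine morphism whose target has linearly reductive stabilisers at all closed points is cohomologically affine; and, crucially, (c) adequate affineness descends along finite surjective morphisms, i.e.\ if $\cX\to\cY$ is finite and surjective and $\cX\to X$ is adequately affine, then $\cY\to X$ is adequately affine. Statement (c) is the stack-theoretic incarnation of Chevalley's theorem — whose scheme case is the assertion that a scheme admitting a finite surjective morphism from an affine scheme is itself affine — and its proof, via norm maps and integral dependence, is characteristic-free and requires no flatness of $\pi$. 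Granting (a)--(c): $\cX\to X$ is adequately affine by (a), hence $\cY\to X$ is adequately affine by (c), hence $\cY\to X$ is cohomologically affine by (b), where the last step uses precisely the hypothesis that $\cY$ has linearly reductive stabilisers at its closed points. In characteristic zero this reduces to \cite[Lemma~3.6]{AFS2}, since there adequate and cohomological affineness coincide and the finite cover $\pi$ can be split by the normalised trace.

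The step I expect to be the main obstacle is the chain (b)--(c): in positive characteristic the finite surjective morphism $\pi$ is typically not flat, so cohomological affineness cannot simply be descended by faithfully flat descent, and $\cX$ need not have linearly reductive stabilisers, so one cannot apply a Luna-type slice argument to $\cX$ or reason on $\cX$ as in characteristic zero. Routing the argument through adequate affineness — which is insensitive to both difficulties — and invoking the linear reductivity of the stabilisers of $\cY$ only at the very last implication is what allows the argument to go through uniformly in all (sufficiently large) characteristics.
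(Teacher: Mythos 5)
Your proposal and the paper's proof share the same high-level skeleton — both ultimately rest on the classical scheme-level Chevalley theorem and on using the linear reductivity of the stabilisers of $\cY$ to upgrade from a weak affineness statement to full cohomological affineness — but the routes diverge in how these two ingredients are invoked, and your version has a gap at the crucial step.

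The paper leans on hypothesis (iii) from the outset. Writing $\cY=[V/G]$ with $G$ reductive, the atlas pulls back to an atlas $U=V\times_\cY\cX$ of $\cX$; since $\cX\to X$ is cohomologically affine and $U\to\cX$ is affine, Serre's criterion gives $U\to X$ affine. Classical Chevalley applied to the finite surjection of algebraic spaces $U\to V$ then gives $V\to X$ affine. From here the paper factors $\cY=[V/G]\to V/G\to X$ through the affine GIT quotient, so it remains only to show $[V/G]\to V/G$ is cohomologically affine; this is done pointwise via the Luna \'etale slice theorem (producing \'etale local quotient presentations $[W_v/G_v]$ with $G_v$ linearly reductive) together with \cite[Thm.\ 13.2, Prop.\ 4.7(ii)]{Alper}. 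Your argument instead routes through the adequate-affineness formalism, reducing the lemma to your facts (a)--(c). Here (a) is standard, and (b) is closely tied to the fact that an adequate moduli space with linearly reductive closed-point stabilisers is a good moduli space. But (c) — descent of adequate affineness along an arbitrary finite surjection of algebraic stacks — is the crux, and you assert it with only a gesture toward ``norm maps and integral dependence'' and no precise reference. This is exactly where the absence of flatness and the failure of the trace splitting bite (a point your own preliminary Leray/projection-formula computation correctly isolates as the obstruction), and I am not aware that (c) is established in that generality; proving it would plausibly require precisely the global-quotient presentation of $\cY$ that the paper exploits. In effect you postpone hypothesis (iii) to the final step (b) and never use it for (c), whereas the paper uses it throughout to reduce everything to scheme-level statements and then to a Luna-slice computation. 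Until (c) is either located in the literature with the needed generality or proved (most naturally, by running essentially the paper's argument), the proposal is incomplete.
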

\begin{proof}
The first part of the  proof follows \cite[Lemma 3.6]{AFS2}. Write $\cY=[V/G]$ for an algebraic space $V$ of finite type over $k$ and a reductive algebraic $k$-group $G$. Since
$\cX \to \cY$ is affine, $\cX$ is the quotient stack $\cX = [U/G]$, where $U = V\times_{\cY} \cX$. Since $U \to \cX$ is affine and $\cX \to X$ is cohomologically affine, the morphism $U \to X$ is affine by Serre's criterion.
The morphism $U \to  V$ is finite and surjective and therefore, by Chevalley's theorem, we can conclude that $p:V \to X$ is affine.

Since the affine morphism $p:V\to X$ is $G$-invariant and $G$ is reductive, we can factor $p$ as 
$$p: V\to [V/G]\stackrel{\phi}{\longrightarrow} V/G:=\Spec_{\cO_X}�p_*(\cO_V)^G \to X.$$
Since the morphism $V/G \to X$ is affine (and hence cohomologically affine), it is enough to show that $\phi$ is cohomologically affine (and indeed we will show that it is a good moduli space).

Let $v$ be a $k$-point of $V$ with a closed $G$-orbit, i.e. a closed $k$-point of $\cY=[V/G]$.
%Then $v$ defines a closed $k$-point of $\cY=[V/G]$ and hence, by our assumption on $\cY$, the stabiliser $G_v\subset G$ of $v$ is linearly reductive. 
Luna's slice theorem (in the generalized form \cite[Thm. 2.1]{AHR}) implies that we can find a $G_v$-invariant locally closed algebraic subspace $W_v\subset V$, containing $v$ and affine over $X$, such that the morphism $f_v:W_v/G_v\to V/G$ is \'etale and the following diagram 
$$\xymatrix{
[W_v/G_v] \ar[r] \ar[d]^{\phi_v} & [V/G] \ar[d]^{\phi}\\
W_v/G_v \ar[r]^{f_v} & V/G   
}$$
is Cartesian. Now, since $G_v$ is linearly reductive, the morphism $\phi_v$ is a good moduli space by \cite[Thm. 13.2]{Alper}. Iterating this argument for all $k$-points of $V$ with a closed $G$-orbit and using the quasi-compactness of $V/G$, we obtain an \'etale cover $f:Z\to V/G$ such that pull-back of $\phi$ via $f$ is a good moduli space. This implies that also $\phi$ is a good moduli space by \cite[Prop. 4.7(ii)]{Alper}, and we are done.
\end{proof}

\begin{remark}\label{R:comm}
\noindent 
\begin{enumerate}[(i)]
\item \label{R:comm1} The assumption \eqref{P:fincov3} in  Proposition \ref{P:fincov} is satisfied for quotient stacks of the form $[U/G]$, where $U$ is a normal and separated scheme of finite type over $k$  and $G$ is a smooth linear algebraic $k$-group, having the property that the stabilisers of the closed $k$-points  are linearly reductive. See \cite[Prop. 2.3]{AFS2} and the references therein.
\item \label{R:comm2}
If $\car(k)=0$ then the condition of the stabilisers in Lemma \ref{L:Cheva} can be removed (indeed, it follows from the first two assumptions on the Lemma), as in  \cite[Lemma 3.6]{AFS2}. However, if $\car(k)=p>0$ then the condition cannot be dropped as the following example (suggested to us by Maksym Fedorchuk) shows: 
$$\cX=\Spec k\to \cY=[\Spec k /(\bbZ/p\bbZ)]\to X=\Spec k.$$ 
\end{enumerate}
\end{remark}

Now, before entering into the proof of Proposition \ref{prop:-}, we will need to review some constructions from \cite[Sec. 4.2]{AFS2}, adapted to our setting and notation.

The \emph{sprouting} stack $\Spr$ is the algebraic stack (see \cite[Def. 4.6]{AFS2}) consisting of flat and proper families of curves $(\m C\to S, \{\sigma_i\}_{i=1}^{n+1})$ with $n+1$-sections $\sigma_i$ such that 
\begin{itemize}
\item the  family $(\m C\to S, \{\sigma_i\}_{i=1}^{n})$ is a $S$-point of $\U_{g,n}(A_3)$;
\item  $\m C$ has a tacnodal singularity along $\sigma_{n+1}$.
\end{itemize} 
Note that the type of the tacnode remains constant along $\sigma_{n+1}$ (see the proof of Lemma \ref{lem:A_3(T)}), so that $\Spr$ will be the disjoint union of closed and open substacks where the type of $\sigma_{n+1}$ is fixed. We will denote by $\Spr^{\irr}$ (resp. $\Spr^{0,\{j\}}$, resp. $ \Spr^{h, M}$) the closed and open substack of $\Spr$ where the tacnodal section $\sigma_{n+1}$ has type $\{\irr\}$ (resp. $\{[0,\{j\}], [1,\{j \}]\}$, resp. $\{[h,M], [h+1,M]\}$ with $[h,M]\neq [0,\{j\}]$ for any $j\in [n]$).

There is an obvious  forgetful morphism 
$$\cF: \Spr\to \U_{g,n}(A_3)$$
given by forgetting the last section $\sigma_{n+1}$. The morphism $\cF$ is  finite (and representable) by \cite[Prop. 4.7]{AFS2}. The restriction of $\cF$ to $\Spr^{\irr}$ (resp. $\Spr^{0,\{j\}}$, resp. $ \Spr^{h, M}$) will be denoted by 
$\cF_{\irr}$ (resp. $\cF_{0,\{j\}}$, resp. $\cF_{h,M}$).

As explained in \cite[Sec. 4.2]{AFS2}, given a family $(\m C\to S, \{\sigma_i\}_{i=1}^{n+1})\in \Spr(S)$, we can normalise along the  section $\sigma_{n+1}$ and then stabilise in order to get a new family $(\m C^s\to S, \{\sigma_i^s\}_{i=1}^{n+l})$ (with $l=0$ or $2$).
The number of connected components of $\m C^s\to S$, their genera and number of marked points, and the number $l$ is determined by the type of tacnodal section $\sigma_{n+1}$. We can distinguish the following three cases.

\begin{itemize}
\item If the tacnodal section $\sigma_{n+1}$ is of type $\{\irr\}$ then $\cC^s\to S$ is connected, hence we get a morphism
$$
\begin{aligned}
\cN_{\irr}: \Spr^{\irr} &\longrightarrow \U_{g-2,n+2}(A_3), \\
(\m C\to S, \{\sigma_i\}_{i=1}^{n+1}) & \mapsto (\m C^s\to S, \{\sigma_i^s\}_{i=1}^{n+2}),\\
\end{aligned}
$$
where the first $n$ sections $\sigma_i^s$ are the images of the first $n$ sections $\sigma_i$ and the last sections $\{\sigma_{n+1}^s, \sigma_{n+l}^s\}$  are the  two inverse images of $\sigma_{n+1}$ under the normalisation along $\sigma_{n+1}$.

\item If the tacnodal section has type equal to $\{[0,\{j\}], [1,\{j \}]\}$ then the normalisation of $\cC\to S$ will have two connected components, one of which is a family of genus $g-1$ curves with $n$ marked points, and the other one is a family of genus $0$ curves with $2$ marked points. When we stabilise, the second component gets contracted and hence we end up with a morphism 
$$\begin{aligned}
\cN_{0, \{j\}}: \Spr^{0,\{j\}} &\longrightarrow \U_{g-1,n}(A_3), \\
(\m C\to S, \{\sigma_i\}_{i=1}^{n+1}) & \mapsto (\m C^s\to S, \{\sigma_i^s\}_{i=1}^{n}),
\end{aligned}$$
where the first $n-1$ sections $\sigma_i^s$ are the images of the sections $\{\sigma_i\}_{i\neq j, n+1}$ and the last section $\sigma_{n}^s$ is  one of the two inverse images of $\sigma_{n+1}$ under the normalisation along $\sigma_{n+1}$.

\item If the tacnodal section has type equal to $\{[h,M], [h+1,M]\}$ with $[h,M]\neq [0,\{j\}]$ for any $j\in [n]$, then the normalisation of $\cC\to S$ will have two connected components, $\cC_1\to S$ of genus $h$ curves and with $|M|+1$ marked points, and $\cC_2\to S$ of genus $g-h-1$ and with $|M^c|+1$ marked points.
Hence, after stabilising,  we get a morphism  
$$\begin{aligned}
\cN_{h, M}: \Spr^{h, M} & \longrightarrow \U_{h,|M|+1}(A_3)\times \U_{g-h-1, |M^c|+1}(A_3), \\
(\m C\to S, \{\sigma_i\}_{i=1}^{n+1}) & \mapsto \left( (\cC_1^s\to S, \{\sigma_i^s\}_{i\in M}, \sigma^s_{n+1}), (\cC_2^s\to S, \{\sigma_i^s\}_{i\in M^c}, \sigma^s_{n+2})\right),
\end{aligned}$$
where the sections $\{\sigma_i^s\}_{i\in M\cup M^c}$ are the images of the first $n$ sections $\sigma_i$ and the last sections $\{\sigma_{n+1}^s, \sigma_{n+l}^s\}$  are the images of the two inverse images of $\sigma_{n+1}$ under the normalisation along $\sigma_{n+1}$.

\end{itemize}

By \cite[Prop. 4.9]{AFS2}, the above maps $\cN_{\irr}$, $\cN_{0,\{j\}}$ and $\cN_{h, M}$ are stacky projective bundles. 
For later use, observe that the codomain of these stacky projective bundles are always stacks of pointed curves with at least one marked point. This is clear for $\cN_{\irr}$ and $\cN_{h, M}$, and for 
$\cN_{0,\{j\}}$ it follows from the fact that the morphism $\cN_{0, \{j\}}: \Spr^{0,\{j\}} \to \U_{g-1,n}(A_3)$ is defined  only if $\{[0,\{j\}], [1,\{j \}]\}\subset T_{g,n}$ which implies that $n\geq 1$.

We now study the compatibility of the maps $\cN_{\irr}$, $\cN_{0,\{j\}}$ and $\cN_{h, M}$ and of $\cF_{\irr}$, $\cF_{0,\{j\}}$ and $\cF_{h,M}$ with the open substacks of $T$-semistable curves. 

\begin{lemma}\label{L:spr-T}
Let $T\subseteq T_{g,n}$. Then the preimage of $\M_{g,n}^T$ via the maps $\cF_{\irr}$, $\cF_{0,\{j\}}$ and $\cF_{h,M}$ are computed as follows.
\begin{enumerate}[(i)]
\item \label{L:spr-T1} $\cF_{\irr}^{-1}(\M_{g,n}^T)=
\begin{cases}
\emptyset & \text{ if } \irr \not\in T,\\
(\cN^{-1}_{\irr})\left(\M_{g-2,n+2}^{T_{g-2,n+2}}\right) & \text{ if } \irr\in T. 
\end{cases}$
\item \label{L:spr-T2} $\cF_{0,\{j\}}^{-1}(\M_{g,n}^T)=
\begin{cases}
\emptyset & \text{ if } \{[0,\{j\}], [1,\{j\}]\} \not\subset T,\\
(\cN^{-1}_{0,\{j\}})\left(\M_{g-1,n}^{\wh{T}}\right) & \text{ if } \{[0,\{j\}], [1,\{j\}]\} \subset T,
\end{cases}$\\
where $\wh{T}$ is the subset of $T_{g-1,n}$ defined by 
$$\begin{sis}
\irr \in \wh{T} & \Leftrightarrow \irr\in T, \\
[\tau, I]\in \wh{T} & \Leftrightarrow 
\begin{cases}
[\tau, I]\in T & \text{ if } n+1\not\in I, \\
[g-1-\tau, [n+1]-\{I\}] & \text{ if } n+1\in I.
\end{cases} 
\end{sis}$$
 \item \label{L:spr-T3} $\cF_{h,M}^{-1}(\M_{g,n}^T)=
\begin{cases}
\emptyset & \text{ if } \{[h,M], [h+1,M]\} \not\subset T,\\
(\cN^{-1}_{h, M})\left(\M_{h,|M|+1}^{\wt{T}_{h,M}}\times \M_{g-1-h,|M^c|+1}^{\wt{T}_{g-h-1,M^c}}\right) &  \text{ if } \{[h,M], [h+1,M]\} \subset T,\\ 
\end{cases}$\\
where $\wt{T}_{h,M}$ is the subset of $T_{h,|M|+1}$ defined by 
$$\begin{sis}
\irr \in \wt{T}_{h,M} & \Leftrightarrow \irr\in T, \\
[\tau, I]\in \wt{T}_{h,M} & \Leftrightarrow 
\begin{cases}
[\tau, I]\in T & \text{ if } |M|+1\not\in I, \\
[h-\tau, [|M|+1]-\{I\}\}] & \text{ if } |M|+1\in I.
\end{cases} 
\end{sis}$$
with the convention that $[|M|]=[|M|+1]-\{|M|+1\}$ is identified with the subset $M\subset [n]$, and where $\wt{T}_{g-h-1,M^c}\subseteq T_{g-h-1,|M^c|+1}$ is defined similarly by replacing $h$ with $g-h-1$ and $M$ with $M^c$.
\end{enumerate}
\end{lemma}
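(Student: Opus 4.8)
The statement breaks into three parallel assertions, one for each of the forgetful morphisms $\cF_{\irr}$, $\cF_{0,\{j\}}$, $\cF_{h,M}$, and each assertion has the same two-layer structure: first, the preimage of $\M_{g,n}^T$ is empty unless the relevant type is contained in $T$; second, when that type is in $T$, the preimage is identified via the stacky projective bundle $\cN$ with a product of $T'$-semistable loci in lower-dimensional moduli stacks, for an explicitly described $T'$. The plan is to prove all three cases in parallel, pointing out where the bookkeeping differs.

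The first layer is immediate from the definitions. Given $(\m C\to S,\{\sigma_i\}_{i=1}^{n+1})$ in a point of, say, $\Spr^{h,M}$, the curve $\m C$ has a tacnode along $\sigma_{n+1}$ of type $\{[h,M],[h+1,M]\}$ (for $\Spr^{\irr}$ of type $\{\irr\}$, for $\Spr^{0,\{j\}}$ of type $\{[0,\{j\}],[1,\{j\}]\}$); by Definition \ref{D:ourstacks}, $\cF$ lands in $\M_{g,n}^T = \m U_{g,n}(A_3(T))\setminus(\m T^{A_1}\cup\m T^{A_3})$ only if this tacnode type is contained in $T$. Hence the preimage is empty when the type is not in $T$. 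This disposes of the first branch of each of (i), (ii), (iii).

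For the second layer, assume the relevant type lies in $T$. We then have to check the equality $\cF^{-1}(\M_{g,n}^T) = \cN^{-1}(\text{product of lower }T'\text{-semistable loci})$ as open substacks of $\Spr$. The key geometric input is that normalising at the tacnodal section $\sigma_{n+1}$ and stabilising is, on $\Spr$, the morphism $\cN$, and conversely $\cF$ reconstructs $\m C$ from $\m C^s$ together with the gluing data (this is exactly the content of \cite[Prop. 4.7, Prop. 4.9]{AFS2}, which gives that $\cN$ is a stacky projective bundle and $\cF$ is finite). So I need to translate each defining condition of $T$-semistability for $\m C$ into a condition on the normalised-stabilised curve(s). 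There are three conditions to track: (a) all singularities are nodes, cusps or tacnodes, the latter of type in $T$; (b) no $A_1$-attached and no $A_3$-attached elliptic tails; (c) ampleness of the log canonical class. For condition (c) and the ``at worst $A_1,A_2,A_3$'' part of (a), note that normalising a tacnode into two smooth branches and then stabilising does not create or destroy singularities elsewhere and preserves these conditions in both directions — one just has to be slightly careful with the rational bridges that get contracted during stabilisation in the $0,\{j\}$ case (where a genus-$0$, $2$-pointed tail is blown down) and to observe that contracting such a tail cannot introduce a worse singularity. The subtle point is condition (b) together with the tacnode-type constraint in (a): a tacnode of $\m C$ other than the one along $\sigma_{n+1}$, or an attached elliptic tail, corresponds after normalisation to a tacnode, respectively an attached elliptic tail, of $\m C^s$ — but its \emph{type} is computed with respect to the genus/marked-point splitting of $\m C^s$, which differs from that of $\m C$ precisely in the reindexing recorded by the formulas for $\wh T$ and $\wt T_{h,M}$: when a component ``crosses'' the normalised tacnode, a subset $I\ni n+1$ (resp. $\ni |M|+1$) and a genus $\tau$ get replaced by the complementary subset and complementary genus. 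Thus the reindexing in the statement is forced by matching $\type$ on $\m C$ with $\type$ on $\m C^s$ across the normalisation, using the behaviour of $\type$ under partial normalisation already established in the proof of Lemma \ref{lem:A_3(T)} and in Definition \ref{D:type}.

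The main obstacle, and where I would spend the most care, is precisely this type-matching dictionary in the $\cF_{0,\{j\}}$ and $\cF_{h,M}$ cases. One has to verify that the correspondence $I\leftrightarrow I^c$, $\tau\leftrightarrow g{-}1{-}\tau$ (for $\cF_{0,\{j\}}$) or $\tau\leftrightarrow h{-}\tau$ (for $\cF_{h,M}$) is exactly the right one — i.e. that a tacnode or attached elliptic tail of $\m C$ whose type-subset contains the ``new'' marked point $n+1$ becomes one in $\m C^s$ whose type-subset is the complement within $[n+1]$ (respectively within $[|M|+1]$ on the component that carries the new marked point), with genera complemented accordingly; and that a tacnode whose type-subset does \emph{not} contain the new marked point has the same type-subset in both curves. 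One must also check that the absence of $A_1$- and $A_3$-attached elliptic tails in $\m C$ is equivalent to their absence in the normalised-stabilised curve(s): this uses that the tacnodal section $\sigma_{n+1}$ is not itself part of an elliptic tail (it is a tacnode, not a genus-$1$ tail) and that stabilisation of a genus-$0$ $2$-pointed tail cannot hide an elliptic tail. Finally, since $\cN$ is a stacky projective bundle (hence in particular flat with geometrically connected fibres), the equality of open substacks established fibrewise at the level of $k$-points upgrades to the claimed scheme-theoretic identity; I would invoke \cite[Prop. 4.9]{AFS2} for this last step. All of the ``routine'' verifications — that the bundle structure is unchanged, that cusps and nodes elsewhere are untouched — I would state and leave to the reader.
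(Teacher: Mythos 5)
Your proposal matches the paper's proof in its essential structure: both reduce to checking $k$-points (since all substacks involved are open in a finite-type stack), establish the emptiness branch immediately from the type of the tacnode along $\sigma_{n+1}$, and then derive the index-shift formulas for $\wh T$ and $\wt T_{h,M}$ by tracking how tacnode types and $A_1$/$A_3$-attached elliptic tails correspond under normalisation along $\sigma_{n+1}$ followed by stabilisation. The only small point of divergence is cosmetic: you invoke the projective-bundle structure of $\cN$ to "upgrade" from $k$-points to a scheme-theoretic equality of open substacks, whereas the paper simply observes that equality of open substacks of a finite-type stack over an algebraically closed field is detected on $k$-points — no further input is needed there.
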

\begin{proof}
Recall that $\M_{g,n}^T$ is the open substack whose $k$-points are $n$-pointed curves $(C,\{p_i\})\in \U_{g,n}(A_3)$ that do not have $A_1$ or $A_3$-attached  elliptic chains and whose tacnodes have type contained in $T$. 
Hence we can argue with families of curves over $k$, i.e. with $n$-pointed curves.

Let us first prove \eqref{L:spr-T1}. First of all, since for any $(C, \{p_i\}_{i=1}^{n+1})\in \Spr^{\irr}(k)$ the $n$-pointed curve  $\cF_{\irr}(C, \{p_i\}_{i=1}^{n+1})=(C, \{p_i\}_{i=1}^{n})\in \U_{g,n}(A_3)(k)$ will have a tacnode of type $\{\irr\}$ in $p_{n+1}$, we deduce that 
$\cF_{\irr}^{-1}(\M_{g,n}^T)=\emptyset$ if $\irr \not\in T$. We can therefore assume that $\irr \in T$. Note that $\cF_{\irr}(C, \{p_i\}_{i=1}^{n+1})=(C, \{p_i\}_{i=1}^{n})$ will have an 
$A_1$ or $A_3$-attached elliptic chain if and only if the same property holds for $\cN_{\irr}(C, \{p_i\}_{i=1}^{n+1})=(C^s, \{p_i^s\}_{i=1}^{n+2})$. Hence the result follows since every tacnode of $(C^s, \{p_i^s\}_{i=1}^{n+2})$ becomes a tacnode of type $\{\irr\}$ when seen in $(C, \{p_i\}_{i=1}^{n})$.

Let us  now prove \eqref{L:spr-T2}. Since for any $(C, \{p_i\}_{i=1}^{n+1})\in \Spr^{0, \{j\}}(k)$ the $n$-pointed curve  $\cF_{0, \{j\}}(C, \{p_i\}_{i=1}^{n+1})=(C, \{p_i\}_{i=1}^{n})\in \U_{g,n}(A_3)(k)$ will have a tacnode of type $\{[0,\{j\}], [1,\{j\}]\}$ in $p_{n+1}$, we deduce that 
$\cF_{0, \{j\}}^{-1}(\M_{g,n}^T)=\emptyset$ if $\{[0,\{j\}], [1,\{j\}]\} \not\subseteq T$. We can therefore assume that $\{[0,\{j\}], [1,\{j\}]\} \subseteq T$. Note that $\cF_{0,\{j\}}(C, \{p_i\}_{i=1}^{n+1})=(C, \{p_i\}_{i=1}^{n})\in \U_{g,n}(A_3)(k)$ will have an $A_1$ or $A_3$-attached elliptic chain if and only if the same property holds for $\cN_{0, \{j\}}(C, \{p_i\}_{i=1}^{n+1})=(C^s, \{p_i^s\}_{i=1}^{n+1})$. Hence the result follows since every tacnode  of $(C^s, \{p_i^s\}_{i=1}^{n+1})$ of type $\{\irr\}$ remains of type $\{\irr\}$  when seen in $(C, \{p_i\}_{i=1}^{n})$, while every tacnode of $(C^s, \{p_i^s\}_{i=1}^{n+1})$ of type $\{[\tau,I], [\tau+1,I]\}$ becomes, when seen in $(C, \{p_i\}_{i=1}^{n})$,  of type $\{[\tau,I], [\tau+1,I]\}$ if $n+1\not\in I$ and of type $\{[g-2-\tau,[n+1]-\{I\}], [g-1-\tau,[n+1]-\{I\}]\}$ if $n+1 \in I$.

Let us finally prove \eqref{L:spr-T3}. First of all, since for any $(C, \{p_i\}_{i=1}^{n+1})\in \Spr^{h, M}(k)$ the $n$-pointed curve  $\cF_{h,M}(C, \{p_i\}_{i=1}^{n+1})=(C, \{p_i\}_{i=1}^{n})\in \U_{g,n}(A_3)(k)$ will have a tacnode of type $\{[h, M], [h+1, M]\}$ in $p_{n+1}$, we deduce that 
$\cF_{h, M}^{-1}(\M_{g,n}^T)=\emptyset$ if $\{[h, M], [h+1, M]\} \not\subset T$. We can therefore assume that $\{[h, M], [h+1, M]\} \subset T$. Note that $\cF_{h, M}(C, \{p_i\}_{i=1}^{n+1})=(C, \{p_i\}_{i=1}^{n})\in \U_{g,n}(A_3)(k)$ will not have an $A_1$ or $A_3$-attached elliptic chain if and only if the same property holds for both  $(C_1^s, \{p_i^s\}_{i\in M},\{p_{n+1}\})\in \U_{h,|M|+1}(A_3)(k)$ and   $(C_2^s, \{p_i^s\}_{i\in M},\{p_{n+2}\})\in \U_{g-h-1,|M^c|+1}(A_3)(k)$.  Hence it remains to determine to type of the tacnodes of $(C_1^s, \{p_i^s\}_{i\in M},\{p_{n+1}\})$ and   $(C_2^s, \{p_i^s\}_{i\in M},\{p_{n+2}\})$ when considered in $(C, \{p_i\}_{i=1}^{n})$. 
We will only examine the tacnodes of $(C_1^s, \{p_i^s\}_{i\in M},\{p_{n+1}\})$, the other case being analogous. 
A tacnode of $(C_1^s, \{p_i^s\}_{i\in M},\{p_{n+1}\})$ of type $\{\irr\}$ remains of type $\{\irr\}$  when seen in $(C, \{p_i\}_{i=1}^{n})$, 
while a tacnode of $(C^s, \{p_i^s\}_{i=1}^{n+1})$ of type $\{[\tau,I], [\tau+1,I]\}$ becomes, when seen in $(C, \{p_i\}_{i=1}^{n})$,  of type $\{[\tau,I], [\tau+1,I]\}$ if $|M|+1\not\in I$ and of type $\{[h-\tau-1, [|M|+1]-\{I\}], [h-\tau, [|M|+1]-\{I\}]\}$ if $|M|+1\in I$. This  implies the result. 
\end{proof}

Using the above Lemma, we can prove  the existence of the proper good moduli space for $\m Z_T^-$. 

\begin{proof}[Proof of Proposition \ref{prop:-}]
Consider the open substack of $\Spr$:
$$E_T:=\cF_{\irr}^{-1}(\M_{g,n}^T)\coprod_{\{[0,\{j\}]\in  T_{g,n}} \cF_{0,\{j\}}^{-1}(\M_{g,n}^T)  \coprod_{\stackrel{[h,M]\in  T_{g,n}:}{0\leq h \leq g-1, [h,M]\neq [0,\{j\}]}} \cF_{h,M}^{-1}(\M_{g,n}^T).$$
The morphism $\cF$ restricted to $E_T$ gives rise to a morphism 
$$\cF_T=\cF_{|E_T}:E_T\to \M_{g,n}^T,$$
which is finite by \cite[Prop. 4.7]{AFS2}. By construction, the image of $\cF_T$ is the locus of $\M_{g,n}^T$ having at least one tacnode, i.e. exactly $\m Z_T^-$.

Observe that the algebraic stack $\m Z_T^-$, being a closed substack of $\M_{g,n}^T$, is a global quotient stack of a normal variety by Remark \ref{R:quotstack} and it has linearly reductive stabilisers by Lemma \ref{L:linred} and our assumption on $\car(k)$. 
Moreover,  Lemma \ref{L:spr-T} and \cite[Prop. 4.9]{AFS2} imply that $E_T$ is a stacky projective bundle over the disjoint unions of stacks of the form $\M^{T'}_{g',n'}$ for suitable $T'\subseteq T_{g',n'}$ with either $g'<g$ and $1\leq n'\leq n+1$ or $(g',n')=(g-2,n+2)$.
Since  all the above stacks  $\M^{T'}_{g',n'}$ admit proper good moduli spaces by assumption, also $E_T$ admits a proper good moduli space. 
 We can now apply Proposition \ref{P:fincov} and Remark \ref{R:comm}\eqref{R:comm1} to infer that $\m Z_T^-$ admits a proper good moduli space.
\end{proof}

Now we turn to the existence of a good moduli space for the stack $\m Z^+_T$.

\begin{proposition}\label{prop:+}
Fix $T\subseteq T_{g,n}$ with $(g,n)\neq (2,0)$ and assume that $\car(k)\gg T$.
If $\M^{T'}_{g',n'}$ admits a proper good moduli space for all $T'\subseteq T_{g',n'}$ with either $g'<g$ and $1\leq n'\leq n+1$ or $(g',n')=(g-2,n+2)$, then $\m Z^+_T\subset \M_{g,n}^T$ admits a proper good moduli space.
\end{proposition}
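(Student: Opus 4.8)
The plan is to mimic the proof of Proposition~\ref{prop:-} as closely as possible, replacing the sprouting stack $\Spr$ (which adds a single tacnodal section) by a variant that records an entire $A_1/A_1$-attached elliptic chain. First I would introduce, for each possible length $r\geq 1$ and each admissible "core datum", an auxiliary stack $\mathcal{EC}_{g,n}(r)$ parametrising families of curves in $\M_{g,n}^T$ together with the data of an $A_1/A_1$-attached elliptic chain of length $r$ (that is, the gluing morphism $\gamma\colon(E,q_1,q_2)\to C$ as in Definition~\ref{D:tailchain}, together with the two nodal attaching points promoted to sections); this is the analogue of $\Spr$, and as there it decomposes into a disjoint union of closed-and-open substacks indexed by the type $\{[\tau,I],\ldots\}\subseteq T$ of the chain. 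There is a forgetful morphism $\mathcal{F}\colon\coprod_r \mathcal{EC}_{g,n}(r)\to \M_{g,n}^T$ which I claim is finite (its fibre over a curve is the finite set of elliptic chains of bounded length it contains, using that the genus bounds the length $r\leq (g+1)/2$), and whose image is precisely $\m Z_T^+$ by the explicit description of $\m Z_T^+$ recalled right after Definition~\ref{D:lociZ}.

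Next I would exhibit each piece $\mathcal{EC}_{g,n}(r)^{\mathrm{type}}$ as a fibration over a product of lower-dimensional stacks of the same shape $\M^{T'}_{g',n'}$: normalising along the two attaching nodes and stabilising (exactly as in the three bulleted cases before Lemma~\ref{L:spr-T}) produces either one family of genus $g-2r\pm\cdots$ with more marked points, or a product of two such families, with $g'<g$ and $1\leq n'\leq n+1$ or $(g',n')=(g-2,n+2)$; meanwhile the chain $(E,q_1,q_2)$ itself, being a curve built from elliptic bridges, contributes a fibre which is (a quotient-stack version of) a projective bundle — here one wants the analogue of \cite[Prop.~4.9]{AFS2}, giving that this is a "stacky projective bundle" so that the total space has a proper good moduli space as soon as the base does. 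The analogue of Lemma~\ref{L:spr-T} identifying the preimages with the correct $\M^{\wt T'}_{g',n'}$ (keeping track of how the type of a tacnode of the base changes when reglued into the big curve, and of the reindexing of the marked sets by complementation) goes through verbatim with the same bookkeeping.

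With those ingredients in place the conclusion is formal: $\m Z_T^+$ is a closed substack of $\M_{g,n}^T$, hence a global quotient of a normal variety (Remark~\ref{R:quotstack}) with linearly reductive stabilisers (Remark~\ref{R:linred}\eqref{R:linred2}, using $\car(k)\gg T$); the finite cover $\coprod_r\mathcal{EC}_{g,n}(r)\to\m Z_T^+$ has a proper good moduli space by the fibration description together with the inductive hypothesis on the $\M^{T'}_{g',n'}$; so Proposition~\ref{P:fincov} and Remark~\ref{R:comm}\eqref{R:comm1} give a proper good moduli space for $\m Z_T^+$.

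The main obstacle I expect is the construction and study of the "elliptic chain stack" $\mathcal{EC}_{g,n}(r)$ and the proof that the forgetful map to $\M_{g,n}^T$ is finite and representable: unlike the single tacnodal section of $\Spr$, an elliptic chain of length $r$ carries internal moduli (each elliptic bridge has a $j$-invariant, or can be a tacnodal/cuspidal degeneration) and a positive-dimensional automorphism group, so one must argue carefully that after fixing the (finitely many) combinatorial attaching data the resulting fibre over a fixed big curve is finite, and that the universal family exists as an algebraic stack of finite type. Concretely one should probably reduce to $r=1$ (a single elliptic bridge) plus an induction on $r$ by "peeling off" the outermost bridge, or equivalently realise $\mathcal{EC}_{g,n}(r)$ as an iterated fibre product over $\Spr$-type stacks; once this is set up, the fibration statement and the type bookkeeping are routine adaptations of \cite[Sec.~4.2]{AFS2} and of Lemma~\ref{L:spr-T}.
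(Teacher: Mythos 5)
Your overall outline is right in spirit---produce a finite surjection onto $\mathcal Z_T^+$ from a cover assembled out of lower $\M_{g',n'}^{T'}$'s and then invoke Proposition~\ref{P:fincov}---but there is a genuine gap in how you describe the cover, precisely at the point you flag as the main obstacle.

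The sprouting/projective-bundle mechanism is the right tool for $\mathcal Z_T^-$ and \emph{not} for $\mathcal Z_T^+$, and the two cases are not analogous in the way your proposal asserts. When you normalise a single tacnodal section (the $\mathcal Z_T^-$ case), the only new data is a one-dimensional gluing parameter at the branch points, and this is what makes $\cN_{\ast}$ a stacky projective bundle in the sense of \cite[Prop.~4.9]{AFS2}. When you instead detach an entire $A_1/A_1$-attached elliptic chain of length $r$ at its two attaching nodes, the residual factor is \emph{not} a projectivised gluing datum: it is the full moduli stack of elliptic chains of length $r$, which the paper denotes $\EC_r\subset \M_{2r-1,2}^{T_{2r-1,2}}$ and which carries $r$ essential moduli (the $j$-invariants of the bridges and their degenerations), together with its positive-dimensional automorphisms. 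So the fibration you would obtain has $\EC_r$ as a genuine factor, not a projective bundle, and the ``analogue of \cite[Prop.~4.9]{AFS2}'' you invoke does not exist in this situation. You do notice that the chain has internal moduli, but your suggested fix (peel off bridges one at a time, or realise $\mathcal{EC}_{g,n}(r)$ as an iterated fibre product of $\Spr$-type stacks) does not close the gap: you still have to know that the stack of length-$r$ chains itself has a \emph{proper} good moduli space, and that is not a consequence of the inductive hypothesis you are allowed to use (note $\EC_r$ sits inside $\M_{2r-1,2}^{T_{2r-1,2}}$ with $2r-1$ possibly equal to $g$, so induction on genus does not cover it).

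What the paper actually does is slightly simpler and sidesteps all of this: instead of building a new ``marked chain'' stack and proving its algebraicity, finiteness of the forgetful map, and the fibration structure from scratch, it uses the finite gluing morphisms $\cG_{\irr}^r$ and $\cG_{h,M}^r$ from \cite[Sec.~4.3]{AFS2} whose source is a \emph{product} $\U_{g',n'}(A_3)\times\U_{g'',n''}(A_3)\times\EC_r$, restricts them to the appropriate open substacks (Lemma~\ref{L:chain-T} does the bookkeeping of how types transform), and observes that the resulting cover $H_T$ is a disjoint union of \emph{products} $\M_{g',n'}^{T'}\times\M_{g'',n''}^{T''}\times\EC_r$. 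Properness of the good moduli space of $H_T$ then follows factorwise: the $\M^{T'}_{g',n'}$ by the inductive hypothesis, and $\EC_r$ by the independent result \cite[Lemma~4.12]{AFS2}, which is the ingredient your proposal is missing. Once you replace ``stacky projective bundle fibre'' by ``product with the moduli stack of elliptic chains, which has a proper good moduli space by \cite[Lemma~4.12]{AFS2}'' and quote the finiteness of the gluing maps from \cite[Lemmas~4.13--4.14]{AFS2} rather than re-proving it, your argument becomes the paper's.
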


Note that $\m Z_T^+$ coincides with the stack $\ov{\mathcal H}_{g,n}(7/10)$ of \cite[Sec. 4]{AFS2} in the case where $T^{\adm}=T_{g,n}^{\adm}$. Hence, the above Proposition generalises \cite[Prop. 4.15]{AFS2} for $\alpha_c=7/10$ (but one has to assume that $(g,n)\neq (2,0)$). 
At the other extreme, if $T^{\adm}=\emptyset$  then $\m Z_T^+=\emptyset$ by Remark \ref{R:ourstacks} and the result is trivial.
Moreover, if $(g,n)=(1,2)$ and $T^{\adm}\neq \emptyset$ then $\m Z_T^+=\M_{1,2}^T$ admits a point as good moduli space by Remark \ref{R:special-gn}
(which follows also from the description  $\m Z_T^+=\ov{\mathcal H}_{1,2}(7/10)\cong [\bbA^3/\bbG_m]$, where $\Gm$ acts on $\bbA^3$ with weights $2,3$ and $4$, see  \cite[Lemma 4.11]{AFS2}).

The strategy of proof of Proposition \ref{prop:+} is similar to the one of loc. cit. and it consists in finding a finite cover of $\m Z_T^+$ consisting of the disjoint union of the product of a stack admitting a good moduli space with suitable stacks  $\M^{T'}_{g',n'}$ (as in the statement of Proposition \ref{prop:+})  and then concluding by applying Proposition \ref{P:fincov}.
In order to employ this strategy we will need to review some constructions from \cite[Sec. 4.3]{AFS2}, adapted to our setting and notation.

For any integer $r\geq 1$, let 
$$
\EC_r \subset \M_{2r-1,2}(7/10)=\M_{2r-1,2}^{T_{2r-1,2}}
$$
be the closure (with reduced structure) of the locally closed substack of elliptic chains of length $r$. 
 It is proved in \cite[Lemma 4.12]{AFS2} that $\EC_r$ admits a proper good moduli space.

By gluing to an elliptic chain of length $r$ suitable pointed curves, we can obtain $n$-pointed curves in $\U_{g,n}(A_3)$. More precisely, there are the following two types of gluing morphisms.

\begin{itemize}
\item For any $1\leq r \leq g/2$, we consider the gluing morphism 
$$
\begin{aligned}
\cG_{\irr}^r: \U_{g-2r,n+2}(A_3)\times \EC_r  &\longrightarrow \U_{g,n}(A_3), \\
\left((C, \{p_i\}_{i=1}^{n+2}),  (Z, q_1, q_2) \right)& \mapsto (C\cup Z,\{p_i\}_{i=1}^n)/(p_{n+1}\sim q_1, p_{n+2}\sim q_2). 
\end{aligned}
$$
Note that we  included in this case also the limit case  $(g,n)=(2r,0)$, in which case $\U_{g-2r,n+2}(A_3)=\U_{0,2}(A_3)=\emptyset$ and in the above construction we have to glue $q_1$ with $q_2$.

\item For any  $0\leq h \leq g-2r+1$ and any $M\subseteq [n]$ with the restriction that  $|M|\geq 1$ if either $h=0$ or $h=g-2r+1$, we consider the gluing morphism
$$\begin{aligned}
\cG_{h, M}^r: \U_{h,|M|+1}(A_3)\times \U_{g-h-2r+1, |M^c|+1}(A_3)\times \EC_r  &\longrightarrow \U_{g,n}(A_3), \\
\left((C, \{p_i\}_{i\in M}, s_1), (C', \{p'_i\}_{i\in M^c}, s_2),  (Z, q_1, q_2) \right)& \mapsto (C\cup C'\cup Z,\{p_i\}_{i=1}^{n})/(s_1\sim q_1, s_2\sim q_2). 
\end{aligned}$$
Note that we  included in this case also the three degenerate cases  $(h,M)=(0,\{j\})$, in which case $\U_{h,|M|+1}(A_3)=\U_{0,2}(A_3)=\emptyset$ and the point $q_1$ is regarded as the $j$-th marked point, or $(g-h-2r+1,M^c)=(0,\{j\})$, in which case $ \U_{g-h-2r+1, |M^c|+1}(A_3)=\U_{0,2}(A_3)=\emptyset$ and the point $q_2$ is regarded as the $j$-th marked point, and or the case where both occurrences happen, namely the case $(g, n)=(2r-1, 2)$, when the above morphism is the embedding  of $\EC_r$ into $\U_{2r-1,2}(A_3)$. 
\end{itemize}

It follows from \cite[Lemma 4.13 and 4.14]{AFS2} that the morphisms $\cG_{\irr}^r$ and $\cG_{h, M}^r$ are finite. 
For later use, observe that  the stacks of the form $\U_{g',n'}(A_3)$ that  appear in the domain of the morphisms $\cG_{\irr}^r$ and $\cG_{h, M}^r$  have the property that $n'\geq 1$, i.e. there is at least one marked point.

We now study the compatibility of the maps $\cG_{\irr}^r$ and $\cG_{h, M}^r$  with the open substacks of $T$-semistable curves.

\begin{lemma}\label{L:chain-T}
Let $T\subseteq T_{g,n}$. 
\begin{enumerate}[(i)]
\item \label{L:chain-T1} If $\irr\in T$ then 
$$(\cG_{\irr}^r)^{-1}(\M_{g,n}^T)=\M_{g-2r,n+2}^{T_{g-2r,n+2}}\times \EC_r.$$
 \item \label{L:chain-T2} 
If $\{[h,M],\ldots,[h+2r-1,M]\}\subseteq T$ and $(h,M), (g-h-2r+1,M^c)\neq (1,\emptyset)$ then  
$$(\cG_{h,M}^r)^{-1}(\M_{g,n}^T)=\M_{h,|M|+1}^{\wt{T}_{h,M}}\times \M_{g-h-2r+1,|M^c|+1}^{\wt{T}_{g-h-2r+1,M^c}}\times \EC_r$$ 
where $\wt{T}_{h,M}$ is the subset of $T_{h,|M|+1}$ defined by 
$$\begin{sis}
\irr \in \wt{T}_{h,M} & \Leftrightarrow \irr\in T, \\
[\tau, I]\in \wt{T}_{h,M} & \Leftrightarrow 
\begin{cases}
[\tau, I]\in T & \text{ if } |M|+1\not\in I, \\
[h-\tau,[|M|+1]-\{I\}]
%=[\tau+g-h, (I-\{|M|+1\})\cup M^c]
\in T 
& \text{ if } |M|+1\in I.
\end{cases} 
\end{sis}$$
with the convention that $[|M|]=[|M|+1]-\{|M|+1\}$ is identified with the subset $M\subset [n]$ (which allows to consider any subset of $[|M|]$ as a subset of $[n]$), and where $\wh{T}_{g-h-2r+1,M^c}\subseteq T_{g-h-2r+1,|M^c|+1}$ is defined similarly by replacing $h$ with $g-h-2r+1$ and $M$ with $M^c$.
\end{enumerate}
\end{lemma}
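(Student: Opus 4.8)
The plan is to adapt, essentially verbatim, the argument proving Lemma \ref{L:spr-T}. Since $\M_{g,n}^{T}$ is an open substack of $\U_{g,n}(A_3)$, membership can be tested on geometric points, so I would work throughout with $n$-pointed curves over $k$. Fix a point of the source of $\cG_{h,M}^r$, say $\bigl((C,\{p_i\}_{i\in M},s_1),(C',\{p'_i\}_{i\in M^c},s_2),(Z,q_1,q_2)\bigr)$ with $(Z,q_1,q_2)\in\EC_r$, and let $X:=C\cup Z\cup C'$ be the associated glued $n$-pointed curve; the case of $\cG_{\irr}^r$ is handled in the same way, with a single factor $C$ carrying the two gluing points. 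The singular points of $X$ created by the gluing are the two nodes $q_i\sim s_i$, so $X\in\U_{g,n}(A_3)$ automatically, and $X\in\M_{g,n}^{T}$ amounts to requiring that $X$ have no $A_1$- or $A_3$-attached elliptic tail and that every tacnode of $X$ have type contained in $T$.

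First I would check that $X$ has an $A_1$- or $A_3$-attached elliptic tail if and only if $C$ or $C'$ does. No genus-one irreducible subcurve of $X$ meeting the rest of $X$ in a single point can lie inside $Z$: the components of a point of $\EC_r$ are elliptic bridges or rosaries of length two, each of them attached inside $Z$ at two points, and $Z$ lies in $\M_{2r-1,2}(7/10)$, which by definition contains no $A_1$-attached elliptic tails; moreover $q_1,q_2$ are glued to the rest of $X$ through nodes, away from the sections, so any elliptic tail of $C$ (resp.\ $C'$) persists unchanged in $X$, and conversely.

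The second, and main, step is the analysis of the tacnodes of $X$, each of which lies on $C$, on $C'$, or on $Z$. Normalising $X$ at a tacnode $q$ of $Z$ turns it into a separating point whose two sides are the sub-chains of $Z$ attached through $q_1$ and through $q_2$; a genus count (a sub-chain of length $j$ has arithmetic genus $2j-1$) gives $\type_X(q)\in\{[h,M],\dots,[h+2r-1,M]\}\subseteq T$ for $\cG_{h,M}^r$, while for $\cG_{\irr}^r$ both sides reattach to the single connected curve $C$, so $\type_X(q)=\{\irr\}\in T$; thus the tacnodes of $Z$ never obstruct membership. For a tacnode $q$ of $C$ (the case of $C'$ being symmetric): if normalising $C$ at $q$ keeps $C$ connected then $X$ is connected and $\type_X(q)=\{\irr\}$; otherwise $C\setminus\{q\}$ splits into two components, one of genus $\tau$ carrying the marked points indexed by $J\subseteq[|M|+1]$, and the outcome depends on whether the gluing section $s_1$ (which carries label $|M|+1$) lies on that component. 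If $|M|+1\notin J$, then $Z$ and $C'$ reattach to the other component and $\type_X(q)=\{[\tau,J],[\tau+1,J]\}$, with $J$ read inside $[n]$. If $|M|+1\in J$, then $Z$ and $C'$ reattach to the genus-$\tau$ component, which in $X$ acquires genus $g-h+\tau$, so $\type_X(q)=\{[h-1-\tau,[|M|+1]\setminus J],[h-\tau,[|M|+1]\setminus J]\}$. Matching these two outcomes against the requirement that the type lie in $T$ reproduces precisely the substitution $[\tau,J]\leftrightarrow[h-\tau,[|M|+1]\setminus J]$ that defines $\wt T_{h,M}$; hence all tacnodes of $X$ coming from $C$ have type in $T$ exactly when all tacnodes of $C$ have type in $\wt T_{h,M}$, and symmetrically for $C'$ and $\wt T_{g-h-2r+1,M^c}$. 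In the $\cG_{\irr}^r$ situation the parallel discussion shows that a separating tacnode of $C$ either becomes of type $\{\irr\}$ in $X$ (when it is bridged by the handle $Z$) or retains combinatorial data that is read directly from $C$, which is why the pullback is the full $T_{g-2r,n+2}$.

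Assembling the two steps gives $X\in\M_{g,n}^{T}$ if and only if $(C,\dots)\in\M_{h,|M|+1}^{\wt T_{h,M}}$ and $(C',\dots)\in\M_{g-h-2r+1,|M^c|+1}^{\wt T_{g-h-2r+1,M^c}}$, with no constraint on the $\EC_r$-factor since the whole of $\EC_r$ occurs in the source; this is the statement of (ii), and the parallel bookkeeping gives (i). The hypothesis $\{[h,M],\dots,[h+2r-1,M]\}\subseteq T$ guarantees that $Z$, which sits in $X$ as an $A_1/A_1$-attached elliptic chain of exactly this type, together with all of its internal tacnodes, is of a type permitted by $T$; otherwise the preimage is empty. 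The hypothesis $(h,M),(g-h-2r+1,M^c)\neq(1,\emptyset)$ prevents $C$ or $C'$ from being forced to be an elliptic curve, which would make the relevant moduli stack degenerate. I expect the delicate point to be the second step: one has to run the type computation of Definition \ref{D:type} simultaneously for the marked-point set of $C$ (or $C'$) and that of $X$, keep track of where the gluing sections and the reattached chain fall after each normalisation, and verify that the induced correspondence of types respects the equivalence $\sim$ used to form the sets $T_{\bullet,\bullet}$, so that $\wt T_{h,M}$ is well defined.
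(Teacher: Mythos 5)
Your argument for part (ii) is correct and matches the paper's own proof: the bookkeeping tracks whether the single gluing marked point $|M|+1$ of $C$ (or of $C'$) falls on the side of a tacnode that gets reattached to $Z$ and to the other factor, and your transformed types agree with the substitutions defining $\wt T_{h,M}$. The gap is in part (i). You note that a separating tacnode $q$ of $C$ not bridged by $Z$ ``retains combinatorial data that is read directly from $C$,'' and then conclude that the pullback is the full $T_{g-2r,n+2}$; but the conclusion does not follow from that observation. When both gluing points $p_{n+1},p_{n+2}$ lie on the same connected component of the normalisation of $C$ at $q$, the tacnode $q$ stays separating in $X=\cG_{\irr}^r(C,Z)$, with type $\{[\gamma,J],[\gamma+1,J]\}$ determined by the genus $\gamma$ and the marked points $J\subset[n]$ of the side of $C$ that does not carry the gluing points. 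This type is in general neither $\{\irr\}$ nor contained in $T$, so the membership of $X$ in $\M_{g,n}^T$ imposes a genuine constraint on $C$.

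For a concrete instance, take $g=5$, $n=0$, $r=1$, $T=\{\irr\}$, let $Z$ be a smooth elliptic bridge, and let $C$ be the tacnodal union of a $2$-pointed $\mathbb{P}^1$ carrying both gluing points with a smooth genus-$2$ curve. Then $(C,Z)\in\M_{3,2}^{T_{3,2}}\times\EC_1$ ($C$ has no elliptic tails), yet the glued curve $X$ has a tacnode of type $\{[2,\emptyset]\}\not\subset\{\irr\}$, so $X\notin\M_{5}^{\{\irr\}}$. Thus the preimage is a strict open substack of $\M_{g-2r,n+2}^{T_{g-2r,n+2}}\times\EC_r$; the superscript should be a suitable $T'\subsetneq T_{g-2r,n+2}$ obtained by pulling the condition on tacnode types back through the gluing. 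For what it is worth, the paper's own proof of (i) has the same difficulty: it asserts that every tacnode of $C$ becomes of type $\{\irr\}$ in $X$, which is true only when the tacnode is non-separating or separates $p_{n+1}$ from $p_{n+2}$. Your phrasing is in fact more honest than the paper's, but you then draw the wrong conclusion from the honest observation; to repair the argument you would need either to identify the correct $T'$ explicitly, or to show that the discrepancy is harmless for the application in Proposition \ref{prop:+}.
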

\begin{proof}
%We will use the notation introduced above when defining $\cG_{\irr}^r$ and $\cG_{h,M}^r$.

Let us prove first \eqref{L:chain-T1}. First of all, note that $\cG_{\irr}^r\left((C, \{p_i\}_{i=1}^{n+2}),  (Z, q_1, q_2) \right)$ does not have an $A_1$ or $A_3$-attached elliptic chain if and only if the same is true for $(C, \{p_i\}_{i=1}^{n+2})$. Moreover, every tacnode of $Z$ and of $C$ become of type $\{\irr\}$ in $(C\cup Z,\{p_i\}_{i=1}^n)/(p_{n+1}\sim q_1, p_{n+2}\sim q_2)$, from which the conclusion follows. 

Let us now prove \eqref{L:chain-T2}. We will assume that we are not in one of the three degenerate cases discussed above after the definition of $\cG_{h,M}^r$, and leave these three limit cases (that are easier to deal with) to the reader. 
First of all, note that, since $(h,M), (g-h-2r+1,M^c)\neq (1,\emptyset)$ by assumption, $\cG_{h,M}^r\left((C, \{p_i\}_{i\in M}, s_1), (C', \{p'_i\}_{i\in M^c}, s_2),  (Z, q_1, q_2)  \right)$ 
does not have an $A_1$ or $A_3$-attached elliptic chain if and only if the same is true for $(C, \{p_i\}_{i\in M}, s_1)$ and  $(C', \{p'_i\}_{i\in M^c}, s_2)$. Next, every tacnode of $Z$, when considered in 
$(C\cup C'\cup Z,\{p_i\}_{i=1}^{n})/(s_1\sim q_1, s_2\sim q_2)$, is of type contained in  $\{[h,M],\ldots,[h+2r-1,M]\}$, and hence in $T$ by our assumption. On the other hand, a tacnode of $(C, \{p_i\}_{i\in M}, s_1)$ of type $\{\irr\}$ remains of type $\{\irr\}$ when seen in 
$(C\cup C'\cup Z,\{p_i\}_{i=1}^{n})/(s_1\sim q_1, s_2\sim q_2)$, while if it has type $\{[\tau, I], [\tau+1,I]\}$ then it remains of the same type if $|M|+1\not\in I$ while it becomes of type $\{[h-\tau-1,[|M|+1]-\{I\}], [h-\tau,[|M|+1]-\{I\}]\}$ if $|M|+1\in I$.
A similar analysis can be done for  $C'$, and this concludes the proof.
\end{proof}

Using the above Lemma, we can prove  the existence of the good moduli space for $\m Z_T^+$. 

\begin{proof}[Proof of Proposition \ref{prop:+}]
First of all,  remark that $\M_{g,n}^T=\M_{g,n}^{T\setminus [1,\emptyset]}$, because a tacnode of type $[1,\emptyset]$ corresponds to a tacnodal elliptic tail, and we have already removed such a tail from $\M_{g,n}^T$ in Definition \ref{D:ourstacks}. We can thus assume that $[1,\emptyset]\not\in T$.

Consider the stack
$$H_T:=
\begin{sis}
\coprod_{\{[h,M],\ldots,[h+2r-1,M]\}\subseteq T}  (\cG_{h,M}^r)^{-1}(\M_{g,n}^T) & \text{ if } \irr \not \in T,\\
\coprod_{\{[h,M],\ldots,[h+2r-1,M]\}\subseteq T}  (\cG_{h,M}^r)^{-1}(\M_{g,n}^T) \coprod_{1\leq r\leq g/2 } (\cG_{\irr}^r)^{-1}(\M_{g,n}^T) & \text{ if } \irr  \in T.\\
\end{sis}
$$

The finite morphisms $\cG_{\irr}^r$ and $\cG_{h, M}^r$ give rise to a finite morphism 
$$\cG_T:H_T\to \M_{g,n}^T,$$
whose image, by construction, is  the locus of $\M_{g,n}^T$ having at least one $A_1$/$A_1$-attached elliptic chain of type contained in $T$, i.e. exactly $\m Z_T^+$.

Observe that the algebraic stack $\m Z_T^+$, being a closed substack of $\M_{g,n}^T$, is a global quotient stack of a normal variety by Remark \ref{R:quotstack} and it has linearly reductive stabilisers by Lemma \ref{L:linred} and our assumption on $\car(k)$. 
Moreover,  Lemma \ref{L:chain-T} implies that the stack $H_T$ is a (finite) disjoint union of products of the stacks $\EC_r$,  which admit proper good moduli space by   \cite[Lemma 4.12]{AFS2}, and of the stacks $\M^{T'}_{g',n'}$ for suitable $T'\subseteq T_{g',n'}$ with either $g'<g$ and $1\leq n'\leq n+1$ or $(g',n')=(g-2,n+2)$, 
 which admit proper good moduli space by assumption.  Therefore also $H_T$ admits a proper good moduli space.
 We can now apply Proposition \ref{P:fincov} and Remark \ref{R:comm}\eqref{R:comm1} to infer that $\m Z_T^+$ admits a proper good moduli space.
\end{proof}

We can now proof the main result of this section.

\begin{proof}[Proof of Theorem \ref{T:goodspaces}]
First of all, Proposition \ref{P:VGIT} implies that  the two open embeddings 
$$
\M_{g,n}^{\ps}\hookrightarrow \M_{g,n}^T\hookleftarrow \M_{g,n}^{T+}
$$
arise from local VGIT with respect to the line bundle $\delta-\psi$ on $\M_{g,n}^T$.

Next, the stack $\M_{g,n}^{\ps}$ admits a coarse proper moduli space $\phi^{\ps}:\M_{g,n}^{\ps}\to \MM_{g,n}^{\ps}$ (see Proposition  \ref{P:Mgps-DM}). Since the stabiliser of any $k$-point of $\M_{g,n}^{\ps}$ is linearly reductive by our assumption on the characteristic (see Lemma \ref{L:linred} and recall that $\M_{g,n}^{\ps}\subseteq \M_{g,n}^T$), we infer that $\phi^{\ps}$ is also a good moduli space by  \cite[Thm. 3.2]{AOV}. 

Therefore, thanks to \cite[Theorem 1.3]{AFS2}, the existence of proper good moduli spaces fitting into the commutative diagram \eqref{E:diag-spaces} will follow if we show that the stacks $\m Z_T^-=\M_{g,n}^T\setminus \M_{g,n}^{\ps}$ and $\m Z_T^+=\M_{g,n}^T\setminus \M_{g,n}^{T+}$ admit good moduli spaces. 
This follows from Propositions \ref{prop:-} and \ref{prop:+} using induction on $g$: the base of the induction is the case $g=0$ when $\M_{0,n}^T=\M_{0,n}$ is a variety (hence it is its own good moduli space) and the assumption on the characteristic of the base field $k$ allows us to apply induction. 
Observe that the non existence of a proper moduli space for $\M_{2,0}^{\irr}$ (see Remark \ref{R:special-gn}) does not interfere with this inductive proof since all the stacks $\M_{g',n'}^{T'}$ appearing in the inductive hypothesis of Propositions \ref{prop:-} and \ref{prop:+} are such that $n'\geq 1$.

Finally, observe that the morphisms $f_T$ and $f_T^+$  are proper (being  morphisms between  proper algebraic spaces) and all the good moduli spaces are normal and irreducible since the corresponding algebraic stacks are smooth and irreducible by Theorem \ref{T:algstack} (see \cite[Theorem 4.16(viii)]{Alper}). 
\end{proof}

\begin{remark}\label{R:modcom}
Since the stacks $\M_{g,n}^T$ and $\M_{g,n}^{T+}$ contain the stack ${\mathcal M}_{g,n}$ of $n$-pointed smooth curves of genus $g$  as an open substack, the spaces $\MMM_{g,n}^T$ and $\MMM_{g,n}^{T+}$ are weakly modular compactification of $M_{g,n}$ in the sense of \cite[Def. 2.6]{FS}. Moreover, they are modular compactification of $M_{g,n}$ in the sense of \cite[Def. 2.1]{FS} whenever the spaces $\MMM_{g,n}^T$ and $\MMM_{g,n}^{T+}$ are coarse moduli spaces, or equivalently whenever the stacks $\M_{g,n}^T$ and $\M_{g,n}^{T+}$ are DM, and this happens  when
\begin{itemize}
\item $\M_{g,n}^{T}$ is a DM stack if and only if $\car(k)\gg T$ and $\M_{g,n}^T=\M_{g,n}^{\ps}$, i.e. if and only if $T^{\adm}=\emptyset$.
\item Assume that $\car(k)\gg T$. Then $\M_{g,n}^{T+}$ is a DM stack if and only if $T$ does not contain subsets of the form $\{[\tau,I], [\tau+1,I], [\tau+2,I]\}$ with $[\tau,I],[\tau+2,I]\neq [1,\emptyset]$.
\end{itemize}

\end{remark}

\end{section}
		
\begin{section}{The moduli space of pseudostable curves and the Elliptic bridge face} \label{Sec:Mgps}

The aim of this section is to study the geometric properties of the moduli space $\MM_{g,n}^{\ps}$ of pseudostable curves   and to describe a face of its Mori cone, that we call the elliptic bridge face, which  will play a special role in the sequel.

We start by studying the singularities, the Picard group and the canonical class of  $\MM_{g,n}^{\ps}$.  

\begin{proposition}\label{P:Pic-Mgps}
Assume that $(g,n)\neq (2,0)$ and that $\car(k)\neq 2,3$. 
Consider the stack $\M_{g,n}^{\ps}$ of pseudostable curves of genus $g$ with $n$ marked points and let $\phi^{\ps}:\M_{g,n}^{\ps}\to \MM_{g,n}^{\ps}$ be the morphism into its coarse moduli space.  
\begin{enumerate}[(i)]

\item \label{P:Pic-Mgps2} The space $\MM_{g,n}^{\ps}$ is normal with finite quotient singularities, hence it is $\bbQ$-factorial. If ${\rm char}(k)=0$, then $\MM_{g,n}^{\ps}$  is klt. 

\item \label{P:Pic-Mgps2b} 
The pull-back via the morphism $\phi^{\ps}$  induces an isomorphism 
$$(\phi^{\ps})^*:\Pic(\MM_{g,n}^{\ps})_{\bbQ}\stackrel{\cong}{\longrightarrow}\Pic(\M_{g,n}^{\ps})_{\bbQ}.$$

\item \label{P:Pic-Mgps3} If $(g,n)\neq (1,2),  (2,1), (3,0)$, then the canonical line bundle of $\MM_{g,n}^{\ps}$ is such that 
$$(\phi^{\ps})^*(K_{\MM_{g,n}^{\ps}})=K_{\M_{g,n}^{\ps}}.$$ 
In particular, using \eqref{P:Pic-Mgps2b} and Mumford's formula for $K_{\M_{g,n}^{\ps}}$ (see Fact \ref{F:PicU}\eqref{F:PicU2}), we get
$$K_{\MM_{g,n}^{\ps}}=13\lambda-2\delta+\psi.$$
\end{enumerate}
\end{proposition}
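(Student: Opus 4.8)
\textbf{Proof plan for Proposition \ref{P:Pic-Mgps}.}

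\emph{Part \eqref{P:Pic-Mgps2}.} The plan is to use that $\M_{g,n}^{\ps}$ is a proper smooth irreducible Deligne--Mumford stack with finite inertia when $\car(k)\neq 2,3$ (Proposition \ref{P:Mgps-DM}), so that its coarse moduli space $\MM_{g,n}^{\ps}$ has finite quotient singularities by the standard local structure theory of DM stacks (étale-locally of the form $[U/\Aut(x)]$ with $\Aut(x)$ finite). From finite quotient singularities one gets normality and $\bbQ$-factoriality automatically, and in characteristic zero klt singularities since quotient singularities are klt. The minor subtlety is that the Proposition is stated for general characteristic (with restrictions specified when needed); in characteristics $2$ and $3$ one should instead invoke that $\MM_{g,n}^{\ps}$ is the good (adequate, in the non-reduced-automorphism case) moduli space and argue $\bbQ$-factoriality via Corollary \ref{C:Pic-ps} together with part \eqref{P:Pic-Mgps2b}, or simply restrict to $\car(k)\neq 2,3$ as the rest of the section does. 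I expect the main point to record carefully is which characteristic hypothesis is in force.

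\emph{Part \eqref{P:Pic-Mgps2b}.} First I would note that $(\phi^{\ps})^*$ is always injective on rational Picard groups for a coarse space of a proper stack (pushforward and the projection formula: $\phi^{\ps}_*(\phi^{\ps})^* L = L$ up to a positive multiple coming from generic automorphisms). For surjectivity, I would use that $\M_{g,n}^{\ps}$ is a DM stack with coarse space $\MM_{g,n}^{\ps}$ having only quotient singularities: any $\bbQ$-line bundle on the stack, being a $\bbQ$-Cartier divisor class, descends to a $\bbQ$-Cartier divisor on the coarse space since the pushforward of a Weil divisor is Weil and $\MM_{g,n}^{\ps}$ is $\bbQ$-factorial. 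Concretely, the generators of $\Pic(\M_{g,n}^{\ps})_{\bbQ}$ listed in Corollary \ref{C:Pic-ps} are boundary and tautological classes, each of which is the pullback of its image under $\phi^{\ps}_*$ up to the (nontrivial but constant, equal to $2$ on the elliptic-tail-type loci) generic stabilizer contributions; one checks these are $\bbQ$-Cartier on the coarse space by the quotient-singularity description. This is essentially the same argument as for $\MM_{g,n}$ itself.

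\emph{Part \eqref{P:Pic-Mgps3}.} The plan is the ramification formula for the map $\phi^{\ps}$ from a smooth DM stack to its coarse space: $(\phi^{\ps})^* K_{\MM_{g,n}^{\ps}} = K_{\M_{g,n}^{\ps}} - \sum_D a_D D$, where the sum runs over the divisorial components of the locus where $\phi^{\ps}$ is stacky (has nontrivial generic automorphisms), and $a_D = (e_D - 1)/e_D$ with $e_D$ the order of the generic stabilizer along $D$. The key input is to show this correction term vanishes, i.e. that $\M_{g,n}^{\ps}$ has no divisorial locus with generic nontrivial automorphism once $(g,n)\neq (1,2),(2,1),(3,0)$. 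This is the analogue of the classical fact for $\M_{g,n}$ (cf.\ the discussion in \cite[Chap.\ XVII]{GAC2}) that the only divisor of stable curves with everywhere-nontrivial automorphisms is the elliptic-tail boundary $\Delta_{1,\emptyset}$ (and low-genus exceptions like the hyperelliptic divisor in $\M_3$, the $\delta_1$ in $\M_{2,1}$, the $\psi$-type one in $\M_{1,2}$), and $\Delta_{1,\emptyset}$ is precisely the divisor contracted by $\wh{\Upsilon}$ and is \emph{not} a divisor of $\M_{g,n}^{\ps}$. The argument: a general pseudostable curve parametrized by a given boundary divisor of $\M_{g,n}^{\ps}$ is, apart from the listed small-$(g,n)$ exceptions, a nodal curve with smooth general components of genus large enough (or with enough marked points) to have no automorphisms; the only codimension-one loci forcing automorphisms would come from genus-$1$ components with $\le 1$ special point (elliptic tails, absent by pseudostability) or from hyperelliptic-type phenomena, which occur in codimension $\geq 2$ except in the excluded cases. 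So the ramification divisor is empty, hence $(\phi^{\ps})^*K_{\MM^{\ps}_{g,n}}=K_{\M^{\ps}_{g,n}}$, and combining with \eqref{P:Pic-Mgps2b} and Mumford's formula (Fact \ref{F:PicU}\eqref{F:PicU2}) gives $K_{\MM_{g,n}^{\ps}}=13\lambda-2\delta+\psi$.

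\textbf{Main obstacle.} The delicate step is part \eqref{P:Pic-Mgps3}: ruling out all codimension-one loci of pseudostable curves with generic nontrivial automorphism. One must genuinely go through the boundary strata of $\M_{g,n}^{\ps}$ (which differ from those of $\M_{g,n}$ since elliptic tails have been traded for cusps) and the possible cuspidal/irreducible divisors, check that automorphisms are generically trivial off the three exceptional pairs, and also confirm that cuspidal curves do not create a new ramification divisor — a cusp is a rigid singularity with trivial local automorphisms in $\car(k)\neq 2,3$, so a general cuspidal pseudostable curve of high genus has no automorphisms. This case analysis, while not deep, is where all the real content sits; everything else is formal stack/coarse-space yoga plus Mumford's formula.
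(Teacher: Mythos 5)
Your plan matches the paper's on parts (i) and (iii): the paper also deduces (i) from the smoothness and finite inertia of $\M_{g,n}^{\ps}$ (Proposition~\ref{P:Mgps-DM}), and for (iii) it reduces, exactly as you do, to showing that $\phi^{\ps}$ is an isomorphism in codimension one, citing \cite[Chap.~XII, Prop.~2.15]{GAC2} for the locus of smooth curves with automorphisms and then noting that a generic point of each boundary divisor of $\M_{g,n}^{\ps}$ is automorphism-free. For part (ii) the paper argues along a slightly different route: it sets up a commutative square comparing $\Pic_{\bbQ}$ and $\Cl_{\bbQ}$ for both the stack and the coarse space, using that the two horizontal maps $\Pic_{\bbQ}\to\Cl_{\bbQ}$ are isomorphisms (smoothness upstairs, $\bbQ$-factoriality downstairs) and that $(\phi^{\ps})_*$ on $\Cl_{\bbQ}$ is an isomorphism because $\phi^{\ps}$ is an isomorphism in codimension one as established in the proof of (iii). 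Your approach (projection formula for injectivity, $\bbQ$-factorial descent for surjectivity) is somewhat more direct and, notably, does not rely on the codimension-one claim, which the paper only proves under the exclusions $(g,n)\neq (1,2),(2,1),(3,0)$ appearing in part (iii) even though part (ii) is stated more generally; so your argument is arguably more robust at that point. One small slip in your write-up for (ii): you invoke ``elliptic-tail-type loci'' with generic stabilizer $2$ --- but $\Delta_{1,\emptyset}$ is precisely the divisor that disappears when passing from $\M_{g,n}$ to $\M_{g,n}^{\ps}$ (elliptic tails become cusps), and its absence is exactly what makes the ramification divisor in (iii) vanish; it should not appear in the descent argument for $\M_{g,n}^{\ps}$. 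Apart from that, you correctly identify the real content of (iii), namely ruling out every codimension-one locus of pseudostable curves with a generic nontrivial automorphism, and your observation that cuspidal strata lie in codimension two (so contribute no new ramification divisor) is exactly the right supplementary point.
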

From now on, we identify (in $\car(k)\neq 2,3$) $\bbQ$-line bundles on $\M_{g,n}^{\ps}$ with $\bbQ$-line bundles on $\MM_{g,n}^{\ps}$ via the isomorphism $(\phi^{\ps})^*$ of \eqref{P:Pic-Mgps2b}, similarly for what is usually done for $\bbQ$-line bundles on $\M_{g,n}$ and on $\MM_{g,n}$.

\begin{proof}
Part \eqref{P:Pic-Mgps2}: since $\M_{g,n}^{\ps}$ is a smooth and separated DM stack  by Fact \ref{F:embAFSV} and Proposition \ref{P:Mgps-DM}, its coarse moduli space $\MM_{g,n}^{\ps}$ is normal with finite quotient singularities 
by \cite[Lemma 2.2.3]{AV}. We conclude since finite quotient singularities are $\bbQ$-factorial and, if $\car(k)=0$,  klt  by  \cite[Prop. 5.15]{KM}.

%$\bbQ$-factorial by Kempf's descent lemma. Moreover, if $\car(k)=0$, then $\MM_{g,n}^{\ps}$ is also a good moduli space for $\M_{g,n}^{\ps}$ and hence it has finite quotient singularities by \cite[Thm. 2.9]{AHR}, and hence klt singularities by \cite[Prop. 5.15]{KM}.

Part \eqref{P:Pic-Mgps3}: it is enough to show that the morphism $\phi^{\ps}$ is an isomorphism in codimension one. First of all, the assumptions on $(g,n)$ guarantee that the locus of $n$-pointed smooth curves with non-trivial automorphisms has codimension at least two (see \cite[Chap. XII, Prop. 2.15]{GAC2}); hence the morphism $\phi^{\ps}$ is an isomorphism in codimension one when restricted to ${\mathcal M}_{g,n}\subset \M_{g,n}^{\ps}$. 
Secondly, a generic point of a boundary divisor of $\M_{g,n}^{\ps}$ does not have non-trivial automorphisms, hence $\phi^{\ps}$ is an isomorphism in codimension one also at the boundary of $\M_{g,n}^{\ps}$.

Part \eqref{P:Pic-Mgps2b}: consider the following commutative diagram 
$$
\xymatrix{
\Pic(\M_{g,n}^{\ps})_{\bbQ}  \ar[r] & \Cl(\M_{g,n}^{\ps})_{\bbQ} \ar[d]^{(\phi^{\ps})_*}\\
 \Pic(\MM_{g,n}^{\ps})_{\bbQ}  \ar[u]^{(\phi^{\ps})^*} \ar[r] & \Cl(\MM_{g,n}^{\ps})_{\bbQ} 
}
$$
The upper horizontal morphism is an isomorphism because $\M_{g,n}^{\ps}$ is a smooth stack; the lower horizontal arrow is an isomorphism since $\MM_{g,n}^{\ps}$ is normal and $\bbQ$-factorial by \eqref{P:Pic-Mgps2}; the right vertical arrow is an isomorphism since $\phi^{\ps}$ is an isomorphism in codimension one as observed
in the proof of \eqref{P:Pic-Mgps3}. Hence, by commutativity of the diagram, we infer that $(\phi^{\ps})^*$ is also an isomorphism. 
\end{proof}

\begin{remark}
We do not know if $\MM_{g,n}^{\ps}$ has finite quotient singularities, or simply if it is $\bbQ$-factorial, if $\car(k)=2,3$ (see also \cite[Rmk. 3.6]{Alp10}). If $\MM_{g,n}^{\ps}$ is $\bbQ$-factorial also if
$\car(k)=2,3$, then all the results of this section extend to $\car(k)=2,3$. 
\end{remark}

\begin{remark}\label{R:Picg2}
The first two points of the above Proposition remain true for $(g,n)=(2,0)$. 

Indeed, part \eqref{P:Pic-Mgps2} follows from the fact that $\MM_2^{\ps}$  is  isomorphic to the GIT quotient of binary sextics (see \cite[Thm. 2]{HL}), which is   isomorphic to the weighted projective space $\P(1,2,3,5)$ (see \cite[Prop. 2.2(1)]{Has} and the references therein), and hence it has finite quotient singularities. 

On the other hand, part \eqref{P:Pic-Mgps2b} follows from the fact that $\M_2^{\ps}$ is smooth, $\MM_2^{\ps}$ has $\bbQ$-factorial singularities and the morphism $\phi^{\ps}:\M_2^{\ps}\to \MM_2^{\ps}$ is finite in codimension one by Remark \ref{R:ps-g2}.
\end{remark}

\begin{remark}\label{R:Hurwitz}

In the exceptional  cases excluded by Proposition \ref{P:Pic-Mgps}\eqref{P:Pic-Mgps3} (and also for $(g,n)=(2,0)$) we can apply Hurwitz formula to the morphism $\phi^{\ps}:\M_{g,n}^{\ps}\to \MM_{g,n}^{\ps}$ in order to get 
$$K_{\M_{g,n}^{\ps}}=(\phi^{\ps})^*(K_{\MM_{g,n}^{\ps}})+R=K_{\MM_{g,n}^{\ps}}+R,$$ 
where $R$ is (the class of) the effective ramification divisor. Using  Mumford's formula for $K_{\M_{g,n}^{\ps}}$, we have that 
$$K_{\MM_{g,n}^{\ps}}=13\lambda-2\delta+\psi-R.$$
Moreover, from the proof of Proposition \ref{P:Pic-Mgps}\eqref{P:Pic-Mgps3}, it follows that $R$ is an effective divisor not contained in the boundary of $\M_{g,n}^{\ps}$.

\end{remark}

We now focus on the relation of the coarse moduli space $\MM_{g,n}^{\ps}$ of pseudostable curves with the coarse moduli space $\MM_{g,n}$ of stable curves.  Note that, for $(g,n)\neq (1,1), (2,0)$,  the morphism of stacks $\wh{\Upsilon}:\M_{g,n}\to \M_{g,n}^{\ps}$ of Proposition \ref{P:Mgps-DM}\eqref{P:Mgps-DM0} induces a proper morphism between their coarse moduli spaces
\begin{equation}\label{E:Ups}
 \Upsilon:\MM_{g,n}\to \MM_{g,n}^{\ps}.
 \end{equation}

\begin{proposition}\label{P:div-contr}
Assume that $(g,n)\neq (1,1), (2,0)$ and that $g\geq 1$.
\begin{enumerate}[(i)]

\item \label{P:div-contr3} The space $\MM_{g,n}^{\ps}$ is  isomorphic to the following log canonical model of $\MM_{g,n}$:
$$\MM_{g,n}^{\ps}\cong \MM_{g,n}\left(\frac{9}{11}\right):=\Proj \bigoplus_{m\geq 0} H^0(\M_{g,n}, \lfloor m(K_{\M_{g,n}}+\psi+\frac{9}{11}(\delta-\psi))\rfloor).
$$

In particular, $\MM_{g,n}^{\ps}$ is a normal projective variety. 

\item \label{P:div-contr4} The morphism $\Upsilon$  is the contraction of the extremal ray $\bbR_{\geq 0}\cdot [C_{\rm ell}]$ of  the Mori cone $\NEb(\MM_{g,n})$, which intersects negatively 
$K_{\M_{g,n}}$, $K_{\M_{g,n}}+\psi$, $K_{\MM_{g,n}}$ and $K_{\MM_{g,n}}+\psi$. Moreover, $\Upsilon$ is a divisorial contraction and the exceptional locus is the divisor $\Delta_{1,\emptyset}$.
\item \label{P:div-contr5} Assume that $\car(k)\neq 2,3$. The pull-back map $\Upsilon^*:\Pic(\MM_{g,n}^{\ps})_{\bbQ}\to \Pic(\MM_{g,n})_{\bbQ}$ is determined by the following relations:
$$\begin{sis}
& \Upsilon^*(\lambda)=\lambda+\delta_{1,\emptyset}, \\
& \Upsilon^*(\delta_{\irr})=\delta_{\irr}+12\delta_{1,\emptyset}, \\
& \Upsilon^*(\delta_{i,I})=\delta_{i,I} \quad \text{ for any } [i,I]\neq [1,\emptyset]. 
\end{sis}$$ 
\end{enumerate}
\end{proposition}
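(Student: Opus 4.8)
The plan is to prove Proposition \ref{P:div-contr} in three parts, treating \eqref{P:div-contr3} first since it provides the projectivity that makes Mori-theoretic language available, then \eqref{P:div-contr4}, and finally the Picard-group computation \eqref{P:div-contr5} which is essentially bookkeeping.

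\textbf{Part \eqref{P:div-contr3}.} I would invoke the work of Hassett--Hyeon and its pointed generalisation: by \cite[Thm. 1.1]{AFS3} (combined with the identification $\M_{g,n}^{\ps}=\M_{g,n}(9/11-\varepsilon)$ recorded in Fact \ref{F:embAFSV}) the log canonical model $\MM_{g,n}(9/11)$ is the good (indeed coarse, by Proposition \ref{P:Mgps-DM}) moduli space of $\M_{g,n}(9/11-\varepsilon)=\M_{g,n}^{\ps}$, which is $\MM_{g,n}^{\ps}$ by definition and Corollary \ref{C:coarseps}. This immediately gives that $\MM_{g,n}^{\ps}$ is a normal projective variety. (Normality was already known from Proposition \ref{P:Pic-Mgps}\eqref{P:Pic-Mgps2}, so the new content is projectivity, coming from the $\Proj$ construction.)

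\textbf{Part \eqref{P:div-contr4}.} The morphism $\Upsilon\colon\MM_{g,n}\to\MM_{g,n}^{\ps}$ is the coarse-space shadow of $\wh\Upsilon$ (Proposition \ref{P:Mgps-DM}\eqref{P:Mgps-DM0}), and on geometric points it contracts precisely the elliptic tails into cusps. So a one-dimensional stratum of $\MM_{g,n}$ is contracted by $\Upsilon$ if and only if it varies only an elliptic tail while keeping the rest of the curve fixed, i.e.\ if and only if it is numerically proportional to $[C_{\rm ell}]$; concretely, one checks $\Upsilon_*[C]=0\iff [C]\in\bbR_{\geq0}\cdot[C_{\rm ell}]$ by intersecting with the $\psi$- and $\delta$-classes (the class $[C_{\rm ell}]$ is pinned down, up to scale, by its intersection numbers, which are the standard ones for an elliptic-tail test curve: $\lambda\cdot C_{\rm ell}=1$, $\delta_{\rm irr}\cdot C_{\rm ell}=12$, $\delta_{1,\emptyset}\cdot C_{\rm ell}=-1$, all other $\delta_{i,I}\cdot C_{\rm ell}=0$, $\psi_i\cdot C_{\rm ell}=0$). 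Since $\MM_{g,n}^{\ps}$ is projective by Part \eqref{P:div-contr3}, $\Upsilon$ is a projective contraction, and Lemma \ref{L:rigidity} identifies it with the contraction of the ray $\bbR_{\geq0}\cdot[C_{\rm ell}]$. That this ray is extremal and $K$-, $(K+\psi)$- negative follows from the intersection numbers above together with Mumford's formula $K_{\M_{g,n}}=13\lambda-2\delta+\psi$ (Fact \ref{F:PicU}\eqref{F:PicU2}): one gets $(K_{\M_{g,n}}+\psi)\cdot C_{\rm ell}=13-2\cdot 11+0=-9<0$ and similarly on the coarse space using Proposition \ref{P:Pic-Mgps}\eqref{P:Pic-Mgps3}. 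That $\Upsilon$ is divisorial with exceptional locus exactly $\Delta_{1,\emptyset}$ is clear from the modular description: $\Upsilon$ is an isomorphism away from the locus of curves with an elliptic tail, and over $\Delta_{1,\emptyset}$ the fibres of $\Upsilon$ are positive-dimensional (the $j$-line of the varying elliptic tail gets crushed), so $\dim\Upsilon(\Delta_{1,\emptyset})=\dim\Delta_{1,\emptyset}-1$, confirming it is a divisor contracted to a subvariety of one lower dimension.

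\textbf{Part \eqref{P:div-contr5}.} The pull-back formulae are computed stratum-by-stratum. Since $\Upsilon$ is an isomorphism in codimension one outside $\Delta_{1,\emptyset}$ and contracts $\Delta_{1,\emptyset}$, each $\Upsilon^*(D)$ differs from the proper transform of $D$ only by a multiple of $\delta_{1,\emptyset}$; that multiple is determined by requiring $\Upsilon^*(D)\cdot C_{\rm ell}=D\cdot\Upsilon_*(C_{\rm ell})=0$ (projection formula, using that $C_{\rm ell}$ is contracted). For $D=\delta_{i,I}$ with $[i,I]\neq[1,\emptyset]$ the proper transform is again $\delta_{i,I}$ and $\delta_{i,I}\cdot C_{\rm ell}=0$, so no correction is needed. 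For $D=\lambda$ we use that the pull-back of the Hodge bundle under a contraction of elliptic tails to cusps picks up $\delta_{1,\emptyset}$ (this is the classical fact that $\lambda$ on $\M_{g,n}$ restricted to the elliptic-tail direction decreases by one when the tail is cusped; equivalently impose $(\lambda+a\,\delta_{1,\emptyset})\cdot C_{\rm ell}=0$, giving $a=1$). For $D=\delta_{\rm irr}$ one imposes $(\delta_{\rm irr}+b\,\delta_{1,\emptyset})\cdot C_{\rm ell}=0$, i.e.\ $12+b(-1)=0$, so $b=12$. Alternatively all three can be read off directly from the description of $\wh\Upsilon$ as in \cite[Sec. 3]{HH1} (the case $n=0$) extended to arbitrary $n$, the argument being identical. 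This yields exactly the stated relations.

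\textbf{Main obstacle.} The genuinely substantial input is Part \eqref{P:div-contr3}, which relies on the projectivity of the log canonical model and the identification $\MM_{g,n}(9/11)\cong\MMM$(good moduli space of $\M_{g,n}^{\ps}$); this rests on \cite[Thm. 1.1]{AFS3} and is not re-proved here. Given that, Parts \eqref{P:div-contr4} and \eqref{P:div-contr5} are routine: the only care needed is to make the intersection-number bookkeeping for the test curve $C_{\rm ell}$ watertight and to handle the passage between the stack $\M_{g,n}$ and its coarse space $\MM_{g,n}$ (via Proposition \ref{P:Pic-Mgps} and the standard fact that $\MM_{g,n}$ is $\bbQ$-factorial with the expected rational Picard group), together with noting that the excluded small cases $(g,n)=(1,1),(2,0)$ are exactly those where $\Upsilon$ or $\MM_{g,n}^{\ps}$ degenerates.
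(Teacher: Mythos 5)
Your overall skeleton is right for parts (ii) and (iii) — modular description of fibres plus the rigidity Lemma \ref{L:rigidity}, then the projection-formula bookkeeping with the standard intersection numbers against $C_{\rm ell}$ — and that matches the paper's approach. But there are two genuine gaps.

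The central one is in part (i). You invoke \cite[Thm.\ 1.1]{AFS3} directly, but that theorem is proved under the hypothesis $\car(k)=0$, whereas Proposition \ref{P:div-contr} carries no characteristic restriction. The paper's proof opens by flagging exactly this issue and then \emph{re-proves} the needed inputs over an arbitrary algebraically closed field: it sets $L_{g,n}:=K_{\M_{g,n}}+\frac{9}{11}\delta+\frac{2}{11}\psi$, uses \cite[Introduction]{AFS3} for the nef-and-degree-zero characterisation (whose proof is characteristic-free), and then establishes \emph{semiampleness} of $L_{g,n}$ by a GIT argument that does not depend on the characteristic: for $n=0$ the bundle $L_{g,0}$ is pulled back from the natural polarisation on the GIT quotient of the Chow variety of $4$-canonical curves (\cite[Thm.\ 7]{HM}, \cite[Thm.\ 3.1]{HH2}), and for $n>0$ one writes $L_{g,n}$ as the pullback of $L_{g+n,0}$ along the clutching morphism $\MM_{g,n}\to\MM_{g+n}$ that attaches a fixed elliptic curve at each marked point. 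Your argument, as written, only establishes the proposition in characteristic zero.

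The second, smaller gap is in the coarse-space $K$-negativity claim of part (ii). You appeal to Proposition \ref{P:Pic-Mgps}\eqref{P:Pic-Mgps3}, which is stated only for $(g,n)\neq (1,2),(2,1),(3,0)$; in those excluded cases $(\phi^{\ps})^*(K_{\MM_{g,n}^{\ps}})$ differs from $K_{\M_{g,n}^{\ps}}$ by the ramification divisor $R$ (Remark \ref{R:Hurwitz}), and one must argue that a representative of $[C_{\rm ell}]$ can be chosen with trivial generic automorphism so that it meets $R$ non-negatively. The paper handles this explicitly; your proposal does not, so those three cases are unproven.
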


\begin{proof}

Some parts of this theorem are proved for $n=0$ in  \cite{HH1}  and \cite{HM}  and some other parts are proved in  \cite{AFS3} under the assumption that ${\rm char}(k)=0$. 
Let us convince the reader that the proofs in the above mentioned papers work for any $n$ and over an arbitrary algebraically closed field $k$.
Consider  the $\bbQ$-line bundle on $\MM_{g,n}$
$$L_{g,n}:=K_{\M_{g,n}}+\psi+\frac{9}{11}(\delta-\psi)=K_{\M_{g,n}}+\frac{9}{11}\delta+\frac{2}{11}\psi.$$
By \cite[Introduction]{AFS3} the line bundle $L_{g,n}$  is nef and it has degree $0$ precisely on the curves that are numerically equivalent to $C_{\rm ell}$. Moreover, we claim that $L_{g,n}$ is semiample on $\MM_{g,n}$. Indeed, in the case $n=0$, $L_{g,0}$ is the pull-back via $\Upsilon$ of the natural polarisation coming from the identification of $\MM_{g}^{\ps}$ with the  GIT quotient of the Chow variety of $4$-canonical curves (see \cite[Thm. 7]{HM} and \cite[Thm. 3.1]{HH2}). In the case $n>0$, $L_{g,n}$ is the pull-back of $L_{g+n,0}$ via the regular morphism $\MM_{g,n}\to \MM_{g+n}$ that attaches a fixed smooth elliptic curve to each of the marked points of an $n$-pointed stable curve of genus $g$ (see \cite[Lemma (4.38)]{GAC2}). 
%\cite[Sec. 5.4]{AFS3}).

These facts imply that a sufficiently high multiple of $L_{g,n}$ induces a regular morphism 
$$\pi: \MM_{g,n}\to \Proj \bigoplus_{m\geq 0} H^0\left(\MM_{g,n}, \left\lfloor m\left(K_{\M_{g,n}}+\frac{9}{11}\delta+\frac{2}{11}\psi\right)\right\rfloor\right) $$
which is  the contraction of the extremal ray $\bbR_{\geq 0}\cdot C_{\rm ell}$ of $\NE(\MM_{g,n})$. The codomain coincides with $\displaystyle \MM_{g,n}\left(\frac{9}{11} \right)$ because $$H^0\left(\MM_{g,n}, \left\lfloor m\left(K_{\M_{g,n}}+\frac{9}{11}\delta+\frac{2}{11}\psi\right)\right\rfloor\right) = H^0\left(\M_{g,n}, \left\lfloor m\left(K_{\M_{g,n}}+\frac{9}{11}\delta+\frac{2}{11}\psi\right)\right\rfloor\right)$$ 
for all $m$ divisible by the cardinality of all inertia groups of $\M_{g,n}$ (see  also \cite[Prop. A.13]{HH1}).

Now observe that, by the modular description of $\Upsilon$, an integral curve of $\MM_{g,n}$ lies on a closed fiber of $\Upsilon$ if and only if its class lies in $\bbR_{\geq 0}\cdot C_{\rm ell}$. Moreover, $\Upsilon$ is a contraction by the Zariski main theorem since it is a proper morphism between irreducible normal algebraic spaces which is moreover birational (being an isomorphism when restricted to the dense open subset of smooth curves). Therefore, using the rigidity Lemma \ref{L:rigidity}, we get an isomorphism $\MM_{g,n}^{\ps}\cong  \MM_{g,n}\left(\frac{9}{11} \right)$ under which $\Upsilon$ gets identified to $\pi$.

Using Mumford's formula $K_{\M_{g,n}}=13\lambda-2\delta+\psi$ and the formulae  \cite[Thm. 2.1]{GKM}, we compute that 
$$
C_{\rm ell}\cdot K_{\M_{g,n}}=C_{\rm ell}\cdot (K_{\M_{g,n}}+\psi)=-9.
$$
If $(g,n)\neq (1,2),  (2,1), (3,0)$ then we have that $K_{\MM_{g,n}}=K_{\M_{g,n}}-\delta_{1,\emptyset}$ by  \cite[Chap. XII, Cor. 7.16]{GAC2}, and then, using again the formulae \cite[Thm. 2.1]{GKM}, we compute that 
$$
C_{\rm ell}\cdot K_{\MM_{g,n}}=C_{\rm ell}\cdot (K_{\MM_{g,n}}+\psi)=-8.
$$
In the above exceptional cases, we have that $K_{\MM_{g,n}}=K_{\M_{g,n}^{\ps}}-\delta_{1,\emptyset}-R$ with $R$ is part the ramification divisor of the morphism $\phi:\M_{g,n}\to \MM_{g,n}$ not entirely contained in the boundary of $\MM_{g,n}$. We can choose the  curve $C_{\rm ell}$  (in its numerical equivalence class) in such a way that the automorphism group of its generic point is generated by the elliptic involution along the elliptic tail,  which implies that $C_{\rm ell}$ is not contained in $R$. 
This ensures that $C_{\rm ell}$  intersects   $R$ non-negatively and hence it  intersects negatively also $K_{\MM_{g,n}^{\ps}}$ and $K_{\MM_{g,n}^{\ps}}+\psi$.

%OLD WRONG TEXT
%The curve $C_{\rm ell}$ intersects negatively $K_{\M_{g,n}}$ and $K_{\M_{g,n}}+\psi$ by the formulae  \cite[Thm. 2.1]{GKM} and Mumford's formula $K_{\M_{g,n}}=13\lambda-2\delta+\psi$.  This implies the analogous result for $K_{\MM_{g,n}^{\ps}}$ and $K_{\MM_{g,n}^{\ps}}+\psi$ if $(g,n)\neq (1,2),  (2,1), (3,0)$ by Proposition \ref{P:Pic-Mgps}\eqref{P:Pic-Mgps3}. In the above exceptional cases, we have that $K_{\MM_{g,n}^{\ps}}=K_{\M_{g,n}^{\ps}}-R$ with $R$ being the ramification divisor of the morphism $\phi^{\ps}:\M_{g,n}^{\ps}\to \MM_{g,n}^{\ps}$ by Remark \ref{R:Hurwitz}. We can choose the  curve $C_{\rm ell}$  (in its numerical equivalence class) in such a way that its generic point does not have non trivial automorphisms, which implies that it is not contained in $R$.  This ensures that $C_{\rm ell}$  intersects   $R$ non-negatively and hence it  intersects negatively also $K_{\MM_{g,n}^{\ps}}$ and $K_{\MM_{g,n}^{\ps}}+\psi$.

Finally, the exceptional locus of $\Upsilon$ contains $\Delta_{1,\emptyset}$  since the curves numerically equivalent to $C_{\rm ell}$ cover  $\Delta_{1,\emptyset}$. On the other hand, since $\delta_{1,\emptyset}\cdot C_{\rm ell}=-1<0$ by  \cite[Thm. 2.1]{GKM}, any curve numerically equivalent to $C_{\rm ell}$ is contained in $\Delta_{1,\emptyset}$. Therefore the exceptional locus of $\Upsilon$ is  equal to $\Delta_{1,\emptyset}$, and hence $\Upsilon$ is a divisorial contraction.
This concludes the proof of \eqref{P:div-contr3} and \eqref{P:div-contr4}.

In order to prove the last part \eqref{P:div-contr5}, observe that, since the exceptional locus of $\Upsilon$ is equal to $\Delta_{1,\emptyset}$, the pull-back of a $\bbQ$-line bundle $L$ on $\MM_{g,n}^{\ps}$ is equal to $L+\alpha(L)\delta_{1,\emptyset}$ for some  $\alpha(L)\in \bbQ$. The rational number $\alpha(L)$ is uniquely determined by imposing that $C_{\rm ell}\cdot \Upsilon^*(L)=\Upsilon_*(C_{\rm ell})\cdot L=0$ (because $C_{\rm ell}$ is contracted by $\Upsilon$), and can be computed using  \cite[Thm. 2.1]{GKM}.
\end{proof}

\begin{remark}\label{C:Upsg2}
Some parts of the above Proposition are true also for $(g,n)=(2,0)$. More specifically, Hyeon-Lee construct in \cite[Sec. 4]{HL} (see also \cite[Prop. 4.2]{Has}) a contraction $\Upsilon:\MM_2\to \MM_2^{\ps}$ which contracts $\Delta_{1,\emptyset}$ (even though $\Upsilon$ does not come from a morphism between the corresponding stacks).
Moreover, we have the identification $\displaystyle \MM_2^{\ps}\cong \MM_2\left(\frac{9}{11}\right)$, as it follows by combining \cite[Thm. 4.10]{Has} and \cite[Thm. 4.2]{HL}. Finally, the proof of \eqref{P:div-contr5} extends verbatim to the case $(g,n)=(2,0)$.
\end{remark}

\begin{remark}\label{C:modVGIT}
In characteristic zero, the morphism $\Upsilon$ admits another description. 

Indeed, from the two open embeddings of Fact \ref{F:embAFSV}, passing to their good moduli spaces (in $\car(k)=0$), 
% consider the two open embeddings of algebraic stacks (see  \cite[Thm. 2.7]{AFSV1}) 
%$$\xymatrix{\M_{g,n}\ar@{^{(}->}[r]^(0.4){i_1^+} & \M_{g,n}(9/11) & \M_{g,n}^{\ps}=\M_{g,n}(9/11-\epsilon)\ar@{_{(}->}[l]_(0.6){i_1^-}},$$
%where $\M_{g,n}(9/11)$ parametrises families of $n$-pointed curves  $(C, \{p_i \}_{i=1}^n)$ of genus $g$ such that $C$ has only nodes and cusps as singularities and  $\omega_{C}(\sum p_i)$ is ample.
we get the following proper birational morphisms between normal proper algebraic spaces (see \cite[Thm. 1.1]{AFS2} for $\alpha_c=9/11$)
$$\xymatrix{\MM_{g,n}\ar[r]^(0.4){j_1^+} & \MMM_{g,n}(9/11) & \MM_{g,n}^{\ps}=\MMM_{g,n}(9/11-\epsilon)\ar[l]_(0.6){j_1^-}}.$$
By \cite[Thm. 2.2]{AFSV1}, the morphism $j_1^+$ (resp. $j_1^-$) is defined on geometric points by sending a stable (resp. pseudostable) curve into the curve which is obtained by replacing each elliptic tail (resp. cusp)  by a cuspidal elliptic tail. Since cusps do not have local moduli, the map $j_1^-$ is bijective on geometric points and hence, being 
proper and birational between normal algebraic spaces, it is an isomorphism by Zariski's main theorem. Comparing the above descriptions of $j_1^+$ and $j_1^-$ on geometric points and the description of $\Upsilon$ contained in Proposition \ref{P:Mgps-DM}\eqref{P:Mgps-DM0}, we deduce that 
 $$\Upsilon=(j_1^-)^{-1}\circ j_1^+.$$
\end{remark}

We now study the elliptic bridge curves in $\MM_{g,n}^{\ps}$ introduced in Definition \ref{D:1strata}. Let us first determine their intersections with the $\bbQ$-line bundles on $\M_{g,n}^{\ps}$ (or on $\MM_{g,n}^{\ps}$).

\begin{lemma}\label{L:int-1str}
Assume that $\car(k)\neq 2,3$. Given a $\bbQ$-line bundle $L=a\lambda+b_{\irr}\delta_{\irr}+\sum_{[i,I]\in T^*_{g,n}-\{[1,\emptyset]\}} b_{i,I} \delta_{i,I} $ in $\M_{g,n}^{\ps}$, we have the following intersection formulas
$$\begin{sis}
& C(\irr) \cdot L= a+10 b_{\irr}, \\
& C([\tau, I], [\tau+1, I])\cdot L=a+12 b_{\irr}-b_{\tau,I}-b_{\tau+1,I}.
\end{sis}$$
\end{lemma}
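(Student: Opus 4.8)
The plan is to compute these two intersection numbers by representing each elliptic bridge curve as an explicit one-parameter family, or — more efficiently — by reducing both computations to a single well-understood curve class in $\overline{M}_{1,2}$ (or $\overline{M}_{1,1}$) via known pushforward/pullback relations. Concretely, the varying component of every elliptic bridge curve is a $2$-pointed rational nodal elliptic curve $(R,p,q)$; as $(R,p,q)$ moves we are sweeping out a curve inside $\overline{M}_{1,2}^{\ps}$ (the pseudostable moduli of $2$-pointed genus-$1$ curves), and the rest of the configuration is fixed. So the first step is to identify, inside $\overline{M}_{1,2}^{\ps}$ (or its pushforward to $\overline{M}_{1,1}$ after forgetting/gluing), the precise curve class swept out — this is the class of a pencil of plane cubics degenerating, i.e. the standard generating curve of $N_1$, whose intersection numbers with $\lambda$, $\delta_{\irr}$ are classically $1$ and $-1$ up to normalization. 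The key computational inputs are: $\deg_{C}\lambda$, $\deg_C \psi_i$, and $\deg_C \delta_{j,J}$ for each boundary class, where $C$ is one of our elliptic bridge curves.

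The cleanest route is to use the gluing (clutching) morphisms and the projection formula. For $C(\irr)$: fix the genus-$(g-2)$ curve $D$ with its $n$ marked points and two extra points where $R$ is glued; then $C(\irr)$ is the image of a curve $C'$ in $\overline{M}_{1,2}^{\ps}$ (parametrizing $(R,p,q)$) under the clutching map $\kappa\colon \overline{M}_{1,2}^{\ps}\times\{D\}\to \overline{M}_{g,n}^{\ps}$ that glues $p$ to $q$ after also gluing to $D$... actually more simply $C(\irr)$ is supported in $\Delta_{\irr}$ and the clutching sends $\overline{M}_{g-2,n+2}^{\ps}\times\{pt\}$-type data; pulling back $\lambda$, $\delta_{\irr}$, $\delta_{i,I}$ along this clutching map is governed by the standard formulas (e.g. \cite[Chap. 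XVII]{GAC2} or \cite{GKM}), which on the self-gluing locus give $\kappa^*\lambda = \lambda$, $\kappa^*\delta_{\irr}=\psi_p+\psi_q+(\text{boundary terms})$, etc. So the second step is to write down these pullback formulas, restrict to the one-parameter family $C'\subset \overline{M}_{1,2}^{\ps}$, and read off $\deg\lambda = 1$, $\deg\psi = 0$ (the marked points on the fixed $D$-side contribute nothing; the glued points are not marked points of the ambient curve), and $\deg\delta_{\irr}=10$ — the $10$ being the familiar $12\lambda - \delta_{\irr}$-type relation on the universal pseudostable elliptic curve, since a pseudostable $1$-pointed genus-$1$ curve contributes a cusp rather than a node, changing $12$ to $10$ (this is exactly the phenomenon visible in Fact \ref{F:PicU}\eqref{F:PicU1}(ii) and Proposition \ref{P:div-contr}\eqref{P:div-contr5}). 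This gives $C(\irr)\cdot L = a + 10 b_{\irr}$ once we note $C(\irr)\cdot\delta_{i,I}=0$ for all $[i,I]$ because $C(\irr)$ stays in the interior of the $\delta_{i,I}$-strata for $[i,I]\neq\irr$ (the only singularities produced are the two nodes of $R$ and its attachment nodes, all of which sit in $\Delta_{\irr}$, not in any $\Delta_{i,I}$).

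For $C([\tau,I],[\tau+1,I])$ the analysis is parallel but now the varying $(R,p,q)$ is attached to two fixed curves $D_1$ (genus $\tau$) and $D_2$ (genus $g-1-\tau$), so the one-parameter family lives in a product of two boundary clutchings. The key differences: $\deg\lambda$ is still $1$ (the Hodge bundle only sees the varying elliptic component, and that is still a degenerating genus-$1$ pencil); $\deg\delta_{\irr}$ becomes $12$ here rather than $10$ — the subtlety is that this elliptic bridge, being attached at two \emph{nodes} (one on each side) rather than carrying a cusp, behaves like a genuine nodal genus-$1$ fiber, so the self-intersection/$\lambda$-relation picks up the full $12$; and crucially $C\cdot\delta_{\tau,I}=\ -1$ and $C\cdot\delta_{\tau+1,I}=\ -1$ because as $R$ degenerates (its node smooths/the configuration specializes) the curve crosses transversally the two boundary divisors $\Delta_{\tau,I}$ and $\Delta_{\tau+1,I}$ — precisely the two divisors in the ``type'' of the bridge — each with multiplicity $-1$ (the sign from the self-intersection of a boundary section, as in the classical computation that a fiber of the universal curve meets $\Delta$ in $-1$). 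All other $\delta_{j,J}$ meet $C$ in $0$. Assembling, $C([\tau,I],[\tau+1,I])\cdot L = a + 12 b_{\irr} - b_{\tau,I} - b_{\tau+1,I}$.

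The main obstacle is getting the boundary-coefficient signs and the $10$ versus $12$ dichotomy exactly right on the \emph{pseudostable} (not stable) moduli space: the passage from $\overline{M}_{g,n}$ to $\overline{M}_{g,n}^{\ps}$ replaces elliptic tails by cusps, which alters $\lambda$ and $\delta_{\irr}$ (this is quantified in Proposition \ref{P:div-contr}\eqref{P:div-contr5}: $\Upsilon^*\delta_{\irr}=\delta_{\irr}+12\delta_{1,\emptyset}$, $\Upsilon^*\lambda=\lambda+\delta_{1,\emptyset}$), so the safest implementation is to lift each elliptic bridge curve to an explicit family over $\overline{M}_{g,n}$ where all the intersection numbers are tabulated in \cite[Thm. 2.1]{GKM}, then push forward via $\Upsilon_*$ and use the projection formula $C\cdot\Upsilon^*L=\Upsilon_*C\cdot L$ to transfer to $\overline{M}_{g,n}^{\ps}$. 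The bookkeeping of which boundary divisors of $\overline{M}_{g,n}$ the lifted family meets — and the fact that the ``$12$'' on the stable side becomes ``$10$'' only in the $\irr$-case because there the elliptic component acquires a cusp under $\Upsilon$ whereas in the two-sided case it does not — is the delicate point; once it is handled, the rest is the routine linear algebra of expanding $L$ in its basis and collecting terms.
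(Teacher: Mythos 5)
Your ``safest implementation'' in the final paragraph is precisely the paper's proof: lift each elliptic bridge curve to the corresponding F-curve $\wt C$ in $\MM_{g,n}$, intersect with $\Upsilon^*L$ using the formula $\Upsilon^*(\lambda)=\lambda+\delta_{1,\emptyset}$, $\Upsilon^*(\delta_{\irr})=\delta_{\irr}+12\delta_{1,\emptyset}$, $\Upsilon^*(\delta_{i,I})=\delta_{i,I}$ from Proposition \ref{P:div-contr}\eqref{P:div-contr5}, read off the intersection numbers of $\wt C$ from \cite[Thm.\ 2.1]{GKM}, and conclude via the projection formula $C\cdot L=\wt C\cdot\Upsilon^*L$. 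So the strategy matches the paper.

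However, the intermediate ``direct'' computations in your middle paragraphs contain several errors that you should be aware of, even though the final implementation would correct them. First, $\deg_{\wt C}\lambda=0$ for \emph{both} lifted curves, not $1$: the moving component $R$ is always the same rational nodal curve, so the Hodge bundle is constant along the family; there is no ``degenerating genus-$1$ pencil.'' The coefficient $a$ in the final answer arises entirely from $\Upsilon^*(\lambda)=\lambda+\delta_{1,\emptyset}$ combined with $\wt C\cdot\delta_{1,\emptyset}=1$, not from any $\lambda\cdot\wt C$ contribution. Second, your explanation of the $10$ vs.\ $12$ dichotomy is incorrect: both $\wt C(\irr)$ and $\wt C([\tau,I],[\tau+1,I])$ meet $\Delta_{1,\emptyset}$ with multiplicity exactly $1$, and so \emph{both} families acquire a single cuspidal fiber after applying $\Upsilon$. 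The genuine source of the difference is $\wt C(\irr)\cdot\delta_{\irr}=-2$ versus $\wt C([\tau,I],[\tau+1,I])\cdot\delta_{\irr}=0$ on $\MM_{g,n}$: in the $\irr$ case the two gluing nodes $p,q$ are themselves of type $\irr$, so the whole family sits inside $\Delta_{\irr}$ with a more negative self-intersection, whereas in the two-sided case the gluing nodes have types $[\tau,I]$ and $[\tau+1,I]$. Combined with $\Upsilon^*(\delta_{\irr})=\delta_{\irr}+12\delta_{1,\emptyset}$, one gets $(-2+12)b_{\irr}=10b_{\irr}$ versus $(0+12)b_{\irr}=12b_{\irr}$. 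Third, the $-1$ intersections with $\delta_{\tau,I}$ and $\delta_{\tau+1,I}$ are not ``transversal crossing'' multiplicities: the curve $\wt C([\tau,I],[\tau+1,I])$ lies \emph{entirely inside} both $\Delta_{\tau,I}$ and $\Delta_{\tau+1,I}$ (every fiber carries those gluing nodes), and the $-1$ is a normal-bundle degree of the family within each boundary divisor. The numbers you quote are correct, but the mechanism you attribute to them is not, and the full calculation you outline in paragraphs two and three would not reproduce the statement if carried out as described; only the route through $\Upsilon^*$ and \cite{GKM} does.
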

\begin{proof}
We can compute the intersection on the moduli space $\MM_{g,n}^{\ps}$. The curves $C(\irr)$ and $C([\tau, I], [\tau+1, I])$ in $\MM_{g,n}^{\ps}$ are push-forward via $\Upsilon$ of irreducible curves $\wt C(\irr)$ and $\wt C([\tau, I], [\tau+1, I])$ in $\MM_{g,n}$ that are defined in the same way. Therefore, by the projection formula, we have 
\begin{equation}\label{E:inter1}
\begin{sis} 
& C(\irr) \cdot L=\wt C(\irr) \cdot \Upsilon^*(L), \\
& C([\tau, I], [\tau+1, I])\cdot L=\wt C([\tau, I], [\tau+1, I])\cdot \Upsilon^*(L).
\end{sis}
\end{equation}
Now, Proposition \ref{P:div-contr}\eqref{P:div-contr5} gives that 
\begin{equation}\label{E:inter2}
\Upsilon^*(L)=a\lambda+b_{\irr}\delta_{\irr}+(a+12b_{\irr})\delta_{1,\emptyset}+\sum_{[i,I]\in T^*_{g,n}-\{[1,\emptyset]\}} b_{i,I} \delta_{i,I}.
\end{equation}
Finally, observe  that the curve $\wt C(\irr)$ coincide with the curve of \cite[Thm. 2.2(4)]{GKM} for $(i,I)=(0,\emptyset)$, while the curve $\wt C([\tau, I], [\tau+1, I])$ coincide with the curve of \cite[Thm. 2.2(5)]{GKM} for $(i,I)=(\tau,I)$ and $(j,J)=(g-1-\tau, I^c)$. Hence, using \cite[Thm. 2.1]{GKM}, we get 
\begin{equation}\label{E:inter3}
\begin{sis} 
& \wt C(\irr) \cdot \left(a\lambda+b_{\irr}\delta_{\irr}+\sum_{[i,I]\in T_{g,n}-\{ \irr\}} b_{i,I} \delta_{i,I}.\right)=-2b_{\irr}+b_{1,\emptyset}, \\
& \wt C([\tau, I], [\tau+1, I])\ \cdot \left(a\lambda+b_{\irr}\delta_{\irr}+\sum_{[i,I]\in T_{g,n}-\{ \irr\}} b_{i,I} \delta_{i,I}.\right)=-b_{\tau,I}-b_{\tau+1,I}+b_{1,\emptyset}.
\end{sis}
\end{equation}
We conclude by putting together \eqref{E:inter1}, \eqref{E:inter2} and \eqref{E:inter3}. 
\end{proof}

Now we look at the subcone of the Mori cone $\NEb(\MM_{g,n}^{\ps})$ spanned by the elliptic bridge curves.

\begin{proposition}\label{P:face}
Assume that $\car(k)\neq 2,3$. 
\begin{enumerate}[(i)]
\item \label{P:face1} The elliptic bridge curves are linearly independent in $N_1(\MM_{g,n}^{\ps})$ and they intersect   $K_{\M_{g,n}^{\ps}}$,  $K_{\M_{g,n}^{\ps}}+\psi$, $K_{\MM_{g,n}^{\ps}}$ and $K_{\MM_{g,n}^{\ps}}+\psi$ negatively.

\item \label{P:face2} The convex cone spanned by elliptic bridge curves is a  face of the Mori cone $\NEb(\MM_{g,n}^{\ps})$ (which we call the \emph{elliptic bridge face}).
In particular, each elliptic bridge curve generates an extremal ray of the Mori cone of $\MM_{g,n}^{\ps}$.  

\item \label{P:face3} If $(g,n)\neq (1,2), (2,0)$ then a curve $B\subset \M_{g,n}^{\ps}$ is such that its class  in $N_1(\MM_{g,n}^{\ps})$ lies in the elliptic bridge face if and only if  the only non-isotrivial components of the corresponding family of pseudostable curves $\mathcal C \to B$ are $A_1/A_1$-attached elliptic bridges.

\end{enumerate}
\end{proposition}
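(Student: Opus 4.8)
The plan is to prove the three parts in order, using Lemma~\ref{L:int-1str} as the main computational engine and the structure of $\NEb(\MM_{g,n}^{\ps})$ as a face of $\NEb(\MM_{g,n})$ pulled back along $\Upsilon$.

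For part \eqref{P:face1}, the idea is to exhibit, for each type, a $\bbQ$-line bundle that separates the corresponding elliptic bridge curve from all the others. Concretely, for the type $\irr$ take a combination like $\lambda - \frac{1}{10}\delta_{\irr}$ (adjusted by enough $\psi$'s to stay in $\Pic(\MM_{g,n}^{\ps})_{\bbQ}$ away from $\delta_{1,\emptyset}$): by Lemma~\ref{L:int-1str} it vanishes on $C(\irr)$. For a type $\{[\tau,I],[\tau+1,I]\}$, choose a line bundle supported on $\delta_{\tau,I}$ plus $\lambda$ and $\delta_{\irr}$ so that the formula $a+12b_{\irr}-b_{\tau,I}-b_{\tau+1,I}$ vanishes. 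A short linear-algebra argument — the boundary divisor classes $\delta_{i,I}$ and $\lambda$ are linearly independent in $\Pic(\MM_{g,n}^{\ps})_{\bbQ}$ by Corollary~\ref{C:Pic-ps}, and each $C([\tau,I],[\tau+1,I])$ involves a distinct pair of boundary divisors, so pairing with the dual basis shows these curves are linearly independent in $N_1$. The negativity statement follows either from Proposition~\ref{P:face}\eqref{P:face1} applied on $\MM_{g,n}$ to the pulled-back curves $\wt C(\irr)$, $\wt C([\tau,I],[\tau+1,I])$ and the formula for $\Upsilon^*$ (these are the curves appearing in \cite[Thm. 2.1]{GKM}), using $K_{\M_{g,n}}=13\lambda-2\delta+\psi$; one computes the intersection numbers directly from Lemma~\ref{L:int-1str} with $L = K_{\M_{g,n}^{\ps}}+\psi = 13\lambda - 2\delta + 2\psi = 13\lambda - 2\widehat\delta$ and $L = K_{\M_{g,n}^{\ps}}$, and checks they are negative. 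Here one must be careful at the finitely many exceptional pairs $(g,n)\in\{(1,2),(2,1),(3,0)\}$ where $K_{\MM_{g,n}^{\ps}}\neq K_{\M_{g,n}^{\ps}}$; as in Remark~\ref{R:Hurwitz}, one chooses the bridge curve generically in its numerical class so that it meets the ramification divisor $R$ nonnegatively, which preserves negativity against $K_{\MM_{g,n}^{\ps}}$ and $K_{\MM_{g,n}^{\ps}}+\psi$.

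For part \eqref{P:face2}, I would exhibit a single nef $\bbQ$-divisor $D$ on $\MM_{g,n}^{\ps}$ whose zero locus on $\NEb(\MM_{g,n}^{\ps})$ is exactly the cone spanned by the elliptic bridge curves. The natural candidate is (the descent of) $K_{\M_{g,n}^{\ps}}+\psi+\frac{7}{10}(\delta-\psi) = K_{\M_{g,n}^{\ps}} + \frac{7}{10}\widehat\delta + \frac{17}{10}\psi$ — equivalently $\Upsilon_*$ of the Hassett--Keel divisor at $\alpha = 7/10$, which by the Hassett--Keel results (used later in the paper, e.g.\ \eqref{E:lowMalpha}) is semiample on $\MM_{g,n}^{\ps}$ and contracts precisely the elliptic bridge face. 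One checks via Lemma~\ref{L:int-1str} that this $D$ has degree $0$ on every elliptic bridge curve and, conversely, that it is positive on every other extremal ray (this is the content of the F-conjecture-free part of the Hassett--Keel analysis for the critical value $7/10$, i.e.\ \cite{HH2,AFSV1,AFS3}; alternatively one cites Theorem~\ref{T:B}/\eqref{E:lowMalpha} directly, since the present statement is logically prior only to Theorem~\ref{T:B}'s proof but the semiampleness of $L_{g,n}(7/10)$ is established independently). Then the face cut out by a nef divisor is a face of the Mori cone, and the fact that the bridge curves span it together with part~\eqref{P:face1} (linear independence) shows each generates an extremal ray.

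For part \eqref{P:face3}, given a curve $B \subset \M_{g,n}^{\ps}$ with corresponding family $\mathcal C \to B$, the statement is that $[B]$ lies in the elliptic bridge face iff the only non-isotrivial components are $A_1/A_1$-attached elliptic bridges. The ``if'' direction: if all varying components are $A_1/A_1$-attached elliptic bridges, then $B$ decomposes (up to numerical equivalence and finite base change) as a combination of elliptic bridge curves of the form $C([\tau,I],[\tau+1,I])$ — each varying bridge sweeps out such a curve class — so $[B]$ lies in the face. The ``only if'' direction is the delicate one: if some component other than an $A_1/A_1$-attached elliptic bridge varies, one must show $[B]$ leaves the face, equivalently that $D\cdot B > 0$ for the nef divisor $D$ from part~\eqref{P:face2}. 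This amounts to the statement that $D = K+\psi+\frac{7}{10}\widehat\delta$ has strictly positive degree on any one-parameter family whose moduli variation is not entirely through elliptic-bridge moduli — a positivity-of-$\psi$-type argument combined with the classification of which $1$-parameter degenerations keep $K+\psi+\frac{7}{10}\widehat\delta$ at degree zero. I expect \textbf{this last step to be the main obstacle}: it requires a careful case analysis of the possible varying components (varying moduli of a genus-$\geq 2$ piece, varying a node or cusp, varying a longer elliptic chain/rosary, etc.) and checking in each case that the degree is strictly positive, excluding the exceptional small pairs $(g,n)$ (hence the hypothesis $(g,n)\neq(1,2),(2,0)$). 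For the varying-elliptic-chain case one uses the isotrivial specialisations of Lemma~\ref{L:spec-ros} to reduce a varying elliptic bridge with a tacnode or an $A_3$ attachment to the pseudostable picture, and the key numerical input throughout is again Lemma~\ref{L:int-1str}.
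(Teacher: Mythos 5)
Your overall route — Lemma~\ref{L:int-1str} for the intersection numbers and linear independence, and the Hassett--Keel $\alpha=7/10$ divisor as a nef class cutting out exactly the elliptic bridge face — is the same as the paper's. Two remarks are worth making.

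First, a small arithmetic slip: $K_{\M_{g,n}^{\ps}}+\psi+\tfrac{7}{10}(\delta-\psi)=K_{\M_{g,n}^{\ps}}+\tfrac{7}{10}\delta+\tfrac{3}{10}\psi=K_{\M_{g,n}^{\ps}}+\psi+\tfrac{7}{10}\widehat\delta$, not $K_{\M_{g,n}^{\ps}}+\tfrac{7}{10}\widehat\delta+\tfrac{17}{10}\psi$. The paper works with $N_{g,n}:=K_{\M_{g,n}^{\ps}}+\tfrac{7}{10}\delta+\tfrac{3}{10}\psi=\tfrac{13}{10}(10\lambda-\delta+\psi)$. Also, the list of exceptional pairs where $K_{\MM_{g,n}^{\ps}}\neq K_{\M_{g,n}^{\ps}}$ should include $(2,0)$ in addition to $(1,2),(2,1),(3,0)$.

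Second and more substantively: you identify the ``only if'' direction of part~\eqref{P:face3} as the main obstacle and sketch a from-scratch case analysis of all one-parameter degenerations. The paper does not do this: it cites \cite[Thm.~1.2(a)]{AFS3} directly, which asserts in one stroke that $N_{g,n}$ is nef on $\MM_{g,n}^{\ps}$ \emph{and} has degree zero precisely on the curves whose only non-isotrivial components are $A_1/A_1$-attached elliptic bridges. That single citation simultaneously gives nefness for~\eqref{P:face2} and the characterisation of~\eqref{P:face3}; the remaining work is only to observe that these degree-zero curves are nonnegative combinations of the bridge curves, then to promote the relative effective cone $\NE(\eta)$ of the induced semiample fibration to a closed face of $\NEb$ by noting it is polyhedral. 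Your worry about circularity with Theorem~B is therefore unfounded: the input is external, not internal. If you tried to prove the degree-zero characterisation by an independent case analysis, you would essentially be reproving a result of Alper--Fedorchuk--Smyth, which is avoidable and considerably harder than the rest of the proposition.
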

Note that part \eqref{P:face1} implies that the  elliptic bridge face is polyhedral and simplicial. 
Observe also that part \eqref{P:face3} is false for $(g,n)=(1,2)$ (resp. $(2,0)$): in these two cases, $\dim N_1(\MM_{g,n}^{\ps})_{\bbQ}=1$ and the elliptic bridge face, which is spanned by $C([0,\{1\}],[0,\{2\}])$ (resp. $C(\irr)$), coincide with the entire Mori cone $\NEb(\MM_{g,n}^{\ps})$ and it is therefore a half-line. Hence, the class of any effective curve on $\MM_{g,n}$  lies in the elliptic bridge face and there are plenty of effective curves in the projective varieties $\MM_{g,n}^{\ps}$.

\begin{proof}
Part \eqref{P:face1}: the fact that the elliptic bridge curves are linearly independent in $N_1(\MM_{g,n}^{\ps})$ follows  by a close  inspection of the intersection formulas in Lemma \ref{L:int-1str} using  the relations among the generators  of  $\Pic(\MM_{g,n}^{\ps})_{\bbQ}$ (see  Fact \ref{F:PicU}\eqref{F:PicU1}, Corollary \ref{C:Pic-ps} and Proposition \ref{P:Pic-Mgps}\eqref{P:Pic-Mgps2b}). 

The fact that the elliptic bridge curves intersect $K_{\M_{g,n}^{\ps}}$ and $K_{\M_{g,n}^{\ps}}+\psi$ negatively follows again from Lemma \ref{L:int-1str} and Mumford's formula $K_{\M_{g,n}^{\ps}}=13\lambda-2\delta+\psi$ (see Fact \ref{F:PicU}\eqref{F:PicU2}). This implies the analogous result for $K_{\MM_{g,n}^{\ps}}$ and $K_{\MM_{g,n}^{\ps}}+\psi$ if $(g,n)\neq (1,2), (2,0), (2,1), (3,0)$ by Proposition \ref{P:Pic-Mgps}\eqref{P:Pic-Mgps3}. In the above mentioned four exceptional cases, we have that $K_{\MM_{g,n}^{\ps}}=K_{\M_{g,n}^{\ps}}-R$ with $R$ being the ramification divisor of the morphism $\phi^{\ps}:\M_{g,n}^{\ps}\to \MM_{g,n}^{\ps}$ by Remark \ref{R:Hurwitz}. We can choose the elliptic bridge curves (in their numerical equivalence class) in such a way that their generic point does not have non trivial automorphisms, which implies that they are not contained in $R$. 
This ensures that the elliptic bridge curves  intersect   $R$ non-negatively and hence they  intersect negatively also $K_{\MM_{g,n}^{\ps}}$ and $K_{\MM_{g,n}^{\ps}}+\psi$.

Let us now prove part \eqref{P:face2} and  part \eqref{P:face3}.  If $(g,n)=(1,2)$ or $ (2,0)$ then $\dim N_1(\MM_{g,n}^{\ps})=1$   and part \eqref{P:face2} is obvious (while part \eqref{P:face3} is clearly false!).

Otherwise, consider the $\bbQ$-line bundle on $\MM_{g,n}^{\ps}$
$$N_{g,n}:=K_{\M_{g,n}^{\ps}}+\frac{7}{10}\delta+\frac{3}{10}\psi=\frac{13}{10}(10\lambda-\delta+\psi).$$
By \cite[Thm. 1.2(a)]{AFS3} (whose proof works in arbitrary characteristics and that can be applied since $(g,n)\neq (1,2), (2,0)$\footnote{Note that in the theorem of loc. cit., not only the case $(g,n)=(2,0)$ but  also the case $(g,n)=(1,2)$ must be excluded. The reason is that these are the only two cases where the line bundle  $K_{\M_{g,n}^{\ps}}+\frac{7}{10}\delta+\frac{3}{10}\psi$, which is proportional to $10\lambda-\delta+\psi=10\lambda-\wh{\delta}$, is zero on $\M_{g,n}^{\ps}$.}), the  line bundle $N_{g,n}$ is nef and it has degree $0$ precisely on the curves of $\M_{g,n}^{\ps}$ described in \eqref{P:face3}. Note that such curves are numerically equivalent to a non-negative linear combination of elliptic bridge curves in $\M_{g,n}^{\ps}$ (since $\MM_{1,2}^{\ps}$ has Picard number one by Corollary \ref{C:Pic-ps} and Proposition \ref{P:Pic-Mgps}\eqref{P:Pic-Mgps2b}) and every elliptic bridge curve intersects $N_{g,n}$ in $0$ by Lemma \ref{L:int-1str}.

Moreover, we claim that $N_{g,n}$ is semiample on $\MM_{g,n}^{\ps}$. Indeed, in the case $n=0$, $N_{g,0}$ is the pull-back  of the natural polarisation on the GIT quotient $\MM_g^c$ of the Chow variety of bicanonical curves of genus $g$ via a regular morphism $\Psi:\MM_{g}^{\ps}\to \MM_g^c$ (see \cite[Thm. 2.13]{HH2} and \cite[Thm. 3.1]{HH2}, whose proof work in arbitrary characteristic). In the case $n>0$, fixing an integer $h\geq 2$, we have that $N_{g,n}$ is the pull-back of $N_{g+nh,0}$ via the regular morphism $\MM_{g,n}^{\ps}\to \MMM_{g+nh}^{\ps}$ that attach a fixed smooth irreducible curve of genus $h$ to each of the marked points of an $n$-pointed stable curve of genus $g$ (see  \cite[Lemma (4.38)]{GAC2} and \cite[Sec. 5.4]{AFS3}).

These facts imply that, if we denote by $\eta$ the fibration induced by a sufficiently high power of $N_{g,n}$,  the convex cone spanned by the elliptic bridge curves coincides with the $\eta$-relative effective cone $\NE(\eta)$  of curves 
and it is therefore a face of the effective cone $\NE(\MM_{g,n}^{\ps})$ of curves (see Notation). Moreover, by what said above, property \eqref{P:face3} holds. 

It remains to see that the convex cone spanned by the elliptic bridge curves is also a face of the Mori cone $\NEb(\MM_{g,n}^{\ps})$. However, this  convex cone, which coincide with $\NE(\eta)$, is polyhedral (because it is generated by a finite number of curves) and hence closed. Since the closure of $\NE(\eta)$ is equal to the $\pi$-relative Mori cone $\NEb(\eta)$ (see Notation), we deduce that the convex cone spanned by the elliptic bridge curves is equal to $\NEb(\eta)$ and hence it is a face of $\NEb(\MM_{g,n}^{\ps})$.
\end{proof}

\begin{remark}\label{R:bridgecurves}
Assume that $g\geq 1$ (to avoid trivialities, since for $g=0$ there are no elliptic bridge curves).

The dimension of the elliptic bridge face, which is equal to the number of elliptic bridge curves, is equal to 
$$\dim(\text{Elliptic bridge face})=
\begin{cases}
1 & \text{ if } (g,n)=(2,0), \\
\frac{g-1}{2} & \text{ if } n=0 \text{ and } g\geq 3 \: \text{ is odd, }\\
\frac{g}{2}-1 & \text{ if } n=0 \text{ and } g\geq 4 \: \text{ is even, }\\
g 2^{n-1}-1 & \text{ if } g\geq 1 \: \text{ and } n\geq 1.
\end{cases}
$$
Comparing it with the Picard number of $\MM_{g,n}^{\ps}$, which can be obtained from Fact \ref{F:PicU}\eqref{F:PicU1}, Corollary \ref{C:Pic-ps} and Proposition \ref{P:Pic-Mgps}\eqref{P:Pic-Mgps2b}, we get that 
$$\codim(\text{Elliptic bridge face})=
\begin{cases}
0 & \text{ if } (g,n)=(2,0), \\
1 & \text{ if } n=0 \text{ and } g\geq 3 \: \text{ is odd, }\\
2 & \text{ if } n=0 \text{ and } g\geq 4 \: \text{ is even, }\\
2^{n-1}+1-\delta_{2,g}-(n+1)\delta_{1,g} & \text{ if } g\geq 1 \: \text{ and } n\geq 1,
\end{cases}
$$
where $\delta_{2,g}$ and $\delta_{1,g}$ are the Kronecker symbols.
\end{remark} 

The subfaces of the elliptic bridge face can be described as follows.

\begin{definition}\label{D:T-Face}[T-faces]
For any $T\subseteq T_{g,n}$, we denote by $F_T$ the cone in $N_1(\MM_{g,n}^{\ps})$ generated by the classes of elliptic bridge curves of type contained in $T$. We will call $F_T$ the \emph{T-face} of the Mori cone.
\end{definition}

The poset of $T$-faces is described by the following result, where we use the terminology of Definition \ref{def:admissible}.

\begin{lemma}\label{L:poset-T}
Assume that $\car(k)\neq 2,3$. 
\begin{enumerate}[(i)]
\item \label{L:poset-T1} For any $T\subseteq T_{g,n}$, the cone $F_T$ is a simplicial polyhedral  face of the Mori cone $\NEb(\MM_{g,n}^{\ps})$ whose dimension is equal to the number of minimal subsets of $T_{g,n}$ contained in $T$.
In particular, the extremal rays of the elliptic bridge face are given by $\{F_T : \: T \: \text{ is minimal}\}$.

\item \label{L:poset-T1b} If  $(g,n)\neq (1,2), (2,0)$ then a curve $B\subset \M_{g,n}^{\ps}$ is such that its class  in $N_1(\MM_{g,n}^{\ps})$ lies in $F_T$ if and only if  the only non-isotrivial components of the corresponding family of pseudostable curves $\mathcal C \to B$ are $A_1/A_1$-attached elliptic bridges of type contained in $T$.

\item \label{L:poset-T2} We have that 
$F_T\subseteq F_S$ if and only if  $T^{\adm}\subseteq S^{\adm}. $
In particular, we have that $F_T=F_S$ if and only if $T^{\adm}=S^{\adm}$. 
\end{enumerate}
\end{lemma}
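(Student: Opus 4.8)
The statement to prove is Lemma \ref{L:poset-T}, which has three parts. The overall strategy is to deduce everything from two inputs already available: Proposition \ref{P:face} (the elliptic bridge curves are linearly independent, span a simplicial face of $\NEb(\MM_{g,n}^{\ps})$, and the membership criterion in terms of non-isotrivial components), and the combinatorial bookkeeping of types introduced in Definitions \ref{D:1strata}, \ref{D:type} and \ref{def:admissible}. The key point linking these is the observation that the set of \emph{types} of elliptic bridge curves is in natural bijection with the set of minimal subsets of $T_{g,n}$: a type $\{\irr\}$ corresponds to the minimal subset $\{\irr\}$ (present only when $g\geq 2$), and a type $\{[\tau,I],[\tau+1,I]\}$ with $[\tau,I]\neq[1,\emptyset]$ corresponds to the minimal subset $\{[\tau,I],[\tau+1,I]\}$. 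Under this bijection, the condition ``the type of an elliptic bridge curve is contained in $T$'' is equivalent to ``the corresponding minimal subset is contained in $T$'', hence also equivalent to ``that minimal subset is contained in $T^{\adm}$'', since by the remark after Definition \ref{def:admissible} a subset is admissible iff it is the union of the minimal subsets it contains, so $T$ and $T^{\adm}$ contain exactly the same minimal subsets.

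\textbf{Part (i).} First I would note that, by Proposition \ref{P:face}\eqref{P:face2}, the elliptic bridge face is a simplicial polyhedral face of $\NEb(\MM_{g,n}^{\ps})$ whose extremal rays are exactly the rays spanned by the individual elliptic bridge curves. A subcone of a simplicial face generated by a subset of its extremal rays is automatically again a face of the ambient cone (a standard fact about simplicial cones: if $F$ is a face of $C$ and $F'\subseteq F$ is a face of $F$, then $F'$ is a face of $C$). Since $F_T$ is, by Definition \ref{D:T-Face}, the cone generated by those extremal rays whose generating curve has type contained in $T$, it is such a subcone, hence a simplicial polyhedral face of $\NEb(\MM_{g,n}^{\ps})$. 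Its dimension equals the number of such generators, which by the bijection above is the number of minimal subsets of $T_{g,n}$ contained in $T$. The final sentence of (i) is the special case $T=T_{g,n}$: every minimal subset is contained in $T_{g,n}$, and a minimal subset $T'$ gives $F_{T'}$ a single ray, which is extremal.

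\textbf{Part (ii).} This is almost immediate from Proposition \ref{P:face}\eqref{P:face3}: a curve $B$ has class in the elliptic bridge face iff the only non-isotrivial components of $\mathcal C\to B$ are $A_1/A_1$-attached elliptic bridges. Writing such a class as a non-negative combination of elliptic bridge curves (possible and unique by the simpliciality in Part (i)), the class lies in $F_T$ iff only the elliptic bridge curves of type contained in $T$ appear with positive coefficient, which translates directly into the condition that the non-isotrivial components are $A_1/A_1$-attached elliptic bridges of type contained in $T$. The one subtlety I would spell out carefully is why the \emph{type} of the $A_1/A_1$-attached elliptic bridge appearing in the family matches the \emph{type} of the corresponding elliptic bridge curve in $N_1$; this is built into Definitions \ref{D:1strata} and \ref{D:type}, so the verification is a matter of unwinding notation.

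\textbf{Part (iii).} Here I would argue: $F_T$ is determined by its set of extremal rays, which (by Part (i)) corresponds to the set of minimal subsets of $T_{g,n}$ contained in $T$. As noted above, a minimal subset is contained in $T$ iff it is contained in $T^{\adm}$. Hence $F_T\subseteq F_S$ iff every minimal subset contained in $T$ is contained in $S$, iff every minimal subset contained in $T^{\adm}$ is contained in $S^{\adm}$. Using that $T^{\adm}$ is the union of the minimal subsets it contains, this last condition is equivalent to $T^{\adm}\subseteq S^{\adm}$. The equality statement $F_T=F_S\iff T^{\adm}=S^{\adm}$ follows by antisymmetry. The main obstacle, such as it is, is purely bookkeeping: making the bijection between elliptic bridge curve types and minimal subsets precise (including the edge cases $g=1$, where $\irr$ is excluded from admissibility, and small $(g,n)$ where there may be no elliptic bridge curves of a given type), and confirming that the ``union of minimal subsets'' characterisation of admissibility from Definition \ref{def:admissible} does exactly the work required; no genuinely hard geometry enters beyond what Proposition \ref{P:face} already provides.
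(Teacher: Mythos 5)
Your proof is correct and follows essentially the same route as the paper's: all three parts are reduced to Proposition~\ref{P:face} plus the combinatorial identification of elliptic bridge curve types with minimal subsets, and the observation that $T$ and $T^{\adm}$ contain the same minimal subsets. The paper's proof is terser, but your slightly more detailed unwinding (in particular of the bijection and the admissibility bookkeeping) is a faithful expansion of the same argument.
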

\begin{proof}
Part \eqref{L:poset-T1}: the cone $F_T$  is a face of the elliptic bridge face of $\NEb(\MM_{g,n}^{\ps})$, which is a simplicial polyhedral  face of the Mori cone $\NEb(\MM_{g,n}^{\ps})$ whose extremal rays are generated by the elliptic bridge curves (by Proposition \ref{P:face}). Hence $F_T$ is a simplicial polyhedral  face of the Mori cone $\NEb(\MM_{g,n}^{\ps})$ whose extremal rays are generated by the elliptic bridge curves of type contained in $T$. We conclude since the elliptic bridge curves correspond to the minimal subsets of $T_{g,n}$. 
Part \eqref{L:poset-T1b} follows from  Proposition \ref{P:face}\eqref{P:face3} and the fact that $F_T$ is a face of the elliptic bridge face. 
Part \eqref{L:poset-T2}: by part \eqref{L:poset-T1},  we have that $F_T\subseteq F_S$ if and only if every minimal subset of $T_{g,n}$ contained in $T$ is also contained in $S$ and this is equivalent to the inclusion $T^{\adm}\subseteq S^{\adm}$.
\end{proof}

\end{section}

\begin{section}{The moduli space of $T$-semistable curves}\label{S:stackT}

The aim of this section is to study the geometric properties of the moduli space $\MMM_{g,n}^T$ of $T$-semistable curves and of the morphism $f_T:\MM_{g,n}^{\ps}\to \MMM_{g,n}^T$. Throughout this section, we assume that $\car(k)\gg (g,n)$ (see Definition \ref{A:char}), which is needed for the existence of the good moduli space  $\MMM_{g,n}^T$. The main result of this section says that, in characteristic zero,  the morphism $f_T$ is the contraction  of the $T$-face $F_T$ (see  Definition \ref{D:T-Face}) of the Mori cone $\NEb(\MM_g^{\ps})$.

\begin{theorem}\label{thm:contr}
Let $T\subseteq T_{g,n}$ with $(g,n)\neq (2,0)$. Assume that ${\rm char}(k)=0$.
The good moduli space $\MMM_{g,n}^T$ is  projective and the morphism $f_T: \MM_{g,n}^{\ps}\to \MMM_{g,n}^T$ is the contraction of the face $F_T$.
Moreover, $f_T$ is a $K_{\MM_{g,n}^{\ps}}$-negative contraction.
\end{theorem}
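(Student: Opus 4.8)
The plan is to deduce Theorem \ref{thm:contr} from the rigidity Lemma \ref{L:rigidity} together with the good-moduli-space machinery of Theorem \ref{T:goodspaces}. Recall that $\MM_{g,n}^{\ps}$ is a projective normal $\bbQ$-factorial variety (Proposition \ref{P:Pic-Mgps}) and that, by Lemma \ref{L:poset-T}, $F_T$ is a simplicial polyhedral face of $\NEb(\MM_{g,n}^{\ps})$. The first step is to observe that $f_T:\MM_{g,n}^{\ps}\to \MMM_{g,n}^T$ is a contraction: it is proper (Theorem \ref{T:goodspaces}), birational (since both spaces contain $M_{g,n}$ as a common dense open, using $(g,n)\neq(2,0),(1,2)$; the case $(g,n)=(1,2)$ has $\MMM_{1,2}^T$ a point but then $F_T$ is the whole Mori cone and the statement degenerates as in Proposition \ref{P:face}), and a proper birational morphism between normal irreducible algebraic spaces is a contraction by Zariski's main theorem. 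The second step is the crucial one: to identify \emph{which} integral curves $C\subset \MM_{g,n}^{\ps}$ are contracted by $f_T$, and to show this set is exactly $\{C : [C]\in F_T\}$.

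For this identification I would argue as follows. A curve $C\subset \MM_{g,n}^{\ps}$ is contracted by $f_T$ if and only if it lifts (after normalizing and passing to a finite cover of the base) to a family of $T$-semistable curves whose image in $\MMM_{g,n}^T$ is a point, i.e. a family that is isotrivial up to the isotrivial specializations governing the good moduli space $\phi^T:\M_{g,n}^T\to\MMM_{g,n}^T$. By the description of closed points and isotrivial specializations of $\M_{g,n}^T$ (Proposition \ref{prop:Tclosed}, via Lemma \ref{L:spec-ros}), two $T$-semistable curves have the same image in $\MMM_{g,n}^T$ iff they have isomorphic $T$-cores and the same collection of attached rosaries; hence a family maps to a point iff its only non-isotrivial parts are tacnodes and $A_1/A_1$-attached elliptic bridges of type contained in $T$, i.e. (on the side of $\MM_{g,n}^{\ps}$, where the tacnodes have been resolved into rosaries of length $2$) iff the only non-isotrivial components of the corresponding family of pseudostable curves are $A_1/A_1$-attached elliptic bridges of type in $T$. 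By Lemma \ref{L:poset-T}\eqref{L:poset-T1b} this is precisely the condition $[C]\in F_T$. (This is where I would lean on Proposition \ref{P:fibers1} and Lemma \ref{L:poset-T}, as the introduction indicates, rather than redoing the analysis from scratch.)

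With the set of contracted curves pinned down, the third step is to invoke Lemma \ref{L:rigidity}: $\MM_{g,n}^{\ps}$ is a projective variety, $F_T$ is a face of $\NE(\MM_{g,n}^{\ps})$ whose closure is a face of $\NEb(\MM_{g,n}^{\ps})$, and—granting that the face $F_T$ does admit a contraction $\pi_{F_T}$, which follows from the cone theorem since $F_T$ is $K_{\MM_{g,n}^{\ps}}$-negative by Proposition \ref{P:face}\eqref{P:face1} (here $\car(k)=0$ enters, both for the cone theorem and for the klt/$\bbQ$-factoriality of $\MM_{g,n}^{\ps}$)—Lemma \ref{L:rigidity} yields an isomorphism $\MMM_{g,n}^T\cong (\MM_{g,n}^{\ps})_{F_T}$ under which $f_T=\pi_{F_T}$. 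In particular $\MMM_{g,n}^T$ is projective, being the target of a contraction of a face of a projective variety. Finally, the last assertion, that $f_T$ is $K_{\MM_{g,n}^{\ps}}$-negative, is immediate: $\NE(f_T)=F_T$ and $F_T$ is spanned by elliptic bridge curves, each of which intersects $K_{\MM_{g,n}^{\ps}}$ negatively by Proposition \ref{P:face}\eqref{P:face1}, so $K_{\MM_{g,n}^{\ps}}$ is negative on $\NEb(f_T)\setminus\{0\}$.

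The main obstacle is the precise matching in the second step—showing that the fibers of the good moduli space $\phi^T$ translate exactly into the condition ``$[C]\in F_T$'' on curves downstairs in $\MM_{g,n}^{\ps}$. One has to be careful that the relation ``same image in $\MMM_{g,n}^T$'' is the equivalence relation generated by isotrivial specializations (Lemma \ref{L:spec-ros}) plus the $S$-equivalence implicit in a good moduli space, and that contracting a tacnode of type in $T$ to a rosary of length $2$ (and the ensuing stabilization) is exactly what happens numerically when one moves along $F_T$; the bookkeeping is exactly what Lemma \ref{L:poset-T}\eqref{L:poset-T1b} and Proposition \ref{P:fibers1} are designed to handle, so the real content is in those earlier statements and here it is a matter of assembling them cleanly, together with the verification that $f_T$ is birational (hence a contraction) for $(g,n)\neq(2,0),(1,2)$.
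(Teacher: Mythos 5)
Your proposal is correct and follows essentially the same route as the paper: you use the cone theorem (via the klt, $\bbQ$-factorial structure of $\MM_{g,n}^{\ps}$ from Proposition \ref{P:Pic-Mgps}) to produce the contraction of the $K$-negative face $F_T$, then identify the contracted curves via Proposition \ref{P:fibers1}\eqref{P:fibers1bis} and Lemma \ref{L:poset-T}\eqref{L:poset-T1b} (lifting to the stack using finite inertia, as in Proposition \ref{P:Mgps-DM}), and finally conclude by the rigidity Lemma \ref{L:rigidity}. Your preliminary ``$f_T$ is a contraction'' step is exactly Proposition \ref{P:fibers1}\eqref{P:fibers1a}, and your heuristic discussion of isotrivial specializations is an informal paraphrase of the content already packaged in Proposition \ref{P:fibers1}; no step is missing and the logical order matches the paper's.
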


The theorem is trivial true in the following cases:
\begin{itemize}
\item If $T^{\adm}=\emptyset$ (which is always the case for $g=0$ or $(g,n)=(1,1)$) then $f_T$ is  the identity by Remark \ref{R:ourstacks}. On the other hand $F_T=(0)$, and hence $\gamma_T$ is also the identity.
\item If $(g,n)=(1,2)$ and $T^{\adm}\neq \emptyset$ (in which case it must be the case that $T^{\adm}=\{[0,\{1\}], [1,\{1\}]\}$) then $f_T:\MM_{1,2}^{\ps}\to \MMM_{1,2}^T=\Spec k$  by Remark \ref{R:special-gn}. On the other hand, $F_T=\NEb(\M_{1,2}^{\ps})$ (see the discussion following Proposition \ref{P:face}) so that the contraction $\gamma_T$ of $F_T$ is the map to $\Spec k$. 
\end{itemize}

Before proving  the above Theorem, we will need a description of the  fibres of $f_T$.  

\begin{proposition}\label{P:fibers1}
Let $T\subseteq T_{g,n}$ with $(g,n)\neq (2,0)$ and $\car(k)\gg (g,n)$.
\noindent 
\begin{enumerate}[(i)]
\item \label{P:fibers1a} The projective morphism $f_T$ is a  contraction, i.e.  $(f_T)_*(\cO_{\MM_{g,n}^{\ps}})=\cO_{\MMM_{g,n}^T}$.
\item \label{P:fibers1bis} Let $B$ an integral curve inside $\M_{g,n}^{\ps}$ with associated family of pseudostable curves $\cC\to B$ and let $C$ be the image of $B$ inside $\MM_{g,n}^{\ps}$. 
Then $C$ is contracted by $f_T$ if and only if the only non-isotrivial components of the family $\cC$ are $A_1$/$A_1$-attached elliptic bridges of type contained in $T$. 

\item \label{P:fibers1b} The exceptional locus of $f_T$ is the union of the following irreducible closed subsets 
$$\Ell([\tau,I], [\tau+1, I]):=\ov{\{(C,\{p_i\})\in \MM_{g,n}^{\ps}\:  \text{having an  elliptic bridge of type }\{[\tau,I], [\tau+1, I]\}\}} $$
for every  $\{ [\tau,I], [\tau+1, I]\}\subseteq  T-\{[1,\emptyset]\}$, and 
$$\Ell(\irr):=\ov{\{(C,\{p_i\})\in \MM_{g,n}^{\ps}\:   \text{having an elliptic bridge of type }\{\irr\}\}} \quad \text{ if } \irr \in T \: \text{and }\: g\geq 2.$$

Moreover, if $(g,n)\neq (1,2)$, then all the above closed subsets have codimension two except  $\Ell([0,\{i\}], [1, \{i\}])$ which coincides with the divisors $\Delta_{1,\{i\}}$ (for any $1\leq i \leq n$). In particular, $f_T$ is always birational and it is small if and only if $T$ does not contain any subset of the form $\{[0,\{i\}],[1,\{i\}]\}$ for some $1\leq i \leq n$.
\end{enumerate}
\end{proposition}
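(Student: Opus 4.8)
\textbf{Proof strategy for Proposition \ref{P:fibers1}.}

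The plan is to prove the three assertions in order, using crucially the VGIT description of the open embeddings (Proposition \ref{P:VGIT}), the good moduli space formalism of Alper, and the explicit description of closed points from Proposition \ref{prop:Tclosed}.

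First, for part \eqref{P:fibers1a}: since $f_T$ is a morphism between proper normal irreducible algebraic spaces (Theorem \ref{T:goodspaces}), to show $(f_T)_*\cO_{\MM_{g,n}^{\ps}}=\cO_{\MMM_{g,n}^T}$ it suffices to show that $f_T$ is birational, which will follow once we prove the last statement of part \eqref{P:fibers1b} that $f_T$ is an isomorphism over the locus of curves without tacnodes and without $A_1/A_1$-attached elliptic bridges of type in $T$; alternatively, since $\MMM_{g,n}^T$ is normal, it is enough to know $f_T$ is birational and proper (Zariski's main theorem). I would deduce birationality from the fact that $\iota_T:\M_{g,n}^{\ps}\hookrightarrow \M_{g,n}^T$ is an open immersion whose image meets the locus $\mathcal M_{g,n}$ of smooth curves, and the generic pseudostable curve is already $T$-semistable and has no non-trivial isotrivial specialisations, so $\phi^T\circ\iota_T$ and $\phi^{\ps}$ agree over a common dense open; hence $f_T$ restricts to an isomorphism there.

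For part \eqref{P:fibers1bis}: this is where the VGIT structure does the real work. Let $\cC\to B$ be the family, $C=\phi^{\ps}(B)\subset\MM_{g,n}^{\ps}$ its image, and $C'=\phi^T(B)\subset\MMM_{g,n}^T$. If the only non-isotrivial components are $A_1/A_1$-attached elliptic bridges of type in $T$, then by Lemma \ref{L:spec-ros}(i) and Proposition \ref{prop:Tclosed}\eqref{prop:Tclosed0} every geometric point of $B$ isotrivially specialises inside $\M_{g,n}^T$ to a single $T$-closed curve (obtained by replacing each such bridge by a rosary of length $2$, the $T$-core being constant and unique); since a good moduli space identifies points with the same closure in the stack, $\phi^T$ is constant on $B$, so $C$ is contracted. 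Conversely, if $C$ is contracted by $f_T$ then all of $B$ maps to one closed point $[(C_0,\{p_i\})]$ of $\MMM_{g,n}^T$, hence the closures of all $\phi^T$-images meet in the unique $T$-closed curve $(C_0)^{\star}$; running the analysis of basins of attraction as in the proof of Proposition \ref{prop:Tclosed}\eqref{prop:Tclosed1} (i.e. the one-parameter-subgroup description of Kempf/Luna applied to $\M_{g,n}^T$), the only curves through a $T$-closed curve that stay in a single $f_T$-fibre are those whose variation is confined to the attached rosaries, i.e. to the $A_1/A_1$-attached elliptic bridges of type in $T$; meanwhile the generic fibre $\cC_b$ must already be pseudostable with the same $T$-core, forcing the $T$-core of $\cC\to B$ to be isotrivial. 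I expect this converse — pinning down precisely which one-parameter families lie in a fixed fibre of the good moduli space morphism — to be the main obstacle, though it is essentially the same local computation already invoked for Proposition \ref{prop:Tclosed}.

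For part \eqref{P:fibers1b}: the exceptional locus of $f_T$ is the set of points where $f_T$ is not a local isomorphism, equivalently (since over the complement of $\mathcal Z_T^-$ the map $f_T$ is an isomorphism, as $\iota_T$ identifies $\M_{g,n}^{\ps}\setminus(\text{tacnodal locus})$ with the corresponding open in $\M_{g,n}^T$ having trivial local VGIT) the locus swept out by the contracted curves of part \eqref{P:fibers1bis}. A pseudostable curve lies on such a contracted curve exactly when it can be deformed keeping the $T$-core fixed and varying an $A_1/A_1$-attached elliptic bridge, i.e. when it contains an elliptic bridge of some type $\{[\tau,I],[\tau+1,I]\}\subseteq T-\{[1,\emptyset]\}$ or, for $g\ge 2$, of type $\{\irr\}$ with $\irr\in T$; this gives the asserted union of the loci $\Ell(\cdot)$. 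Finally I would compute codimensions by a parameter count: a general point of $\Ell([\tau,I],[\tau+1,I])$ is a pseudostable curve with one separating node replaced by an elliptic bridge $(E,q_1,q_2)$ attached at two smooth points, and comparing with $\M_{g,n}$ one sees the locus has codimension $2$ (one condition for the node, one for the extra elliptic modulus), except when $(\tau,I)=(0,\{i\})$, where the ``genus-$0$ side'' carries a single marked point so the bridge is attached only at one point — this is exactly the divisor $\Delta_{1,\{i\}}$ parametrising an elliptic tail meeting the rest in a node, hence codimension $1$; similarly $\Ell(\irr)$ has codimension $2$. The case $(g,n)=(1,2)$ is excluded because there $\MM_{1,2}^{\ps}$ has Picard number one and $f_T$ contracts everything. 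The statements that $f_T$ is always birational and small iff $T$ contains no $\{[0,\{i\}],[1,\{i\}]\}$ then follow immediately.
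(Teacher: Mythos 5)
Your proof is essentially correct and follows the paper's route: part \eqref{P:fibers1a} via Zariski's main theorem and birationality over the smooth locus, part \eqref{P:fibers1b} as the union of the $\Ell(\cdot)$ together with a dimension count, and the forward direction of \eqref{P:fibers1bis} directly from Proposition \ref{prop:Tclosed}\eqref{prop:Tclosed0}. For the converse of \eqref{P:fibers1bis} the paper is more direct: since $f_T(\phi^{\ps}(C))=[C^{\star}]$ by Proposition \ref{prop:Tclosed}\eqref{prop:Tclosed0}, constancy of $f_T$ along $B$ immediately forces $\cC_b^{\star}$ and hence the $T$-core with its markings to be constant, so the detour through Luna slices and basins of attraction re-derives what that proposition already encodes and is not needed --- your final step (``the generic fibre must have the same $T$-core'') is in effect the paper's one-line argument.
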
 
Note that the closed subsets $\Ell([\tau,I], [\tau+1, I])$ (resp. $\Ell(\irr)$) are covered by the elliptic bridge curves $C([\tau,I], [\tau+1, I])$ (resp. $C(\irr)$). Hence part \eqref{P:fibers1b} is  a necessary condition to have that $f_T$ is the contraction of the face $F_T$. In the case $(g,n)=(1,2)$ and $T^{\adm}=\{[0,\{1\}], [1,\{1\}]\}$, the morphism $f_T$ is the map to a point and  its the exceptional locus  is equal to $\Ell([0,\{1\}], [1, \{1\}])=\MM_{1,2}^{\ps}$.

\begin{proof}
Part \eqref{P:fibers1a} follows from the Zariski main theorem using that $f_T$ is a proper morphism between irreducible normal algebraic spaces (see Theorem \ref{T:goodspaces}) which is moreover birational since it is an isomorphism when restricted to the dense open subset of smooth curves.

Let us now prove parts \eqref{P:fibers1bis} and \eqref{P:fibers1b}. By  Proposition \ref{prop:Tclosed}\eqref{prop:Tclosed0}, the morphism $f_T$ sends a pseudo-stable curve $(C, \{p_i\})$ into the $T$-closed curve $f_T((C, \{p_i\}))$ which is obtained from $(C, \{p_i\})$ by replacing each $A_1$/$A_1$-attached elliptic bridge of type contained in $T$ by an attached rosary  of length two. The type of any $A_1$/$A_1$-attached elliptic bridge of $(C, \{p_i\})$ can be equal to $\{\irr\}$ if $\irr \in T$ and  $g\geq 2$, or
$\{ [\tau,I], [\tau+1, I]\}$ if $\{ [\tau,I], [\tau+1, I]\}\subseteq  T-\{[1,\emptyset]\}$ (because $(C, \{p_i\})$ does not have elliptic tails). This  implies part \eqref{P:fibers1bis} and that the exceptional locus of $f_T$ is equal to 
$$\bbE_T:=\bigcup_{\{ [\tau,I], [\tau+1, I]\}\subseteq  T-\{[1,\emptyset]\}} \Ell([\tau,I], [\tau+1, I]) \bigcup_{\stackrel{\irr \in T}{g\geq 2}} \Ell(\irr).$$
We conclude observing that the closed subsets  $\Ell([\tau,I], [\tau+1, I])$ and $ \Ell(\irr)$ are irreducible of the stated codimension.
\end{proof}

\begin{proof}[Proof of Theorem \ref{thm:contr}]
As observed after the statement of the Theorem, we can assume that $(g,n)\neq (1,2)$ for otherwise the Theorem is trivially true.

Since $F_T$ is a $K_{\MM_{g,n}^{\ps}}$-negative face of $\NEb(\MM_g^{\ps})$ and $\MM_g^{\ps}$ has klt singularities by Proposition \ref{P:Pic-Mgps}\eqref{P:Pic-Mgps2}, the cone theorem \cite[Thm. 3.7(3)]{KM} implies that there is a $K_{\MM_{g,n}^{\ps}}$-negative contraction of $F_T$ 
$$\gamma_T:\MM_{g,n}^{\ps}\to (\MM_{g,n}^{\ps})_{F_T}.$$
%onto a normal projective variety $(\MM_{g,n}^{\ps})_{F_T}$ such that $-K_{\MM_{g,n}^{\ps}}$ is relatively ample with respect to $\gamma_T$ and a curve $C\subset \MM_{g,n}^{\ps}$ is contracted by $\gamma_T$ if and only if its class $[C]$ belongs to $F_T$. 

Therefore, the Theorem will follow from the  Lemma \ref{L:rigidity} if we show that an integral  curve $C\subset \MM_{g,n}^{\ps}$ is contracted by $f_T$ if and only if its class $[C]$ belongs to $F_T$.

In order to prove this, fix an integral curve $C\subset \MM_{g,n}^{\ps}$ and observe that, since $\M_{g,n}^{\ps}$ has finite inertia by Proposition \ref{P:Mgps-DM}, the curve $C$ admits a finite cover that lifts to $\M_{g,n}^{\ps}$. Hence we can find an integral curve $B\subset \M_{g,n}^{\ps}$, with associated family of pseudostable curves $\cC\to B$,  whose image in  $\MM_{g,n}^{\ps}$ is the curve $C$.
Now, Proposition \ref{P:fibers1}\eqref{P:fibers1bis} says that $C$ is contracted by $f_T$ if and only if the only non-isotrivial components of the family $\cC$ are $A_1$/$A_1$-attached elliptic bridges of type contained in $T$, which is  equivalent to the fact that $[C]$ belongs to $F_T$ by Lemma \ref{L:poset-T}\eqref{L:poset-T1b}.
\end{proof}

As a Corollary of the above Theorem and some facts that are implicit in the proof of the cone theorem, we can describe the Neron-Severi group of $\MMM_{g,n}^T$ and its nef/ample cone. 
We will need the following definition, where we freely identify  the rational Picard groups of $\M_{g,n}^T$, $\M_{g,n}^{\ps}$ and $\MM_{g,n}^{\ps}$, using Corollary \ref{C:Pic-ps} and Proposition \ref{P:Pic-Mgps}\eqref{P:Pic-Mgps2b}.

\begin{definition}\label{D:Tcomp}
A $\bbQ$-line bundle $L$ on $\M_{g,n}^T$ (or equivalently on $\MM_{g,n}^{\ps}$ or on $\M_{g,n}^{\ps}$) is said to be \emph{$T$-compatible}  if $L$ intersects to zero all the elliptic bridge curves of type contained in $T$.

Explicitly, using Lemma \ref{L:int-1str}, a $\bbQ$-line bundle  
$$L=a\lambda+b_{\irr}\delta_{\irr}+\sum_{[i,I]\in T_{g,n}-\{[1,\emptyset], \irr\}} b_{i,I} \delta_{i,I} \in \Pic(\M_{g,n}^{T})_{\bbQ}$$
 is $T$-compatible if and only if
\begin{equation}\label{E:equaT}
\begin{sis}
a+10 b_{\irr}=0 & \: \text{ if } \irr \in T, \\
a+12 b_{\irr}-b_{\tau,I}-b_{\tau+1,I}=0 & \: \text{ for any } \{[\tau, I], [\tau+1, I]\}\subset T.
\end{sis}
\end{equation}
\end{definition}

\begin{cor}\label{C:contr}
Let $T\subseteq T_{g,n}$ with $(g,n)\neq (2,0)$. Assume that ${\rm char}(k)=0$. Then 
\begin{enumerate}[(i)]
\item \label{C:contr1} The real Neron-Severi vector space $N^1(\MMM_{g,n}^T)_{\bbR}$ can be identified, via pull-back along $f_T$, with the annihilator subspace $F_T^{\perp}\subset N^1(\MM_{g,n}^{\ps})_{\bbR}.$ 
This implies that a $\bbQ$-line bundle $L$ on $\M_{g,n}^T$ descends to a (necessarily unique) $\bbQ$-line bundle on $\MMM_{g,n}^T$ (which we will denote by $L^T$) if and only if $L$ is $T$-compatible.
\item \label{C:contr2} The nef (resp. ample) cone of $\MMM_{g,n}^T$ can be identified, via pull-back along $f_T$, with the dual face $F_T^{\vee}:=F_T^{\perp}\cap \Nef(\MM_{g,n}^{\ps})$ of $F_T$ (risp. the interior of $F_T^{\vee}$).
\end{enumerate}
In particular, $F_T$ and $F_T^{\vee}$ are perfect  dual faces, i.e. $\codim F_T=\dim F_T^{\vee}$, and hence they are exposed faces, i.e. they admit supporting hyperplanes. Moreover, every $\bbQ$-line bundle on $\MM_{g,n}^{\ps}$ whose class lies in  the interior of $F_T^{\vee}$ defines a supporting hyperplane for $F_T$ and it is semiample with associated contraction equal to $f_T$.
\end{cor}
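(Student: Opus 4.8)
The statement to be proved is Corollary \ref{C:contr}, which describes the Néron--Severi space and nef/ample cone of $\MMM_{g,n}^T$ via pull-back along $f_T$. The whole proof rests on two facts already in hand: first, that by Theorem \ref{thm:contr} the morphism $f_T$ is the contraction of the $K_{\MM_{g,n}^{\ps}}$-negative face $F_T$ in the sense of the cone theorem (and $\MM_{g,n}^{\ps}$ has klt singularities by Proposition \ref{P:Pic-Mgps}\eqref{P:Pic-Mgps2}); second, that $F_T$ is a simplicial polyhedral face whose spanning curves are precisely the elliptic bridge curves of type contained in $T$ (Lemma \ref{L:poset-T}\eqref{L:poset-T1}), with intersection numbers computed in Lemma \ref{L:int-1str}.

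First I would prove part \eqref{C:contr1}. Since $f_T$ is a contraction (Proposition \ref{P:fibers1}\eqref{P:fibers1a}), pull-back $f_T^*\colon N^1(\MMM_{g,n}^T)_{\bbR}\to N^1(\MM_{g,n}^{\ps})_{\bbR}$ is injective, and its image is contained in $F_T^\perp$ because every curve whose class lies in $F_T$ is contracted by $f_T$ (this is exactly the characterisation of contracted curves established in the proof of Theorem \ref{thm:contr}). For the reverse inclusion I would invoke the standard output of the contraction theorem for the face $F_T$: a line bundle on $\MM_{g,n}^{\ps}$ that is trivial on $F_T$ descends to $\MMM_{g,n}^T$ (e.g. by \cite[Thm. 3.7(4)]{KM} for extremal rays, iterated over the simplicial face $F_T$, or directly since $\MMM_{g,n}^T=(\MM_{g,n}^{\ps})_{F_T}$ by Theorem \ref{thm:contr} together with \cite[Cor. 3.17]{KM}); over $\bbQ$ this gives $f_T^*N^1(\MMM_{g,n}^T)_{\bbR}=F_T^\perp$. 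The translation into the explicit criterion — that $L$ descends iff $L$ is $T$-compatible — is then immediate from Definition \ref{D:Tcomp}, whose defining equations \eqref{E:equaT} are exactly the vanishing conditions $L\cdot C=0$ for all elliptic bridge curves spanning $F_T$, read off from Lemma \ref{L:int-1str}.

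Next, part \eqref{C:contr2}: under the identification $N^1(\MMM_{g,n}^T)_{\bbR}\cong F_T^\perp$, a class is nef (resp. ample) on $\MMM_{g,n}^T$ iff its pull-back is nef (resp. ample) relative to $f_T$ combined with being pulled back, which by the relative Kleiman criterion and the fact that $f_T$ is the contraction of $F_T$ amounts to being nef (resp. strictly positive off $0$) on $\NEb(\MM_{g,n}^{\ps})$ — that is, lying in $\Nef(\MM_{g,n}^{\ps})\cap F_T^\perp=F_T^\vee$ (resp. its relative interior). This is again a routine consequence of the projectivity of $f_T$ (Theorem \ref{thm:contr}) and \cite[Prop. 1.14]{Deb}, \cite[Thm. 1.44]{KM}. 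Finally, for the concluding assertions: since $F_T$ is simplicial of dimension equal to the number of minimal subsets in $T$ (Lemma \ref{L:poset-T}\eqref{L:poset-T1}) and $F_T^\vee=F_T^\perp\cap\Nef$ has dimension $\codim F_T$ because $\Nef(\MM_{g,n}^{\ps})$ is full-dimensional and $F_T$ is an exposed face cut out by a nef class (namely any ample class on $\MMM_{g,n}^T$ pulled back), perfect duality $\codim F_T=\dim F_T^\vee$ follows; that any class in the interior of $F_T^\vee$ is semiample with associated contraction $f_T$ is then the statement that an ample class on $\MMM_{g,n}^T$ pulls back to a semiample class on $\MM_{g,n}^{\ps}$ contracting exactly $F_T$, which is Theorem \ref{thm:contr} again.

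The main obstacle, I expect, is not any single step but making the descent argument in \eqref{C:contr1} clean in the category of algebraic spaces (the target $\MMM_{g,n}^T$ is a priori only an algebraic space, though Theorem \ref{thm:contr} asserts it is projective): one must be careful that "descends to a $\bbQ$-line bundle" is being used consistently, and that the identification of $N^1$ of the good moduli space with $F_T^\perp$ does not secretly use $\bbQ$-factoriality of the target (which may fail — cf. Theorem \ref{T:C}). The safe route is to use the projectivity of $f_T$ and the contraction theorem output directly, rather than any divisorial/class-group manipulation on $\MMM_{g,n}^T$ itself.
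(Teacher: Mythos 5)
Your proposal is correct and follows essentially the same approach as the paper: part (i) rests on \cite[Thm.~3.7(4)]{KM} applied to the contraction $f_T$ of the $K$-negative face $F_T$ furnished by Theorem \ref{thm:contr}, and part (ii) and the final claims are derived from the standard cone/contraction-theorem package. The small organizational difference is that the paper first establishes exposedness of $F_T$ and semiampleness of supporting hyperplanes directly from Steps 6 and 7 of the proof of \cite[Thm.~3.15]{KM} (i.e.\ the basepoint-free theorem), and then deduces the nef-cone identification and perfect duality at the end; you instead deduce perfect duality first from full-dimensionality of the nef cone and then read off exposedness and semiampleness from the projectivity of $\MMM_{g,n}^T$ already asserted in Theorem \ref{thm:contr}. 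Both routes are valid since that projectivity is itself an output of the basepoint-free theorem. Two small caveats on the write-up: the parenthetical ``strictly positive off $0$'' should read ``strictly positive off $F_T$,'' since a pull-back of an ample class vanishes on the (nonzero) face $F_T$; and the second claim of (i), which you describe as ``immediate'' from Lemma \ref{L:int-1str}, in fact also needs the identifications $\Pic(\M_{g,n}^T)_{\bbQ}=\Pic(\M_{g,n}^{\ps})_{\bbQ}\cong\Pic(\MM_{g,n}^{\ps})_{\bbQ}$ together with the commutative diagram \eqref{E:diag-spaces} to translate descent along the stacky good-moduli-space morphism $\phi^T$ into descent along $f_T$ — this is precisely what the paper's reference to Proposition \ref{P:Pic-Mgps}\eqref{P:Pic-Mgps2b} supplies.
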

In \cite[Prop. 3.13]{CTV}, we will prove that the second assertion of \eqref{C:contr1} holds true if $\car(k)\gg (g,n)$ arguing similarly to Proposition \ref{P:lbdesc}.
\begin{proof}
Since $f_T$ is the contraction of the  $K_{\MM_{g,n}^{\ps}}$-negative face  $F_T$ by Theorem \ref{thm:contr}, it follows from \cite[Thm. 3.7(4)]{KM} that $F_T^{\perp}$ is the pull-back via $f_{T}$ of  $N^1(\MMM_{g,n}^T)_{\bbR}$, which proves the first statement in \eqref{C:contr1}. The second one follows from the first one, the left part of the commutative diagram \eqref{E:diag-spaces} and Proposition \ref{P:Pic-Mgps}\eqref{P:Pic-Mgps2b}.

Next, since $F_T$ is a $K_{\MM_{g,n}^{\ps}}$-negative face of $\NEb(\MM_{g,n}^{\ps})$, it follows from Step 6 of the proof of \cite[Thm. 3.15]{KM} that $F_T$ is an exposed face. Hence any $\bbQ$-line bundle $L$ which is in the relative interior of $F_T^\vee$ is a supporting hyperplane for $F_T$  and conversely. 
Moreover, it follows  from  the basepoint-free theorem (see Step 7 of the proof of \cite[Thm. 3.15]{KM}) that any $\bbQ$-line bundle $L$ which is a supporting hyperplane for $F_T$ is semiample and the morphism associated to $|mL|$ (for $m\gg 0$) is $f_T$.  In particular, it follows that the relative interior of $F_T^{\vee}$ is the pull-back via $f_T$ of the ample cone of $\MMM_{g,n}^T$ and, by taking the closures, we get that $F_T^{\vee}$ is the pull-back via $f_T$ of the nef cone of $\MMM_{g,n}^T$, which proves \eqref{C:contr2}.

Finally, the last part of the Corollary \ref{C:contr} follows from what already proved and the equalities
$$\codim F_T=\dim N^1(\MMM_{g,n}^T)_{\bbR}=\dim F_T^{\vee},$$
where we have used \cite[Rmk. 7.40]{Deb} for the first equality and the fact that the nef cone is a full-dimensional cone in the real Neron-Severi vector space for the second equality.
\end{proof}

Note that the characteristic zero assumption is used in the proof of Theorem \ref{thm:contr} only to establish the projectivity of $\MMM_{g,n}^T$. There is a special case, however, where we can prove the projectivity in arbitrary characteristic (provided that it is large enough so that $\MMM_{g,n}^T$ exists).

\begin{example}\label{R:LMMP}
If $T=T_{g,n}$ (and $(g,n)\neq  (2,0)$) then the above Theorem \ref{thm:contr} is true for $\car(k)\gg (g,n)$ and it can be proved as it follows. 
From the proof of Proposition \ref{P:face}, it follows that the $\bbQ$-line bundle  on $\MM_{g,n}^{\ps}$
$$N_{g,n}:=K_{\M_{g,n}^{\ps}}+\frac{7}{10}\delta+\frac{3}{10}\psi=\frac{13}{10}(10\lambda-\delta+\psi)$$
is semiample  and its dual face in $\NE(\M_{g,n}^{\ps})$ is the elliptic bridge face (note that this is true also for $(g,n)=(1,2)$, in which case $N_{1,2}=0$ and the elliptic bridge face coincides with the entire effective cone of curves of $\MM_{1,2}^{\ps}$). 
Hence a sufficiently high multiple of $N_{g,n}$ induces a morphism
$$\psi: \MM_{g,n}^{\ps}\to \Proj \bigoplus_{m\geq 0} H^0\left(\MM_{g,n}^{\ps}, \lfloor m N_{g,n}\rfloor\right) $$
which is  the contraction of the elliptic bridge face and whose codomain coincides with  $\displaystyle \MM_{g,n}\left(\frac{7}{10} \right)$ by \cite[Prop. 7.2]{AFS3}.
Since the $f_{T_{g,n}}$-relative effective cone $\NE(f_{T_{g,n}})$ of curves  is equal to the elliptic bridge face (see Proposition \ref{P:fibers1}\eqref{P:fibers1bis}), Lemma  \ref{L:rigidity} implies that we have an isomorphism 
\begin{equation}\label{E:LMMP}
\MMM_{g,n}^{T_{g,n}}\cong \MM_{g,n}(7/10 ),
\end{equation}
under which $f_{T_{g,n}}$ gets identified with $\psi$.
Note that \eqref{E:LMMP} is a special case (if $\car(k)=0$) of \cite[Thm. 1.1]{AFS3}, and it was previously proved by Hassett-Hyeon \cite{HH2} for $n=0$.

From the above discussion and  Remark \ref{R:bridgecurves}, we can compute the Picard number of $\MMM_{g,n}^{T_{g,n}}\cong \MM_{g,n}(\frac{7}{10})$ and the relative Picard number of $f_{T_{g,n}}$ (assuming that $g\geq 1$, for otherwise we have that $\MM_{0,n}(\frac{7}{10})=\MM_{0,n}$):
\begin{enumerate}
\item \label{C:PicT1} The Picard number of $\MM_{g,n}(\frac{7}{10})$ is equal to 
$$\dim_{\bbQ} \Pic\left(\MM_{g,n}\left(\frac{7}{10}\right)\right)_{\bbQ}=
\begin{cases}
1 & \text{ if } n=0 \text{ and } g\geq 3 \: \text{ is odd, }\\
2 & \text{ if } n=0 \text{ and } g\geq 4 \: \text{ is even, }\\
2^{n-1}+1-\delta_{2,g}-(n+1)\delta_{1,g} & \text{ if } g\geq 1 \: \text{ and } n\geq 1.
\end{cases}
$$
\item \label{C:PicT2} The relative Picard number of $f_{T_{g,n}}$ is equal to 
$$\rho(f_{T_{g,n}})=
\begin{cases}
\frac{g-1}{2} & \text{ if } n=0 \text{ and } g\geq 3 \: \text{ is odd, }\\
\frac{g}{2}-1 & \text{ if } n=0 \text{ and } g\geq 4 \: \text{ is even, }\\
g 2^{n-1}-1 & \text{ if } g\geq 1 \: \text{ and } n\geq 1.
\end{cases}
$$

\end{enumerate}

\end{example}

In \cite{CTV}, we study several geometric properties of the space $\MMM_{g,n}^T$ and of the morphism $f_T$.  For completeness, we mention those results here. 
We will need the following 

\begin{definition}\label{D:Tdiv}
Given a subset $T\subseteq T_{g,n}$, we define the \emph{divisorial part} of $T$ as the (possibly  empty) subset $T^{\div}\subset T$ defined by  
$$T^{\div}:=
\begin{cases}
\emptyset & \text{ if } (g,n)=(1,1) \, \text{ or } (2,1), \\
 \{\{[0,\{i\}],[1,\{i\}]\}: \:  \{[0,\{i\}],[1,\{i\}]\}\subset T\} & \text{ otherwise. } 
\end{cases}
$$
\end{definition}
It is easily checked that $T^{\div}$ is admissible in the sense of Definition \ref{def:admissible}.

\begin{proposition}(\cite[Prop. 3.16, 3.17]{CTV})\label{P:geomT}
Assume that $(g,n)\neq (2,0)$, $\car(k)\gg (g,n)$, and let $T\subseteq T_{g,n}$. 
\begin{enumerate}
\item \label{P:geomT1} The following conditions are equivalent:
\begin{enumerate}[(i)]
\item $\MMM_{g,n}^T$ is $\bbQ$-factorial.
\item  $\MMM_{g,n}^T$ is $\bbQ$-Gorenstein.
\item  $T^{\adm}=T^{\div}$.
\end{enumerate}

%FORMULA per il PULL-BACK del CANONICO
%Under the above conditions and assuming that $(g,n)\neq (1,2), (2,1), (3,0)$, we have the following crepant equation on $\M_{g,n}^T$ 
%\begin{equation}\label{E:canT}
%(\phi^T)^*(K_{\MMM_{g,n}^T})=K_{\M_{g,n}^T}-8\sum_{\{[0,\{j\}], [1,\{j\}]\}\subseteq T} \delta_{1,\{j\}}=13\lambda-2\delta+\psi-8\sum_{\{[0,\{j\}], [1,\{j\}]\}\subseteq T} \delta_{1,\{j\}}.
%\end{equation}
\item \label{P:geomT2} The morphism $f_T:\MM_{g,n}^{\ps}\to \MMM_{g,n}^T$ can be factorised as follows 
\begin{equation}\label{E:fact-fT}
f_T\colon \MM_{g,n}^{\ps}\xrightarrow{f_{T^{\div}}}\MMM_{g,n}^{T^{\div}}\xrightarrow{\sigma_T} \MMM_{g,n}^T
\end{equation}
in such a way that 
\begin{enumerate}[(i)]
\item The morphism $f_{T^{\div}}$ is a composition of $\frac{1}{2}|T^{\div}|$ divisorial contractions,
%\footnote{Recall that a divisorial contraction has the same properties of a divisorial contraction except that we do not require the target to be projective.}, 
each one of them having the relative Mori cone generated by a $K$-negative extremal ray. 
\item  The algebraic space $\MMM_{g,n}^{T^{\div}}$ is $\bbQ$-factorial and, if $\car(k)=0$, klt.

\item  The morphism $\sigma_T$ is a small contraction.
\item  \label{P:geomT2iv}
The relative Mori cone of $\sigma_T$ is  a $K_{\MMM_{g,n}^{T^{\div}}}$-negative face if and only if $T$ does not contain subsets of the form $\{[0,\{j\}],[1,\{j\}],[2,\{j\}]\}$ for some $j\in [n]$ or  $(g,n)=(3,1), (3,2), (2,2)$. 
\end{enumerate}
\end{enumerate}
\end{proposition}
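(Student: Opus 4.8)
The plan is to prove part \eqref{P:geomT2} first and then deduce part \eqref{P:geomT1} from it, working in detail when $\car(k)=0$ (where Theorem \ref{thm:contr} applies) and leaving the case $\car(k)\gg (g,n)$ to the positive-characteristic refinements of Theorem \ref{thm:contr} and Corollary \ref{C:contr}. The degenerate cases $T^{\adm}=\emptyset$, $(g,n)=(1,1)$ and $(g,n)=(1,2)$ are handled separately via Remarks \ref{R:ourstacks} and \ref{R:special-gn}, exactly as for Theorem \ref{thm:contr}; so assume $T^{\adm}\neq\emptyset$ and $(g,n)\neq(1,1),(1,2)$. Since $T^{\div}\subseteq T$ is admissible in the sense of Definition \ref{def:admissible} (as observed after Definition \ref{D:Tdiv}), Lemma \ref{L:poset-T}\eqref{L:poset-T2} gives an inclusion of faces $F_{T^{\div}}\subseteq F_T$ of $\NEb(\MM_{g,n}^{\ps})$; as $f_{T^{\div}}$ is the contraction of $F_{T^{\div}}$ into a projective variety (Theorem \ref{thm:contr}) and $F_{T^{\div}}\subseteq F_T=\NE(f_T)$, the universal property of $f_{T^{\div}}$ (equivalently the rigidity Lemma \ref{L:rigidity}) produces the factorisation \eqref{E:fact-fT}, with $\sigma_T$ a contraction whose relative Mori cone is $(f_{T^{\div}})_*(F_T)$.

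To obtain parts (2)(i)--(iii) I would view $f_{T^{\div}}$ as a run of the MMP on $\MM_{g,n}^{\ps}$. By Lemma \ref{L:poset-T}\eqref{L:poset-T1} the simplicial face $F_{T^{\div}}$ is spanned by the $\frac{1}{2}|T^{\div}|$ elliptic bridge curves $C([0,\{i\}],[1,\{i\}])$ with $\{[0,\{i\}],[1,\{i\}]\}\subseteq T$, each generating a $K_{\MM_{g,n}^{\ps}}$-negative extremal ray (Proposition \ref{P:face}\eqref{P:face1}); since Lemma \ref{L:int-1str} gives $\delta_{1,\{i\}}\cdot C([0,\{i'\}],[1,\{i'\}])=0$ for $i\neq i'$, these rays can be contracted one at a time, and by Proposition \ref{P:fibers1}\eqref{P:fibers1b} each such step contracts the divisor $\Delta_{1,\{i\}}=\Ell([0,\{i\}],[1,\{i\}])$ (to the codimension-two locus of cuspidal curves), hence is a relative Picard number one divisorial contraction of a $K$-negative extremal ray; this is part (2)(i). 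Starting from $\MM_{g,n}^{\ps}$, which is $\bbQ$-factorial and klt (Proposition \ref{P:Pic-Mgps}\eqref{P:Pic-Mgps2}), the standard behaviour of $\bbQ$-factoriality and of klt singularities under such contractions gives inductively that $\MMM_{g,n}^{T^{\div}}$ is $\bbQ$-factorial, and klt when $\car(k)=0$ --- part (2)(ii). For part (2)(iii), the cone $(f_{T^{\div}})_*(F_T)$ is spanned by the images of the elliptic bridge curves of type in $T^{\adm}\setminus T^{\div}$, whose loci $\Ell([\tau,I],[\tau+1,I])$ and $\Ell(\irr)$ have codimension two by Proposition \ref{P:fibers1}\eqref{P:fibers1b}; as $f_{T^{\div}}$ contracts only the divisors $\Delta_{1,\{i\}}$ (to codimension-two loci), these images stay of codimension $\geq 2$ in $\MMM_{g,n}^{T^{\div}}$, so $\sigma_T$ is small.

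For part \eqref{P:geomT1}: if $T^{\adm}=T^{\div}$ then $\MMM_{g,n}^T=\MMM_{g,n}^{T^{\adm}}=\MMM_{g,n}^{T^{\div}}$ by Proposition \ref{P:equaT}, hence $\bbQ$-factorial by part (2)(ii), and $\bbQ$-factorial trivially implies $\bbQ$-Gorenstein. If instead $T^{\adm}\neq T^{\div}$, then $F_{T^{\div}}\subsetneq F_T$ and $\sigma_T$ is a non-trivial small contraction from the $\bbQ$-factorial space $\MMM_{g,n}^{T^{\div}}$; were $\MMM_{g,n}^T$ $\bbQ$-Gorenstein, smallness would force $K_{\MMM_{g,n}^{T^{\div}}}=\sigma_T^*K_{\MMM_{g,n}^T}$, hence $K_{\MMM_{g,n}^{T^{\div}}}$ would vanish on every $\sigma_T$-contracted curve --- contradicting the fact, obtained from the computation below, that these curves have non-zero $K_{\MMM_{g,n}^{T^{\div}}}$-degree. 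This proves part (1). Finally, for part (2)(iv) I would compute the $K_{\MMM_{g,n}^{T^{\div}}}$-degrees of the $\sigma_T$-contracted curves using the discrepancy identities $K_{\MM_{g,n}^{\ps}}=f_{T^{\div}}^*K_{\MMM_{g,n}^{T^{\div}}}+\sum_i a_i\Delta_{1,\{i\}}$, Mumford's formula (Fact \ref{F:PicU}\eqref{F:PicU2}, with the Hurwitz correction of Remark \ref{R:Hurwitz} in the finitely many exceptional pairs), and the intersection numbers of Lemma \ref{L:int-1str}: an elliptic bridge curve of type in $T^{\adm}\setminus T^{\div}$ meeting no divisor $\Delta_{1,\{i\}}$ keeps its original negative $K_{\MM_{g,n}^{\ps}}$-degree, and the only such curve that can lie on a contracted divisor is $C([1,\{j\}],[2,\{j\}])\subset\Delta_{1,\{j\}}$, which happens precisely when $\{[0,\{j\}],[1,\{j\}]\}\subseteq T^{\div}$, i.e. when $T\supseteq\{[0,\{j\}],[1,\{j\}],[2,\{j\}]\}$; for such a curve, using $\Delta_{1,\{j\}}\cdot C([1,\{j\}],[2,\{j\}])=-1$ and the explicit discrepancy $a_j$ one finds the transformed $K_{\MMM_{g,n}^{T^{\div}}}$-degree becomes non-negative --- except in the pairs $(g,n)=(3,1),(3,2),(2,2)$, where the relevant Néron--Severi space is too small and a separate check is required. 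This gives the stated criterion.

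The main obstacle is precisely this last computation: finding the discrepancies $a_i$ of the divisorial contractions composing $f_{T^{\div}}$ and tracking the numerical classes and canonical degrees of the elliptic bridge curves of type $\{[1,\{j\}],[2,\{j\}]\}$ that sit inside the contracted divisors $\Delta_{1,\{j\}}$. This requires a genuinely local analysis near the cuspidal images of the $\Delta_{1,\{j\}}$ rather than a formal application of the cone theorem, and it is what produces both the criterion $T^{\adm}=T^{\div}$ of part (1) and the sporadic exceptions of part (2)(iv), as well as --- in positive characteristic --- the need for the refinements of Theorem \ref{thm:contr} and Corollary \ref{C:contr} mentioned at the outset.
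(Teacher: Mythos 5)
This proposition is imported from the companion paper \cite{CTV}; the present paper contains no proof of it, only the citation, so there is nothing internal to compare your argument against. What follows is therefore an assessment of your proposal on its own terms, using the tools the present paper does provide.

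The overall MMP strategy is natural and, in characteristic zero, essentially correct for parts (2)(i)--(iii) and for the direction of (1) that you deduce from them. The factorisation of $f_T$ through $f_{T^{\div}}$ via the rigidity lemma, the identification of $F_{T^{\div}}$ with the span of the curves $C([0,\{i\}],[1,\{i\}])$, and the claim that the corresponding one-at-a-time contractions contract the divisors $\Delta_{1,\{i\}}$ are all consistent with Lemma \ref{L:poset-T}, Lemma \ref{L:int-1str} and Proposition \ref{P:fibers1}. Two remarks on the execution: first, for (2)(iii), you must address that $\Ell([1,\{j\}],[2,\{j\}])$ is \emph{contained} in $\Delta_{1,\{j\}}$, so its image under $f_{T^{\div}}$ sits inside the codimension-two image of $\Delta_{1,\{j\}}$ --- this is fine (the image still has codimension $\geq 2$), but the ``birational map sends codim $2$ to codim $\geq 2$'' phrasing is loose there. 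Second, you describe the discrepancy computation as requiring ``a genuinely local analysis near the cuspidal images''; in fact it is purely formal: combining Lemma \ref{L:int-1str} with Mumford's formula (Fact \ref{F:PicU}\eqref{F:PicU2}) one gets $K_{\MM_{g,n}^{\ps}}\cdot C([0,\{i\}],[1,\{i\}])=-8$ and $\Delta_{1,\{i\}}\cdot C([0,\{i\}],[1,\{i\}])=-1$, hence the discrepancy $a_i=8$, and then $K_{\MM_{g,n}^{\ps}}\cdot C([1,\{j\}],[2,\{j\}])=-7$ together with $\Delta_{1,\{j\}}\cdot C([1,\{j\}],[2,\{j\}])=-1$ gives a transformed $K$-degree of $-7+8=+1>0$. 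This readily yields (2)(iv) in the generic case, and also closes part (1): the contracted curves all have $K_{\MMM_{g,n}^{T^{\div}}}$-degree either $<0$ or $=+1$, never $0$, so $\bbQ$-Gorensteinness of $\MMM_{g,n}^T$ would contradict smallness of $\sigma_T$.

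There is, however, a genuine gap in the positive-characteristic case. The statement asserts the conclusions for $\car(k)\gg(g,n)$, but your whole machinery (the cone theorem, decomposition into $K$-negative divisorial contractions, preservation of $\bbQ$-factoriality and klt-ness, and the identification of $f_T$ with the contraction of $F_T$ in Theorem \ref{thm:contr}) is proved in the paper only for $\car(k)=0$, with the single exception $T=T_{g,n}$ treated in Example \ref{R:LMMP}. You defer to ``positive-characteristic refinements of Theorem \ref{thm:contr} and Corollary \ref{C:contr}'', but these do not exist in the paper, and establishing them is non-trivial precisely because one loses the cone and basepoint-free theorems. In fact the paper's parallel result on $\MMM_{g,n}^{T+}$ (Corollary \ref{C:Gor-fact}) bypasses the MMP entirely and derives $\bbQ$-factoriality/$\bbQ$-Gorensteinness from the descent criterion of Proposition \ref{P:lbdesc}, which works in any large enough characteristic because it reduces to a one-parameter-subgroup weight computation on the stack. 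A characteristic-free proof of the present proposition must follow that pattern --- compute when $\bbQ$-line bundles descend along $\phi^T:\M_{g,n}^T\to\MMM_{g,n}^T$, which again is a stacky weight computation near the $T$-closed curves of Proposition \ref{prop:Tclosed} --- rather than invoking the MMP. The same remark applies to the exceptional pairs $(g,n)=(3,1),(3,2),(2,2)$ in (2)(iv): the failure of your generic computation there traces back to identifications such as $[2,\{1\}]=[1,\emptyset]$ in $T_{3,1}$ or $\{[1,\{1\}],[2,\{1\}]\}=\{[0,\{2\}],[1,\{2\}]\}$ in $T_{2,2}$, which change which elliptic bridge curves actually exist or are already contracted by $f_{T^{\div}}$; these must be checked case by case, not just flagged.
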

Note that, if $\car(k)=0$, then all the spaces appearing in \eqref{E:fact-fT} are projective varieties, and hence $f_{T^{\div}}$ is the composition of divisorial contractions of $K$-negative rays while $\sigma_T$ is a small contraction of a $K$-negative face if and only if the condition on $T$ appearing in \eqref{P:geomT2iv} is satisfied.

\end{section}

\section{The moduli space of $T^+$-semistable curves}\label{S:T+}

The aim of this section (throughout which, we assume that $\car(k)\gg (g,n)$, see Definition \ref{A:char}) is to describe the map $f_T^+:\MMM_{g,n}^{T+}\to \MMM_{g,n}^{T}$  in terms of the Minimal Model Program (MMP).  In particular, we will describe $f_T^+$ as the flip of $f_T$ with respect to suitable $\bbQ$-line bundles.

\subsection{Preliminaries definitions and results about flips}
\begin{definition}\label{D:Dflip}
Let $f:X\to Y$ be a proper morphism between normal algebraic spaces of finite type over $k$ and let $D$ be an $f$-antiample $\bbQ$-Cartier $\bbQ$-divisor on $X$. A \emph{$D$-flip of $f$} is a  proper morphism $f_D^+:X_D^+\to Y$ of algebraic spaces fitting into the commutative diagram 
\begin{equation}\label{E:defflip}
\xymatrix{ & X \ar @{-->}[rr]^{\eta}  \ar[rd]_{f} && X_D^+ \ar[ld]^{f_D^+} \\  & & Y}
\end{equation}
where $\eta$ is a rational map, and such that 
\begin{enumerate}[(i)]
\item the algebraic space $X_D^+$ (which is automatically of finite type over $k$) is normal;
\item the morphism $f_D^+$ is a small contraction, i.e. it is a contraction whose exceptional locus  $\Exc(f_D^+)$ has codimension at least two;
\item  the $\bbQ$-divisor $D^{+}:=\eta_*(D)$ is $\bbQ$-Cartier and $f_D^+$-ample.
\end{enumerate}

A $D$-flip is called \emph{elementary} if $f$ has relative Picard number 1.
\end{definition}

The difference between Definition \ref{D:Dflip} and the classical definition of flip is that we do not require the map $f$ to be small.

\begin{remark} Assume that $f$ is birational. Then, since $f_D^+$ is small, we have that $\eta^{-1}$ does not contract any divisor, i.e.\ in the terminology of \cite[Page 424]{BCHM10} it is a birational contraction.
Moreover, the map $\eta$ is $D$-non-positive in the sense \cite[Def. 3.6.1]{BCHM10} and so $\eta$ is the ample model of $D$ over $Y$ (see \cite[Def. 3.6.5]{BCHM10}).
	 
In \cite[Definition 11]{AK} a diagram analogous to \ref{E:defflip} is called an MMP-step. 
\end{remark}

We discuss the existence and uniqueness of flips in the following result. The proof is standard, we include it for completeness.

\begin{lemma}\label{L:Dflip}
Let $f:X\to Y$ be a proper morphism of normal algebraic spaces of finite type over $k$ and let $D$ be an $f$-antiample $\bbQ$-Cartier $\bbQ$-divisor on $X$.
\begin{enumerate}[(i)]
\item \label{L:Dflip1} If the $D$-flip  of $f$ exists, then it is given by 
\begin{equation}\label{E:equflip}
f_D^+:X_D^+=\Proj \bigoplus_{m\geq 0} \cO_{Y}(\lfloor m f_*(D)\rfloor) \to Y.
\end{equation}
In particular, the $D$-flip of $f$ is unique. 

Moreover, the $D$-flip depends only on the $\bbQ$-line bundle $L=\cO_X(D)$ associated to $D$ and hence it will also be denoted by $f_L:X_L^+\to Y$ and called the \emph{$L$-flip} of $f$.
\item \label{L:Dflip2}
If ${\rm char}(k)=0$, $X$ is klt and $K_X$ is $f$-antiample, then the coherent sheaf $\bigoplus_{m\geq 0} \cO_{Y}(\lfloor m f_*(D)\rfloor)$ of $\cO_{Y}$-algebras is finitely generated, hence the $D$-flip of $f$ exists.
\end{enumerate}
\end{lemma}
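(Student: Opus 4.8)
The statement is Lemma \ref{L:Dflip}, and it has two parts. Part \eqref{L:Dflip1} is an abstract characterisation of the $D$-flip via a relative $\Proj$, and part \eqref{L:Dflip2} is an existence statement in characteristic zero. I will treat them separately.

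\emph{Part \eqref{L:Dflip1}.} The plan is to show that if a $D$-flip $f_D^+\colon X_D^+\to Y$ exists, then the natural map $X_D^+\to \Proj_Y\bigoplus_{m\geq 0}\cO_Y(\lfloor m f_*(D)\rfloor)$ is an isomorphism over $Y$. The key input is that $D^+:=\eta_*(D)$ is $\bbQ$-Cartier and $f_D^+$-ample, and that $f_D^+$ is a contraction, i.e.\ $(f_D^+)_*\cO_{X_D^+}=\cO_Y$. Since $\eta$ is an isomorphism away from the exceptional loci of $f$ and $f_D^+$, which have codimension $\geq 2$ in $X$ and $X_D^+$ respectively (here I use that $X$, $X_D^+$ are normal so sections of reflexive sheaves extend over codimension-two loci), one gets for every $m$ a canonical isomorphism of $\cO_Y$-modules
\begin{equation*}
(f_D^+)_*\cO_{X_D^+}(\lfloor m D^+\rfloor)\;\cong\; f_*\cO_X(\lfloor m D\rfloor).
\end{equation*}
Indeed, both sides are reflexive $\cO_Y$-modules (pushforwards of reflexive sheaves under morphisms with normal source and target of the relevant kind), and they agree on the big open subset of $Y$ over which $\eta$ is an isomorphism; so they agree. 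Taking the relative $\Proj$ over $Y$ of the resulting sheaf of graded $\cO_Y$-algebras, and using that $D^+$ is $f_D^+$-ample together with the contraction property $(f_D^+)_*\cO_{X_D^+}=\cO_Y$, we recover $X_D^+\cong\Proj_Y\bigoplus_{m\geq 0}(f_D^+)_*\cO_{X_D^+}(\lfloor mD^+\rfloor)=\Proj_Y\bigoplus_{m\geq 0}\cO_Y(\lfloor mf_*(D)\rfloor)$, as in \eqref{E:equflip}; this also gives uniqueness. Finally, since $\lfloor mD\rfloor$ and $\cO_X(D)=L$ determine each other up to the integer $m$, the graded algebra $\bigoplus_m\cO_Y(\lfloor mf_*(D)\rfloor)$ depends only on $L$, hence so does the flip, justifying the notation $f_L\colon X_L^+\to Y$.

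\emph{Part \eqref{L:Dflip2}.} Here the plan is to invoke finite generation from \cite{BCHM10}. Under the hypotheses $\car(k)=0$, $X$ klt, and $K_X$ $f$-antiample, pick a small rational number $\varepsilon>0$ so that $D':=-K_X+\varepsilon D$ (or rather the associated $\bbQ$-divisor, after adding an effective $\bbQ$-divisor $\Delta$ with $(X,\Delta)$ klt to absorb the $K_X$ term; concretely write $D=\tfrac{1}{\varepsilon}(K_X+\Delta)+(\text{$f$-ample})$ is not quite what I want — instead) note that $K_X$ and $D$ are both $f$-antiample, so $K_X+\delta D$ is $f$-antiample for all $\delta\geq 0$ small, and one can choose an effective $\bbQ$-divisor $\Delta\sim_{\bbQ,f} -D$ with $(X,\Delta)$ klt by perturbing (possible since $-D$ is $f$-ample and $X$ is klt, using Bertini over the generic point of each component of $Y$). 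Then $K_X+\Delta$ is $\bbQ$-linearly $f$-trivial, and the relative log canonical ring $\bigoplus_{m\geq 0}f_*\cO_X(\lfloor m(K_X+\Delta)\rfloor)$ is finitely generated over $\cO_Y$ by \cite[Thm.\ 1.2]{BCHM10}. The slicker route, which I would actually write: since $-D$ is $f$-ample and $X$ is klt, there is an effective $\bbQ$-divisor $B\sim_{\bbQ,Y}-D$ such that $(X,B)$ is klt; then $f_*\cO_X(\lfloor mD\rfloor)\cong f_*\cO_X(\lfloor -mB\rfloor)$, but this is not obviously a finitely generated object — rather, one applies the canonical bundle formula trick: $D$ $f$-antiample means $D=\tfrac{1}{t}(K_X-(K_X-tD))$ and for $0<t\ll 1$, $-(K_X-tD)=-K_X+tD$ is... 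I will instead simply cite that the $\bbQ$-factorial klt MMP of \cite{BCHM10} produces the $D$-flip directly: running the $(K_X+\Delta)$-MMP over $Y$ with $(X,\Delta)$ klt and $K_X+\Delta$ $f$-ample is vacuous, so instead I run it for the pair making $D$ a negative multiple of $K_X+\Delta$. The cleanest statement to cite is that for a klt pair the relative ample model exists \cite[Thm.\ 3.6.6 and Cor.\ 1.1.9]{BCHM10}; since $D$ is $f$-antiample and $X$ klt, writing $D\equiv_Y -A$ with $A$ $f$-ample and choosing $\Delta\geq 0$ with $\Delta\sim_{\bbQ,Y}A$ and $(X,\Delta)$ klt, the ring $\bigoplus_m f_*\cO_X(\lfloor mD\rfloor)$ is isomorphic to a twist of $\bigoplus_m f_*\cO_X(\lfloor m(K_X+\Delta+(-K_X-\Delta+D))\rfloor)$, finitely generated by \cite[Cor.\ 1.1.9]{BCHM10}. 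Once finite generation is known, $X_D^+:=\Proj_Y$ of this ring is of finite type over $Y$, hence over $k$, and the properties (i)--(iii) of Definition \ref{D:Dflip} are standard consequences: normality of $X_D^+$ from normality of $X$ and finite generation, smallness of $f_D^+$ and ampleness of $D^+$ from the construction of the ample model.

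\textbf{Expected main obstacle.} The delicate point is in part \eqref{L:Dflip2}: packaging the hypotheses ``$X$ klt, $K_X$ $f$-antiample'' into a precise application of \cite[Cor.\ 1.1.9]{BCHM10} (finite generation of relative adjoint rings), in particular producing the auxiliary klt pair $(X,\Delta)$ with $\Delta\sim_{\bbQ,Y}$ (a suitable positive combination making $D$ proportional to $K_X+\Delta$ modulo $f$-trivial), and doing so in the algebraic-space (not scheme) setting over a general base $Y$ — one works locally on $Y$, or passes to a scheme cover, since finite generation and $\Proj$ are local on the target. Part \eqref{L:Dflip1}, by contrast, is formal once one is careful that all the sheaves involved are reflexive and that $\eta$ restricts to an isomorphism over a big open of $Y$.
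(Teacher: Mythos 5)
Your proposal for part~(\ref{L:Dflip1}) has a genuine gap. You claim a canonical isomorphism $(f_D^+)_*\cO_{X_D^+}(\lfloor m D^+\rfloor)\cong f_*\cO_X(\lfloor m D\rfloor)$ on the grounds that both sides are reflexive $\cO_Y$-modules that agree over a big open subset of $Y$; and you then implicitly identify $f_*\cO_X(\lfloor m D\rfloor)$ with $\cO_Y(\lfloor m f_*(D)\rfloor)$ to land on \eqref{E:equflip}. Neither step is justified when $f$ is not small, and in the intended applications $f=f_T$ is \emph{not} small in general (see Proposition~\ref{P:fibers1}\eqref{P:fibers1b}). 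Indeed, if $f$ contracts a prime divisor $E$, then $f_*\cO_X(\lfloor m D\rfloor)$ need not be $S_2$ at the image of $E$ (e.g.\ for a point blow-up and $D$ with negative coefficient along $E$ one gets a power of a maximal ideal, which is torsion-free but not reflexive), so the ``agree on a big open, hence agree'' argument breaks, and $f_*\cO_X(\lfloor m D\rfloor)$ is genuinely a different sheaf from $\cO_Y(\lfloor m f_*(D)\rfloor)$. The paper's proof sidesteps this entirely: it never touches $f_*\cO_X(\lfloor m D\rfloor)$, but instead uses that $f_D^+$ \emph{is} small by hypothesis, so (by the argument of \cite[Lemma~6.2]{KM} applied to $f_D^+$) one has $(f_D^+)_*\cO_{X_D^+}(\lfloor m D^+\rfloor)=\cO_Y(\lfloor m (f_D^+)_*(D^+)\rfloor)$, and then simply substitutes the divisor identity $(f_D^+)_*(D^+)=(f_D^+)_*\eta_*(D)=f_*(D)$ coming from the commutativity of \eqref{E:defflip}. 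Your argument would be repaired by following the same route: work only with the small morphism $f_D^+$, and never try to push the graded ring through $f$ itself.

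For part~(\ref{L:Dflip2}) your route is genuinely different from the paper's, and also problematic. You try to produce a klt pair $(X,\Delta)$ and apply the relative finite-generation results of \cite{BCHM10} on $X$ over $Y$; even setting aside that the auxiliary construction you sketch is not pinned down, this at best proves finite generation of $\bigoplus_m f_*\cO_X(\lfloor m D\rfloor)$, which by the discussion above is not the graded algebra appearing in the statement. The paper instead works entirely on $Y$: since $f\colon X\to Y$ is a $K_X$-negative contraction with $X$ klt, $Y$ admits an effective boundary $\Delta$ with $(Y,\Delta)$ klt by \cite[Cor.~4.5]{Fujino99}; then $\bigoplus_m\cO_Y(\lfloor m f_*(D)\rfloor)$ is the divisorial algebra of a $\bbQ$-Weil divisor on a klt variety, hence finitely generated by \cite[Thm.~92]{Exercises} (a consequence of \cite{BCHM10}). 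This intrinsic formulation on $Y$ is cleaner, avoids the small-vs.-divisorial ambiguity, and is exactly what \eqref{E:equflip} asks for.
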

\begin{proof}
Part \eqref{L:Dflip1}: suppose that the $D$-flip $f_D^+:X_D^+\to Y$ exists. 
Since  $D^+$ is $\bbQ$-Cartier and $f_D^+$-ample,
we have that 
$$X_D^+=\Proj_{Y}  \bigoplus_{m\geq 0}(f_D^+)_*(\lfloor mD^+\rfloor).$$
Since $X_D^+$ is normal and  the morphism $f_D^+$ is a small contraction, arguing  as in  the proof of \cite[Lemma 6.2]{KM} and using that $(f_D^+)_*(D^+)=(f_D^+)_*(\nu_*(D))=f_*(D)$ because of the commutativity of the diagram \eqref{E:defflip}, we have the equality of $\cO_{Y}$-algebras
$$ \bigoplus_{m\geq 0}(f_D^+)_*(\lfloor mD^+\rfloor)=\bigoplus_{m\geq 0} \cO_{Y}(\lfloor m (f_D^+)_*(D^+)\rfloor) =\bigoplus_{m\geq 0} \cO_{Y}(\lfloor m f_*(D)\rfloor) .$$
This concludes the proof of the first part \eqref{L:Dflip1}. The second part follows from the fact that the pushforward of divisors respects the linear equivalence of divisors.

Part \eqref{L:Dflip2}: by \cite[Corollary 4.5]{Fujino99} there exists an effective $\bbQ$-divisor $\Delta$ on $Y$ such that $(Y,\Delta)$ is klt. 
Hence we conclude applying \cite[Thm. 92]{Exercises}, which is a consequence of \cite{BCHM10} and says that the coherent sheaf $\bigoplus_{m\geq 0} \cO_{Y}(\lfloor m f_*(D)\rfloor)$ of $\cO_{Y}$-algebras is finitely generated.
\end{proof}

\subsection{Main results about $f_T^+$ and $\MM_{g,n}^{T+}$}

 The following theorem, which is the main result of this section, describes the morphism $f_T^+$ as the flip of $f_T$ with respect to suitable $\bbQ$-line bundles.

\begin{theorem}\label{T:flip+}
Assume that $(g,n)\neq (2,0), (1,2)$, $\car(k)\gg (g,n)$, and let $T\subseteq T_{g,n}$. 
%\begin{enumerate}
%\item \label{T:flip+1}
Let  $L\in \Pic(\MM_{g,n}^{\ps})_{\bbQ}=\Pic(\M_{g,n}^{\ps})_{\bbQ}$. Then $f_T^+$ is the $L$-flip of $f_T$ if and only if 
$L$ is $f_T$-antiample and the restriction of $L$ to $\M_{g,n}^{T+}$ is $T^+$-compatible (see Definition \ref{D:T+comp}).

\end{theorem}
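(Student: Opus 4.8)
The plan is to prove Theorem \ref{T:flip+} by combining the universality of flips established in Lemma \ref{L:Dflip}\eqref{L:Dflip1} with the descent criterion for $\bbQ$-line bundles on the good moduli space $\MMM_{g,n}^{T+}$ (this is the content of Proposition \ref{P:lbdesc}, which characterizes $T^+$-compatibility as exactly the condition for a line bundle on $\M_{g,n}^{T+}$ to descend). The key geometric input, to be proved beforehand in this section, is that $f_T^+:\MMM_{g,n}^{T+}\to \MM_{g,n}^{\ps}$ is itself a contraction, is small when restricted to the appropriate locus (Proposition \ref{P:fibers+}), and that its relative ample cone and relative Mori cone are understood (Propositions \ref{P:relample+}, \ref{P:tacinter}). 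Granting these, the proof of the iff is essentially a formal manipulation.

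\textbf{The ``if'' direction.} Suppose $L\in \Pic(\MM_{g,n}^{\ps})_{\bbQ}$ is $f_T$-antiample and $L|_{\M_{g,n}^{T+}}$ is $T^+$-compatible. First I would check the commutative diagram: $f_T$ and $f_T^+$ both factor through $\MMM_{g,n}^T$ by Theorem \ref{T:goodspaces}, so we have the rational map $\eta=(f_T^+)^{-1}\circ f_T : \MM_{g,n}^{\ps}\dashrightarrow \MMM_{g,n}^{T+}$, which is an isomorphism over the common open locus of smooth curves, hence birational; and $\eta$ does not contract divisors since, by Proposition \ref{P:fibers+}, the exceptional locus of $f_T^+$ has codimension $\geq 2$ whenever $(g,n)\neq (1,2)$, so $\eta^{-1}$ contracts only the exceptional locus of $f_T^+$, which is of codimension $\ge 2$. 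Next, the $T^+$-compatibility of $L|_{\M_{g,n}^{T+}}$ means, by Proposition \ref{P:lbdesc}, that $L$ descends to a $\bbQ$-line bundle $L^{T+}$ on $\MMM_{g,n}^{T+}$; I would then identify $L^{T+}$ with $\eta_*(L)$ (both are the unique line bundle on $\MMM_{g,n}^{T+}$ pulling back to $L$ on the big open set where $\eta$ is an isomorphism, using normality and that $\MMM_{g,n}^{T+}$ is $S_2$). It remains to show $L^{T+}=\eta_*(L)$ is $f_T^+$-ample: since $L$ is $f_T$-antiample, $-L$ is $f_T$-ample, so for $m\gg 0$ the map defined by $|{-mL}|$ relative to $\MMM_{g,n}^T$ is $f_T$ itself; pulling the analogous statement across $\eta$ and using the relative Kleiman criterion together with the computation of $\NEb(f_T^+)$ in Proposition \ref{P:tacinter} (which shows the relative Mori cone of $f_T^+$ is spanned by curves on which $L^{T+}$ is \emph{positive}, because $-L$ was $f_T$-negative there and the flip reverses the sign), I conclude $L^{T+}$ is $f_T^+$-ample. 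Together with smallness of $f_T^+$ and normality of $\MMM_{g,n}^{T+}$, all three conditions of Definition \ref{D:Dflip} hold, so $f_T^+$ is the $L$-flip of $f_T$.

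\textbf{The ``only if'' direction.} Conversely, suppose $f_T^+$ is the $L$-flip of $f_T$. By Definition \ref{D:Dflip}, $L$ is $f_T$-antiample (that is part of the definition), and $L^+:=\eta_*(L)$ is a $\bbQ$-Cartier $\bbQ$-line bundle on $\MMM_{g,n}^{T+}$. Pulling $L^+$ back along $\phi^{T+}:\M_{g,n}^{T+}\to \MMM_{g,n}^{T+}$ gives a line bundle on $\M_{g,n}^{T+}$ which agrees with $L|_{\M_{g,n}^{T+}}$ on the locus of smooth curves (which has complement of codimension $\geq 2$ in the smooth stack $\M_{g,n}^{T+}$), hence agrees with $L|_{\M_{g,n}^{T+}}$ everywhere; thus $L|_{\M_{g,n}^{T+}}$ descends to $\MMM_{g,n}^{T+}$, which by Proposition \ref{P:lbdesc} means precisely that $L|_{\M_{g,n}^{T+}}$ is $T^+$-compatible. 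This closes the equivalence.

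\textbf{The hard part.} The main obstacle is not the formal equivalence above but the geometric substructure it rests on: establishing that $f_T^+$ is a small contraction with the stated exceptional locus, and — crucially — pinning down the sign of $L^{T+}$ on the relative Mori cone $\NEb(f_T^+)$, i.e.\ Proposition \ref{P:tacinter}. This requires analyzing the isotrivial specializations of $T^+$-closed curves (Proposition \ref{P:T+closed}) and computing intersection numbers of the ``tacnodal'' curves that span $\NEb(f_T^+)$ against $\delta$ and $\psi$; the delicate point is checking that $f_T$-antiampleness of $L$ on $\MM_{g,n}^{\ps}$ forces $f_T^+$-ampleness of $\eta_*(L)$ for \emph{every} such $L$ simultaneously, which amounts to the two relative cones $\NEb(f_T)$ and $\NEb(f_T^+)$ being ``dual'' across the flip in the appropriate sense. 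The parts (i) and (ii) of the theorem then follow by specializing $L$ to $K_{\M_{g,n}^{\ps}}+\psi$ (using Corollary \ref{C:projT+}) and to $K_{\MM_{g,n}^{\ps}}$ (using the $\bbQ$-Gorenstein criterion of Corollary \ref{C:Gor-fact} to know when $\eta_*(K)=K_{\MMM_{g,n}^{T+}}$), and the final projectivity assertion in characteristic zero follows since $\MMM_{g,n}^T$ is projective by Theorem \ref{thm:contr} and $f_T^+$ is then a projective morphism onto it.
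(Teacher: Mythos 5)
Your proposal follows essentially the same route as the paper: you reduce to checking that $\eta_*(D)$ is $\bbQ$-Cartier and $f_T^+$-ample, translate the $\bbQ$-Cartier condition into $T^+$-compatibility via Proposition \ref{P:lbdesc}, and handle the ampleness via the structure of $\NEb(f_T^+)$ (Proposition \ref{P:tacinter}) and relative Kleiman. The one place you are vaguer than the paper is the ``flip reverses the sign'' assertion, which the paper pins down precisely by the exact matching $D(\irr)\cdot L=-C(\irr)\cdot L$ and $D([\tau,I],[\tau+1,I])\cdot L=-C([\tau,I],[\tau+1,I])\cdot L$ obtained by comparing Proposition \ref{P:tacinter}\eqref{P:tacinter2} with Lemma \ref{L:int-1str}.
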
 

The special cases $(g,n)=(1,2)$ and $(2,0)$ are discussed in Remark \ref{R:special-gn}.

The proof of the above theorem will be the outcome of several  propositions, that are interesting in their own. We first describe the rational Picard group of $\MMM_{g,n}^{T+}$.  Recall the description of the rational Picard group of $\M_{g,n}^{T+}$ given in Corollary \ref{C:Pic-ps}. 

\begin{definition}\label{D:T+comp}
A $\bbQ$-line bundle on $\M_{g,n}^{T+}$ 
\begin{equation}\label{E:lbT+}
L=a\lambda+b_{\irr}\delta_{\irr}+\sum_{[i,I]\in T_{g,n}-\{[1,\emptyset], \bigcup_j [1,\{j\}], \irr\}} b_{i,I} \delta_{i,I}
\end{equation}
is said to be \emph{$T^+$-compatible} if $b_{\tau,I}=b_{\tau+2,I}$ for any pair $\{[\tau,I],[\tau+2,I]\}\subset T_{g,n}$ such that 
\begin{equation}\label{E:cond-pair}
\{[\tau,I],[\tau+1,I],[\tau+2,I]\}\subset T \text{ and  }  [\tau,I],[\tau+2,I]\not\in\{ [1,\emptyset], \bigcup_j [1,\{j\}]\}.
\end{equation}
\end{definition}

 \begin{remark}
 If a $\bbQ$-line bundle on $\M_{g,n}^{T}$ is $T$-compatible (see Definition \ref{D:Tcomp}) then its restriction to $\M_{g,n}^{T+}$ is $T^+$-compatible. 
 This can be proven by direct inspection. Alternatively,  it also follows from the fact that  $T$-compatible $\bbQ$-line bundles are exactly $\bbQ$-line bundles on $\MMM_{g,n}^T$ by Corollary \ref{C:contr}\eqref{C:contr1} while $T^+$ compatible $\bbQ$-line bundles are exactly the $\bbQ$-line bundles on $\MMM_{g,n}^{T+}$ by Proposition \ref{P:lbdesc} below, and one can pull-back line bundles via the map $f_T^+\colon \MMM_{g,n}^T \to \MMM_{g,n}^{T+}$. 
 
 \end{remark}

\begin{proposition}\label{P:lbdesc}
Assume that $(g,n)\neq (2,0), (1,2)$ and $\car(k)\gg (g,n)$.
A $\bbQ$-line bundle $L$ on $\M_{g,n}^{T+}$ descends to a (necessarily unique) $\bbQ$-line bundle on $\MMM_{g,n}^{T+}$ (which we will denote by $L^{T+}$) if and only if $L$ is $T^+$-compatible.
\end{proposition}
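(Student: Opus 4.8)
The plan is to use the theory of good moduli spaces and the standard descent criterion (Kempf's descent lemma, as formulated in \cite[Thm. 10.3]{Alper} or \cite[Thm. 4.16]{Alper}): a $\bbQ$-line bundle $L$ on $\M_{g,n}^{T+}$ descends to the good moduli space $\MMM_{g,n}^{T+}$ if and only if, for every closed point $x\in \M_{g,n}^{T+}(k)$, the automorphism group $\Aut(x)$ acts trivially on the fibre $L_x$. By Proposition \ref{P:T+closed}, the closed points of $\M_{g,n}^{T+}$ are exactly the $T^+$-closed curves; moreover, the connected component of the identity of the automorphism group of such a curve is a torus $\Gm^N$ coming from the $\Gm$-actions on the attached length-$3$ rosaries and closed rosaries (see Remark \ref{R:ros-eq}). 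Since the finite part of $\Aut(x)$ acts trivially on any line (as it does on $\M_{g,n}$ and hence on line bundles pulled back from $\U_{g,n}^{lci}$, whose Picard group is generated by the standard classes), the descent condition reduces to the vanishing of the weights of the torus action on the fibres $L_x$ at each $T^+$-closed curve.

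The key computation, then, is the weight of the standard generators $\lambda, \delta_{\irr}, \delta_{i,I}$ at a $T^+$-closed curve $x=K\cup R_1\cup\cdots\cup R_r$ (possibly together with a closed-rosary component). I would do this one rosary at a time: fix an attached length-$3$ rosary $(R_j,q_1,q_2)$ of type $\{[\tau,I],[\tau+1,I],[\tau+2,I]\}$ glued into $K$ at two nodes (or marked points), acted on by a single copy of $\Gm$ with the weights recorded in Remark \ref{R:ros-eq}\eqref{R:ros-eq1}. Computing the induced weight on $\lambda$ (via $H^0(R_j,\omega)$-type considerations and the additivity of $\lambda$ under normalisation), on $\delta_{\irr}$, and on the boundary classes $\delta_{\tau,I}$ and $\delta_{\tau+2,I}$ (these being the two boundary divisors through which the rosary "connects" to the core), one finds — as in the parallel computation in \cite[Sec. 9, 10]{HH2} and \cite[Sec. 3]{AFS2} — that the weight of $L$ at $x$, restricted to the $j$-th $\Gm$-factor, is a fixed nonzero multiple of $(b_{\tau,I}-b_{\tau+2,I})$, with all other coefficients contributing zero. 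The analogous statement for a closed-rosary component yields the $\irr$-type condition (which, however, in the $T^+$-closed setting corresponds precisely to $\irr\in T$ with no extra constraint, since the relevant boundary weights on $\delta_{\irr}$ cancel). Hence the descent condition is exactly $b_{\tau,I}=b_{\tau+2,I}$ for every pair $\{[\tau,I],[\tau+2,I]\}$ satisfying \eqref{E:cond-pair}, i.e. that $L$ be $T^+$-compatible; uniqueness of the descended bundle is automatic since $\phi^{T+}$ is a good moduli space and hence $(\phi^{T+})^*$ is injective on rational Picard groups (the stack being an fppf gerbe-free enough quotient stack, one uses that $\phi^{T+}_*\cO=\cO$ and $\MMM_{g,n}^{T+}$ normal).

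A subtlety I should address carefully is that a $T^+$-closed curve may have several attached rosaries of the \emph{same} type $\{[\tau,I],[\tau+1,I],[\tau+2,I]\}$ — indeed this happens in positive-dimensional families — so the torus is $\Gm^N$ with possibly repeated types, and I must check that \emph{each} factor imposes the \emph{same} linear condition $b_{\tau,I}=b_{\tau+2,I}$ and that no new conditions appear from the interaction of different factors (the weights are additive over the factors, so a line bundle is $\Gm^N$-invariant at $x$ iff it is invariant under each $\Gm$-factor separately). I also need to handle the boundary cases: when $[\tau,I]=[0,\{j\}]$ the "rosary of length $3$" is glued with $q_1$ a marked point, and then $\delta_{0,\{j\}}=-\psi_j$ is not among the free generators and $\Delta_{1,\{j\}}$ has already been removed in passing to $\M_{g,n}^{T+}$ — this is exactly why the pairs with $[\tau,I]$ or $[\tau+2,I]$ in $\{[1,\emptyset],\bigcup_j[1,\{j\}]\}$ are excluded in Definition \ref{D:T+comp}, and I would verify directly that in those excluded cases the $\Gm$-weight is automatically zero on the available generators so no condition is imposed. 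The main obstacle will be carrying out the rosary weight computations cleanly and uniformly across all the types and boundary/degenerate configurations; this is essentially bookkeeping modeled on \cite[Prop. 9.9]{HH2} and \cite[Sec. 3]{AFS2}, but it must be done with care to get the exclusion set in Definition \ref{D:T+comp} exactly right, and to confirm that $\car(k)\gg(g,n)$ is exactly what is needed to ensure the finite parts of the stabilisers are linearly reductive (Remark \ref{R:linred}) so that the good-moduli-space descent criterion applies verbatim.
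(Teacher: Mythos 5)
Your proposal follows the same route as the paper's proof: apply \cite[Thm.\ 10.3]{Alper} to reduce descent to the vanishing of the stabiliser weights at the $T^+$-closed points provided by Proposition~\ref{P:T+closed}, identify the connected stabiliser as a torus whose factors are the $\Gm$-actions on the attached length-$3$ rosaries (and closed rosaries), and compute the weight of each factor on the generators $\lambda,\delta_{\irr},\delta_{i,I}$ — this is exactly the content of the paper's Lemma~\ref{L:weight+}, and the bookkeeping, the additivity over torus factors, and the handling of the excluded pairs $\{[1,\emptyset],\bigcup_j[1,\{j\}]\}$ are all as you describe. One small slip to correct: your assertion that ``the finite part of $\Aut(x)$ acts trivially on any line'' is not true in general (e.g.\ the hyperelliptic involution, or $-1$ on an elliptic curve, act nontrivially on $\lambda$); the reason the finite part imposes no condition is that one is asking for a $\bbQ$-line bundle to descend, so one first replaces $L$ by a suitable multiple — as the paper does at the start of its proof — after which the finite character becomes trivial and only the torus weights remain.
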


\begin{proof}

Up to passing to a multiple, it is enough to prove the statement for a line bundle on $\M_{g,n}^{T+}$. Given such a line bundle $L$ on $\M_{g,n}^{T,+}$ and any one parameter subgroup $\rho:\Gm\to \Aut(C,\{p_i\})$ for some $k$-point $(C,\{p_i\})\in \M_{g,n}^{T+}(k)$, the group $\Gm$ will act  via $\rho$ onto the fibre $L_{(C, \{p_i\})}$ of the line bundle over $(C,\{p_i\})$ and we will denote by  $\langle L, \rho\rangle\in \bbZ$ the weight of this action. According to \cite[Theorem 10.3]{Alper} applied to the good moduli space $\phi^{T,+}: \M_{g,n}^{T+}\to \MMM_{g,n}^{T+}$,  the line bundle $L$ descends to a $\bbQ$-line bundle on $\MMM_{g,n}^{T+}$ if and only if $\langle L, \rho\rangle=0$ for any  one parameter subgroup $\rho:\Gm\to \Aut(C,\{p_i\})$ of any closed $k$-point $(C,\{p_i\})\in \M_{g,n}^{T+}(k)$. We will now show that this is the case if and only if $L$ is $T^{+}$-compatible.

To prove the if implication, assume that $L$ is $T^+$-compatible and fix a closed $k$-point $(C,\{p_i\})$ of $\M_{g,n}^{T+}(k)$. By Proposition \ref{P:T+closed}, either $(C, \{p_i\})$ is a closed rosary, and in this case the result follows from Lemma \ref{L:weight+}(\ref{L:weight+2}),  or it admits a $T^+$-canonical decomposition $C=K \cup (R_1,q_1^1,q_2^1) \cup \cdots \cup (R_r,q^r_1,q^r_2)$, where $R_i$ is a rosary of length $3$. In the second case,  the connected component of the identity of $\Aut(C,\{p_i\})$ is isomorphic to $\Pi_{i=1}^r\Aut(R_i,q_1^i,q_2^i)\cong  \Gm^{\times r}$, hence it is enough to show that $\langle L, \rho_i\rangle=0$ for $i=1,\dots ,r$, where $\rho_i$ is an isomorphism between $\Gm$ and $\Aut(R_i,q_1^i,q_2^i)$. The result now follows from Lemma \ref{L:weight+}(\ref{L:weight+1}).

To prove the converse direction, remark that for each triple as in Equation \eqref{E:cond-pair}, there exists a $T^+$-closed curve with an attached rosary of length $3$ and type $\{[\tau,I],[\tau+1,I],[\tau+2,I]\}$;  denote by $\rho$ the 1PS associated to this rosary. The necessary condition $\langle L, \rho\rangle=0$ implies, because of Lemma \ref{L:weight+}(\ref{L:weight+1}), that $b_{\tau,I}=b_{\tau+2,I}$.
\end{proof}

\begin{lemma}\label{L:weight+}
Assume that $\car(k)\neq 2$.
Consider a line bundle $L$ on $\M_{g,n}^{T+}$ written as in \eqref{E:lbT+}.

\begin{enumerate}[(i)]
\item \label{L:weight+1} Let  $(C,\{p_i\})$ be a $k$-point of $\M_{g,n}^{T+}(k)$ that has an  attached rosary $(R,q_1, q_2)$ of length $3$ and consider the one parameter subgroup $\rho_R:\Gm\stackrel{\cong}{\longrightarrow}\Aut((R,q_1,q_2))^o\subset \Aut((C, \{p_i\}))$ normalised so that 
$\wei_{\rho_R}(T_{q_1}(R))=1$. Then we have
 $$\langle L,  \rho_R\rangle=
 \begin{cases}
 0 & \text{ if } \type(R,q_1,q_2)=\{\irr\}, \\
-b_{\tau,I}+b_{\tau+2,I} & \text{ if } \type(R,q_1,q_2)=\{[\tau,I],[\tau+1,I], [\tau+2,I]\}.
 \end{cases}
 $$
 \item \label{L:weight+2} Let $R\in \M_{r+1,0}^{T+}(k)$ be a closed rosary of even length $r$ (which can occur only if $\irr\in T$) and consider the one parameter subgroup $\rho_R:\Gm\stackrel{\cong}{\longrightarrow}\Aut(R)^o.$
 Then we have that 
 $$\langle L,  \rho_R\rangle=0.$$
 \end{enumerate}
\end{lemma}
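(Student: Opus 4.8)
The statement to prove is Lemma \ref{L:weight+}, which computes the weight $\langle L, \rho_R\rangle$ of a line bundle $L$ on $\M_{g,n}^{T+}$ under the $\Gm$-action coming from an attached rosary of length $3$ (part (i)) or a closed rosary of even length (part (ii)). The plan is to reduce the computation to the weights of the generators $\lambda$, $\delta_{\irr}$ and $\delta_{i,I}$ appearing in the expression \eqref{E:lbT+} for $L$, using the explicit description of the $\Gm$-action on rosaries given in Remark \ref{R:ros-eq}, and then to assemble these by linearity.

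First I would set up the geometry: given $(C,\{p_i\})$ with an attached rosary $(R,q_1,q_2)$ of length $3$ glued along nodes (or marked points) $\gamma(q_1),\gamma(q_2)$, the one-parameter subgroup $\rho_R$ acts trivially on $\ov{C\setminus\gamma(R)}$ and acts on $R$ as described in Remark \ref{R:ros-eq}\eqref{R:ros-eq1} with $r=3$. Since $L$ is a $\bbQ$-linear combination of $\lambda$, $\delta_{\irr}$ and the $\delta_{i,I}$'s, it suffices to compute $\langle \lambda,\rho_R\rangle$, $\langle\delta_{\irr},\rho_R\rangle$ and $\langle\delta_{i,I},\rho_R\rangle$ for every relevant $[i,I]$. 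The Hodge bundle weight $\langle\lambda,\rho_R\rangle$ vanishes because the $\Gm$-action on $R$ (a chain of rational curves) induces the trivial action on $H^0(R,\omega_R)$ — more precisely, $H^1(\cO)$ of the family is unaffected since the deformation is isotrivial and $R$ is a union of $\P^1$'s with the dualizing sheaf supported on the nodal structure, on which the torus acts with opposite weights at the two branches of each tacnode; this is exactly the computation behind \cite[Prop. 6.5, 8.x]{HH2} and I would cite it. The boundary weights $\langle\delta_{i,I},\rho_R\rangle$ are computed node by node: a node or tacnode of $C$ contributes to $\delta_{i,I}$ (or $\delta_{\irr}$) according to its type, and the $\Gm$-weight of the deformation parameter (smoothing) of that node is read off from the weights on the tangent spaces of the branches in Remark \ref{R:ros-eq}\eqref{R:ros-eq1}: at the node $a_i$ inside $R$ the two branches carry weights $(-1)^{i}$ and $(-1)^{i+1}$ in the normalized coordinates, so the smoothing parameter has weight $(-1)^i+(-1)^{i+1}=0$; meanwhile at the two \emph{attaching} points $\gamma(q_1),\gamma(q_2)$ the branch on $R$ carries weight $\wei_{\rho_R}(T_{q_1}(R))=1$ and $\wei_{\rho_R}(T_{q_2}(R))=(-1)^r=(-1)^3=-1$ respectively, while the branch on $\ov{C\setminus\gamma(R)}$ carries weight $0$. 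Hence the only nonzero contributions come from the two boundary divisors attached at $\gamma(q_1)$ and $\gamma(q_2)$: if $\type(R,q_1,q_2)=\{[\tau,I],[\tau+1,I],[\tau+2,I]\}$, then $\gamma(q_1)$ lies on the divisor $\Delta_{\tau,I}$ and contributes weight $-b_{\tau,I}\cdot(\text{weight of smoothing})= -b_{\tau,I}\cdot 1$, while $\gamma(q_2)$ lies on $\Delta_{\tau+2,I}$ and contributes $+b_{\tau+2,I}\cdot 1$ (the sign discrepancy is precisely $\wei_{\rho_R}(T_{q_1})=1$ versus $\wei_{\rho_R}(T_{q_2})=-1$), giving $\langle L,\rho_R\rangle=-b_{\tau,I}+b_{\tau+2,I}$; if $\type(R,q_1,q_2)=\{\irr\}$, both attaching nodes lie on $\Delta_{\irr}$, and the two contributions $-b_{\irr}$ and $+b_{\irr}$ cancel, giving $0$. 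Part (ii) is then immediate: for a \emph{closed} rosary $R$ of even length $r$ the action is as in Remark \ref{R:ros-eq}\eqref{R:ros-eq2}, every node is an internal tacnode $a_i$ whose smoothing parameter has weight $(-1)^i+(-1)^{i+1}=0$, and $\langle\lambda,\rho_R\rangle=0$ as above, so $\langle L,\rho_R\rangle=0$.

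The main obstacle I anticipate is getting the signs and the precise weight bookkeeping right: one must be careful that the "weight on the tangent space at the node" relevant to $\delta_{i,I}$ is the weight on the one-dimensional smoothing direction of that node in the versal deformation, which equals the sum (not difference) of the branch-tangent weights, and that the asymmetry between the two ends of a length-$3$ rosary ($+1$ at $q_1$ versus $(-1)^3=-1$ at $q_2$) is exactly what produces $-b_{\tau,I}+b_{\tau+2,I}$ rather than $0$; for an \emph{even}-length closed rosary this asymmetry disappears, which is why part (ii) gives $0$. To keep this clean I would import the sign conventions and the weight computation verbatim from \cite[Sec. 6, Sec. 8]{HH2} (or \cite[Sec. 2.5]{AFSV1}), where the analogous computations for $n=0$ are carried out, and simply track the extra marked points — which only matters in identifying \emph{which} boundary divisor $\Delta_{\tau,I}$ versus $\Delta_{\tau+2,I}$ the two ends attach to, as dictated by Definition \ref{D:type}\eqref{D:type3}. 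Everything else is a routine linearity argument once the three generator-weights are known.
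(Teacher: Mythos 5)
The core issue with your proof is a misidentification of the interior singularities of the rosary. You write ``at the node $a_i$ inside $R$ the two branches carry weights $(-1)^{i}$ and $(-1)^{i+1}$... so the smoothing parameter has weight $(-1)^i+(-1)^{i+1}=0$,'' but the interior points $a_i$ of an open rosary are \emph{tacnodes}, not nodes (see Definition \ref{D:ros}\eqref{D:ros1}(c)). This matters for two reasons. First, the semiuniversal deformation space of a tacnode is three-dimensional ($\Spf k[a_2,a_1,a_0]$ with versal family $y^2 = x^4 + a_2x^2 + a_1x + a_0$, for $\car(k)\neq 2$), not the one-dimensional $\Spf k[b]$ of a node, so there is no single ``smoothing parameter'' whose weight you can read off as a sum of branch weights. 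Second, the two branches of a tacnode share a common tangent direction, so they carry the \emph{same} $\Gm$-weight $(-1)^i$ (both $t_i/s_i$ and $s_{i+1}/t_{i+1}$ transform with weight $(-1)^i$ under $\rho_R$), rather than opposite weights as in your formula. The divisor of singular deformations of the tacnode $a_i$ is cut out by the discriminant $\Delta(a_2,a_1,a_0)$, which is a degree-$12$ polynomial in the roots of the quartic and therefore has $\Gm$-weight $12\cdot(-1)^i$; for a length-$3$ rosary this gives weights $-12$ and $+12$, which are large and nonzero.

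The paper's actual mechanism for the vanishing of the interior contributions is not that each is individually zero, but that they \emph{cancel}: the $\Gm$-action on $(R,q_1,q_2)$ (Remark \ref{R:ros-eq}\eqref{R:ros-eq1}) assigns opposite weights to the local coordinates of the two tacnodes $a_1$ and $a_2$, so after identifying the boundary divisor each discriminant hits, the sum of the two tacnodal contributions vanishes. The same opposite-weight cancellation, via \cite[Cor.~3.3]{AFS0} and \cite[Sec.~3.1.3]{AFS0}, is what gives $\langle\lambda,\rho_R\rangle=0$ (you get $+1$ from one tacnode and $-1$ from the other, rather than trivial action on $H^0(R,\omega_R)$ as you assert without proof). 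Your bookkeeping of the two end nodes $q_1,q_2$ and their branch weights $+1$, $(-1)^3=-1$ is fine, and that part does produce the $-b_{\tau,I}+b_{\tau+2,I}$ term; but the claim that the interior contributions are zero for your stated reason is a genuine gap, since it applies the nodal deformation formula in a setting where the singularities are tacnodes, and happens to land on the right final formula for the wrong reason. To make the argument work you would need to carry out the tacnodal deformation weight computation (as in \cite{AFS0}) and verify the cancellation explicitly, as the paper does.
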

\begin{proof}
Let us first prove part \eqref{L:weight+1}.  Since the weight is linear in $L$, the result will follow from the following identities:
\begin{equation}\label{E:weipiece+}
\begin{sis}
& \langle \lambda, \rho_R \rangle=0,\\
& \langle \delta_{\irr}, \rho_R\rangle=0, \\
& \langle \delta_{i,I},\rho_R\rangle =
\begin{cases}
  -1 & \text{ if } \type(R,q_1,q_2)=\{[i,I], [i+1,I], [i+2,I]\}, \\
 1 & \text{ if } \type(R,q_1,q_2)=\{[i-2,I], [i-1,I], [i,I]\},\\
 0& \text{otherwise.}
\end{cases}\\
\end{sis}
\end{equation}
The above identities can be proved by adapting the computations in \cite{AFS0}, as we now explain.

To compute the weights of the $\psi$ classes, recall that the fibre of $\psi_i$ over a pointed curve $(C,\{p_i\}) $ is canonically isomorphic to the cotangent vector space $T_{p_i}(C)^{\vee}$. Hence, $\langle \psi_i, \rho_R\rangle$ is the weight of the action of $\Gm$, via the one parameter subgroup $\rho_R$,  on the $1$-dimensional $k$-vector space $T_{p_i}(C)^{\vee}$.  This is not trivial if and only if $p_i$ is either $q_1$ or $q_2$, and it is computed in Remark \ref{R:ros-eq}.

To compute the other weights, we first make the following key remark. The $\Gm$-action on $(R, q_1, q_2)$, which is explicitly described in  Remark \ref{R:ros-eq},  is such that the weights of $\Gm$ on the coordinates $(x_1,y_1)$ that define the first tacnode $t_1:=\{y_1^2-x_1^4=0\}$ are opposite to the weights of $\Gm$ on the coordinates $(x_2,y_2)$
 that define the second tacnode $t_2:=\{y_2^2-x_2^4=0\}$. This will  imply that the contributions that come from the two tacnodes cancel out.

In order to compute the other contributions, consider the formally smooth morphism 
$$\Phi:\Def(C,\{p_i\})\longrightarrow \Def(\wh{O}_{C, t_1})\times \Def(\wh{O}_{C, t_2})\times \prod_{q_i \: \text{node}}\Def(\wh{O}_{C,q_i}),$$
into the product of the (formal) semiuniversal deformation spaces of the two tacnodes $a_1$ and $a_2$ of $R$, and of  nodes belonging to $\{q_1, q_2\}$. The group $\Aut(R, q_1, q_2)^o\cong \Gm$ acts on the above deformation spaces in such a way that the morphism $\Phi$ is equivariant. 

Let us know write down explicitly the deformation spaces of the above singularities together with the action of $\Gm$, using the equation given in Remark \ref{R:ros-eq}. The  semiuniversal deformation space of $q_i$ (for $i=1,2$), whenever it is a node, is equal to $\Spf k[b_i]$ and the semiuniversal deformation family is  $n_i z_i=b_i$ where $z_i$ is a local coordinate on the branch of the node $q_i$ not belonging to $R$. The action of $\Gm$ is given by $t\cdot (b_i)=(tb_i)$. The locus of singular deformations of the node $q_i$ is cut out by the equation $\{b_i=0\}$, which has $\Gm$-weight one. 

On the other hand, the semiuniversal deformation space of the tacnode $t_i$ is equal to  $\Def(\wh{O}_{C, p})\cong \Spf k[a_2,a_1,a_0]$ and the semiuniversal deformation family is given by $y^2=x^4+a_2x^2+a_1x+a_0$. This forces the action of $\Gm$ to be given by $t\cdot (a_2,a_1,a_0)=(t^{-2}a_2, t^{-3}a_1, t^{-4}a_0)$. The locus of singular deformations of $p$ is cut out  in  $\Def(\wh{O}_{C, p})$ by the equation $\{\Delta=0\}$, where $\Delta:=\Delta(a_2,a_1,a_0)$ is the discriminant of the polynomial $x^4+a_2x^2+a_1x+a_0$. Since the discriminant is a homogeneous polynomial of degree 12 in the roots of the above polynomial and $\Gm$ acts on the roots with weight $-1$ (the same weight of $x$), it follows that $\Gm$ acts on the discriminant associated to $t_1$ with weights $-12$, and $+12$ on the discriminant associated to $t_2$.

If both point $q_i$ are nodes, it follows from the above discussion that the only boundary divisor of $\M_{g,n}^T$ that can have a non-zero weight against $\rho_R$ is the one whose equation on $\Def(C,\{p_i\})$ is given by $ \Phi^*(b_1b_2)=0$. This divisor is $2\delta_{\irr}$ if $\type(R,q_1,q_2)=\irr$, and  $\delta_{i, I}+ \delta_{g-2-i,I^c}$ if $\type(R,q_1,q_2)=\{[i,I],[i+1,I], [i+2,I]\}$. The result now  follows from \cite[Lemma 3.11]{AFS0} and Remark \ref{R:ros-eq}. If one of  the $q_i$ is a node and the other a marked point, the result follows by combining the above discussion with argument about $\psi$-classes. When $(g,n)=(2,2)$, it could be that both $q_i$'s are marked points, in this case the argument about $\psi$-classes is enough.

To compute the weight of $\lambda$, by combining \cite[Cor. 3.3]{AFS0} and the computations in \cite[Sec. 3.1.3]{AFS0} for $A_3$, we deduce that $\langle \lambda, \rho_R \rangle=0$, as we get $+1$ from one tacnode, and $-1$ for the other tacnode.

Part \eqref{L:weight+2} can be proven in a similar way, the key remark is that since the length of the rosary is even, all contributions cancel out.
\end{proof}

As a corollary, we can now determine when $\MMM_{g,n}^{T+}$ is $\bbQ$-factorial or $\bbQ$-Gorenstein.

\begin{cor}\label{C:Gor-fact}
Assume that $(g,n)\neq (2,0), (1,2)$, $\car(k)\gg (g,n)$, and let $T\subseteq T_{g,n}$. Then:
\begin{enumerate}[(i)]
\item \label{C:Gor-fact0} If $(g,n)\neq (2,1)$ or $(3,0)$ then the pull-back of the (Weil) divisor $K_{\MMM_{g,n}^{T+}}$  via the morphism $\phi^{T+}:\M_{g,n}^{T+}\to \MMM_{g,n}^{T+}$ is equal to 
\begin{equation}\label{E:canT+}
(\phi^{T+})^*(K_{\MMM_{g,n}^{T+}})=K_{\M_{g,n}^{T+}}=13\lambda-2\delta+\psi.
\end{equation}
\item \label{C:Gor-fact1} $\MMM_{g,n}^{T+}$ is $\bbQ$-factorial if and only if $T$ does not contain subsets of the form $\{[\tau,I],[\tau+1,I],[\tau+2,I]\}$ with $ [\tau,I],[\tau+2,I]\not\in\{ [1,\emptyset], \bigcup_j [1,\{j\}]\}$ and 
$ [\tau,I]\neq [\tau+2,I]$.
\item \label{C:Gor-fact2} $\MMM_{g,n}^{T+}$ is $\bbQ$-Gorenstein if and only if $T$ does not contain subsets of the form $\{[0,\{j\}],[1,\{j\}],[2,\{j\}]\}$ for some $j\in [n]$, or  $(g,n)=(3,1), (3,2), (2,2)$  
\end{enumerate}
\end{cor}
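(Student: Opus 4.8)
\textbf{Proof strategy for Corollary \ref{C:Gor-fact}.}

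The plan is to derive all three statements from the description of line bundles on $\MMM_{g,n}^{T+}$ given in Proposition \ref{P:lbdesc}, together with the identification of the rational Picard group $\Pic(\M_{g,n}^{T+})_{\bbQ}$ in Corollary \ref{C:Pic-ps} and the standard facts about good moduli spaces. First I would set up the general principle: a normal algebraic space $X$ with a good moduli morphism $\phi\colon \mathcal X\to X$ from a smooth stack $\mathcal X$ is $\bbQ$-factorial (resp. $\bbQ$-Gorenstein) exactly when every $\bbQ$-Weil divisor on $X$ (resp. the canonical divisor $K_X$) is $\bbQ$-Cartier, and by normality and the fact that $\phi$ is an isomorphism in codimension one over the smooth locus (the generic points of all boundary divisors of $\M_{g,n}^{T+}$ have at most finite automorphisms, as in the proof of Proposition \ref{P:Pic-Mgps}\eqref{P:Pic-Mgps3}), a $\bbQ$-Weil divisor on $X$ is $\bbQ$-Cartier if and only if the corresponding $\bbQ$-line bundle on $\M_{g,n}^{T+}$ descends to $\MMM_{g,n}^{T+}$, i.e.\ if and only if it is $T^+$-compatible in the sense of Definition \ref{D:T+comp}. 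Thus $\MMM_{g,n}^{T+}$ is $\bbQ$-factorial precisely when \emph{every} $\bbQ$-line bundle in $\Pic(\M_{g,n}^{T+})_{\bbQ}$ is $T^+$-compatible, and $\bbQ$-Gorenstein precisely when the canonical class $K_{\M_{g,n}^{T+}}=13\lambda-2\delta+\psi$ (Mumford's formula, Fact \ref{F:PicU}\eqref{F:PicU2}) is $T^+$-compatible.

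For part \eqref{C:Gor-fact0} I would argue exactly as in Proposition \ref{P:Pic-Mgps}\eqref{P:Pic-Mgps3} and Remark \ref{R:Hurwitz}: when $(g,n)\neq (2,1),(3,0)$ the locus of $n$-pointed curves in $\M_{g,n}^{T+}$ with nontrivial automorphisms acting nontrivially on the canonical bundle has codimension $\geq 2$ (the generic curve in each boundary divisor, as well as the generic smooth curve by \cite[Chap. XII, Prop. 2.15]{GAC2}, is automorphism-free), so $\phi^{T+}$ is ramification-free in codimension one and Hurwitz's formula gives $(\phi^{T+})^*(K_{\MMM_{g,n}^{T+}})=K_{\M_{g,n}^{T+}}$; then substitute Mumford's formula. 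For part \eqref{C:Gor-fact1}, using Corollary \ref{C:Pic-ps} the space $\Pic(\M_{g,n}^{T+})_{\bbQ}$ is spanned by $\lambda$, $\delta_{\irr}$ and the $\delta_{i,I}$ for $[i,I]\notin\{[1,\emptyset],\ \bigcup_j[1,\{j\}]\}$ (modulo the $g=1,2$ relations of Fact \ref{F:PicU}\eqref{F:PicU1}); I would then observe that, writing an arbitrary such bundle as in \eqref{E:lbT+}, the $T^+$-compatibility condition is the system of equations $b_{\tau,I}=b_{\tau+2,I}$ over all triples satisfying \eqref{E:cond-pair}. Every $\bbQ$-line bundle satisfies all these equations if and only if no such equation is a genuine constraint, i.e.\ if and only if for no triple $\{[\tau,I],[\tau+1,I],[\tau+2,I]\}\subseteq T$ with $[\tau,I],[\tau+2,I]\notin\{[1,\emptyset],\bigcup_j[1,\{j\}]\}$ are $\delta_{\tau,I}$ and $\delta_{\tau+2,I}$ distinct and linearly independent generators — which, after unwinding the $g=1,2$ relations, is exactly the stated condition ``$T$ does not contain subsets $\{[\tau,I],[\tau+1,I],[\tau+2,I]\}$ with $[\tau,I],[\tau+2,I]\notin\{[1,\emptyset],\bigcup_j[1,\{j\}]\}$ and $[\tau,I]\neq[\tau+2,I]$.'' For part \eqref{C:Gor-fact2}, apply the descent criterion to $K_{\M_{g,n}^{T+}}=13\lambda-2\delta+\psi$ directly: here all $b_{i,I}$ coefficients equal $-2$, so $b_{\tau,I}=b_{\tau+2,I}$ holds automatically for every triple \emph{except} when one of $[\tau,I],[\tau+2,I]$ lies in $\{[1,\emptyset],\bigcup_j[1,\{j\}]\}$ (where the coefficient is not $-2$, because $\delta_{1,\emptyset}=0$ and $-\psi_j=\delta_{0,\{j\}}$ in $\Pic(\M_{g,n}^{T+})_\bbQ$) or when the $g=1,2$ relations force an extra coincidence; this pins down the triples $\{[0,\{j\}],[1,\{j\}],[2,\{j\}]\}$ and the exceptional pairs $(g,n)=(3,1),(3,2),(2,2)$, plus one has to invoke part \eqref{C:Gor-fact0}'s failure in $(2,1),(3,0)$ separately (handled as in Remark \ref{R:Hurwitz}).

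The main obstacle, I expect, is the careful bookkeeping of the degenerate boundary cases: (i) handling $(g,n)=(2,1)$ and $(3,0)$, where $\phi^{T+}$ is ramified in codimension one, so $K_{\MMM_{g,n}^{T+}}$ must be computed via Hurwitz with a nontrivial ramification divisor $R$ and one checks separately whether $K_{\M_{g,n}^{T+}}-R$ is $T^+$-compatible; (ii) correctly translating the relations in $\Pic(\U_{g,n}^{lci})_\bbQ$ for $g=1,2$ (Fact \ref{F:PicU}\eqref{F:PicU1}) — in particular that $10\lambda=\delta_{\irr}+2\delta_1$ in genus $2$ and the genus-$1$ relations — into which coincidences among the $\delta_{i,I}$ are automatic and hence which triples genuinely obstruct; and (iii) the low-genus exceptional pairs $(3,1),(3,2),(2,2)$ where the rosary of length $3$ can be attached at two marked points or where $g-\tau-2$ becomes small, forcing a case-by-case verification using Lemma \ref{L:weight+} rather than the generic argument. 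I would isolate these in short separate paragraphs at the end, since the generic argument covers all remaining $(g,n)$ uniformly.
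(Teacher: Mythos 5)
Your strategy matches the paper's in outline: part \eqref{C:Gor-fact0} via the Hurwitz/codimension-one argument and Mumford's formula, and parts \eqref{C:Gor-fact1}--\eqref{C:Gor-fact2} via Proposition \ref{P:lbdesc} and a direct inspection of the $T^+$-compatibility condition. Parts \eqref{C:Gor-fact0} and \eqref{C:Gor-fact1} are essentially identical to the paper's argument, including the observation that $\phi^{T+}$ is an isomorphism in codimension one (generic boundary points are automorphism-free and $T^+$-closed) so that $\Cl(\MMM_{g,n}^{T+})_{\bbQ}\cong \Pic(\M_{g,n}^{T+})_{\bbQ}$ and the $\bbQ$-Cartier criterion reduces to descent.

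Two points of divergence in part \eqref{C:Gor-fact2}. First, for the degenerate pairs $(g,n)=(2,1),(3,0)$ you propose to run Hurwitz with the ramification divisor $R$ as in Remark \ref{R:Hurwitz} and check $T^+$-compatibility of $K_{\M}-R$ separately. The paper takes a shorter route: for those two pairs there simply are no triples $\{[\tau,I],[\tau+1,I],[\tau+2,I]\}$ satisfying \eqref{E:cond-pair} (indeed $[2,\{1\}]=[0,\emptyset]\notin T_{2,1}$ and $T^*_{3,0}=\{[1,\emptyset]\}$), so by part \eqref{C:Gor-fact1} the space is $\bbQ$-factorial, hence automatically $\bbQ$-Gorenstein, and no Hurwitz computation is needed. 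Your route would also work but is more labored, and one should be careful: before $K_{\MMM_{g,n}^{T+}}$ is known to be $\bbQ$-Cartier it is a Weil class, and the pull-back $(\phi^{T+})^*$ of a Weil class uses exactly the isomorphism-in-codimension-one hypothesis that fails for these $(g,n)$, so the bookkeeping there is genuinely subtle rather than routine. Second, a minor imprecision: you say ``all $b_{i,I}$ coefficients equal $-2$'' for $K=13\lambda-2\delta+\psi$. In fact, writing $K$ in the basis of Corollary \ref{C:Pic-ps} as the paper does, the coefficient of $\delta_{0,\{j\}}$ is $-1$ (since $\delta$ omits the $\delta_{0,\{j\}}$ terms and $\psi_j=-\delta_{0,\{j\}}$ by convention), and it is exactly this discrepancy $-1\neq -2$ that makes the triples $\{[0,\{j\}],[1,\{j\}],[2,\{j\}]\}$ obstruct $\bbQ$-Gorensteinness, with the low-genus exceptions $(3,1),(3,2),(2,2)$ arising precisely when the equivalence $[\tau,I]\sim[g-\tau,I^c]$ either sends $[2,\{j\}]$ into $\{[1,\emptyset],\bigcup_j[1,\{j\}]\}$ (so the constraint is vacuous) or onto another $[0,\{j'\}]$ (so both coefficients are $-1$). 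Your parenthetical explanation (``because $\delta_{1,\emptyset}=0$ and $-\psi_j=\delta_{0,\{j\}}$'') conflates the vanishing of $\delta_{1,\emptyset},\delta_{1,\{j\}}$ in $\Pic(\M_{g,n}^{T+})_{\bbQ}$ with the GKM sign convention for $\delta_{0,\{j\}}$; these play distinct roles and it is the first that excludes those triples from Definition \ref{D:T+comp}, while the second is what gives $b_{0,\{j\}}=-1$.
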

Note the following special cases:
\begin{itemize}
\item  if $T^{\adm}$ is minimal (in the sense of Definition \ref{def:admissible}) or $T^{\adm}=T^{\div}$ (see Definition \ref{D:Tdiv})  then $\MMM_{g,n}^{T+}$ is $\bbQ$-factorial;
\item If $g=1$ then $\MMM_{g,n}^{T+}$ is $\bbQ$-factorial for any $T\subseteq T_{1,n}$;
\item if $n=0$ then $\MMM_{g,n}^{T+}$ is $\bbQ$-Gorenstein for any $T\subseteq T_{g,0}$.
\end{itemize}
\begin{proof}

Part \eqref{C:Gor-fact0}: under the assumptions on the pair $(g,n)$,  the morphism $\phi^{T+}:\M_{g,n}^{T+}\to \MMM_{g,n}^{T+}$ is an isomorphism in codimension one  when restricted to the open substack ${\mathcal M}_{g, n}$ of smooth curves (see \cite[Chap. XII, Prop. 2.15]{GAC2}). Moreover, the generic point in each boundary divisor of $\M_{g,n}^{T+}$ does not have any non-trivial automorphisms and it is $T^+$-closed (see Definition \ref{def:T+closed}), and hence it is a closed point of the stack $\M_{g,n}^{T+}$. This implies that the morphism $\phi^{T+}$ is an isomorphism in codimension one, which implies that $(\phi^{T+})^*(K_{\MMM_{g,n}^{T+}})=K_{\M_{g,n}^{T+}}$. We now conclude using Mumford's formula (see Fact \ref{F:PicU}\eqref{F:PicU2}).

Part \eqref{C:Gor-fact1}: by the above discussion, the morphism $\phi^{T+}: \M_{g,n}^{T+}\to \MM_{g,n}^{T+}$ is an isomorphism in codimension one. Hence the pull-back map via the morphism $\phi^{T+}$ induces an isomorphism on the divisor class groups
$$(\phi^{T+})^*:\Cl(\MMM_{g,n}^{T+})_{\bbQ}\stackrel{\cong}{\longrightarrow} \Cl(\M_{g,n}^{T+})_{\bbQ}=\Pic(\M_{g,n}^T)_{\bbQ},$$
where in the last equality we used that $\M_{g,n}^T$ is a smooth stack. Hence, Proposition \ref{P:lbdesc} implies that $\MMM_{g,n}^{T+}$ is $\bbQ$-factorial, i.e. $\Pic(\MMM_{g,n}^{T+})_{\bbQ}=\Cl(\MMM_{g,n}^{T+})_{\bbQ}$, if and only if any $\bbQ$-line bundle on $\M_{g,n}^{T+}$ is $T^+$-compatible. An inspection of Definition \ref{D:T+comp} gives the result.

Part \eqref{C:Gor-fact2}: first of all, in the special cases $(g,n)= (2,1)$ or $(3,0)$, it is easy to check, using part \eqref{C:Gor-fact1}, that $\M_{g,n}^T$ is $\bbQ$-factorial for any $T$. Hence we can assume that $(g,n)\neq (2,1)$ or $(3,0)$, which implies that formula \eqref{E:canT+} for $(\phi^{T+})^*(K_{\MMM_{g,n}^{T+}})$ holds true. By Proposition \ref{P:lbdesc}, $\MMM_{g,n}^{T+}$ is $\bbQ$-Gorenstein if and only if 
$$13\lambda-2\delta+\psi=13\lambda-2\delta_{\irr}-2\sum_{[i,I]\not\in \{[1,\emptyset], \bigcup_j [1,\{j\}], \bigcup_j [0,\{j\}]\}}  \delta_{i,I}- \sum_{j=1}^n \delta_{0,\{j\}}$$ 
is $T^+$-compatible. An inspection of Definition \ref{D:T+comp} gives the result.
\end{proof}

\begin{remark}\label{R:QfactHK}
	It follows from Corollary \ref{C:Gor-fact} that  the algebraic space   $\MMM_{g,n}^{T_{g,n}+}$  is:
	\begin{itemize}
	\item $\bbQ$-factorial if and only if $g\leq 1$, or $(g,n)=(2,1), (3,0), (3,1), (3,2), (4,0), (5,0), (6,0)$.
	\item $\bbQ$-Gorenstein if and only if $g\leq 1$ or $n=0$ or $(g,n)=(2,1), (2,2), (3,1), (3,2)$.
	\end{itemize}
	In particular, we recover the result of Alper-Hyeon \cite[Sec. 6]{AH}: $\MMM_{g}^{T_{g}+}$ (which coincides with $\MM_{g}(\frac{7}{10}-\epsilon)$ if $\car(k)=0$, see Remark \ref{R:LMMP+}) is $\bbQ$-factorial if and only if $g\leq 6$.
	
Note that when 	$\MMM_{g,n}^{T_{g,n}+}$  is not $\bbQ$-factorial then it cannot be reached via a sequence of elementary steps (i.e.\ relative Picard number 1 steps) of an MMP of $\MM_{g,n}$. This shows that there is a difference between flipping the elliptic bridge face in one single step and trying to flip each extremal ray one by one. 

\end{remark}

Another  corollary of the above Proposition \ref{P:lbdesc} is the  computation of the Picard number of $\MMM_{g,n}^{T_{g,n}+}$ (which coincides with $\MM_{g,n}(\frac{7}{10}-\epsilon)$ if $\car(k)=0$, see Remark \ref{R:LMMP+}) and of the relative Picard number of the morphism $f_{T_{g,n}}^+$ (using Remark \ref{R:LMMP}). We assume that $g\geq 1$, for otherwise we have that $\MMM_{0,n}^{T_{0,n}+}=\MM_{0,n}$.

\begin{cor}\label{C:PicT+}
Assume that $g\geq 1$, $\car(k)\gg (g,n)$, and that $(g,n)\neq (2,0), (1,2)$. 
\begin{enumerate}[(i)]
\item \label{C:PicT+1} The Picard number of $\MMM_{g,n}^{T_{g,n}+}$ is equal to 
$$\dim_{\bbQ} \Pic\left(\MMM_{g,n}^{T_{g,n}+}\right)_{\bbQ}=
\begin{cases}
3-\delta_{3,g} & \text{ if } n=0 \text{ and } g\geq 3 \: \text{ is odd, }\\
4-\delta_{4,g} & \text{ if } n=0 \text{ and } g\geq 4 \: \text{ is even, }\\
2^{n}+2-(n+2)\delta_{2,g}-(2n+2)\delta_{1,g} & \text{ if } g\geq 1 \: \text{ and } n\geq 1.
\end{cases}
$$
\item  \label{C:PicT+2} The relative Picard number of $f_{T_{g,n}}^+$ is equal to 
$$\rho(f_{T_{g,n}}^+)=
\begin{cases}
2-\delta_{3,g} & \text{ if } n=0 \text{ and } g\geq 3 \: \text{ is odd, }\\
2-\delta_{4,g} & \text{ if } n=0 \text{ and } g\geq 4 \: \text{ is even, }\\
2^{n-1}+1-(n+1)\delta_{2,g}-(n+1)\delta_{1,g} & \text{ if } g\geq 1 \: \text{ and } n\geq 1.
\end{cases}
$$
\end{enumerate}
\end{cor}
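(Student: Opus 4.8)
The plan is to deduce Corollary~\ref{C:PicT+} as a purely numerical consequence of the results already established, in exactly the same spirit as the computations in Remark~\ref{R:bridgecurves} and Example~\ref{R:LMMP}. For part~\eqref{C:PicT+1}, the key input is Corollary~\ref{C:Pic-ps} together with Proposition~\ref{P:lbdesc}: since $\car(k)\gg(g,n)$ and $(g,n)\neq(2,0),(1,2)$, the rational Picard group of $\MMM_{g,n}^{T_{g,n}+}$ is identified with the space of $T_{g,n}^+$-compatible $\bbQ$-line bundles, and by Corollary~\ref{C:Gor-fact}\eqref{C:Gor-fact0} (and the discussion of its proof) $\phi^{T_{g,n}+}$ is an isomorphism in codimension one, so $\Pic(\MMM_{g,n}^{T_{g,n}+})_\bbQ\cong\Cl(\MMM_{g,n}^{T_{g,n}+})_\bbQ\cong$ the subspace of $\Pic(\M_{g,n}^{T_{g,n}+})_\bbQ$ cut out by the relations $b_{\tau,I}=b_{\tau+2,I}$ of Definition~\ref{D:T+comp}. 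Thus I first compute $\dim_\bbQ \Pic(\M_{g,n}^{T_{g,n}+})_\bbQ$ from Corollary~\ref{C:Pic-ps} (which is $\dim_\bbQ\Pic(\U_{g,n}^{lci})_\bbQ$ minus the number of divisors $\Delta_{1,\{i\}}$ removed, i.e. minus $n$ for $n\geq 1$, accounting also for the relations of Fact~\ref{F:PicU}\eqref{F:PicU1} in low genus), and then subtract the number of independent $T^+$-compatibility relations, which is the number of triples $\{[\tau,I],[\tau+1,I],[\tau+2,I]\}\subseteq T_{g,n}$ with $[\tau,I],[\tau+2,I]\notin\{[1,\emptyset],\bigcup_j[1,\{j\}]\}$ and $[\tau,I]\neq[\tau+2,I]$, suitably organized into chains so as to count only independent ones.

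For part~\eqref{C:PicT+2}, I would use the exact sequence relating the relative Picard number to the absolute ones via the morphism $f_{T_{g,n}}^+:\MMM_{g,n}^{T_{g,n}+}\to\MMM_{g,n}^{T_{g,n}}$. Since $f_{T_{g,n}}^+$ is a proper contraction between normal proper algebraic spaces (Theorem~\ref{T:goodspaces}, Proposition~\ref{P:fibers+}) and its codomain is $\MMM_{g,n}^{T_{g,n}}$ whose Picard number is already computed in Example~\ref{R:LMMP}\eqref{C:PicT1}, the relative Picard number is $\rho(f_{T_{g,n}}^+)=\dim_\bbQ\Pic(\MMM_{g,n}^{T_{g,n}+})_\bbQ-\dim_\bbQ\Pic(\MMM_{g,n}^{T_{g,n}})_\bbQ$, provided $(f_{T_{g,n}}^+)^*$ is injective on rational Picard groups — which holds because $f_{T_{g,n}}^+$ is a birational contraction (see Theorem~\ref{T:goodspaces}, the statement that $f_T^+$ is birational for $(g,n)\neq(1,2)$) so that $(f_{T_{g,n}}^+)^*$ identifies $N^1(\MMM_{g,n}^{T_{g,n}})_\bbR$ with a subspace of $N^1(\MMM_{g,n}^{T_{g,n}+})_\bbR$. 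Subtracting the formula of Example~\ref{R:LMMP}\eqref{C:PicT1} from the answer to~\eqref{C:PicT+1} then gives the stated closed form; the various Kronecker symbols $\delta_{1,g},\delta_{2,g},\delta_{3,g},\delta_{4,g}$ will appear precisely because the relations in Fact~\ref{F:PicU}\eqref{F:PicU1}, the degenerations described in Remark~\ref{R:bridgecurves}, and the exclusions in Definition~\ref{D:T+comp} all change in those sporadic cases.

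The main obstacle I anticipate is the bookkeeping of how many independent $T^+$-compatibility relations there actually are, and in particular making sure that this count matches the difference between the two Picard numbers cleanly. The subtlety is that a maximal chain $\{[\tau,I],[\tau+1,I],\ldots,[\tau+m,I]\}$ inside $T_{g,n}$ contributes a string of equalities $b_{\tau,I}=b_{\tau+2,I}=b_{\tau+4,I}=\cdots$ and a parallel string for the odd indices, and one must carefully excise the forbidden classes $[1,\emptyset]$ and $[1,\{j\}]$ (which were already removed when passing to $\M_{g,n}^{T+}$) as well as identify $[\tau,I]$ with $[g-\tau,I^c]$ under the equivalence $\sim$ of~\eqref{E:Tset}; the cases $g=1,2,3,4$ and $n=0$ versus $n\geq 1$ require separate attention, and one must check the degenerate identifications $[\tau,I]=[\tau+2,I]$ that can only occur for very small $g$. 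I would handle this by first doing the generic case $g\geq 5$, $n\geq 1$ by a direct count, then verifying the small-genus cases against the explicit descriptions, cross-checking throughout with the sanity constraint $\rho(f_{T_{g,n}}^+)\geq 0$ and with the independently-known Picard numbers of $\MMM_{g,n}^{T_{g,n}}$ and of the Hassett--Keel spaces.
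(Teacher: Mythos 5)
Your approach is exactly the paper's: part~(i) is obtained by combining Corollary~\ref{C:Pic-ps} with Proposition~\ref{P:lbdesc} (using that $\phi^{T_{g,n}+}$ is an isomorphism in codimension one, as in Corollary~\ref{C:Gor-fact}), and part~(ii) follows by subtracting the Picard number of $\MMM_{g,n}^{T_{g,n}}$ from Example~\ref{R:LMMP}, which is precisely what the paper cites before stating the corollary. One caveat worth flagging when you carry out the bookkeeping you rightly identify as the crux: for $n\geq 1$ the displayed formula has no $\delta_{3,g}$ or $\delta_{4,g}$ corrections analogous to the $n=0$ case, yet a direct count in, say, $(g,n)=(3,1)$ gives only a single tacnodal extremal ray for $f_{T_{3,1}}^+$ (namely $D(\irr)$, since every pair $\{[\tau,I],[\tau+1,I]\}\subset T_{3,1}$ contains $[1,\emptyset]$ or $[1,\{1\}]$ up to $\sim$), so $\rho(f_{T_{3,1}}^+)=1$ and $\dim_\bbQ\Pic(\MMM_{3,1}^{T_{3,1}+})_\bbQ=3$, whereas the stated formula yields $2$ and $4$ respectively — so your planned cross-checks in small genus are not merely a formality and should be taken seriously.
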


We now show that  $f_T^+$ is projective  by producing  an $f_T^+$- ample line bundle on $\MMM_{g,n}^{T+}$.

\begin{proposition}\label{P:relample+}
Assume that $(g,n)\neq (2,0), (1,2)$ and $\car(k)\gg (g,n)$. The line bundle $-\wh{\delta}=-(\delta-\psi)$ on $\M_{g,n}^{T,+}$ descends to an $f_T^+$-ample  $\bbQ$-line bundle $(-\wh{\delta})^{T+}$ on  $\MMM_{g,n}^{T,+}$.

In particular, the morphism $f_T^+$ is projective.
\end{proposition}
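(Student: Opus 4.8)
The plan is to reduce the statement to a concrete assertion about the $\bbQ$-line bundle $-\wh\delta = -(\delta-\psi)$ on $\M_{g,n}^{T+}$. First I would check that $-\wh\delta$ actually descends. By Proposition \ref{P:lbdesc} it suffices to verify that $-\wh\delta = \psi - \delta$ is $T^+$-compatible in the sense of Definition \ref{D:T+comp}; this is an immediate inspection, since in the expansion of $\delta$ the coefficient of $\delta_{\tau,I}$ equals $-1$ for every $[\tau,I]$ with $|I|\geq 2$ if $\tau=0$ (and $\delta_{0,\{j\}}=-\psi_j$ contributes coefficient $0$ to $\wh\delta$), so $b_{\tau,I}=b_{\tau+2,I}$ holds automatically for every pair appearing in \eqref{E:cond-pair}. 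Hence $-\wh\delta$ descends to a $\bbQ$-line bundle $(-\wh\delta)^{T+}$ on $\MMM_{g,n}^{T+}$.

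\textbf{Relative ampleness via the relative Kleiman criterion.} Once descent is known, since $f_T^+$ is a proper morphism between algebraic spaces (Theorem \ref{T:goodspaces}) and $\MMM_{g,n}^{T+}$ is normal, I would apply the relative Kleiman ampleness criterion (as recalled in the Notation section, citing \cite[Thm. 1.44]{KM}): it is enough to show that $(-\wh\delta)^{T+}$ is positive on $\NEb(f_T^+)\setminus\{0\}$, i.e.\ that $-\wh\delta$ has strictly positive degree on every integral curve $C\subset\MMM_{g,n}^{T+}$ contracted by $f_T^+$. The curves contracted by $f_T^+$ are exactly those whose associated family of $T^+$-semistable curves has only isotrivial components except for $A_1/A_3$-attached elliptic bridges and closed $A_3/A_3$-attached elliptic chains of type in $T$ — this follows from the description of $f_T^+$ via Proposition \ref{P:T+closed}\eqref{P:T+closed0}, exactly as in the analysis carried out for $f_T$ in Proposition \ref{P:fibers1}. (Here, as in the proof of Theorem \ref{thm:contr}, I use that $\M_{g,n}^{T+}$ has finite-type atlases so that after a finite cover any such $C$ lifts to a curve $B\subset\M_{g,n}^{T+}$ with an explicit family $\cC\to B$.)

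\textbf{The key computation.} So the heart of the proof is: for a one-parameter family $\cC\to B$ acquiring a tacnode along a section and isotrivial elsewhere (the model case being the degeneration of an $A_1/A_3$-attached elliptic bridge to a rosary of length $3$, or of a closed elliptic chain to a closed rosary), one has $\wh\delta\cdot B < 0$, equivalently $\psi\cdot B < \delta\cdot B$. The cleanest way is to observe that such a $B$ is, up to finite base change, a test curve of the type appearing in the local VGIT picture of Proposition \ref{P:VGIT}: the inclusions $\M_{g,n}^{\ps}\hookrightarrow\M_{g,n}^T\hookleftarrow\M_{g,n}^{T+}$ arise from local VGIT with respect to $\delta-\psi=\wh\delta$ on $\M_{g,n}^T$. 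By the very definition of local VGIT with respect to $\wh\delta$ (see \cite[Def. 3.14]{AFSV1}, following the mechanism used in \cite[Thm. 3.17]{AFSV1}), the sign of $\wh\delta$ on the one-parameter subgroup attached to a $T^+$-closed curve — that is, on the test curve coming from its basin of attraction — is negative on precisely the destabilizing directions that are removed when passing from $\M_{g,n}^T$ to $\M_{g,n}^{T+}$, namely the $A_1/A_1$-attached elliptic chains (equivalently, the rosaries of length $\geq 4$, and the tacnodes), and is zero on the "good" directions. Concretely one computes, just as in Lemma \ref{L:weight+} but now tracking $\wh\delta$ rather than a general $L$: the contribution of each tacnodal section to $\langle\wh\delta,\rho\rangle$ is governed by the $\Gm$-weight $\mp 12$ on the discriminant locus $\{\Delta=0\}$ in $\Def(\wh\cO_{C,t})\cong\Spf k[a_2,a_1,a_0]$ versus the weight $\pm 1$ on the node-smoothing parameters $b_i$, so that for the relevant contracted curves the total weight is strictly negative. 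I expect this weight bookkeeping — getting all signs right for the several cases ($A_1/A_3$ bridge, $A_3/A_3$ chain, closed chain, and the degenerate small-$(g,n)$ cases) — to be the main technical obstacle, though it is entirely parallel to computations already present in \cite{AFS0} and to Lemma \ref{L:weight+}. With this positivity in hand, Kleiman's criterion gives that $(-\wh\delta)^{T+}$ is $f_T^+$-ample, and since $\MMM_{g,n}^{T}$ is an algebraic space over which $f_T^+$ is proper, $f_T^+$ is projective.
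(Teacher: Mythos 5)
The descent part of your argument is fine (modulo a harmless slip: in $\wh\delta=\delta-\psi=\delta_{\irr}+\sum_{[i,I]\in T_{g,n}^*}\delta_{i,I}$ the class $\delta_{0,\{j\}}=-\psi_j$ appears with coefficient $1$, not $0$; but this does not matter, since \emph{every} coefficient $b_{i,I}$ in $-\wh\delta$ equals $-1$, so $b_{\tau,I}=b_{\tau+2,I}$ is automatic and $T^+$-compatibility follows).

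The relative ampleness step has a genuine gap: it is circular. You propose to apply the relative Kleiman ampleness criterion to $f_T^+$, but, as recalled in the paper's Notation section, that criterion presupposes that the morphism $\pi$ is \emph{projective} — which is precisely what Proposition \ref{P:relample+} is trying to establish. At this point in the paper we only know that $f_T^+$ is a proper morphism between proper normal algebraic spaces (Theorem \ref{T:goodspaces}); neither $\MMM_{g,n}^{T}$ nor $\MMM_{g,n}^{T+}$ is yet known to be projective (that is the content of Theorem \ref{thm:contr} in characteristic $0$, resp.\ Corollary \ref{C:projT+}, whose proof in turn uses this very proposition). So positivity of $-\wh\delta$ on the contracted curves — which incidentally you would get from Proposition \ref{P:tacinter}\eqref{P:tacinter2}, not from the $\Gm$-weight computation in Lemma \ref{L:weight+}, which controls descent but not intersection numbers on curves in the good moduli space — does not by itself yield $f_T^+$-ampleness. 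The correct mechanism is the one the paper actually invokes: since the inclusions $\M_{g,n}^{\ps}\hookrightarrow\M_{g,n}^T\hookleftarrow\M_{g,n}^{T+}$ arise from local VGIT with respect to $\wh\delta$ (Proposition \ref{P:VGIT}), the argument of \cite[Prop.~7.4]{AFS3} produces the relatively ample polarization directly from the local GIT picture (étale-locally, $f_T^+$ is a projective GIT quotient morphism with $-\wh\delta$ giving the GIT linearization), thereby proving projectivity of $f_T^+$ without any a priori appeal to Kleiman. You mention local VGIT, but only to identify test curves; the decisive point — that it \emph{replaces} the Kleiman criterion and supplies projectivity — is missing.

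A secondary imprecision: your description of the contracted curves (``families whose only non-isotrivial components are $A_1/A_3$-attached elliptic bridges and closed $A_3/A_3$-attached chains'') conflates the isotrivial specializations used in Proposition \ref{P:T+closed} with the curves in the exceptional locus of $f_T^+$. The latter are the tacnodal curves of Definition \ref{D:1strata+}, which parametrize variation of the crimping datum at a tacnode (cf.\ Propositions \ref{P:fibers+} and \ref{P:tacinter}\eqref{P:tacinter1}); they are not the same thing.
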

\begin{proof}
The fact that $-\wh{\delta}\in \Pic(\M_{g,n}^{T+})$ descends to a  $\bbQ$-line bundle $(-\wh{\delta})^{T+}$ on $\MMM_{g,n}^{T+}$ follows from Proposition \ref{P:lbdesc}. The fact that $(-\wh{\delta})^{T+}$ is $f_T^+$-ample follows from the same argument of \cite[Prop. 7.4]{AFS3} using that the open embeddings 
$$\M_{g,n}^{\ps}\hookrightarrow \M_{g,n}^T\hookleftarrow \M_{g,n}^{T+}$$
arise from local VGIT with respect to the line bundle $\wh \delta$ on $\M_{g,n}^T$ by Proposition \ref{P:VGIT}.
\end{proof}

\begin{cor}\label{C:projT+}
Assume that $(g,n)\neq (2,0), (1,2)$ and  ${\rm char}(k)=0$. Then $\MMM_{g,n}^{T+}$ is projective.
\end{cor}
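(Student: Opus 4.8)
The statement to establish is Corollary \ref{C:projT+}: assuming $(g,n)\neq (2,0),(1,2)$ and $\operatorname{char}(k)=0$, the algebraic space $\MMM_{g,n}^{T+}$ is projective. The key inputs are already in place. By Theorem \ref{thm:contr}, in characteristic zero the space $\MMM_{g,n}^T$ is projective and $f_T\colon\MM_{g,n}^{\ps}\to\MMM_{g,n}^T$ is the contraction of the $K_{\MM_{g,n}^{\ps}}$-negative face $F_T$; in particular $\MMM_{g,n}^T$ is a projective variety. By Proposition \ref{P:relample+}, the line bundle $-\wh\delta=-(\delta-\psi)$ on $\M_{g,n}^{T+}$ descends to an $f_T^+$-ample $\bbQ$-line bundle $(-\wh\delta)^{T+}$ on $\MMM_{g,n}^{T+}$, so the morphism $f_T^+\colon\MMM_{g,n}^{T+}\to\MMM_{g,n}^T$ is projective. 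The plan is simply to combine these two facts: a morphism which is projective onto a projective base has projective source.

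\textbf{Key steps.} First I would recall that $f_T^+\colon\MMM_{g,n}^{T+}\to\MMM_{g,n}^T$ is a proper morphism of algebraic spaces by Theorem \ref{T:goodspaces}, and that by Proposition \ref{P:relample+} there is an $f_T^+$-ample $\bbQ$-line bundle $M:=(-\wh\delta)^{T+}$ on $\MMM_{g,n}^{T+}$; replacing $M$ by a suitable integral multiple we may assume $M$ is an honest $f_T^+$-ample line bundle. Second, by Theorem \ref{thm:contr} the variety $\MMM_{g,n}^{T}$ is projective; fix an ample line bundle $A$ on it. Third, since $M$ is $f_T^+$-ample and $\MMM_{g,n}^T$ is quasi-compact, there exists $m\gg 0$ such that $M\otimes (f_T^+)^*A^{\otimes m}$ is ample on $\MMM_{g,n}^{T+}$; this is the standard fact that an $f$-ample line bundle twisted by the pullback of a sufficiently ample bundle on the (quasi-compact) base becomes ample (see e.g. \cite[II.7.10]{Har} in the scheme case, the algebraic-space version following by the same argument or by the projectivity criterion for proper morphisms of algebraic spaces admitting a relatively ample line bundle). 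In particular $\MMM_{g,n}^{T+}$ carries an ample line bundle; being moreover a proper algebraic space, it is a projective variety.

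\textbf{Remark on the edge cases.} I would note that the excluded pairs $(2,0)$ and $(1,2)$ are genuinely special and are dealt with in Remark \ref{R:special-gn}: for $(1,2)$ the stack $\M_{1,2}^{T+}$ (hence $\MMM_{1,2}^{T+}$) is empty when $T^{\adm}\neq\emptyset$, and equals $\MM_{1,2}^{\ps}$ otherwise, so projectivity holds trivially; for $(2,0)$ the stack $\M_2^{T+}$ need not even be an open substack of $\M_2^{T}$, so the statement is not asserted. For all other pairs the above argument applies uniformly, including the degenerate case $T^{\adm}=\emptyset$, where $\MMM_{g,n}^{T+}=\MM_{g,n}^{\ps}$ is projective by Proposition \ref{P:div-contr}\eqref{P:div-contr3}.

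\textbf{Main obstacle.} There is essentially no hard step left: all the substance has been extracted into Theorem \ref{thm:contr} (projectivity of the base, which uses the cone theorem and thus the characteristic-zero hypothesis) and Proposition \ref{P:relample+} (existence of the relatively ample descended bundle, which rests on the local VGIT picture of Proposition \ref{P:VGIT} and the argument of \cite[Prop. 7.4]{AFS3}). The only mild care needed is to invoke the correct form of ``relatively ample over a projective base implies ample'' in the category of algebraic spaces rather than schemes, which is routine. Hence I expect the proof to be a two-line deduction, and I would present it as such.
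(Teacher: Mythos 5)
Your proof is correct and follows exactly the same route as the paper's: invoke Theorem \ref{thm:contr} for projectivity of $\MMM_{g,n}^T$ and Proposition \ref{P:relample+} for projectivity of $f_T^+$, then compose. You merely spell out the standard ``relatively ample over a projective base implies ample'' step that the paper leaves implicit.
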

\begin{proof}
$\MMM_{g,n}^T$ is projective if ${\rm char}(k)=0$ by Theorem \ref{thm:contr}; the corollary now  follows from the projectivity of $f_T^+$ proven in Proposition \ref{P:relample+}.
\end{proof}

\begin{remark}\label{R:LMMP+}
If $T=T_{g,n}$ (and $(g,n)\neq (2,0), (1,2)$), then the projectivity of  $\MMM_{g,n}^{T_{g,n}+}$ follows from Remark \ref{R:LMMP} and Proposition \ref{P:relample+}.
Furthermore, if $\car(k)=0$ then it follows from \cite[Thm. 1.1]{AFS3} that $\MMM_{g,n}^{T_{g,n}+}$  is identified with a log canonical model of $\MM_{g,n}$:
\begin{equation}\label{E:LMMP+}
\MMM_{g,n}^{T_{g,n}+}\cong \MM_{g,n}(7/10-\epsilon ):=\Proj \bigoplus_{m\geq 0} H^0(\M_{g,n}, \lfloor m(K_{\M_{g,n}}+\psi+\left(\frac{7}{10}-\epsilon\right)(\delta-\psi))\rfloor),
\end{equation}
extending the previous result of Hassett-Hyeon \cite{HH2} for $n=0$.
\end{remark}

Next, we study the fibres and the exceptional loci of the  morphism $f_T^+$.

\begin{proposition}\label{P:fibers+}
Assume that $(g,n)\neq (2,0), (1,2)$, and $\car(k)\gg (g,n)$.
\noindent 
\begin{enumerate}[(i)]
\item \label{P:fibers+a} The  morphism $f_T^+$ is a contraction, i.e.   $(f_T^+)_*(\cO_{\MMM_{g,n}^{T+}})=\cO_{\MMM_{g,n}^T}$.
\item \label{P:fibers+b} The exceptional locus of $f_T^+$ is the union of the following irreducible closed subsets
$$\Tac([\tau,I], [\tau+1, I]):=\ov{\{(C,\{p_i\})\in \MMM_{g,n}^{T+}\: : \: (C,\{p_i\}) \: \text{has a tacnode of type } \{[\tau,I], [\tau+1, I]\}\}} $$
for every  $\{ [\tau,I], [\tau+1, I]\}\subseteq  T-\{[1,\emptyset]\}$ which is not of the form $\{[0,\{i\}],[1,\{i\}]\}$ for some $1\leq i \leq n$, and 
$$\Tac(\irr):=\ov{\{(C,\{p_i\})\in \MMM_{g,n}^{T+}\: : \: (C,\{p_i\}) \: \text{has a tacnode of type }\{\irr\}\}} \quad \text{ if } \irr \in T \text{and }g\geq 2.$$
All the above closed subsets have codimension three, so that the morphism $f_T^+$ is small.
\end{enumerate}
\end{proposition}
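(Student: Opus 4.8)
\textbf{Proof plan for Proposition \ref{P:fibers+}.}

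\emph{Part \eqref{P:fibers+a}.} The plan is to invoke the Zariski main theorem, exactly as in the proof of Proposition \ref{P:fibers1}\eqref{P:fibers1a}. First I would observe that $f_T^+\colon\MMM_{g,n}^{T+}\to\MMM_{g,n}^T$ is a proper morphism (Theorem \ref{T:goodspaces}) between irreducible normal algebraic spaces (again Theorem \ref{T:goodspaces}). It is birational because both spaces contain the open locus $M_{g,n}$ of smooth curves as a dense open subset, and $f_T^+$ restricts to the identity there (the open embeddings $\iota_T^+$ of Theorem \ref{T:algstack} are compatible with this, and both good moduli space morphisms restrict to isomorphisms on $M_{g,n}$ since smooth curves with trivial automorphisms give a common open chart). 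Hence $(f_T^+)_*\cO_{\MMM_{g,n}^{T+}}=\cO_{\MMM_{g,n}^T}$ by Zariski main theorem, so $f_T^+$ is a contraction.

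\emph{Part \eqref{P:fibers+b}.} The strategy is to describe $f_T^+$ modularly via the isotrivial-specialisation picture and then read off its exceptional locus. By Proposition \ref{P:T+closed}\eqref{P:T+closed0}, the morphism $f_T^+$ sends a $T$-closed curve to the $T^+$-closed curve obtained by replacing each attached rosary of length $2$ (equivalently, each tacnode, since the $T$-core has no tacnodes by Definition \ref{def:Tclosed}\eqref{def:Tclosed2}, and each tacnode of a $T$-closed curve is part of such an attached length-$2$ rosary) by the degeneration dictated by the $T^+$-canonical decomposition. Concretely, comparing the $T$-canonical decomposition (Definition \ref{def:Tclosed}) with the $T^+$-canonical decomposition (Definition \ref{def:T+closed}) and using Lemma \ref{L:spec-ros}, a $T^+$-closed curve and its image in $\MMM_{g,n}^T$ differ precisely along attached rosaries of length $2$ versus length $3$ (and closed rosaries of even length); hence a fibre of $f_T^+$ over a point $x\in\MMM_{g,n}^T$ is a point unless the $T$-closed representative of $x$ contains at least one tacnode of type contained in $T$. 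This shows the exceptional locus of $f_T^+$ is the union, over the relevant types, of the closures $\Tac([\tau,I],[\tau+1,I])$ and $\Tac(\irr)$. The types that actually occur are those $\{[\tau,I],[\tau+1,I]\}\subseteq T\setminus\{[1,\emptyset]\}$, with the extra caveat that a type $\{[0,\{i\}],[1,\{i\}]\}$ must be excluded: in the $T^+$-semistable world such a putative tacnode would sit on a genus-$0$ component carrying one marked point and one node, which after normalisation and stabilisation disappears — more to the point, $\M_{g,n}^{T+}$ does not parametrise curves with an $A_1/A_1$-attached elliptic chain of such a type in a way that lets $f_T^+$ be non-trivial there, so these loci do not appear in $\Exc(f_T^+)$ (parallel to the role of $\Delta_{1,\{i\}}$ in Proposition \ref{P:fibers1}\eqref{P:fibers1b}, but now on the $T^+$-side). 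Finally, I would check the codimension count: the locus $\Tac([\tau,I],[\tau+1,I])$ inside $\M_{g,n}^{T+}$ is the image of a gluing morphism $\cG_{h,M}^r$-type construction (or rather, of the sprouting/normalisation picture of \S\ref{S:modspace} restricted to the $T^+$-stack) from a product of two moduli stacks of strictly smaller $(g',n')$, and a standard dimension computation — one tacnode imposes two conditions on the versal deformation space, matching the drop in the dimension of the parameter space of the normalised, stabilised curves — gives codimension exactly $2$; similarly for $\Tac(\irr)$ using $\cN_{\irr}$. Since every component of $\Exc(f_T^+)$ has codimension $2$, the contraction $f_T^+$ is small.

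\emph{Main obstacle.} The delicate point is the rigorous identification of the exceptional locus and, in particular, the exclusion of the types $\{[0,\{i\}],[1,\{i\}]\}$ together with the verification that no tacnode of this type survives as a genuine codimension-one (or higher) exceptional component on the $\MMM_{g,n}^{T+}$-side. This requires carefully tracking, through the good moduli space morphisms $\phi^{T+}$ and $\phi^T$, which $T^+$-closed curves get contracted by $f_T^+$; the cleanest route is to use the explicit modular description of $f_T^+$ on closed points (via Propositions \ref{prop:Tclosed} and \ref{P:T+closed}) and then argue that the fibre of $f_T^+$ over a point with a length-$2$ attached rosary of type $\{[0,\{i\}],[1,\{i\}]\}$ is a single (reduced) point, so no such locus is exceptional. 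The codimension-$2$ estimate itself is routine once the correct finite covers of $\Tac(\cdots)$ by products of lower-dimensional moduli stacks are set up as in the proof of Propositions \ref{prop:-} and \ref{prop:+}.
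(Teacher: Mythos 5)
Part (i) is fine and is exactly the paper's argument (Zariski main theorem applied to a proper birational morphism of normal irreducible algebraic spaces).

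Part (ii) has two issues, one minor and one substantive. The minor one is a direction slip: you write that ``$f_T^+$ sends a $T$-closed curve to the $T^+$-closed curve obtained by replacing each attached rosary of length $2$ \ldots''; but $f_T^+$ goes from $\MMM_{g,n}^{T+}$ to $\MMM_{g,n}^T$, so it sends a $T^+$-closed curve to the $T$-closed curve $(C,\{p_i\})^{\star}$ obtained from it by replacing each tacnode by a length-$2$ rosary and stabilising (this is Proposition \ref{prop:Tclosed}\eqref{prop:Tclosed0}, not Proposition \ref{P:T+closed}\eqref{P:T+closed0}). Your description is that of the inverse rational map, not $f_T^+$.

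The substantive gap is the exclusion of the types $\{[0,\{i\}],[1,\{i\}]\}$ and, more generally, the identification of the exceptional locus. Knowing that $f_T^+$ replaces tacnodes with length-$2$ rosaries does not by itself tell you which of the loci $\Tac(\cdots)$ are actually contracted: a priori the map could be injective on a locus with a tacnode if there is no ``extra direction'' to contract. The missing idea is that a tacnode carries local moduli isomorphic to $\Gm$ coming from the choice of isomorphism between the tangent lines of the two branches being glued; this $\Gm$ is precisely what $f_T^+$ collapses, since the image $(C,\{p_i\})^{\star}$ forgets the gluing datum. The point is then to decide when this local $\Gm$ gives rise to genuine global moduli: it fails exactly when one branch of the tacnode lies on a $\mathbb{P}^1$ with one other special point, because the $\Gm\subset\Aut(\mathbb{P}^1)$ fixing two points absorbs the gluing freedom — and for tacnodes of type $\{[0,\{i\}],[1,\{i\}]\}$ this is automatic (the branch through the marked point $p_i$ is forced onto a $2$-pointed rational curve by ampleness of $\omega_C(\sum p_j)$). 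Your paragraph on this point invokes normalisation/stabilisation and a vague analogy with $\Delta_{1,\{i\}}$, but the tacnodal locus does not ``disappear'' under normalisation and stabilisation; the actual reason is the automorphism-group cancellation of the gluing moduli, which you need to state. Once you have that, the codimension-two statement (from the deformation theory of a tacnode) and the conclusion that $f_T^+$ is small follow as you indicate.
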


\begin{proof}
Part \eqref{P:fibers+a} follows from the Zariski main theorem using that $f_T^+$ is a proper morphism between irreducible normal algebraic spaces (see Theorem \ref{T:goodspaces}) which is moreover birational since it is an isomorphism when restricted to the dense open subset of smooth curves.

Part \eqref{P:fibers+b}: first of all, the closed subsets in the statement are irreducible and they have codimension three since the semiuniversal deformation space of a tacnode has dimension three (since $\car(k)\neq 2$).
By  Proposition \ref{prop:Tclosed},  the morphism $f_T^+$ sends a $T^+$-closed curve $(C, \{p_i\})$ into the $T$-closed curve $f_T^+((C, \{p_i\}))$ which is the stabilisation of the $n$-pointed curve which is obtained from $(C, \{p_i\})$ by replacing each tacnode (necessarily of type contained in $T-\{[1,\emptyset]\}$ since $(C, \{p_i\})$ cannot have $A_3$-attached elliptic tails) by an attached rosary  of length two. Now observe that a tacnode has local moduli isomorphic to $\Gm$ because it is constructed from the normalisation by gluing together the two tangent spaces at the two smooth branches, see \cite[Sec. 4.1]{HH1} for details. Since $\omega_C(\sum p_i)$ is ample, these local moduli do not give rise to global moduli if and only if one of the two branches of the tacnode belongs to a rational curve with only one other marked point (which always happen if  the type of the tacnode is equal to  $\{[0,\{i\}],[1,\{i\}]\}$ for some $1\leq i \leq n$), in which case the automorphism group of the $2$-pointed rational curve cancels out the local moduli.
The curve $f_T^+((C, \{p_i\}))$ does not depend on the global moduli given by the tacnodes of $(C,\{p_i\})$.
By putting everything together, we deduce that the exceptional locus of $f_T^+$ is equal to the union of the closed subsets described in the statement. 
\end{proof}

As a corollary of the above proposition, we can determine when $f_T^+$ is an isomorphism.

\begin{cor}\label{C:isof+}
Assume that $(g,n)\neq (2,0), (1,2)$, and $\car(k)\gg (g,n)$. Then $f_T^+:\MMM_{g,n}^{T+}\to \MMM_{g,n}^T$ is an isomorphism if and only if $T^{\adm}=T^{\div}$. 
\end{cor}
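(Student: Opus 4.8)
The plan is to prove the iff by analysing the exceptional loci of $f_T$ and $f_T^+$ via Propositions \ref{P:fibers1} and \ref{P:fibers+}, reducing everything to the combinatorics of admissible and divisorial subsets. First I would recall that $f_T$ and $f_T^+$ are both proper birational contractions between normal irreducible algebraic spaces (Theorem \ref{T:goodspaces}, Proposition \ref{P:fibers1}\eqref{P:fibers1a}, Proposition \ref{P:fibers+}\eqref{P:fibers+a}), so $f_T^+$ is an isomorphism if and only if both $f_T^+$ and its inverse fail to contract anything, equivalently if and only if $\Exc(f_T^+)=\emptyset$ and $f_T$ is also an isomorphism. More precisely, $f_T^+$ is an isomorphism iff $f_T^+$ has empty exceptional locus and $f_T$ has empty exceptional locus: indeed $f_T = f_T^+ \circ (f_T^+)^{-1}\circ f_T$ composed appropriately, and since $f_T^+$ is small while $f_T$ may be divisorial, one has $f_T$ iso $\iff$ $f_T^+$ iso together with the divisorial part of $f_T$ being trivial. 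I would phrase this cleanly: $f_T^+\colon \MMM_{g,n}^{T+}\to\MMM_{g,n}^T$ is an isomorphism iff $\MMM_{g,n}^{T+}\to\MMM_{g,n}^{\ps}$ (obtained by composing with $f_T$, after $\MM_{g,n}^{\ps}\dashrightarrow\MMM_{g,n}^{T+}$) realises $\MMM_{g,n}^{T+}\cong\MM_{g,n}^{\ps}$, and then both $\Exc(f_T)$ and $\Exc(f_T^+)$ must vanish.

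Next I would unwind when $\Exc(f_T^+)=\emptyset$. By Proposition \ref{P:fibers+}\eqref{P:fibers+b}, $\Exc(f_T^+)$ is the union of the $\Tac([\tau,I],[\tau+1,I])$ for $\{[\tau,I],[\tau+1,I]\}\subseteq T-\{[1,\emptyset]\}$ not of the form $\{[0,\{i\}],[1,\{i\}]\}$, together with $\Tac(\irr)$ if $\irr\in T$ and $g\geq 2$. Hence $\Exc(f_T^+)=\emptyset$ iff every minimal subset $\{[\tau,I],[\tau+1,I]\}$ (or $\{\irr\}$) contained in $T$ is of the form $\{[0,\{i\}],[1,\{i\}]\}$; recalling that $T^{\adm}$ is the union of the minimal subsets contained in $T$ (Definition \ref{def:admissible} and the remark following it), this says exactly $T^{\adm}\subseteq T^{\div}$, and since $T^{\div}\subseteq T^{\adm}$ always, it says $T^{\adm}=T^{\div}$. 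Separately, by Proposition \ref{P:fibers1}\eqref{P:fibers1b}, $\Exc(f_T)$ consists of the $\Ell([\tau,I],[\tau+1,I])$ and $\Ell(\irr)$ for the same index set, and the divisorial ones are precisely the $\Delta_{1,\{i\}}=\Ell([0,\{i\}],[1,\{i\}])$ for $\{[0,\{i\}],[1,\{i\}]\}\subseteq T$; so $f_T$ small iff $T$ contains no such pair, and combining, $f_T$ is an isomorphism iff $T^{\adm}=\emptyset$ — but that is strictly stronger than $T^{\adm}=T^{\div}$, so I should not require $f_T$ iso directly.

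The cleaner route, which I would actually follow, is: $f_T^+$ is an isomorphism iff it is small and has empty exceptional locus and its inverse contracts no divisor; since $f_T^+$ is already known to be small (Proposition \ref{P:fibers+}\eqref{P:fibers+b}), and $(f_T^+)^{-1}$ being a composition of flips contracts no divisor, the only obstruction is $\Exc(f_T^+)\neq\emptyset$. Thus $f_T^+$ is an isomorphism iff $\Exc(f_T^+)=\emptyset$, which by the computation above is iff $T^{\adm}=T^{\div}$. I would also remark for completeness that this matches Proposition \ref{P:geomT}\eqref{P:geomT1}: when $T^{\adm}=T^{\div}$, the space $\MMM_{g,n}^T$ is already $\bbQ$-factorial and equals $\MMM_{g,n}^{T^{\div}}$, and the factorisation $f_T=f_{T^{\div}}\circ\sigma_T$ of Proposition \ref{P:geomT}\eqref{P:geomT2} has $\sigma_T$ an isomorphism, so there is nothing to flip and $f_T^+$ is the identity. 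The main obstacle is making the logical reduction airtight — specifically, justifying that "$f_T^+$ small with empty exceptional locus $\Rightarrow$ isomorphism" using that $f_T^+$ is a proper birational morphism between normal algebraic spaces (so Zariski's main theorem applies: a proper birational morphism onto a normal space that is quasi-finite, equivalently has empty exceptional locus in our setting, is an isomorphism), and dealing correctly with the excluded degenerate pairs $(g,n)=(2,0),(1,2)$, which are handled by the standing hypothesis and Remark \ref{R:special-gn}.

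\begin{proof}
Since $(g,n)\neq (2,0),(1,2)$ and $\car(k)\gg (g,n)$, the morphism $f_T^+\colon \MMM_{g,n}^{T+}\to \MMM_{g,n}^T$ is a proper birational contraction between normal irreducible algebraic spaces (Theorem \ref{T:goodspaces} and Proposition \ref{P:fibers+}\eqref{P:fibers+a}), and by Proposition \ref{P:fibers+}\eqref{P:fibers+b} it is small. A proper birational morphism between normal algebraic spaces which is quasi-finite is an isomorphism by Zariski's main theorem; since $f_T^+$ is birational, it is quasi-finite if and only if its exceptional locus is empty. Hence $f_T^+$ is an isomorphism if and only if $\Exc(f_T^+)=\emptyset$.

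By Proposition \ref{P:fibers+}\eqref{P:fibers+b}, $\Exc(f_T^+)$ is the union of the closed subsets $\Tac([\tau,I],[\tau+1,I])$ for every $\{[\tau,I],[\tau+1,I]\}\subseteq T-\{[1,\emptyset]\}$ which is not of the form $\{[0,\{i\}],[1,\{i\}]\}$ for some $1\leq i\leq n$, together with $\Tac(\irr)$ if $\irr\in T$ and $g\geq 2$. Therefore $\Exc(f_T^+)=\emptyset$ if and only if every minimal subset of $T_{g,n}$ contained in $T$ (see Definition \ref{def:admissible} and the discussion following it) is of the form $\{[0,\{i\}],[1,\{i\}]\}$ for some $1\leq i\leq n$. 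Since $T^{\adm}$ is the union of the minimal subsets of $T_{g,n}$ contained in $T$, and $T^{\div}$ is by definition the union of those of the form $\{[0,\{i\}],[1,\{i\}]\}$ contained in $T$, this condition is equivalent to $T^{\adm}\subseteq T^{\div}$. As $T^{\div}\subseteq T$ is admissible and $T^{\div}\subseteq T^{\adm}$ always holds, we conclude that $\Exc(f_T^+)=\emptyset$ if and only if $T^{\adm}=T^{\div}$, which proves the claim.
\end{proof}
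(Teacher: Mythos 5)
Your proof is correct and takes essentially the same route as the paper's one-sentence argument: reduce to Proposition \ref{P:fibers+} and observe that a proper birational contraction between normal algebraic spaces with empty exceptional locus is an isomorphism, then read off that the index set of the $\Tac$'s is empty precisely when $T^{\adm}=T^{\div}$. You correctly identify and use part \eqref{P:fibers+b} for the exceptional-locus description (the paper's proof cites \eqref{P:fibers+a}, which appears to be a typo), and you make explicit the combinatorial step $T^{\adm}\subseteq T^{\div}\Rightarrow T^{\adm}=T^{\div}$ that the paper leaves implicit.
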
 
\begin{proof}
Proposition \ref{P:fibers+}\eqref{P:fibers+a} implies that the exceptional locus of $f_T^+$ is empty, i.e. $f_T^+$ is an isomorphism, if and only $T^{\adm}=T^{\div}$.  
\end{proof}

The final ingredient we need is a description of the relative Mori cone of the morphism $f_T^+$. With this in mind, we introduce  the following curves, which were already considered in 
 \cite[Propositions 4.1 and 4.2]{HH2}.

\begin{definition}\label{D:1strata+}[Tacnodal curves]
Let $(g,n)\neq (2,0), (1,2)$ be an hyperbolic pair. Consider the following irreducible curves (well-defined up to numerical equivalence) in $\M_{g,n}^{T+}$, which we call \emph{tacnodal curves}:
\begin{enumerate}
\item If $\irr\in T$ and $g\geq 2$ then let $D(\irr)^o\cong \Gm$ to be the curve in $\M_{g,n}^{T+}$ which parametrises $T^+$-semistable curves that are obtained from a fixed  smooth irreducible curve $E$ of genus $g-2$ with $n+2$ marked points
by gluing  the last two marked points, which we call $a$ and $b$, to form a tacnode of type $\irr$ using the identification of $T_aE$ and $T_bE$ provided by the elements of $\Gm$.
We denote by $D(\irr)$ the closure of $D(\irr)^o$ in $\M_{g,n}^{T+}$. The curve $D(\irr)$ is isomorphic to $\mathbb{P}^1$; the two points on the closure parametrise the two curves formed by gluing $a$ and $b$ with a $\mathbb{P}^1$ which is attached nodally at $a$ and tacnodally at $b$ (or the other way around).

\item For any pair $\{[\tau, I], [\tau+1, I]\}=\{[\tau, I], [g-1-\tau, I^c]\}\subset T-\{[1,\emptyset], \bigcup_j [1,\{j\}], \irr\}$, we let $D([\tau,I],[\tau+1,I])^o\cong \Gm$ to be the curve in $\M_{g,n}^{T+}$ which parametrises $T^+$-semistable curves 
that are obtained from two fixed irreducible curves $A$ and $B$, the first of genus  $\tau$ with $I\cup\{a\}$ marked points and the second one of genus $g-1-\tau$ with $I^c\cup \{b\}$ marked points, by gluing the points $a$ and $b$ to form a tacnode of type $\{[\tau, I], [\tau+1, I]\}$, using the identification of $T_aA$ and $T_bB$ provided by the elements of $\Gm$.
We denote by $D([\tau,I],[\tau+1,I])$ the closure of $D([\tau,I],[\tau+1,I])^o$ in $\M_{g,n}^{T+}$. The curve $D([\tau,I],[\tau+1,I])$ is isomorphic to $\mathbb{P}^1$; the two points on the closure parametrise the two curves formed by gluing $a$ and $b$ with a $\mathbb{P}^1$ which is attached nodally at $a$ and tacnodally at $b$ (or the other way around).

\end{enumerate}
The \emph{type}  of a tacnodal curve is defined as follows: $D(\irr)$ has type $\{\irr\}\subset T_{g,n}$ while $D([\tau, I], [\tau+1, I])$ has type equal to $\{[\tau, I], [\tau+1, I]\}\subset T_{g,n}$.
It is straightforward to see that the tacnodal curves parametrises $T^+$-closed points of $\M_{g,n}^{T+}$ (see Definition \ref{def:T+closed}); hence they descend to integral curves (which we will continue to call tacnodal curves and we will denote them with the same notation)   in the good moduli space  $\MMM_{g,n}^{T+}$ by Proposition \ref{P:T+closed}\eqref{P:T+closed1}.

\end{definition}

\begin{remark}\label{R:tac-special}
Notice that we have not defined the tacnodal curves $D([0, \{i\}], [1, \{i\}])$  and $D([1, \{i\}], [2, \{i\}])$ for $1\leq i \leq n$. This is due to the following reasons:
\begin{itemize}
\item If we define   $D([0, \{i\}], [1, \{i\}])^o$ as in the above definition, then $D([0, \{i\}], [1, \{i\}])^o$ is a point and not a curve inside $\M_{g,n}^{T+}$, since the continuous automorphism group of the curve $A$ of genus and with $2$ marked points kills the gluing data that is needed to construct the tacnode. 
\item The curve $D([1, \{i\}], [2, \{i\}])$, defined as the closure of the curve $D([1, \{i\}], [2, \{i\}])^o$ defined as above, is contracted when mapped into $\MMM_{g,n}^{T+}$ via the morphism $\phi^{T+}$ since its generic point is not $T^+$-closed (because it contains an $A_1/A_3$-attached elliptic bridge of type $\{[1, \{i\}], [2, \{i\}]\}\subseteq T$, see Proposition \ref{P:T+closed}\eqref{P:T+closed0}).
\end{itemize}

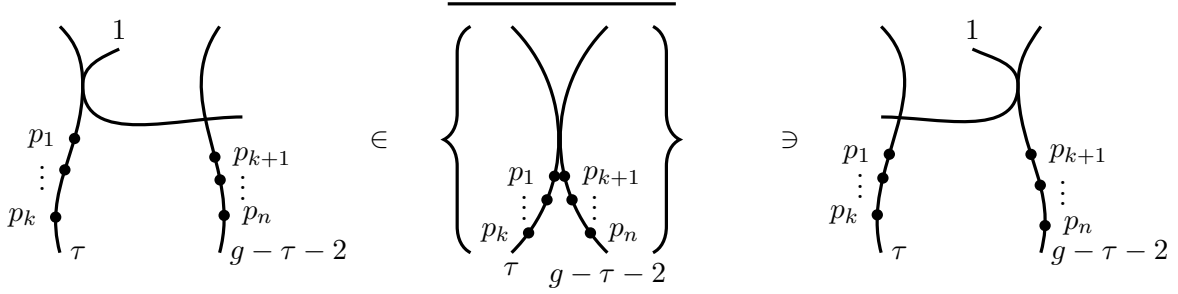
\begin{figure}[!h]
	\begin{center}
		\begin{tikzpicture}[scale=0.6]
		
		\coordinate (x) at (-2-8,2);
		\coordinate (y) at (-1.5-8, 0.7);
		\coordinate[label=right:{$\tau$}] (z) at (-2-8, -3);
		\draw [very thick] (x) to[in=90, out=-45] (y) to[in=105, out=-90]  
		node[dot, pos=0.3, label=left:$p_1$]{}
		node[dot, pos=0.5, label=left:$\vdots$]{}
		%node[dot, pos=0.6]{}  
		node[dot, pos=0.8, label=left:$p_k$]{}
		(z);

		\coordinate[label=above:1] (a) at (-0.7-8, 1.5);
		\coordinate (c) at (2-8, 0);
		\draw [very thick] (a)  to[out=205, in=90]   (y)  to[out=-90, in=180] (c);

		\coordinate (w) at (1.5-8,2);
		\coordinate[label=right:{$g-\tau-2$}] (z) at (1.5-8, -3);
		\draw [very thick, in=75, out=-125] (w) to 
		node[dot, pos=0.6, label=right:$p_{k+1}$]{}
		node[dot, pos=0.7, label=right:$\vdots$]{}
		%node[dot, pos=0.75]{}  
		node[dot, pos=0.85, label=right:$p_n$]{}
		(z);

		\draw[very thick] (-1.5,2.5) to (3.5,2.5);
		
		\draw[very thick, decoration={brace, amplitude=10pt}, decorate] (-1,-3) -- (-1, 2);
		
		\node at (-3,-0.5) {$\in$};		
		
		\coordinate (x) at (-0.1,2);
		\coordinate[label=below:{$\tau$}] (y) at (-0.1, -3);
		\draw [very thick, in=45, out=-45] (x) to 
		node[dot, pos=0.65, label=left:$p_1$]{}
		node[dot, pos=0.75, label=left:$\vdots$]{}
		%node[dot, pos=0.75]{}  
		node[dot, pos=0.9, label=left:$p_k$]{}
		(y);

		\coordinate (w) at (2,2);
		\coordinate[label=below:{$g-\tau-2$}] (z) at (2, -3);
		\draw [very thick, in=135, out=-135] (w) to 
		node[dot, pos=0.65, label=right:$p_{k+1}$]{}
		node[dot, pos=0.75, label=right:$\vdots$]{}
		%node[dot, pos=0.75]{}  
		node[dot, pos=0.9, label=right:$p_n$]{}
		(z);
		
		\draw[very thick, decoration={brace, mirror, amplitude=10pt}, decorate] (3,-3) -- (3, 2);

		\node at (6,-0.5) {$\ni$};

		\coordinate (x) at (-0.5+8.5,2);
		\coordinate[label=right:{$\tau$}] (y) at (-0.5+8.5, -3);
		\draw [very thick, in=105, out=-45] (x) to 
		node[dot, pos=0.6, label=left:$p_1$]{}
		node[dot, pos=0.7, label=left:$\vdots$]{}
		%node[dot, pos=0.75]{}  
		node[dot, pos=0.85, label=left:$p_k$]{}
		(y);

		\coordinate (a) at (-1+9, 0);
		\coordinate (b) at (2+9, 0.7);
		\coordinate[label=above:1] (c) at (1+9, 1.5);
		\draw [very thick] (a)   to[out=0, in=-90]  (b)  to[out=90, in=-25] (c);
		
		\coordinate (w) at (11.5,2);
		\coordinate[label=right:{$g-\tau-2$}] (z) at (11.5, -3);
		\draw [very thick] (w) to[in=90, out=-125] (b)  to[out=-90, in=75] 
		node[dot, pos=0.4, label=right:$p_{k+1}$]{}
		node[dot, pos=0.6, label=right:$\vdots$]{}
		%node[dot, pos=0.75]{}  
		node[dot, pos=0.85, label=right:$p_n$]{}
		(z);

		\end{tikzpicture}		
		
	\end{center}
	\caption{The tacnodal curve $D([\tau, I], [\tau+1, I])$ with the two limit points, where $I=\{1,\ldots, k\}$.} 
	\label{F:tacnodalcurve}
\end{figure}
\end{remark}

\begin{proposition}\label{P:tacinter}
Assume that $(g,n)\neq (2,0), (1,2)$ and that  $\car(k)\gg (g,n)$.
\begin{enumerate}[(i)]
\item \label{P:tacinter1}
The relative Mori cone of the morphism $f_T^+$ is the subcone of $\NEb(\MMM_{g,n}^{T+})$ spanned by the tacnodal curves of type contained in $T$.
\item \label{P:tacinter2}
Given a $\bbQ$-line bundle  
$$
L=a\lambda+b_{\irr}\delta_{\irr}+\sum_{[i,I]\in T_{g,n}^*-\{[1,\emptyset], \bigcup_j [1,\{j\}]\}} b_{i,I} \delta_{i,I}
$$
on $\M_{g,n}^{T+}$, we have the following intersection formulas
$$\begin{sis}
& D([\tau, I], [\tau+1, I])\cdot L=-a-12 b_{\irr}+b_{\tau,I}+b_{\tau+1,I},\\
& D(\irr) \cdot L= -a-10b_{\irr}   .
\end{sis}$$

\end{enumerate}
\end{proposition}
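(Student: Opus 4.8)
\textbf{Proof plan for Proposition \ref{P:tacinter}.}

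The plan is to prove the two parts together, since part \eqref{P:tacinter2} provides the computational input needed to identify the span of the tacnodal curves inside $N_1(\MMM_{g,n}^{T+})$ and hence to pin down $\NEb(f_T^+)$ in part \eqref{P:tacinter1}. I will organise the argument in three steps: (a) compute the intersection numbers of the tacnodal curves against the generators of $\Pic(\M_{g,n}^{T+})_{\bbQ}$; (b) use Proposition \ref{P:relample+}, the description of the exceptional locus in Proposition \ref{P:fibers+}, and the rigidity/contraction formalism to show that $\NEb(f_T^+)$ is spanned by the tacnodal curves; and (c) check that this subcone is actually a face by exhibiting the $f_T^+$-ample class $(-\wh\delta)^{T+}$ as a supporting functional.

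\textbf{Step (a): intersection numbers.} Here I would work on the moduli space $\MMM_{g,n}^{T+}$ directly. Each tacnodal curve $D$ of type contained in $T$ lifts to the curve $D^o$ (completed to $D\cong \bbP^1$) in the stack $\M_{g,n}^{T+}$ described in Definition \ref{D:1strata+}, and the two boundary points of $D$ are explicitly described there. The cleanest route is to relate $D(\irr)$ and $D([\tau,I],[\tau+1,I])$ to the elliptic bridge curves $C(\irr)$ and $C([\tau,I],[\tau+1,I])$: a tacnodal curve is, up to the base change supplied by the $\Gm$-gluing, the image under $f_T^+$ of (a finite cover of) the curve whose general member is obtained by a tacnodal gluing, and one can pass to a nodal degeneration of the tacnode to reduce to the curves already computed in Lemma \ref{L:int-1str} — but with an extra sign, coming from the fact that on $D^o$ the gluing parameter varies while the normalisation is fixed (a ``test curve in the $j$-direction'' of a $1$-parameter family, as in \cite[Prop. 4.1, 4.2]{HH2}). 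Concretely, I expect to reproduce the Hassett–Hyeon computation $D(\irr)\cdot\lambda = -1$, $D(\irr)\cdot\delta_{\irr} = -(\text{number of tacnodes})$, etc., by writing down the semiuniversal deformation of the tacnode $y^2 = x^4 + a_2 x^2 + a_1 x + a_0$ as in the proof of Lemma \ref{L:weight+}, noting that along $D^o$ the only varying datum is the gluing in $T_a \times T_b$ and that the discriminant hypersurface is hit with the stated multiplicities; the contributions of a tacnode of type $\{\irr\}$ are $-1$ against $\lambda$ and $-12$ against $\delta_{\irr}$ (resp. $-1$ against $\lambda$ and $-1$ against each of $\delta_{\tau,I}$, $\delta_{\tau+1,I}$ for a separating tacnode), and the Hodge class picks up $-1$ from the single varying singularity. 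Assembling these against a general $L = a\lambda + b_{\irr}\delta_{\irr} + \sum b_{i,I}\delta_{i,I}$ gives exactly the formulas in \eqref{P:tacinter2}; I would double-check the overall sign by testing against $-\wh\delta = -\delta+\psi$, where the formula must yield a strictly positive number because $(-\wh\delta)^{T+}$ is $f_T^+$-ample by Proposition \ref{P:relample+}.

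\textbf{Step (b): identifying the relative Mori cone.} By Proposition \ref{P:fibers+}\eqref{P:fibers+b}, the exceptional locus of $f_T^+$ is the union of the $\Tac(\ast)$, each of codimension two, and each $\Tac([\tau,I],[\tau+1,I])$ (resp. $\Tac(\irr)$) is covered by the tacnodal curves $D([\tau,I],[\tau+1,I])$ (resp. $D(\irr)$): indeed, fixing the normalisation and letting the gluing parameter vary sweeps out the fibre of $f_T^+$ through a given $T^+$-closed point, exactly as in the analysis of the basin of attraction in Proposition \ref{P:T+closed}. Hence every tacnodal curve of type contained in $T$ is contracted by $f_T^+$, so its class lies in $\NEb(f_T^+)$, and conversely any integral curve contracted by $f_T^+$ lies in $\Exc(f_T^+)$ and therefore — again using that each fibre of $f_T^+$ is connected (Proposition \ref{P:fibers+}\eqref{P:fibers+a}) and is covered, via hyperplane sections as in the proof of Lemma \ref{L:rigidity}, by tacnodal curves — has class in the span of the tacnodal curves of type contained in $T$. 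This proves \eqref{P:tacinter1} as an equality of cones once we know the span in question is closed, which it is, being finitely generated.

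\textbf{Step (c) and the main obstacle.} Finally, to see that the cone $R$ spanned by the tacnodal curves of type contained in $T$ is a genuine \emph{face} of $\NEb(\MMM_{g,n}^{T+})$ — and not merely a subcone — I would invoke Proposition \ref{P:relample+}: $(-\wh\delta)^{T+}$ is $f_T^+$-ample, hence by the relative Kleiman criterion it is positive on $\NEb(f_T^+)\setminus\{0\}$; but a class is in $\NEb(f_T^+)$ iff it is pushed to zero by $f_T^+$, so $f_T^{+*}$ of any ample class on $\MMM_{g,n}^T$ is a supporting functional vanishing exactly on $R$, which exhibits $R = \NEb(f_T^+)$ as a face. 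I expect the main obstacle to be Step (a): getting the signs and the exact multiplicities right in the deformation-theoretic computation of $D(\ast)\cdot\lambda$ and $D(\ast)\cdot\delta_{i,I}$, in particular keeping careful track of which branch of the tacnode is attached nodally versus tacnodally at the two boundary points of $D\cong\bbP^1$ and of the factor of $12$ coming from the discriminant of a quartic — this is where an honest adaptation of \cite[Prop. 4.1, 4.2]{HH2} and of the weight computations in the proof of Lemma \ref{L:weight+} is needed, including a separate check of the low-genus cases $(g,n)$ near the excluded pairs where some $\delta_{i,I}$ degenerates.
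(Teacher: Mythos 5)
Your Steps (b) and (c) are in line with the paper's proof of part~(i): the paper argues directly that the geometric generic point of any curve contracted by $f_T^+$ is a $T^+$-closed curve with a varying tacnodal gluing datum, whence the curve is numerically equivalent to a tacnodal curve; your ``covered by tacnodal curves'' paraphrase is acceptable, and Step (c) is automatic once $f_T^+$ is known to be projective.

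The genuine gap is Step (a), which you yourself flag as the main obstacle, and which you do not actually resolve. Two problems. First, the computational scheme you propose is conceptually off: along $D^o\cong\Gm$ the fibres of the family $\pi:\cX\to D$ have a \emph{persistent} tacnodal section (only the gluing datum in $T_aE\times T_bE$ varies), so the discriminant hypersurface of the semiuniversal deformation $y^2=x^4+a_2x^2+a_1x+a_0$ is \emph{not} hit along $D^o$; the intersection $D\cdot\delta_{\irr}$ does not arise by counting intersections with a discriminant. Rather, $D$ is entirely contained in the divisor $\Delta_{\irr}$ (since every fibre has a tacnode, which lies in the closure of the nodal locus), so the number $D\cdot\delta_{\irr}$ is a ``self-intersection'' type quantity, and the weight computation of Lemma~\ref{L:weight+} (which is about $\Gm$-weights at a single closed point, not degrees along a test curve) does not directly give it. Second, the specific numbers you claim do not yield the formulas in part~(ii): you assert $D(\irr)\cdot\delta_{\irr}=-12$, but the proposition gives $-10$; you assert $D([\tau,I],[\tau+1,I])\cdot\delta_{\tau,I}=-1$, but the coefficient of $b_{\tau,I}$ in the statement is $+1$; and you omit the $-12\,b_{\irr}$ term in the separating case. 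Also ``the image under $f_T^+$ of a tacnodal curve'' is a point, not an elliptic bridge curve, so that proposed reduction to Lemma~\ref{L:int-1str} does not go through.

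The paper's actual computation is cleaner and worth internalising: normalise the family $\cX\to D$ along the tacnodal section to get $\wt\pi:\cY\to D$ with two extra sections $\sigma_a,\sigma_b$; observe that $\cY\to D\cong\bbP^1$ is the blow-up of the constant family $F\times\bbP^1$ at $\{a\}\times\{0\}$ and $\{b\}\times\{\infty\}$, so $\deg_D\lambda_{\cY/D}=0$, $\deg_D\delta_{\cY/D}=2$, and $\deg_D\psi_a=\deg_D\psi_b=1$ (from $E_0^2=-1$, etc.); then plug into the comparison formula of \cite[Prop.~6.1]{AFS3}, which directly gives $\lambda\cdot D=-1$ and $\delta\cdot D=-10$, while $\delta_{i,J}\cdot D$ for $[i,J]\neq\irr$ is read off from the two nodal degenerations at $0,\infty$. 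The relation $D(\ast)\cdot L=-C(\ast)\cdot L$ that you anticipate is a \emph{consequence} of these computations, observed a posteriori (and used later in the proof of Theorem~\ref{T:flip+}); it is not available as a starting point.
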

\begin{proof}
Part \eqref{P:tacinter1}: let $D$ be an integral curve inside $\MMM_{g,n}^{T+}$ that is contracted by the morphism $f_T^+$. By Proposition \ref{prop:Tclosed}\eqref{prop:Tclosed0}, the geometric generic point of  $D$ parametrises a $T^+$-closed curve $C$ (by Proposition \ref{P:T+closed}\eqref{P:T+closed1}) with a tacnode $t$ of type contained in $T$ and having some non trivial global gluing data, which happens if and only if $\type(t)$ is not equal to $\{[0,\{i\}], [1,\{i\}]\}$ for some $1\leq i \leq n$. Moreover, since $C$ is  $T^+$-closed curve, $\type(t)$ cannot be equal to $\{[1,\emptyset], [2,\emptyset]\}$ (for otherwise $C$ would contain an $A_3$-attached elliptic tail) or to $\{[1,\{i\}], [2,\{i\}]\}$ for some $1\leq i \leq n$ (for otherwise $C$ would contain an $A_1$/$A_3$-attached elliptic bridge of type contained in $T$).  From this discussion, it follows that $D$ is numerically equivalent to a tacnodal curve of type contained in $T$ and part \eqref{P:tacinter1} follows. 

Part \eqref{P:tacinter2}: let $D\cong \P^1\subset \M_{g,n}^{T+}$ be a tacnodal curve and let $\pi:\cX\to D$ be the associated (flat and projective) family of $n$-pointed $T^+$-semistable curves of genus $g$. The family $\cX\to D$ has a tacnodal section $\tau$ (which is also the only singularity of the each fibre over $\Gm\subset \P^1$) and two nodes over $0$ and $\infty$ that are of type $[\tau,I]$ and $[\tau+1,I]$ if $D=D([\tau,I],[\tau+1,I])$, or both of type $\{\irr\}$ if $D=D(\irr)$. This implies that the only boundary divisor that contains $D$ is $\delta_{\irr}$ and that for any $[i, J]\in T_{g,n}- \{\irr\}$, we have that 
\begin{equation}\label{E:inter-iJ}
\begin{sis}
& \delta_{i,J}\cdot D(\irr)=0, \\
& \delta_{i,J}\cdot D([\tau,I],[\tau+1,I])=
\begin{sis}
1 & \text{ if } [i,J]= [\tau,I] \: \text{ or } [\tau+1,I], \\
0 & \text{ otherwise.} 
\end{sis}
\end{sis} 
\end{equation} 
Consider now the normalisation $\wt{\pi}:\cY\to D$ of the family $\cX\to D$ along the tacnodal section $\tau$. The (flat and projective) family $\cY\to D$ has $n+2$ section, the first $n$ of which are the pull-back of the $n$ sections of the family $\cX\to D$, and the last two sections, call them $\sigma_a$ and $\sigma_b$, are the inverse image of the tacnodal section $\tau$ along the normalisation morphism $\cY\to \cX$. 
We can apply \cite[Prop.6.1]{AFS3} in order to get that:
\begin{equation}\label{E:forAFS}
\begin{sis}
& \lambda\cdot D=\deg_D(\lambda_{\cY/D})-\frac{\deg_D(\psi_a+\psi_b)}{2}, \\
& \delta\cdot D=\deg_D(\delta_{\cY/D})-6\deg_D(\psi_a+\psi_b),
\end{sis}
\end{equation}
where $\delta_{\cY/D}$ is the total boundary of the family $\wt{\pi}:\cY\to D$, $\lambda_{\cY/D}:=\det \wt{\pi}_*(\omega_{\cY/D})$ and $\psi_a=\sigma_a^*(\omega_{\cY/D})$ and $\psi_b=\sigma_b^*(\omega_{\cY/D})$. By the definition of the tacnodal curve $D$, it follows that the family $\cY\to D\cong \P^1$ together with the two sections $\sigma_a$ and $\sigma_b$ is obtained from a constant family $ F\times \P^1\to \P^1$ (where, using the notations of Definition \ref{D:1strata+},  $F=E$ if $D=D(\irr)$ or $F=A\coprod B$ if $D=D([\tau, I], [\tau+1, I])$) together with two constant sections $\{a\}\times \P^1$ and $\{b\}\times \P^1$ by blowing up the points $\{a\}\times \{0\}$ and $\{b\}\times \{\infty\}$ and taking the strict transform of the two constant sections. Therefore, the family $\wt{\pi}:\cY\to D$ has  two singular fibres, namely $\wt{\pi}^{-1}(0)$ and $\wt{\pi}^{-1}(\infty)$ which are  formed by $F$ and the exceptional divisors $E_0$ and $E_{\infty}$, respectively, meeting in one node; hence 
we have that 
\begin{equation}\label{E:deltaY}
\deg_D(\delta_{\cY/D})=2.
\end{equation}
Moreover, since there is no variation of moduli in the fibres of the family $\wt{\pi}:\cY\to D$, we have that 
\begin{equation}\label{E:lambdaY}
\deg_D(\lambda_{\cY/D})=0.
\end{equation}
Finally, since $\sigma_a^*(\omega_{\cY/D})=\sigma_a^*(\cO_{\cY}(-\Im(\sigma_a)))$, we have that $\deg_D(\psi_a)=-(\Im \sigma_a)^2$. Since the pull-back of the constant section
$\{a\}\times \P^1$ to the blow-up family $\wt{\pi}:\cY\to D$ is equal to $E_0+\Im \sigma_a$, we get that 
\begin{equation}\label{E:psia}
0=(E_0+\Im \sigma_a)^2=E_0^2+2E_0\cdot \Im \sigma_a+(\Im \sigma_a)^2=-1+2+(\Im \sigma_a)^2\Rightarrow \deg_D(\psi_a)=-(\Im \sigma_a)^2=1.
\end{equation}
And similarly we have that 
\begin{equation}\label{E:psib}
\deg_D(\psi_b)=1.
\end{equation}
Substituting \eqref{E:deltaY}, \eqref{E:lambdaY}, \eqref{E:psia} and \eqref{E:psib} into \eqref{E:forAFS}, we get that 
\begin{equation}\label{E:forAFSbis}
 \lambda\cdot D=-1 \: \text{ and } \:  \delta\cdot D=-10.
 \end{equation}
By combining \eqref{E:inter-iJ} and \eqref{E:forAFSbis}, we conclude the proof of part \eqref{P:tacinter2}. 
\end{proof}

We are now ready, by combining the above propositions, to give a proof of  Theorem \ref{T:flip+}.

\begin{proof}[Proof of Theorem \ref{T:flip+}]

Note that the algebraic space $\MMM_{g,n}^{T+}$ is normal by Theorem \ref{T:goodspaces} and the morphism $f_T^+$ is a small contraction  by Proposition \ref{P:fibers+}. 
Hence the first two conditions of Definition \ref{D:Dflip} are always satisfied.  Moreover, in order for $f_T^+$ to be the $L$-flip of $f_T$, we need that $L$ is $f_T$-antiample (see Definition \ref{D:Dflip}).

It remains  to check the last condition of Definition \ref{D:Dflip}  with respect to the rational morphism 
$$\eta:=(f_T^{+})^{-1}\circ f_T:\MMM_{g,n}^T\dashrightarrow \MMM_{g,n}^{T+}$$ 
and any  $\bbQ$-Cartier $\bbQ$-divisor $D$ on $\MMM_{g,n}^T$ whose associated $\bbQ$-line bundle is $L$.
If the restriction of $L$  to $\M_{g,n}^{T+}$ (which we denote again by $L$) is $T^+$-compatible, it will descend
 to a $\bbQ$-line bundle $L^{T+}$ on $\MMM_{g,n}^{T+}$ by Proposition \ref{P:lbdesc}. By the commutativity of the diagram \eqref{E:diag-spaces}, we have that the linear equivalence class of the $\bbQ$-divisor $\eta_*(D)$ is $L^{T+}$, which implies  that $\eta_*(D)$ is $\bbQ$-Cartier. Conversely, if $\eta_*(D)$ is $\bbQ$-Cartier then 
 its linear equivalence class is a $\bbQ$-line bundle on $\MMM_{g,n}^{T+}$ whose pull-back to $\M_{g,n}^{T+}$ is the restriction of $L$ to  $\M_{g,n}^{T+}$, and this implies that $L$ is $T^+$-compatible again by Proposition \ref{P:lbdesc}.

Hence it remains to show that if $L$ is $f_T$-antiample then $L^{T+}$ is   $f_{T}^+$-ample. 
Since $f_T^+$ is projective by Proposition \ref{P:relample+} and the relative Mori cone of $f_T^+$ is generated by  the tacnodal curves of type contained in $T$ by Proposition \ref{P:tacinter}\eqref{P:tacinter1}, it is enough  to show, by the relative Kleiman ampleness criterion (\cite[Thm. 1.44]{KM}),  that $L$ intersects negatively these curves. By combining Proposition  \ref{P:tacinter}\eqref{P:tacinter2} with Lemma \ref{L:int-1str} and using that the intersection of $L$ with all the elliptic bridge curves of type contained in $T$ is negative because $L$ is $f_T$-antiample, we get that 
$$
\begin{sis}
& D(\irr)\cdot L=-C(\irr)\cdot L>0 \: \text{ if } \irr \in T, \\
& D([\tau,I],[\tau+1,I])\cdot L=-C([\tau,I],[\tau+1,I])\cdot L>0,  
\end{sis}
$$
 for any   $\{[\tau, I], [\tau+1, I]\}\subset T-\{[1,\emptyset], \bigcup_j [1,\{j\}]\}$, and this concludes the proof.
\end{proof}

We now describe two important special cases of the main Theorem \ref{T:flip+}. 

\begin{cor}\label{C:K-flip}
Assume that $(g,n)\neq (2,0), (1,2)$, and $\car(k)\gg (g,n)$. 
\begin{enumerate}[(i)]
\item \label{C:K-flip1} The  morphism $f_T^+: \MMM_{g,n}^{T,+}  \to \MMM_{g,n}^T$ is the $(K_{\M_{g,n}^{\ps}}+\psi)$-flip of $f_T$. 
\item \label{C:K-flip2} The  morphism $f_T^+: \MMM_{g,n}^{T,+}  \to \MMM_{g,n}^T$ is the $K_{\MM_{g,n}^{\ps}}$-flip of $f_T$ if and only if $\MMM_{g,n}^{T,+}$ is $\bbQ$-Gorenstein, i.e. if and only if $T$ does not contain subsets of the form $\{[0,\{j\}],[1,\{j\}],[2,\{j\}]\}$ for some $j\in [n]$ or  $(g,n)=(3,1), (3,2), (2,2)$.
\end{enumerate}
\end{cor}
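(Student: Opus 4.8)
The plan is to deduce Corollary \ref{C:K-flip} directly from Theorem \ref{T:flip+} by identifying the relevant $\mathbb{Q}$-line bundles. Recall that Theorem \ref{T:flip+} says that $f_T^+$ is the $L$-flip of $f_T$ precisely when $L$ is $f_T$-antiample and $L|_{\M_{g,n}^{T+}}$ is $T^+$-compatible. So for each of the two parts it suffices to check these two conditions for the line bundle in question.

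For part \eqref{C:K-flip1}, I would take $L = K_{\M_{g,n}^{\ps}}+\psi = 13\lambda - 2\delta + 2\psi$. First I would verify that $L$ is $f_T$-antiample: by Corollary \ref{C:contr}\eqref{C:contr2} (or directly via the relative Kleiman criterion together with Proposition \ref{P:fibers1}\eqref{P:fibers1bis}), it is enough to check that $L$ intersects every elliptic bridge curve of type contained in $T$ negatively. This is exactly the content of Proposition \ref{P:face}\eqref{P:face1}, which states that the elliptic bridge curves intersect $K_{\M_{g,n}^{\ps}}+\psi$ negatively. Second, I would check that $L|_{\M_{g,n}^{T+}}$ is $T^+$-compatible, i.e. that in the expansion \eqref{E:lbT+} one has $b_{\tau,I}=b_{\tau+2,I}$ for every relevant triple. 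Writing $K_{\M_{g,n}^{\ps}}+\psi$ in terms of the generators $\lambda,\delta_\irr,\{\delta_{i,I}\}$ using Mumford's formula (Fact \ref{F:PicU}\eqref{F:PicU2}) and $\delta = \delta_\irr + \sum \delta_{i,I}$, $\psi = \sum\psi_i = -\sum\delta_{0,\{i\}}$, one sees that $K+\psi = 13\lambda - 2\delta_\irr - 2\sum_{[i,I]\neq\irr,\ i\geq 1 \text{ or }|I|\geq 2}\delta_{i,I} - \sum_j \delta_{0,\{j\}}$; in particular every coefficient $b_{i,I}$ with $[i,I]\notin\{[1,\emptyset],\bigcup_j[1,\{j\}],\bigcup_j[0,\{j\}]\}$ equals $-2$, so the condition $b_{\tau,I}=b_{\tau+2,I}$ holds automatically whenever $[\tau,I],[\tau+2,I]\notin\{[1,\emptyset],\bigcup_j[1,\{j\}]\}$. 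Hence $L$ is $T^+$-compatible and Theorem \ref{T:flip+} applies.

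For part \eqref{C:K-flip2}, I would take $L = K_{\MM_{g,n}^{\ps}}$. By Proposition \ref{P:Pic-Mgps}\eqref{P:Pic-Mgps3} (and Remark \ref{R:Hurwitz} in the excluded cases), $K_{\MM_{g,n}^{\ps}}$ agrees with $K_{\M_{g,n}^{\ps}} = 13\lambda - 2\delta + \psi$ up to an effective ramification divisor $R$ not contained in the boundary; choosing the elliptic bridge curves with generic point without nontrivial automorphisms, as in Proposition \ref{P:face}\eqref{P:face1}, one gets that they intersect $K_{\MM_{g,n}^{\ps}}$ negatively, so $K_{\MM_{g,n}^{\ps}}$ is $f_T$-antiample. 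However, the flip in the sense of Definition \ref{D:Dflip} requires $\eta_*(K_{\MM_{g,n}^{\ps}})$ to be $\mathbb{Q}$-Cartier, which by the argument in the proof of Theorem \ref{T:flip+} is equivalent to $K_{\M_{g,n}^{\ps}}|_{\M_{g,n}^{T+}}$ being $T^+$-compatible, i.e. (by Proposition \ref{P:lbdesc} and its use in Corollary \ref{C:Gor-fact}\eqref{C:Gor-fact2}) equivalent to $\MMM_{g,n}^{T+}$ being $\mathbb{Q}$-Gorenstein. So I would simply quote Corollary \ref{C:Gor-fact}\eqref{C:Gor-fact2}: when $(g,n)\neq(2,1),(3,0)$ this says $\MMM_{g,n}^{T+}$ is $\mathbb{Q}$-Gorenstein iff $T$ contains no subset $\{[0,\{j\}],[1,\{j\}],[2,\{j\}]\}$ and $(g,n)\neq(3,1),(3,2),(2,2)$; for $(g,n)=(2,1),(3,0)$ the space is always $\mathbb{Q}$-factorial (hence $\mathbb{Q}$-Gorenstein) and these pairs fall under the stated list. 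Combining with part \eqref{C:K-flip1} and the observation that $K_{\MM_{g,n}^{\ps}}$ differs from $K_{\MM_{g,n}^{\ps}}+\psi$ by a nef class pulled back appropriately is not needed here — one just directly applies Theorem \ref{T:flip+} with $L=K_{\MM_{g,n}^{\ps}}$ when $\MMM_{g,n}^{T+}$ is $\mathbb{Q}$-Gorenstein, and notes that when it is not $\mathbb{Q}$-Gorenstein the divisor $\eta_*(K)$ fails to be $\mathbb{Q}$-Cartier so $f_T^+$ cannot be a $K$-flip.

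I do not anticipate a genuine obstacle here: the corollary is essentially a bookkeeping consequence of Theorem \ref{T:flip+}, Proposition \ref{P:face}, Proposition \ref{P:lbdesc}, and Corollary \ref{C:Gor-fact}. The only point requiring a little care is to present the coefficient computation for $K+\psi$ cleanly enough that the $T^+$-compatibility is manifest, and to handle the small exceptional pairs $(2,1),(3,0)$ in part \eqref{C:K-flip2} by invoking the $\mathbb{Q}$-factoriality statement rather than the canonical-class formula, since for those pairs $(\phi^{T+})^*K_{\MMM_{g,n}^{T+}}$ picks up a ramification term.
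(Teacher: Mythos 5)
Your overall strategy is the same as the paper's: reduce both parts to Theorem~\ref{T:flip+} by checking $f_T$-antiampleness (via Proposition~\ref{P:face}\eqref{P:face1} and the relative Kleiman criterion) and $T^+$-compatibility (via Proposition~\ref{P:lbdesc} and Corollary~\ref{C:Gor-fact}). Part~(ii) of your argument is essentially the paper's, and the handling of the exceptional pairs $(2,1), (3,0)$ is fine (the condition on $T$ is vacuous there, so the ``if and only if'' holds trivially).

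However, there is a genuine gap in your verification of $T^+$-compatibility in part~(i), and it is not a harmless typo. You wrote
$K_{\M_{g,n}^{\ps}}+\psi = 13\lambda - 2\delta_\irr - 2\sum \delta_{i,I} - \sum_j \delta_{0,\{j\}}$,
but this is the expansion of $K_{\M_{g,n}^{\ps}}=13\lambda-2\delta+\psi$ alone, not of $K+\psi$. The correct expansion is
\begin{equation*}
K_{\M_{g,n}^{\ps}}+\psi \;=\; 13\lambda - 2\delta+2\psi \;=\; 13\lambda - 2\widehat\delta,
\end{equation*}
where $\widehat\delta = \delta-\psi = \delta_\irr + \sum_{[i,I]\in T_{g,n}^*}\delta_{i,I}$ has \emph{all} boundary coefficients equal to $1$, including those of $\delta_{0,\{j\}}$. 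Hence in the notation of \eqref{E:lbT+} every $b_{i,I}$ equals $-2$, and the condition $b_{\tau,I}=b_{\tau+2,I}$ of Definition~\ref{D:T+comp} holds identically. With your coefficient $-1$ for $\delta_{0,\{j\}}$, the check would \emph{fail} precisely for triples $\{[0,\{j\}],[1,\{j\}],[2,\{j\}]\}\subset T$, which is the obstruction governing part~(ii), so part~(i) would then be false for such $T$. Moreover, your sentence drawing the conclusion excludes $\bigcup_j[0,\{j\}]$ from the ``$=-2$'' list but then asserts $T^+$-compatibility for pairs allowed to contain $[0,\{j\}]$ --- a non sequitur that the correct, uniform formula $K+\psi=13\lambda-2\widehat\delta$ (which the paper uses directly) removes. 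Once you correct the coefficient, your proof coincides with the paper's.
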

\begin{proof}
Since the relative Mori cone of $f_T$ is generated by the elliptic bridge curves of type contained in $T$ by Proposition \ref{P:fibers1}\eqref{P:fibers1bis} and the elliptic bridge curves are both $K_{\MM_{g,n}^{\ps}}$ and $(K_{\M_{g,n}^{\ps}}+\psi)$-negative by Proposition \ref{P:face}\eqref{P:face1}, the relative Kleiman's ampleness criterion (which can be applied since $f_T^+$ is projective by Proposition \ref{P:relample+}) implies that  
$K_{\MM_{g,n}^{\ps}}$ and $(K_{\M_{g,n}^{\ps}}+\psi)$ are $f_T$-antiample.
By Mumford's formula (see Fact \ref{F:PicU}\eqref{F:PicU2}), we have that $K_{\M_{g,n}^{\ps}}+\psi=13\lambda -2\wh{\delta}$ and the restriction of  $13\lambda -2\wh{\delta}$ to $\M_{g,n}^{T+}$ is $T^+$-compatible, see Definition \ref{D:T+comp}. Hence we conclude that $f_T^+$ is the $(K_{\M_{g,n}^{\ps}}+\psi)$-flip of $f_T$ by Theorem \ref{T:flip+}. 

In order to prove part \eqref{C:K-flip2}, observe first that  
\begin{equation}\label{E:pushK}
((f_T^+)^{-1}\circ f_T)_*(K_{\MMM_{g,n}^{\ps}})=K_{\MMM_{g,n}^{T+}}.
\end{equation}
 Therefore, if $f_T^+$  is the $K_{\MMM_{g,n}^{\ps}}$-flip of $f_T$, then $K_{\MMM_{g,n}^{T+}}$ is $\bbQ$-Cartier, i.e. $\MMM_{g,n}^{T+}$ is $\bbQ$-Gorenstein, which happens if and only if $T$ does not contain subsets of the form $\{[0,\{j\}],[1,\{j\}],[2,\{j\}]\}$ for some $j\in [n]$ or  $(g,n)=(3,1), (3,2), (2,2)$ by Corollary \ref{C:Gor-fact}\eqref{C:Gor-fact2}.
Conversely, if $K_{\MMM_{g,n}^{T+}}$ is $\bbQ$-Cartier then, by the diagram \eqref{E:diag-spaces}, we deduce that the restriction of the $\bbQ$-line bundle $K_{\MMM_{g,n}^{\ps}}$ (seen as a $\bbQ$-line bundle on $\M_{g,n}^T$ by Corollary \ref{C:Pic-ps} and Proposition \ref{P:Pic-Mgps}\eqref{P:Pic-Mgps2b}) to $\M_{g,n}^{T+}$ descends to the $\bbQ$-line bundle $K_{\MMM_{g,n}^{T+}}$, and hence it is $T^+$-compatible. Hence, we conclude that $f_T^+$ is the $K_{\MM_{g,n}^{\ps}}$-flip of $f_T$ by Theorem \ref{T:flip+}. 
\end{proof}

Theorem \ref{T:flip+} implies that, when $\MMM_{g,n}^{T+}$ is $\bbQ$-factorial (cf. Corollary \ref{C:Gor-fact}\eqref{C:Gor-fact1}), then the morphism $f_T^+$ is the $L$-flip of $f_T$ with respect to any $\bbQ$-line bundle $L$ on $\MM_{g,n}^{\ps}$ which is $f_T$-antiample. Under these assumptions and assuming furthermore that $f_T$ is small  (cf. Proposition \ref{P:geomT}\eqref{P:geomT2}), we will now prove that $f_T^+$ is the composition of elementary $L$-flips.

\begin{cor}\label{C:compflips}
	Assume $(g,n) \neq (2,0), (1,2)$ and ${\rm char}(k)=0$. Let $T \subset T_{g,n}$ such that $f_T: \MM_{g,n}^{\ps} \to \MMM_{g,n}^T$ is small and $\MMM_{g,n}^{T,+}$ is $\bbQ$-factorial (cf. Proposition \ref{P:geomT}\eqref{P:geomT2} and Corollary \ref{C:Gor-fact}(\ref{C:Gor-fact1})).  	
	Let $L$ be a $\bbQ$-line bundle on $\MM_{g,n}^{\ps}$ which is $f_T$-antiample. 
	
	Then the rational map $(f_T^+)^{-1}\circ f_T:\MM_{g,n}^{\ps} \dashrightarrow \MMM_{g,n}^{T,+}$ can be decomposed (up to isomorphism) as a sequence of elementary $L$-flips. 
\end{cor}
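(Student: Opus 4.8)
The plan is to run the standard relative MMP machinery of \cite{BCHM10} for the morphism $f_T\colon \MM_{g,n}^{\ps}\to \MMM_{g,n}^T$, using $L$ as the ``ample model'' divisor to guide the choice of extremal rays, and then identify the output of this MMP with $\MMM_{g,n}^{T,+}$ by means of the rigidity Lemma \ref{L:rigidity}. Concretely, since ${\rm char}(k)=0$ and $\MM_{g,n}^{\ps}$ has klt singularities (Proposition \ref{P:Pic-Mgps}\eqref{P:Pic-Mgps2}), and since $L$ is $f_T$-antiample, we may pick an effective $\bbQ$-divisor $\Delta$ on $\MM_{g,n}^{\ps}$ such that the pair $(\MM_{g,n}^{\ps},\Delta)$ is klt and $K_{\MM_{g,n}^{\ps}}+\Delta\equiv_{f_T} L$ (for instance, $\Delta = \epsilon L + (\text{small general divisor})$ for $\epsilon\ll 1$, using that $-L$ is $f_T$-ample and $\MM_{g,n}^{\ps}$ is klt). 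Then $K_{\MM_{g,n}^{\ps}}+\Delta$ is $f_T$-antiample, so by \cite{BCHM10} we may run the $(K_{\MM_{g,n}^{\ps}}+\Delta)$-MMP over $\MMM_{g,n}^T$; because $f_T$ is small by hypothesis, the MMP consists only of flips (no divisorial contractions), each of which is an elementary $(K_{\MM_{g,n}^{\ps}}+\Delta)$-flip, i.e.\ an elementary $L$-flip in the sense of Definition \ref{D:Dflip} (here one uses that each step has relative Picard number one over $\MMM_{g,n}^T$, and Lemma \ref{L:Dflip}\eqref{L:Dflip1} to see the $D$-flip depends only on the $\bbQ$-line bundle). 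The MMP terminates (again by \cite{BCHM10}, since $-(K_{\MM_{g,n}^{\ps}}+\Delta)$ is $f_T$-ample, so we are running the MMP with scaling and it terminates with a minimal model, i.e.\ a model on which the transform of $K+\Delta$, equivalently of $L$, is $f_T$-nef, hence $f_T$-antinef becomes nef — more precisely the minimal model is the ample model and equals $\Proj$ of the section ring).

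\textbf{Identifying the output with $\MMM_{g,n}^{T,+}$.} Let $g\colon \MM_{g,n}^{\ps}\dashrightarrow X$ be the composition of the elementary $L$-flips produced above, with induced morphism $h\colon X\to \MMM_{g,n}^T$. By construction $X$ is $\bbQ$-factorial, normal and projective, $h$ is small, and the transform $L_X$ of $L$ is $h$-ample; moreover $X=\Proj\bigoplus_{m\ge0}\cO_{\MMM_{g,n}^T}(\lfloor m (f_T)_*(L)\rfloor)$ is the ample model of $L$ over $\MMM_{g,n}^T$. On the other hand, Theorem \ref{T:flip+} tells us that $f_T^+\colon\MMM_{g,n}^{T,+}\to\MMM_{g,n}^T$ is the $L$-flip of $f_T$ (it is applicable precisely because $\MMM_{g,n}^{T,+}$ is $\bbQ$-factorial, hence every $f_T$-antiample $L$ restricts to a $T^+$-compatible class on $\M_{g,n}^{T+}$, by the remark after Definition \ref{D:T+comp} together with Proposition \ref{P:lbdesc} / Corollary \ref{C:Gor-fact}\eqref{C:Gor-fact1}). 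By Lemma \ref{L:Dflip}\eqref{L:Dflip1}, the $L$-flip is unique and is given by the same $\Proj$ formula \eqref{E:equflip}. Therefore $X\cong \MMM_{g,n}^{T,+}$ over $\MMM_{g,n}^T$, and under this isomorphism the rational map $g$ becomes $(f_T^+)^{-1}\circ f_T$; this exhibits $(f_T^+)^{-1}\circ f_T$ as a composition of elementary $L$-flips, as desired.

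\textbf{Main obstacle.} The delicate point is to set things up so that the abstract MMP of \cite{BCHM10} is available and produces \emph{exactly} a chain of $L$-flips terminating at the ample model of $L$. Two things need care: first, constructing the klt pair $(\MM_{g,n}^{\ps},\Delta)$ with $K+\Delta\equiv_{f_T} L$ — this uses the klt-ness of $\MM_{g,n}^{\ps}$ and the $f_T$-ampleness of $-L$ to perturb, and one should check $\Delta$ can be taken effective with $(\MM_{g,n}^{\ps},\Delta)$ still klt; second, ensuring that along the MMP no divisorial contraction occurs, which follows from smallness of $f_T$ (any divisor contracted over $\MMM_{g,n}^T$ would have to be $f_T$-exceptional, but $f_T$ is small), so every step is a flip of relative Picard number one, i.e.\ elementary. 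The termination and the identification ``minimal model $=$ ample model of $L$'' are then the with-scaling version of \cite{BCHM10}; alternatively one invokes \cite[Thm. 92]{Exercises} (finite generation of $\bigoplus_m\cO_{\MMM_{g,n}^T}(\lfloor m(f_T)_*L\rfloor)$, already used in Lemma \ref{L:Dflip}) to know the ample model exists, and the MMP-with-scaling reaches it after finitely many steps. Once these foundational points are in place, the uniqueness statement in Lemma \ref{L:Dflip}\eqref{L:Dflip1} combined with Theorem \ref{T:flip+} closes the argument with no further computation.
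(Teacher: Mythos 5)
Your overall strategy coincides with the paper's: run the $L$-MMP over $\MMM_{g,n}^T$, note that every step is a flip because $f_T$ is small, and identify the end model with $\MMM_{g,n}^{T,+}$ by comparing ample models of $L$ over $\MMM_{g,n}^T$ (using $\bbQ$-factoriality and smallness, or equivalently the uniqueness in Lemma~\ref{L:Dflip}). The paper packages the existence and termination of this MMP into the single statement that $f_T$ is a \emph{relative Mori dream space}, citing that it is $K_{\MM_{g,n}^{\ps}}$-negative (Theorem~\ref{thm:contr}) with $\MM_{g,n}^{\ps}$ klt, $\bbQ$-factorial and with discrete Picard group; you instead try to unwind this by producing a klt pair $(\MM_{g,n}^{\ps},\Delta)$ with $K_{\MM_{g,n}^{\ps}}+\Delta\equiv_{f_T}L$ and then appealing to \cite{BCHM10} directly.

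The construction of $\Delta$ is where the proposal breaks down. You need an \emph{effective} $\Delta$ with $(\MM_{g,n}^{\ps},\Delta)$ klt and $\Delta\equiv_{f_T}L-K_{\MM_{g,n}^{\ps}}$, and the parenthetical suggestion ``$\Delta=\epsilon L+(\text{small general divisor})$'' does not deliver this: $L$ is $f_T$-antiample, so $\epsilon L$ has no reason to be effective, and the ``small general divisor'' would have to represent the $f_T$-class $(1-\epsilon)L-K_{\MM_{g,n}^{\ps}}$, which is a difference of two $f_T$-\emph{anti}ample classes and hence has no obvious effective or Bertini-general representative. The missing input is precisely the $K_{\MM_{g,n}^{\ps}}$-negativity of $f_T$ from Theorem~\ref{thm:contr}, which you never invoke: because $-K_{\MM_{g,n}^{\ps}}$ is $f_T$-ample, for $0<c\ll 1$ the class $cL-K_{\MM_{g,n}^{\ps}}$ is still $f_T$-ample, hence $cL-K_{\MM_{g,n}^{\ps}}+f_T^*H$ is ample for $H$ sufficiently ample on the projective base $\MMM_{g,n}^T$; a Bertini-general $\bbQ$-member $\Delta$ of this class then gives $(\MM_{g,n}^{\ps},\Delta)$ klt and $K_{\MM_{g,n}^{\ps}}+\Delta\equiv_{f_T}cL$, which suffices since the $cL$-MMP equals the $L$-MMP. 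With this repair, the rest of your argument (each step contracts a single extremal ray so is elementary; the $f_T$-numerical proportionality of $K+\Delta$ and $cL$ persists under the flips because they are small, so each step is an $L$-flip; no divisorial contractions occur because $f_T$ is small; and the terminal ample model coincides with $\MMM_{g,n}^{T,+}$ by Theorem~\ref{T:flip+} and Lemma~\ref{L:Dflip}) goes through and matches the paper's proof.
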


\begin{proof}
	The morphism $f_T:\MM_{g,n}^{\ps} \to \MMM_{g,n}^T$ is a relative Mori dream space because it is $K_{\MM_{g,n}^{\ps}}$-negative (by Theorem \ref{thm:contr}) and $\MM_{g,n}^{\ps}$ is klt and $\bbQ$-factorial (by Proposition \ref{P:Pic-Mgps}\eqref{P:Pic-Mgps}) with a discrete Picard group (by Corollary \ref{C:Pic-ps} and Proposition \ref{P:Pic-Mgps}\eqref{P:Pic-Mgps2}). Hence, we can run an MMP for $L$ over $\MMM_{g,n}^{T}$ and obtain a relative minimal model
	\begin{equation}\label{E:defflip2}
	\xymatrix{ & \MM_{g,n}^{\ps} \ar @{-->}[rr]^{\eta}  \ar[rd]_{f_T} && X \ar[ld]^{g} \\  & & \MMM_{g,n}^{T}}
	\end{equation}
	Since $f_T$ is small, $g$ is also small and $\eta$ is a composition of flips.  Moreover, since $\MMM_{g,n}^{T,+}$ is the ample model of $L$ over  $\MMM_{g,n}^{T}$ there is a birational morphism $X \to \MMM_{g,n}^{T,+}$ over $\MMM_{g,n}^T$, which is again small. Since both spaces are $\bbQ$-factorial we conclude that the morphism $X\to  \MMM_{g,n}^{T,+}$  
	is an isomorphism, as wanted.  
\end{proof}

\bibliographystyle{amsalpha}
\bibliography{Library}

\end{document}